\documentclass[letterpaper,11pt]{amsart}

\usepackage{amsmath}
\usepackage{amsthm}
\usepackage{mathrsfs}
\usepackage{amssymb,amscd}
\usepackage[all]{xy}
\usepackage{mathtools}
\usepackage{fullpage}
\usepackage{comment}

\usepackage[dvipsnames]{xcolor}
\usepackage[colorlinks=true,linkcolor=blue,breaklinks,pagebackref]{hyperref}

\usepackage[lite,abbrev,msc-links,alphabetic]{amsrefs}

\linespread{1.3}

\pagestyle{plain}


\newcommand{\fg}{\mathfrak{g}}
\newcommand{\fh}{\mathfrak{h}}

\newcommand{\fn}{\mathfrak{n}}
\newcommand{\fo}{\mathfrak{o}}

\newcommand{\fs}{\mathfrak{s}}

\newcommand{\ft}{\mathfrak{t}}
\newcommand{\fu}{\mathfrak{u}}

\newcommand{\bA}{\mathbb{A}}

\newcommand{\C}{\mathbb{C}}

\newcommand{\R}{\mathbb{R}}

\newcommand{\cA}{\mathcal{A}}
\newcommand{\cB}{\mathcal{B}}

\newcommand{\cG}{\mathcal{G}}
\newcommand{\cH}{\mathcal{H}}

\newcommand{\cK}{\mathcal{K}}

\newcommand{\cM}{\mathcal{M}}

\newcommand{\cO}{\mathcal{O}}

\newcommand{\cS}{\mathcal{S}}

\newcommand{\cU}{\mathcal{U}}
\newcommand{\cV}{\mathcal{V}}
\newcommand{\cW}{\mathcal{W}}

\newcommand{\cZ}{\mathcal{Z}}

\newcommand{\rd}{\mathrm{d}}


\newcommand{\bs}{\backslash}

\newcommand{\re}{\mathrm{Re}\,}

\newcommand{\tp}[1]{\prescript{t}{}{#1}}

 \DeclareMathOperator{\vol}{vol}
\DeclareMathOperator{\val}{val}

\providecommand{\abs}[1]{\lvert#1\rvert}
\providecommand{\aabs}[1]{\lVert#1\rVert}

\DeclareMathOperator{\Tr}{Tr}

\DeclareMathOperator{\GL}{GL}

\DeclareMathOperator{\GSp}{GSp}

\DeclareMathOperator{\GU}{GU}

\DeclareMathOperator{\Res}{Res}

\DeclareMathOperator{\Hom}{Hom}

\DeclareMathOperator{\id}{\mathbf{1}}

\DeclareMathOperator{\As}{As}

\DeclareMathOperator{\Ad}{Ad}

\DeclareMathOperator{\Sym}{Sym}

\DeclareMathOperator{\reg}{reg}

\DeclareMathOperator{\el}{ell}

\DeclareFontFamily{U}{mathx}{\hyphenchar\font45}
\DeclareFontShape{U}{mathx}{m}{n}{
      <5> <6> <7> <8> <9> <10>
      <10.95> <12> <14.4> <17.28> <20.74> <24.88>
      mathx10
      }{}
\DeclareSymbolFont{mathx}{U}{mathx}{m}{n}
\DeclareMathAccent{\widecheck}{\mathalpha}{mathx}{"71}

\makeatletter
\def\Ddots{\mathinner{\mkern1mu\raise\p@
\vbox{\kern7\p@\hbox{.}}\mkern2mu
\raise4\p@\hbox{.}\mkern2mu\raise7\p@\hbox{.}\mkern1mu}}
\makeatother

\theoremstyle{definition}
\newtheorem{definition}{Definition}[section]

\theoremstyle{plain}
\newtheorem{theorem}[definition]{Theorem}
\newtheorem{prop}[definition]{Proposition}
\newtheorem{lemma}[definition]{Lemma}

\newtheorem{conj}[definition]{Conjecture}

\theoremstyle{remark}
\newtheorem{remark}[definition]{Remark}


\numberwithin{equation}{section}

\setcounter{tocdepth}{1}

\newenvironment{altenumerate}
   {\begin{list}
      {(\theenumi) }
      {\usecounter{enumi}
       \setlength{\labelwidth}{0pt}
       \setlength{\labelsep}{0pt}
       \setlength{\leftmargin}{0pt}
       \setlength{\itemsep}{\the\smallskipamount}
       \renewcommand{\theenumi}{\roman{enumi}}
      }}
   {\end{list}}

\begin{document}

\title{Twisted linear periods and a new relative trace formula}

\author{Hang Xue}

\author{Wei Zhang}

\date{\today}

\address{Hang Xue\\
Department of Mathematics\\ The University of Arizona\\
Tucson, AZ 85721}

\email{xuehang@math.arizona.edu}

\address{Wei Zhang\\
Department of Mathematics\\
Massachusetts Institute of Technology\\
Cambridge, MA 02139 }

\email{weizhang@mit.edu}

\begin{abstract}
We study the linear periods on $\GL_{2n}$ twisted by a character using a
new relative trace formula. We establish the relative fundamental lemma and
the transfer of orbital integrals. Together with the spectral isolation
technique of Beuzart-Plessis--Liu--Zhang--Zhu, we are able to compare the
elliptic part of the relative trace formulae and to obtain new results
generalizing Waldspurger's theorem in the $n=1$ case.
\end{abstract}

\maketitle

\tableofcontents

\section{Introduction}

\subsection{Linear periods}
Let $F$ be a number field and $E$ a quadratic field extension of $F$, with
their rings of ad\`eles denoted by $\bA_F$ and $\bA_E$. Let $\eta:
\bA_F^\times/F^\times\to\{\pm 1\}$ be the quadratic character attached to
$E/F$ by class field theory. Let $\omega: \bA_F^\times/F^\times \to
\C^\times$ and $\chi: \bA_E^\times/E^\times \to \C^\times$ be two characters
with $\chi^n|_{\bA_F^\times} \omega = 1$. Let $\cA$ be a central simple
algebra over $F$ of dimension $4n^2$. Fix an embedding  $E\to \cA$ of
$F$-algebras, and let $\cB$ be the centralizer of $E$ in $\cA$. Let $G =
\cA^\times$ and $H = \cB^\times$, both regarded as algebraic groups over $F$.
Let $Z$ be the center of $G$. Let $\pi$ be an irreducible cuspidal
automorphic representation of $G(\bA_F)$ with central character $\omega$.
Take $\varphi \in \pi$ and define
    \begin{align}
    P_\chi(\varphi) = \int_{Z(\bA_F)H(F) \bs H(\bA_F)} \varphi(h) \chi(h) \rd h.
    \end{align}

We propose the following conjecture.

\begin{conj}    \label{conj:linear_periods}
\begin{altenumerate}
\item\label{item:conjt1} If $P_\chi$ is not identically zero, then
    $L(\frac{1}{2}, \pi_{0, E} \otimes \chi) \not=0$ and $L(s, \pi_0, \wedge^2
    \otimes \chi|_{\bA_F^\times})$ has a simple pole at $s= 1$ where $\pi_0$
    is the Jacquet--Langlands transfer of $\pi$ to $\GL_{2n}(\bA_F)$, and
    $\pi_{0, E}$ is the base change of $\pi_0$ to $\GL_{2n}(\bA_E)$.
    Moreover for all places $v$ of $F$, the Langlands parameter of $\pi_v$
    takes value in $\GSp_{2n}(\C)$ with similitude factor
    $\chi_v|_{F_v^\times}$, and $\epsilon(\pi_{0, E, v}
    \otimes \chi_v) \eta_v(-1)^n = (-1)^r$ if $v \in S$, where $r$ is the
    split rank of $G$ over $F_v$.
\item\label{item:conjt2}

If all archimedean places of $F$ split in $E$, or $n$ is odd, we also have a
converse. Let $\pi_0$ be an irreducible cuspidal automorphic representation
of $\GL_{2n}(\bA_F)$. Assume that $L(\frac{1}{2}, \pi_{0,E} \otimes \chi)
\not=0$ and $L(s, \pi_0, \wedge^2 \otimes \chi|_{\bA_F^\times})$ has a simple
pole at $s= 1$. Then there is a central simple algebra $\cA$ containing $E$
over $F$ and an irreducible cuspidal representation $\pi$ of $G(\bA_F)$, such
that $\pi$ is the Jacquet--Langlans transfer of $\pi_0$ and $P_{\chi}$ is not
identically zero on $\pi$. Moreover if $n$ is odd, we can take $\cA$ to be a
matrix algebra over a (possibly split) quaternion algebra.
\end{altenumerate}
\end{conj}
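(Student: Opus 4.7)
The plan is to prove the conjecture by comparing two relative trace formulae (RTFs), in the spirit of Jacquet's approach to period problems and Waldspurger's original proof in the $n=1$ case. One RTF is tailored to the twisted linear period $P_\chi$ on $G$, built from the kernel $K_f(h_1,h_2)=\sum_{\gamma\in Z(F)\bs G(F)} f(h_1^{-1}\gamma h_2)$ integrated against $\chi\otimes\chi$ over $(Z(\bA_F)H(F)\bs H(\bA_F))^2$. The other, auxiliary RTF should be chosen so that its spectral side produces the product of $L(\tfrac{1}{2},\pi_{0,E}\otimes\chi)$ with the residue at $s=1$ of $L(s,\pi_0,\wedge^2\otimes\chi|_{\bA_F^\times})$; the Friedberg--Jacquet integral representation of the exterior square $L$-function, combined with the Flicker--Rallis integral governing base change from $F$ to $E$, points to the structure of such an auxiliary RTF.

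Next I would establish the geometric match: decompose both kernels by regular semisimple invariants, construct a transfer of test functions at every place, prove the relative fundamental lemma for the unit of the spherical Hecke algebra at unramified places, and prove smooth transfer at the remaining non-archimedean places. These are precisely the ingredients the authors announce in the abstract. Truncation of both sides and identification of their elliptic parts is then standard once matching is in hand.

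I would then apply the spectral isolation technique of Beuzart-Plessis--Liu--Zhang--Zhu to pin down the contribution of a single cuspidal $\pi$ on $G(\bA_F)$ and its Jacquet--Langlands transfer $\pi_0$ on $\GL_{2n}(\bA_F)$. The resulting identity factorizes $|P_\chi(\varphi)|^2$ into a product of local linear-period functionals times the global $L$-value quantity above. For direction (i), the hypothesis $P_\chi\ne 0$ then forces non-vanishing of both $L$-factors on the right, while the local functionals can be non-zero only when the local Langlands parameter of $\pi_v$ takes values in $\GSp_{2n}(\C)$ with similitude character $\chi_v|_{F_v^\times}$, and only when the local epsilon dichotomy $\epsilon(\pi_{0,E,v}\otimes\chi_v)\eta_v(-1)^n=(-1)^r$ holds. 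For direction (ii), starting from $\pi_0$ satisfying the $L$-value hypotheses, I would first choose the central simple algebra $\cA$ so that the local dichotomy signs line up everywhere (this is exactly where the hypothesis ``$n$ odd or all archimedean places of $F$ split in $E$'' enters, ensuring $\cA$ can be taken of quaternionic type); then invoke Jacquet--Langlands to build $\pi$, and read the spectral identity in reverse to exhibit a $\varphi\in\pi$ with $P_\chi(\varphi)\ne 0$.

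The principal obstacles, in my view, are: (a) smooth transfer of orbital integrals at archimedean places, historically the hardest analytic input and almost certainly what restricts the current theorem to a subset of the cases of the conjecture; (b) establishing the full local dichotomy of twisted linear periods at every place, which underlies both the local epsilon constraint in (i) and the choice of inner form in (ii); and (c) controlling the non-cuspidal spectrum in the auxiliary RTF, so that spectral isolation yields a clean identity without parasitic Eisenstein contributions obscuring the $L$-value factorization.
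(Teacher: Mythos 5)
The statement you are addressing is Conjecture~\ref{conj:linear_periods}, which the paper does \emph{not} prove in full: it remains a conjecture, and the paper only establishes the special cases stated in Theorems~\ref{thm:linear_periods} and~\ref{thm:linear_periods_converse}, under substantial extra hypotheses ($E/F$ split at all archimedean places, $\pi_{0,E}$ cuspidal, existence of a place where the representation is elliptic, and for the converse $n$ odd with supercuspidality conditions at ramified places). Your proposal is therefore not a proof but a research program, and it is essentially the program the paper carries out: the RTF on $G$ attached to $P_\chi$, a new auxiliary RTF on $\GL_{2n}(\bA_E)$ whose spectral side combines the Friedberg--Jacquet period (giving $L(\tfrac12,\Pi\otimes\chi)$ and the exterior square pole) with the Flicker--Rallis period (forcing $\Pi=\pi_{0,E}$), matching of orbital integrals, the fundamental lemma, and spectral isolation via the multipliers of Beuzart-Plessis--Liu--Zhang--Zhu.

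The gaps in your proposal are exactly the points at which the paper, too, falls short of the conjecture, and you should be explicit that they are unresolved rather than ``standard.'' First, the paper does not truncate and compare the full geometric sides; it compares only the elliptic parts, which is why an elliptic local hypothesis appears in both theorems. Your sentence that truncation and identification ``is then standard once matching is in hand'' is not justified: the non-elliptic and non-regular contributions, and the continuous spectrum of the auxiliary RTF, are precisely what is not yet controlled. Second, the archimedean transfer is sidestepped in the paper by assuming $E/F$ splits at infinity so that matching becomes the explicit split-place convolution of Lemma~\ref{lemma:matching_split}; without that assumption no transfer is available. Third, the local constraints in part~(\ref{item:conjt1}) are not obtained from a ``local dichotomy at every place'' as you suggest: the $\GSp_{2n}$ statement is deduced globally from the pole of $L(s,\pi_0,\wedge^2\otimes\chi|_{\bA_F^\times})$, and the root number identity comes from the involution $f'\mapsto f'^\dag$ on test functions together with the functional equation of the local Friedberg--Jacquet integral (Lemmas~\ref{lemma:involution} and~\ref{lemma:matching_involution}), not from a classification of local period functionals. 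Finally, for part~(\ref{item:conjt2}) the selection of the inner form is constrained by a genuine vanishing condition on orbital integrals (membership in $\cS(G'(F_v))_0$), which the paper can only verify for elliptic or supercuspidal local components; this is why the converse theorem is weaker than the conjecture. In short: correct strategy, but the steps you label as obstacles are not incidental difficulties to be dispatched later — they are the open content of the conjecture.
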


If $n =1$, then this is the celebrated result of Waldspurger~\cite{W}. If
$\chi$ is trivial, as $L(s, \pi_{0, E}) = L(s, \pi_0)L(s, \pi_0 \otimes
\eta)$, the conjecture reduces to the one proposed by Guo and Jacquet
in~\cite{Guo2}. The description of the local components $D_v$ and $\pi_v$,
regardless of $\chi$ being trivial or not, is (a consequence of) the
conjecture of Prasad and Takloo-Bighash~\cite{PTB}*{Conjecture~1}.

When $\chi$ is trivial, a relative trace formula approach was proposed
in~\cite{Guo2}, generalizing the work of~\cite{JacquetI}. The study of these
relative trace formulae yields both local and global results towards
Conjecture~\ref{conj:linear_periods} in the case $\chi$ being trivial,
cf.~\cites{FMW,Li1,Li2,Xue1}. However these relative trace formulae make
essential use of the fact that $L(s, \pi_{0, E})$ factors, so they cannot be
extended to the case of nontrivial $\chi$.

The goal of this paper is to propose a new relative trace formula towards
Conjecture~\ref{conj:linear_periods}, and to confirm many cases of the
conjecture by comparing the elliptic parts of the relative trace formula. We
do this by establishing the relevant fundamental lemma and transfer of
orbital integrals.

\begin{remark}
The assumption that $E/F$ splits at the archimedean places comes from the
fact that the only central simple algebras over $\R$ are matrix algebras over
quaternion algebras. In the case $n$ being even, they do not provide us with
enough orbits in our relative trace formula approach. A similar phenomenon
appears even when $\chi$ is trivial, cf.~\cite{Guo2}.
\end{remark}

\begin{remark}
The statement of Conjecture~\ref{conj:linear_periods} also makes sense when
$E = F \times F$ provided that the embedding $E\to \cA$ makes $\cA$ a free
$E$-module. If $\chi$ is trivial, some preliminary studies have been carried
out in~\cite{CZhang2}. The trace formula we proposed in this paper, with
obvious modifications, can be used to study the general case. We will return
to this in the future work.
\end{remark}

For simplicity we assume for the rest of this paper that $\cA = M_n(D)$ where
$D$ is a (possibly split) quaternion algebra over $F$ containing $E$. Thus $G
= \GL_n(D)$ and $H = \Res_{E/F} \GL_{n, E}$.

\begin{theorem} \label{thm:linear_periods}
Assume that $E/F$ is split at all archimedean places, $\pi_{0, E}$ is
cuspidal, and there is at least one place $v_1$ such that $\pi_{v_1}$ is {\em
elliptic}. Then part (\ref{item:conjt1}) of
Conjecture~\ref{conj:linear_periods} holds.
\end{theorem}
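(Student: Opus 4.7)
The plan is to combine the two main technical inputs developed earlier in the paper --- the relative fundamental lemma and the smooth transfer of orbital integrals for the new RTF --- with the spectral isolation technique of Beuzart-Plessis--Liu--Zhang--Zhu, and to compare the elliptic parts of two relative trace formulas. On one side sits the ``linear'' RTF for the pair $(G,H)$ with test functions in $\ccinf(G(\bA_F))$, whose $(H\times H)$-regular semisimple orbital integrals match, via the established transfer, with the geometric side of a second RTF whose $\pi_0$-contribution on the spectral side is proportional to $L(\tfrac12,\pi_{0,E}\otimes\chi)\cdot\Res_{s=1}L(s,\pi_0,\wedge^2\otimes\chi|_{\bA_F^\times})$; this product is the natural detector of both the central L-value and of the pole of the twisted exterior-square L-function, the latter reflecting the $\GSp_{2n}$ structure of the Langlands parameter.

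With the matching of test functions in place, the next step is to isolate $\pi$. At the elliptic place $v_1$, I would insert a pseudo-coefficient of an elliptic constituent of $\pi_{v_1}$: this annihilates every parabolically induced contribution to the spectral side, so only the elliptic part of the RTF survives. The BPLZZ isolation argument, applied at a finite set of auxiliary nonarchimedean places, then concentrates the spectral support on a narrow window around $\pi$; the assumption that $\pi_{0,E}$ is cuspidal rules out residual-spectrum contributions on the comparison side. Comparing elliptic parts now yields an identity of the form
\[
c_\pi \sum_{\varphi} P_\chi(\pi(f)\varphi)\,\overline{P_\chi(\varphi)}
\;=\;
c'\, L(\tfrac12,\pi_{0,E}\otimes\chi)\cdot\Res_{s=1}L(s,\pi_0,\wedge^2\otimes\chi|_{\bA_F^\times})\cdot\prod_v\alpha_v,
\]
with nonzero constants $c_\pi,c'$ and local factors $\alpha_v$ that can be made nonzero by appropriate test function choices. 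Nonvanishing of $P_\chi$ on $\pi$ therefore forces $L(\tfrac12,\pi_{0,E}\otimes\chi)\neq 0$ and a simple pole of $L(s,\pi_0,\wedge^2\otimes\chi|_{\bA_F^\times})$ at $s=1$.

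The local conditions on the Langlands parameter and the sign identity $\epsilon(\pi_{0,E,v}\otimes\chi_v)\eta_v(-1)^n=(-1)^r$ in part (\ref{item:conjt1}) are read off by localization: global nonvanishing of $P_\chi$ on $\pi$ implies nonvanishing of the local $(H_v,\chi_v)$-equivariant functional on $\pi_v$ at every $v$, and the local Prasad--Takloo-Bighash dichotomy converts this local nonvanishing into the predicted epsilon and parameter constraints. The main obstacle I anticipate is controlling the non-elliptic contributions to the geometric side of the new RTF cleanly enough that the elliptic-part comparison really isolates $P_\chi$ on $\pi$; unlike the Guo--Jacquet setting, the twisted case lacks the factorization $L(s,\pi_{0,E})=L(s,\pi_0)L(s,\pi_0\otimes\eta)$, so the orbit analysis and the regularization of parabolic terms must be redone from scratch. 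This is also precisely where the hypothesis of an elliptic place $v_1$ is essential: the pseudo-coefficient at $v_1$ is what reduces the full RTF comparison to a comparison of elliptic parts, and the split-at-archimedean assumption is what makes the required smooth transfer at real places tractable.
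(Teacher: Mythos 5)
Your overall framework (fundamental lemma $+$ transfer $+$ BPLZZ isolation $+$ comparison of elliptic parts) is the right one, but several of the mechanisms you invoke are not the ones that make the argument work, and two of them would actually fail. First, the hypothesis that $\pi_{v_1}$ is elliptic is a \emph{relative} notion (the spherical character $\Theta_{\pi_{v_1}}$ is nonvanishing at some $H$-elliptic regular semisimple element matching an element of $G'(F_{v_1})$); its role is to guarantee the existence of a test function $f_{v_1}$ supported in the elliptic \emph{locus} of $G(F_{v_1})$ with $J_{\pi_{v_1}}(f_{v_1})\neq 0$, which forces the geometric side of $J(f)$ to reduce to the absolutely convergent sum over elliptic orbits. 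Your pseudo-coefficient at $v_1$ addresses the spectral side of an Arthur--Selberg-type formula, not the relative geometric side, and pseudo-coefficients are not adapted to the $(H\times H)$-orbital integrals here; the spectral isolation is instead done entirely by the multipliers of Proposition~\ref{prop:multiplier}. Second, after isolation the $G$-side spectrum is $\pi\oplus(\pi\otimes\eta)$ (base change cannot separate them), so the identity one obtains is $I_{\pi_E}(f')=J_\pi(f)+J_{\pi\otimes\eta}(f)$. To conclude nonvanishing of the right side from $P_\chi\not\equiv 0$ on $\pi$ you need the test functions to be of \emph{positive type} so that $J_\pi(f)>0$ and $J_{\pi\otimes\eta}(f)\geq 0$; your sketch has no mechanism to rule out cancellation between the two terms. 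Relatedly, the precise identity you write down, with $L$-values and local factors $\alpha_v$, is an Ichino--Ikeda-type refinement that is neither proved nor needed: the actual deduction is qualitative, via $I_{\pi_E}(f')\neq 0\Rightarrow P'_\chi,P''_{\chi\eta}\not\equiv 0$ on $\pi_E$, and then Friedberg--Jacquet and Flicker--Rallis (Lemma~\ref{lemma:functoriality}) give the $L$-function statements.

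The most serious gap is the last step. You propose to read off the local root number identity from ``the local Prasad--Takloo-Bighash dichotomy,'' but that dichotomy is precisely (part of) what this theorem is proving; it is not available as an input, and invoking it makes the argument circular. The paper instead computes $\epsilon(\pi_{0,E,v}\otimes\chi_v)$ by a global-to-local trick on the $G'$ side: the involution $f'\mapsto f'^\dagger$ of~\eqref{eq:involution} satisfies $I_{\Pi_v}(f'^\dagger)=\epsilon(\Pi_v\otimes\chi_v)I_{\Pi_v}(f')$ (via the local functional equation of the Friedberg--Jacquet integral, Lemma~\ref{lemma:involution}), while on the geometric side $f'^\dagger$ matches $\eta_v(-1)^n\epsilon_{D_v}^n f$ (Lemma~\ref{lemma:matching_involution}); equating the two evaluations of $I_{\pi_E}$ with $f'$ replaced by $f'^\dagger$ at the single place $v$ yields $\epsilon(\pi_{0,E,v}\otimes\chi_v)=\epsilon_{D_v}^n\eta_v(-1)^n$. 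You would need to supply this (or an equivalent) argument rather than cite the conjecture being proved.
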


Here the ellipticity is relative to the subgroup $H$ and its precise meaning
can be found in Subsection~\ref{ss:sph char}. By Proposition
\ref{prop:elliptic_rep}, all supercuspidal representations are elliptic.

The local part of this theorem, i.e. the self-duality of $\pi$ and
determining the local root numbers, confirms the conjecture of Prasad and
Takloo-Bighash in many cases. The general case of the conjecture will be
treated in a subsequent paper, based on the results we obtain here.

In the converse direction, because of the lack of certain representation
theoretic results, our theorem is less general than the above one.

\begin{theorem} \label{thm:linear_periods_converse}
Assume that $n$ is odd, $E/F$ is split at all archimedean places and that
$\pi_0$ satisfy the conditions in part (\ref{item:conjt2}) of
Conjecture~\ref{conj:linear_periods}. Assume $\pi_{0,E}$ is cuspidal. Let
$\Sigma$ be a finite set of finite places of $F$ containing dyadic places,
such that if $v \not \in \Sigma$, then $E_v/F_v$, $\pi_{0, v}$ and $\chi_v$
are all unramified. Assume that
\begin{enumerate}
\item if $v \in \Sigma$, then either $v$ splits in $E$ or $\pi_{0, E_v}$ is
    supercuspidal.

\item  there is at least one place $v_1$ in $\Sigma$ such that $\pi_{0,
    E_{v_1}}$ is elliptic.
\end{enumerate}
Then there is a unique quaternion algebra $D$ over $F$ and an irreducible
cuspidal representation $\pi$ of $G(\bA_F)$, such that $\pi$ is the
Jacquet--Langlans transfer of $\pi_0$ and $P_{\chi}$ is not identically zero
on $\pi$, i.e. part (\ref{item:conjt2}) of
Conjecture~\ref{conj:linear_periods} holds.
\end{theorem}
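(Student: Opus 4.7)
My plan is to use the new relative trace formula of this paper as one side of a comparison, matched against a companion RTF on $\GL_{2n}$ that detects the $L$-values in the hypothesis, and then to isolate the contribution of $\pi_0$ using the spectral isolation technique of Beuzart-Plessis--Liu--Zhang--Zhu together with the transfer of orbital integrals and the relative fundamental lemma already established earlier in this paper.

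Concretely, for each quaternion algebra $D$ over $F$ containing $E$, let $J^D$ denote the RTF on $G=\GL_n(D)$ weighted by $P_\chi$, and let $I$ denote the companion RTF on $\GL_{2n}$ whose $\pi_0$-contribution is, up to explicit non-zero local factors, proportional to the product of $L(\frac{1}{2}, \pi_{0,E}\otimes\chi)$ and the residue at $s=1$ of $L(s, \pi_0, \wedge^2 \otimes \chi|_{\bA_F^\times})$. The hypotheses of part~(\ref{item:conjt2}) guarantee that this $\pi_0$-contribution is non-zero, and the cuspidality assumption on $\pi_{0,E}$ rules out residual or Eisenstein contamination of the relevant part of the spectral side.

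Next I would choose a test function $f=\otimes_v f_v$ on the $\GL_{2n}$-side. At each $v\notin\Sigma$ take $f_v$ to be the spherical unit; the relative fundamental lemma matches it to the spherical unit on every $G$-side, and the Hecke eigenvalues of $\pi_0$ force any surviving representation $\pi$ on $G$ to be its Jacquet--Langlands transfer. At $v\in\Sigma$ split in $E$, the quaternion $D_v$ splits and local Guo--Jacquet-type harmonic analysis applies directly. At $v\in\Sigma$ non-split, the supercuspidal assumption on $\pi_{0,E_v}$ allows me to use a matrix coefficient of $\pi_{0,v}$ as a pseudo-coefficient, whose transfer $f^D_v$ can be arranged to pair non-trivially with $P_\chi$ against $\pi_v$. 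At $v_1$, the ellipticity assumption combined with the Beuzart-Plessis--Liu--Zhang--Zhu isolation provides an $f_{v_1}$ which restricts the spectral contribution at $v_1$ to elliptic representations. With these choices the elliptic part $I_{\el}(f)$ reduces, up to a non-zero scalar, to the $\pi_0$-contribution and is therefore non-zero. The comparison identity $I_{\el}(f) = \sum_D J^D_{\el}(f^D)$ (up to signs) then follows by combining the relative fundamental lemma, the transfer of orbital integrals, and the ellipticity at $v_1$ to cancel non-elliptic regular terms on both sides. This forces $J^D(f^D) \neq 0$ for some $D$; unfolding the spectral side yields a cuspidal $\pi$ on $G(\bA_F)$ with $P_\chi \not\equiv 0$. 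Matching Hecke eigenvalues outside $\Sigma$ gives the Jacquet--Langlands relation between $\pi$ and $\pi_0$, and uniqueness of $D$, together with the fact that one may take $\cA = M_n(D)$, follows from the odd-$n$ case of the Prasad--Takloo-Bighash local dichotomy, which determines the local quaternion invariant at each place from the prescribed $\epsilon$-factor condition and hence globally.

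The main technical obstacle I anticipate is the local step at the places in $\Sigma$: one must produce test functions that simultaneously match via transfer, detect the local $H$-period non-trivially, and isolate the correct local representation of $\pi$. The restriction of $\Sigma$ to split places and to places where $\pi_{0,E_v}$ is supercuspidal is calibrated precisely so that the local representation theory and period integrals remain tractable. The ellipticity hypothesis at $v_1$ is what makes the global comparison truly elliptic-to-elliptic, sidestepping the more delicate non-elliptic parts of the two trace formulae.
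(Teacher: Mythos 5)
Your global architecture is right (comparison of the elliptic parts of $I$ and $J$, spherical units plus the fundamental lemma away from $\Sigma$, BPLZZ multipliers to isolate $\pi_{0,E}$, matrix coefficients at supercuspidal places, ellipticity at $v_1$), but there is a genuine gap at the decisive local step. In this relative trace formula not every regular semisimple orbit of $G'$ matches an orbit of $G=\GL_n(D)$: the matching condition is $1-(\alpha\overline{\alpha})^{-1}\in\epsilon N\GL_n(E_v)$, and the transfer theorem (Theorem~\ref{thm:transfer_group}) only applies to test functions in $\cS(G'(F_v))_0$, i.e.\ those whose orbital integrals vanish on all non-matching orbits. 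Your proposal never explains how to put the local test functions at the nonsplit places of $\Sigma$ into this space, nor how the choice of $D_v$ is forced. The paper's mechanism is the involution $f'\mapsto f'^{\dag}$ of~\eqref{eq:involution}: the local Friedberg--Jacquet functional equation gives $I_{\Pi_v}(f'^{\dag})=\epsilon(\Pi_v\otimes\chi_v)I_{\Pi_v}(f')$, while on orbital integrals $O^{G'}(x,f'^{\dag})=\eta_v(1-\alpha\overline{\alpha})\,O^{G'}(x,f')$; because $n$ is odd and $x$ is elliptic (resp.\ because $\Pi_v$ is supercuspidal and one uses matrix coefficients, for which the orbital integral factors through $I_{\Pi_v}$), the symmetrized function $f'+\epsilon(\Pi_v\otimes\chi_v)f'^{\dag}$ kills exactly the orbits not matching the quaternion algebra with invariant $\epsilon(\Pi_v\otimes\chi_v)\eta_v(-1)^n$. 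This epsilon-dichotomy computation is where the hypotheses ``$n$ odd'', ``$\pi_{0,E_v}$ supercuspidal'' and ``$\pi_{0,E_{v_1}}$ elliptic'' actually enter; without it your $f^D_v$ does not exist. Relatedly, your comparison identity $I_{\el}(f)=\sum_D J^D_{\el}(f^D)$ is not the one used (and would require separating the $D$-contributions spectrally); the paper instead fixes one $D$ in advance and arranges $f'\in\cS(G'(\bA_F))_0$ so that only that $D$ appears, yielding $I_{\Pi}(f')=J_{\pi}(f)+J_{\pi\otimes\eta}(f)$.

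Two further points. You invoke the Prasad--Takloo-Bighash dichotomy to determine and to prove uniqueness of $D$; that conjecture is an output of this circle of ideas, not an available input --- the paper determines $D$ by the local root numbers via the $\dag$-involution and deduces uniqueness from Theorem~\ref{thm:linear_periods}. Finally, the spectral side produces nonvanishing of $J_{\pi}(f)+J_{\pi\otimes\eta}(f)$, so you must still resolve the ambiguity between $\pi$ and $\pi\otimes\eta$; the paper does this by noting $\eta$ is trivial on $H(\bA_F)$, so $P_{\chi}$ vanishes on $\pi$ if and only if it vanishes on $\pi\otimes\eta$, and one of the two is the Jacquet--Langlands transfer of $\pi_0$.
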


Here the ellipticity is relative to subgroups of $\GL_{2n}(E_{v_1})$ and the
precise meaning will be explained in Subsection~\ref{subsec:involution}.

\subsection{The new relative trace formula}

Let $v$ be an archimedean place of $F$, we let $\cS(G(F_v))$ be the space of
Schwartz functions on $G(F_v)$, meaning the functions $f$ such that $Df$ is
bounded on $G(F_v)$ for all algebraic differential operators $D$ on $G(F_v)$.
Let $\cS(G(\bA_F))$ be the space of Schwartz functions on $G(\bA_F)$, i.e.
linear combinations of the functions of the form $\prod_v f_v$ where $f_v \in
C_c^\infty(G(F_v))$ if $v$ is nonarchimedean and $f_v \in \cS(G(F_v))$ is $v$
is archimedean. Similar definitions also applies to other groups.

To study the linear period $P_{\chi}$ we consider the following relative
trace formulae. Let $f \in \cS(G(\bA_F))$. We put
    \[
    K_f(g_1, g_2) = \int_{Z(F) \bs Z(\bA_F)} \sum_{y\in G(F)}
    f(z g_1^{-1} y g_2) \omega(z)^{-1} \rd z.
    \]
Define a distribution
    \begin{equation}    \label{eq:J_intro}
    J(f) = \iint_{(Z(\bA_F)H(F) \bs H(\bA_F))^2}
    K_f(h_1, h_2) \chi(h_1 h_2^{-1}) \rd h_1 \rd h_2.
    \end{equation}
This distribution at least formally unfolds geometrically and has a spectral
expansion. Then we obtain a relative trace formula on $G(\bA_F)$. This
relative trace formula is essentially the same as the one propose
in~\cite{Guo2}, except that a character $\chi$ is inserted.

We now propose a relative trace formula to study the $L$-function
$L(\frac{1}{2}, \pi_{0, E} \otimes \chi)$. This is the main innovation of
this paper.

Let us first recall the relative trace formula proposed by~\cite{Guo2} when
$\chi$ is trivial. In this case, let $f' \in \cS(\GL_{2n}(\bA_F))$ and the
relative trace formula results from the geometric and spectral expansion of
the distribution
    \[
    \iint
    \sum_{x \in \GL_{2n}(F)} f'(h_1^{-1} x h_2) \eta(h_1) \rd h_1 \rd h_2,
    \]
where the integration is over $h_1, h_2 \in \GL_n(\bA_F) \times
\GL_n(\bA_F)$. The spectral expansion gives both the periods
    \[
    \int \varphi(h) \rd h, \quad \int\varphi(h)\eta(h) \rd h,
    \]
where $\varphi \in \pi_{0}$ and the domain of the integration in both cases
are $\GL_n(\bA_F) \times \GL_n(\bA_F)$. Thus by the work of Friedberg and
Jacquet~\cite{FJ} on (split) linear periods these periods give rise to the
$L$-functions $L(s, \pi_0)L(s, \pi_{0} \otimes \eta)$.

It is clear that such an approach cannot be generalized to arbitrary $\chi$,
simply because $L(s, \pi_{0, E} \otimes \chi)$ does not factorize in general.
An alternative approach is needed. Assume the central character of $\pi_0$ is
$\omega$. In the case of $n = 1$, Jacquet~\cite{JacquetII} proposed the
following. Assume $n = 1$. Let $f' \in \cS(\GL_2(\bA_E))$ be a test function.
Then consider
    \[
    \iint \sum_{x \in \GL_{2}(E)} f'(h_1^{-1} x h_2) \chi(h_1)
    (\omega\eta)(\lambda(h_2)) \rd h_1 \rd h_2.
    \]
Here the integration is over $h_1 = \begin{pmatrix} a \\ & 1\end{pmatrix}$,
$a \in E^\times \bs \bA_E^\times$, $h_2$ is in $\GU(1, 1)$ where $\GU(1,1)$
stands for the quasisplit similitude unitary group in two variables, and
$\lambda$ is the similitude character. Jacquet's idea is as follows. The
integration over $h_1$ gives the central $L$-value, and the period over
$\GU(1, 1)$ ensures that the representations we consider on $\GL_{2}(\bA_E)$
are all base change from $\GL_2(\bA_F)$. Jacquet (re)proved
Conjecture~\ref{conj:linear_periods} in the case $n = 1$ based on this
relative trace formula.

Thus for general $n$ a natural idea is to extend the relative trace formulae
in~\cite{JacquetII}, i.e.
    \begin{equation}    \label{eq:wrong}
    \iint \sum_{x \in \GL_{2n}(E)} f'(h_1^{-1} x h_2) \widetilde{\chi}(h_1)
    (\omega\eta)(\lambda(h_2)) \rd h_1 \rd h_2,
    \end{equation}
with $h_1 = (h_{11}, h_{12}) \in \GL_n(\bA_E) \times \GL_n(\bA_E)$,
$\widetilde{\chi}(h_1) = \chi(h_{11} h_{12}^{-1})$, and $h_2 \in
\GU(n,n)(\bA_F)$. This is very natural and was indeed our first attempt. But
it does not seem to be the correct approach and we eventually abandoned it
for the following reason. The stabilizers in the geometric expansions of the
distributions~\eqref{eq:J_intro} and~\eqref{eq:wrong} are very different, one
being tori in $\GL_n(F)$ and the other being tori in the unitary groups. On
the philosophical level it is not expected that two trace formulae can be
compared unless the stabilizers from their geometric side are closely
related, e.g. they are isomorphic or at least one of them is trivial. After
all, in the comparison of the trace formulae, we need to equate the volume of
these stabilizers. Therefore we do not expect a nice comparison between the
geometric expansions of the distributions~\eqref{eq:J_intro}
and~\eqref{eq:wrong}.

We take an alternative approach in this paper. The starting point is the
following key observation. Let $\Pi$ be an irreducible cuspidal automorphic
representation of $\GL_{2n}(\bA_E)$ and $\varphi \in \Pi$. The integration of
$\varphi$ over $\GL_n(\bA_E) \times \GL_n(\bA_E)$ does not merely tell us
something about $L(\frac{1}{2}, \Pi \otimes \chi)$, but also about the
self-duality of $\Pi$. Let us introduction some notation. Let $H' =
\Res_{E/F}(\GL_n \times \GL_n)$, and $\chi_{H'}$ is a character of $H'$
sending $h_1 = (h_{11}, h_{12})$ to $\chi(h_{11} \overline{h_{12}})$. Indeed
if
    \[
    \int \varphi(h_1) \chi_{H'}(h_1) \rd h_1 \not=0
    \]
where the integration is over $H'$, by~\cite{FJ} we have $L(\frac{1}{2}, \Pi
\otimes \chi) \not=0$ and $L(s, \Pi, \wedge^2 \otimes \chi\chi^c)$ has a pole
at $s = 1$ where $\chi^c(g) = \chi(\overline{g})$. The later implies that
$\Pi^\vee \simeq \Pi \otimes \chi \chi^c$. What we need in the relative trace
formula is to use the second integral over $h_2$ to separate those $\Pi$ with
$\Pi \simeq \Pi^c$. Under the condition that $\Pi^\vee \simeq \Pi \otimes
\chi \chi^c$, this is equivalent to $\Pi^\vee \otimes \chi^{-1} \simeq \Pi^c
\otimes \chi^c$ where $\Pi^c(g) = \Pi(\overline{g})$, and this later
condition can be detected using the period integral of Flicker and Rallis,
i.e. the integration
    \[
    \int \varphi(h_2) \chi\eta(h_2) \rd h_2
    \]
where $h_2 \in \GL_{2n}(\bA_F)$. Thus our new distribution on
$\GL_{2n}(\bA_E)$ reads the following
    \[
    \iint \sum_{x \in \GL_{2n}(E)} f'(h_1^{-1} x h_2)\chi_{H'}(h_1)
    (\chi\eta)^{-1}(h_2) \rd h_1 \rd h_2,
    \]
where $h_1 \in \GL_n(\bA_E) \times \GL_{n}(\bA_E)$ and $h_2 \in
\GL_{2n}(\bA_F)$. The geometric and spectral expansions of this distribution
give the relative trace formula on $\GL_{2n}(\bA_E)$. The stabilizer of any
(relatively) regular semisimple orbit is a torus in $\GL_n(F)$ of the form
$\prod_{i}\Res_{F_i/F}  \GL_1$, and hence it matches the stabilizers arising
from the distribution~\eqref{eq:J_intro}.

The majority of this paper compares the elliptic part of this relative trace
formula with the one on $G(\bA_F)$. Our key local results  are  the relevant
fundamental lemma and transfer of orbital integrals. With the recent
technique from~\cite{BPLZZ} to isolate cuspidal spectra, these local results
lead to the main theorems. To remove the unnecessary conditions in those
theorems, one would need to compare the full relative trace formulae, not
just the elliptic part. Nevertheless the current comparison is sufficient for
the purpose of solving the local problems, i.e. the conjecture of Prasad and
Takloo-Bighash. The conjecture of Prasad and Takloo-Bighash in turn appears
to be an indispensable ingredient in the comparison of the full relative
trace formulae. We hope to address these  questions  in a future work.

\subsection{Notation and Convention}

Throughout this paper we keep the following notation and convention.

If $X$ is a set, we denote by $\id_X$ the characteristic function of it.

If $G$ is a group and $f$ is a function on $G$ then we put $f^\vee(g) =
f(g^{-1})$.

When a group $A$ acts on a set $X$ and $x \in X$ we always denote by $A_x$
the stabilizer of $x$ in $A$.

The $n \times n$ identity matrix is denoted by $1_n$, or simply $1$ when the
size of the matrix is clear.

If $F$ is a number field, we put $F_{\infty} = \prod_{v \mid \infty} F_v$.

Let $E/F$ is be quadratic field extension. The nontrivial Galois involution
is denoted by $\overline{\cdot}$. By a twisted conjugation by $g \in
\GL_n(E)$, we mean the map $x \mapsto g x \overline{g}^{-1}$. The stabilizer
of $x$ in $\GL_{n, E}$ under this twisted conjugation is denoted by $(\GL_{n,
E})_{x, \mathrm{twisted}}$. This is an algebraic group over $F$ and
    \[
    (\GL_{n, E})_{x, \mathrm{twisted}}(F) = \GL_n(E)_{x, \mathrm{twisted}} =
    \{ g \in \GL_n(E) \mid gx \overline{g}^{-1} =x \}.
    \]
We define $N: \GL_n(E) \to \GL_n(E)$ the norm map $Ng = g \overline{g}$. The
image of the norm map is denoted by $N \GL_n(E)$.

Let $D$ be a quaternion algebra over $F$ with a fixed embedding $E \to D$. We
fix an element $\epsilon \in NE^\times$ or $\epsilon \in F^\times \bs
NE^\times$ depending on whether $D$ split or ramifies. The group $\GL_n(D)$
is realized as a subgroup of $\GL_{2n}(E)$ consisting of elements of the form
    \[
    \begin{pmatrix} A & \epsilon B \\ \overline{B} & \overline{A} \end{pmatrix},
    \quad A, B \in M_n(E),
    \]

We let $\theta: \GL_{2n}(E) \to \GL_{2n}(E)$ be the involution
    \[
    g\mapsto \theta(g) = \begin{pmatrix} 1_n \\ &-1_n \end{pmatrix} g
    \begin{pmatrix} 1_n \\ &-1_n \end{pmatrix}.
    \]

\subsection{Acknowledgement}
HX is partially supported by the NSF grant DMS~\#1901862 and DMS \#2154352.
WZ is partially supported by the NSF grant DMS~\#1901642.

\section{Relative trace formulae: the geometric side}
\label{sec:geometric_side}

\subsection{Geometric side}
Let $E/F$ be a quadratic extension of number fields, and $D$ a (possibly
split) quaternion algebra over $F$ containing $E$. Let $G = \GL_n(D)$, $Z =
\GL_{1, F}$ the center of $G$, and $H = \Res_{E/F} \GL_{n, E}$.

Let us recall from the introduction that we have the following relative trace
formula on $G(\bA_F)$. Let $f \in \cS(G(\bA_F))$. We put
    \[
    K_f(g_1, g_2) = \int_{Z(F) \bs Z(\bA_F)} \sum_{y\in G(F)}
    f(z g_1^{-1} y g_2) \omega(z)^{-1} \rd z.
    \]
Define a distribution
    \[
    J(f) = \iint_{(Z(\bA_F)H(F) \bs H(\bA_F))^2}
    K_f(h_1, h_2) \chi(h_1 h_2^{-1}) \rd h_1 \rd h_2.
    \]

We consider the $H\times H$ action on $G$ by $(h_1,h_2)\cdot y=h_1^{-1} y
h_2$. An element $y \in G(F)$ is called regular semisimple if the stabilizer
$(H \times H)_y$ is a torus of dimension $n$ over $F$. It is called elliptic
if in addition that this torus is anisotropic modulo the center of $G$. Let
$G(F)_{\reg}$ and $G(F)_{\el}$ be the subsets of regular semisimple and
elliptic elements. These definitions also apply to elements in $G(F_v)$ where
$v$ is a place of $F$.

Assume that $f = \otimes f_v$ is decomposable and there is one nonsplit place
$v_1$ of $F$ such that $f_{v_1}$ is supported in the regular elliptic locus.
Then we have
    \begin{equation}    \label{eq:geometric_nonsplit}
    J(f) = \sum_{y \in H(F) \bs G(F)_{\el} /H(F)}
    \vol((H\times H)_y)  O^G(y, f),
    \end{equation}
where
    \[
    O^G(y, f) = \int_{(H \times H)_y(\bA_F) \bs (H \times H)(\bA_F)}
    f(h_1^{-1} y h_2) \chi(h_1^{-1} h_2)^{-1} \rd h_1 \rd h_2.
    \]
In these expressions we fix compatible measures on $Z(\bA_F) \bs (H \times
H)_y(\bA_F)$, $Z(\bA_F) \bs (H \times H)(\bA_F)$ and $(H \times H)_y(\bA_F)
\bs (H \times H)(\bA_F)$ for each $y \in G(F)_{\el}$ and $\vol((H\times
H)_y)$ stands for the volume of
    \[
    Z(\bA_F)(H\times H)_y(F) \bs (H \times H)(\bA_F).
    \]

This integral is absolutely convergent for all regular semisimple $y \in
G(F)$. Since the test function $f$ is not compactly supported, the absolute
convergence of~\eqref{eq:geometric_nonsplit} needs explanation. This will be
given in Appendix~\ref{s:A1}.

For any place $v$ of $F$, we define similarly the local orbital integrals,
except we integrate over $(H \times H)(F_v)$ instead.

The orbital integral can be simplified as follows. If $g \in \GL_{2n}$ we
define an involution
    \[
    \theta(g) = \begin{pmatrix} 1_n \\ &- 1_n \end{pmatrix} g
    \begin{pmatrix} 1_n \\ &- 1_n \end{pmatrix}.
    \]
Then $H$ is the group of fixed point of $\theta$. We introduce the symmetric
space
    \[
    S = \{ g \theta(g)^{-1} \mid g \in G\}
    \]
and then $H$ acts on $S$ by conjugation. Put
    \[
    \widetilde{f}(g \theta(g)^{-1}) = \int_{H(\bA_F)}
    f( gh) \chi (gh)^{-1} \rd h.
    \]
Then $\widetilde{f} \in \cS(S(\bA_F))$ and
    \[
    O^G(g, f) = O^S(s, \widetilde{f}) =
    \int_{H_s(\bA_F) \bs H(\bA_F)} \widetilde{f}(h^{-1} s h) \rd h.
    \]
For any place $v$ of $F$, the local orbital integrals can be simplified in a
similar way.

We introduce another relative trace formula with
which~\eqref{eq:geometric_nonsplit} will be compared. While the distribution
$J$ does not differ much from those in~\cite{Guo2}, this relative trace
formula is the main point of innovation of the present paper.

Let $G' = \Res_{E/F} \GL_{2n}$, $H' = \Res_{E/F} (\GL_n \times \GL_n)$
embedded in $G'$ as diagonal blocks, and $H'' = \GL_{2n, F}$. Let $Z' \simeq
\GL_{1, F}$ embedded in $G'$ diagonally. Let $f' \in \cS(G'(\bA_E))$ and
    \[
    K_{f'}(g_1, g_2) = \int_{Z'(E) \bs Z'(\bA_E)}
    \sum_{\gamma \in G'(F)} f'(g_1^{-1} z \gamma g_2)
    \omega(z \overline{z})^{-1} \rd z.
    \]

For $h = \begin{pmatrix} h_1 \\ & h_2 \end{pmatrix}\in H'(\bA_F)$ with
$h_1,h_2\in ( \Res_{E/F} \GL_n)(\bA_F)$, we put $\chi_{H'}(h) = \chi(h_1
\overline{h_2})$. Consider the distribution
    \[
    \begin{aligned}
    I(f') = \int_{Z'(\bA_E)H'(F) \bs H'(\bA_E)}
    \int_{Z'(\bA_F) H''(F) \bs H''(\bA_F)}
    K_{f'}\left( h, g \right)\chi_{H'}(h)
    (\chi\eta)^{-1}(g) \rd h \rd g.
    \end{aligned}
    \]
The motivation for introducing this distribution will be clear when we
discuss its spectral expansion in Subsection~\ref{subsec:functoriality}.

We say that an element $x \in G'(F)$ is regular semisimple if the stabilizer
$(H' \times H'')_x$ is a torus of dimension $n$ over $F$. It is elliptic if
in addition $(H' \times H'')_x$ is an elliptic torus. Let $G'(F)_{\el}$ be
the subset of elliptic elements. These definitions also apply to elements in
$G'(F_v)$ where $v$ is a place of $F$.

Assume that $f' = \otimes f'_v$ is decomposable and there is one nonsplit place $v_1$ of $F$ such
that $f_{v_1}'$ is supported in the regular elliptic locus. We fix a
character $\widetilde{\eta}: E^\times \bs \bA_E^\times \to \C^\times$ such
that $\widetilde{\eta}|_{\bA_F^\times} = \eta$. Then as usual the
distribution $I$ unfolds to orbital integrals, i.e. we have
    \begin{equation}    \label{eq:geometric_split}
    I(f') = \sum_{x \in H'(F) \bs G'(F)_{\el} / H''(F)}
    \vol((H'\times H'')_x) O^{G'}(x, f'),
    \end{equation}
where
    \[
    O^{G'}(x, f') = \int_{(H' \times H'')_x (\bA_F) \bs (H' \times H'')(\bA_F)}
    f'(h^{-1} x h'') (\chi_{H'}\chi^{-1} \widetilde{\eta}^{-1})(h)
    (\chi \widetilde{\eta})^{-1}(h^{-1} x h'') \rd h \rd h''.
    \]
We fix compatible measures on $Z'(\bA_F) \bs (H' \times H'')_x(\bA_F)$,
$Z'(\bA_F) \bs (H' \times H'')(\bA_F)$ and $(H' \times H'')_x(\bA_F) \bs (H'
\times H'')(\bA_F)$ for each $x \in G'(F)_{\el}$ and $\vol((H'\times H'')_x)$
stands for the volume of
    \[
    Z'(\bA_F)(H'\times H'')_x(F) \bs (H' \times H'')(\bA_F).
    \]

This integral is absolutely convergent for all regular semisimple $x$. Since
the test function $f'$ is not compactly supported, the absolute convergence
of~\eqref{eq:geometric_split} needs explanation. This will be given in the
Appendix~\ref{s:A1}.

If $v$ is a place of $F$, then we define similarly the local orbital
integral, except we integrate over $(H' \times H'')(F_v)$ instead.

The orbital integral can be simplified as follows. Introduce the symmetric
space
    \[
    S' = \{s\overline{ s}=1 \mid s\in G'\}\simeq G'/H'',
    \]
on which $H'$ acts by twisted conjugation. Put
    \begin{equation}    \label{eq:f'tilde}
    \widetilde{f'}(g \overline{g}^{-1}) =
    \int_{H''(\bA_F)} f'(gh) (\chi\widetilde{\eta})^{-1}(gh) \rd h.
    \end{equation}
Then $\widetilde{f'} \in \cS(S'(\bA_F))$. We have
    \begin{equation}    \label{eq:simplify_orbital_integral}
    O^{G'}(g, f') = O^{S'}(s', \widetilde{f'}) =
    \int_{H'_{s'}(\bA_F) \bs H'(\bA_F)}
    \widetilde{f'}(h^{-1} s' \overline{h})
    (\chi_{H'}\chi^{-1} \widetilde{\eta}^{-1})(h) \rd h, \quad
    s' = g \overline{g}^{-1}.
    \end{equation}
If $v$ is a place of $F$, the local orbital integral can be defined and
simplified in a similar way.

An element $s' \in S'(F)$ is regular semisimple if $H'_{s'}$ is a torus of
dimension $n$. It is in addition elliptic if $H'_{s'}$ is an anisotropic
torus modulo the split center of $G'$. If $s' = g\overline{g}^{-1}$, $g \in
G'(F)$, then $s'$ is regular semisimple or elliptic if $g$ is so in $G'(F)$.
Let $S'(F)_{\mathrm{reg}}$ and $S'(F)_{\el}$ be the subsets of regular
semisimple and elliptic elements respectively.

\subsection{Matching of test functions} \label{subsec:matching}
Recall that we fixed $\epsilon  \in NE^\times$ (resp. $F^\times \bs
NE^\times$) if $D$ splits (resp. ramifies) and the group $G$ is realized as a
subgroup of $\GL_{2n}(E)$ consisting of matrices of the form
    \[
    \begin{pmatrix} \alpha & \epsilon  \beta \\
    \overline{\beta} & \overline{\alpha} \end{pmatrix}, \quad
    \alpha, \beta \in M_n(E).
    \]
Then $H$ consists of matrices of the form $\begin{pmatrix} \alpha \\
& \overline{\alpha} \end{pmatrix}$, $\alpha \in \GL_n(E)$.

Let $x \in G'(F)$ and $y \in G(F)$ be regular semisimple elements. Write
    \[
    x \overline{x}^{-1} = \begin{pmatrix} \alpha_1 & \alpha_2 \\
    \alpha_3&\alpha_4 \end{pmatrix} \in S'(F),
    \quad
    y \theta(y)^{-1} = \begin{pmatrix} \beta_1 &\beta_2 \\
    \beta_3 & \beta_4 \end{pmatrix} \in S(F),
    \]
where $\alpha_i, \beta_i \in M_n(E)$. We will prove that $\alpha_i$'s and
$\beta_i$'s are all invertible. We say that $x$ and $y$ match if $2 \alpha_1
\overline{\alpha_1} - 1$ and $\beta_1$ have the same characteristic
polynomial. Matching of regular semisimple elements will be studied in detail
in Section~\ref{sec:transfer}, and it turns out that not all regular
semisimple $x \in G'(F)$ matches a regular semisimple $y \in G(F)$, and vice
versa. This is a new feature of the present relative trace formula at hand.
The element $x$ matches some $y$ (resp. $y$ matches some $x$) if
    \[
    1 - (\alpha_1 \overline{\alpha_1})^{-1} \in \epsilon N\GL_n(E), \quad
    \text{resp.}\quad \frac{1}{2}(\beta_1+1) \in N \GL_n(E).
    \]
The matching of regular semisimple elements also applies to the situation of
$F_v$ where $v$ is a place of $F$. We note that there is a neighbourhood of
$1 \in G(F_v)$ such that every regular semisimple $y$ in this neighbourhood
matches some $x \in G'(F_v)$. This is because there is a small neighbourhood
of $1 \in \GL_n(E_v)$ such that every element in this neighbourhood is a
norm.

If $x \in G'(F_v)$ and $y \in G(F_v)$ match, we will see in
Subsection~\ref{subsec:matching_def} that the stabilizers $(H' \times
H'')_x(F_v)$ and $(H \times H)_y(F_v)$ are isomorphic and we fix such an
isomorphism. We fix measures on these stabilizers which are identified under
the fixed isomorphism. If $x \in G'(F)$ and $y \in G(F)$ match, then they
match at every place of $F$. With the above fixed measures on their
stabilizers we have
    \[
    \vol((H' \times H'')_x) = \vol((H \times H)_y).
    \]

Fix an element $\tau \in E^\times$ such that $\overline{\tau} = -\tau$. For
each place $v$ of $F$, we define  transfer factors on $G'$ and $G$:
    \begin{align}\label{eq:tran fact}
    \kappa_v^{G'}(x) = \chi_v(\alpha_4) \widetilde{\eta_v}(\tau \alpha_2), \quad
    \kappa_v^G(y) = \chi_v(y_1), \quad y^{-1} =
    \begin{pmatrix} y_1 & \epsilon y_2 \\
    \overline{y_2} & \overline{y_1} \end{pmatrix}.
    \end{align}

For each place $v$ of $F$, we define
    \[
    \cS(G'(F_v))_0 = \{ f' \in \cS(G'(F_v)) \mid O^{G'}(x, f_v') = 0 \text{
    for all $x$ not matching any $y \in G(F_v)$}\},
    \]
and $\cS(G(F_v))_0$ in a similar way. By definition, two test functions $f'_v \in
\cS(G'(F_v))_0$ and $f_v \in \cS(G(F_v))_0$ match if and only if
    \[
    \kappa_v^{G'}(x) O^{G'}(x, f'_v) = \kappa_v^G(y) O^{G}(y, f_v)
    \]
for all matching regular semisimple $x \in G'(F_v)$ and $y \in G(F_v)$. Two
test functions $f' = \otimes f'_v \in \cS(G'(\bA_F))$ and $f = \otimes f_v
\in \cS(G(\bA_F))$ match if $f'_v \in \cS(G'(F_v))_0$ and $f_v \in
\cS(G(F_v))_0$ and they match for all place $v$ of $F$.

The following are the main theorems concerning the geometric side of the
trace formulae. They will be proved in Sections~\ref{sec:transfer}
and~\ref{sec:FL} respectively.

\begin{theorem} \label{thm:transfer_group}
Assume that $v$ is nonarchimedean and nonsplit. For any $f'_v \in
\cS(G'(F_v))_0$ there is an $f_v \in \cS(G(F_v))_0$ that matches it, and vice
versa.
\end{theorem}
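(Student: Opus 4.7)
The plan is to follow the semisimple-descent plus Fourier-transform strategy that has become standard for smooth transfer problems, in the spirit of Waldspurger's work on endoscopy and W.~Zhang's treatment of Jacquet--Rallis transfer. The first step is to reduce the theorem to a matching problem on the symmetric spaces $S(F_v)$ and $S'(F_v)$. By \eqref{eq:simplify_orbital_integral} and its analogue for $G$, the orbital integrals of $f_v \in \cS(G(F_v))$ and $f'_v \in \cS(G'(F_v))$ depend only on $\widetilde{f_v} \in \cS(S(F_v))$ and $\widetilde{f'_v} \in \cS(S'(F_v))$; the maps $f \mapsto \widetilde f$ are surjective since the fibers are single $H$- resp.\ $H''$-orbits. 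Hence it suffices to show that, with the obvious analogues of $\cS(\cdot)_0$ on the symmetric spaces, any element of $\cS(S'(F_v))_0$ admits a matching element in $\cS(S(F_v))_0$ and vice versa.

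The second step is semisimple descent. For a semisimple $s_0 \in S(F_v)$ matching $s'_0 \in S'(F_v)$, the slice theorem gives an $H(F_v)$-invariant analytic neighborhood of $H\cdot s_0$ isomorphic to an induced bundle $H \times^{H_{s_0}} U$, with $U$ a neighborhood of $0$ in the normal slice at $s_0$. Crucially, the slice is again of the same shape as $S$ itself but for a smaller centralizer, a product of pieces ``$S$ for $\GL_m(D_i)$'' over finite extensions $F_i/F_v$, with an entirely parallel description on the $G'$-side; the stabilizer isomorphism of Subsection~\ref{subsec:matching_def} is compatible with this descent. A partition of unity subordinate to such a covering then reduces smooth transfer at an arbitrary semisimple base point, by induction on the rank, to smooth transfer near the identity of the same kind of symmetric-space pair.

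The third step is linearization. Via a Cayley-type transform $X \mapsto (1-X)(1+X)^{-1}$, I identify a neighborhood of $1 \in S(F_v)$ and of $1 \in S'(F_v)$ with neighborhoods of $0$ in the $F_v$-vector spaces $\fs = \{X \in M_n(D)(F_v) : \theta(X) = -X\}$ and $\fs' = \{X \in \gl_{2n}(E_v) : X + \overline X = 0\}$, on which $H$ and $H'$ act linearly by conjugation. One then has to establish smooth transfer on these linear spaces, and here the natural tool is the Fourier transform with respect to a self-dual additive character: one shows that the Fourier transform preserves the subspace ``$\cS(\cdot)_0$'' and sends matching pairs to matching pairs, up to an explicit constant that cancels because the transfer factors \eqref{eq:tran fact} transform in a controlled way. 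Combined with local constancy of orbital integrals on the regular semisimple locus and the inductive descent of step two, this closes the argument on both linear spaces and yields transfer in both directions.

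The main obstacle is step three: proving Fourier-equivariance of the transfer correspondence and pinning down the Weil constants so that they cancel in the matching identity. This requires an explicit computation of $\kappa_v^{G'}$ and $\kappa_v^G$ in Cayley coordinates near $1$ and a careful comparison with the transfer factors for the linear problem. A secondary, bookkeeping-level difficulty is to verify that the semisimple descent carries $\kappa_v^{G'}$, $\kappa_v^G$ and the fixed isomorphisms of stabilizers to the corresponding objects on the descended problem; without this compatibility the induction will not close.
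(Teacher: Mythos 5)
Your overall architecture (localize at matching semisimple points, descend to the sliced representations, solve a linear transfer problem there) is exactly the strategy of the paper's proof of Theorem~\ref{thm:transfer}, and your step one and the appeal to Lemma~\ref{lemma:local_transfer}-type localization are fine. The genuine gap is in your step two: the claim that the slice at a general semisimple point is ``again of the same shape as $S$ itself but for a smaller centralizer,'' so that induction on the rank reduces everything to the linearized problem at the identity. This is false here. At $s'=s'(\alpha,n_1,n_2,n_3)$ and the matching $g=g(\beta,n_1,n_2,n_3)$ the sliced representations decompose as $V_1'\times V_2'\times V_3'$ and $V_1\times V_2\times V_3$, and the three factors are of \emph{different} natures: the $V_1$-piece is a transfer of Lie algebra orbital integrals between two inner forms of the centralizer $\GL_{n_1,F,\alpha\overline\alpha}$ (not a symmetric-space problem at all); the $V_3$-piece is literally the Guo--Jacquet infinitesimal transfer $(\mathsf V,\mathsf W)$; and the $V_2$-piece pairs two copies of $M_{n_2}(E)$ with twisted conjugation on \emph{both} sides, with the nonstandard matching $X\overline X\sim-\epsilon Y\overline Y$, which is handled by routing through the Guo--Jacquet transfer twice (once in each direction). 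So the induction you describe does not close as stated; identifying the slices and supplying a transfer statement for each factor separately is the substantive content of the argument. A second, related gap: the comparison of invariants between the group and the slice is not mere bookkeeping --- one needs a genuine reparametrization (a power series $\xi_1$ in the paper's Lemma~\ref{lemma:matching_sliced}) before matching on the group corresponds to matching on the slice, and one must also check that a neighbourhood of a \emph{non-matching} semisimple point contains no elements of $G(F_v)_{\reg,0}$, so that the $\cS(\cdot)_0$ conditions survive descent.

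On step three: what your Cayley transform produces at the identity is precisely the Guo--Jacquet linear transfer problem ($\fs'$ is the $\mathsf V$-side, $\fs$ is the $\mathsf W$-side), for which smooth transfer is already a theorem of C.~Zhang (Proposition~\ref{prop:GJ_transfer} in the paper), proved there by exactly the Fourier-transform method you outline. Redoing that argument --- in particular establishing that the Fourier transform preserves the vanishing subspaces and computing the Weil constants --- is a substantial project in its own right, not a step one can assert; the efficient route is to quote that result, which is what the paper does.
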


\begin{theorem} \label{thm:FL_group}
Let $v$ be a nonsplit nonarchimedean odd place in $F$. Assume the quaternion
algebra $D$ splits at $v$ and $\chi_v$ is unramified at $v$. Let $\fo_{F_v}$
be the ring of integers of $F_v$. We pick the measure on $G'(F_v)$ and
$G(F_v)$ such that the volumes of $G'(\fo_v)$ and $G(\fo_v)$ are $1$. Then
$\id_{G'(\fo_{F_v})}$ and $\id_{G(\fo_{F_v})}$ match.
\end{theorem}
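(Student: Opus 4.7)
The plan is to prove the identity by a direct computation on the symmetric spaces \(S\) and \(S'\), after reducing to an infinitesimal statement via the Cayley transform.

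First I would pass from \(\id_{G(\fo_{F_v})}\) and \(\id_{G'(\fo_{E_v})}\) to their descents \(\widetilde{f_v}\) and \(\widetilde{f'_v}\) as in~\eqref{eq:f'tilde} and its analogue on the \(G\)-side. Because \(D\) splits at \(v\) and \(\chi_v\) is unramified, the inner integrals against \(H(F_v) = \GL_n(E_v)\) and \(H''(F_v) = \GL_{2n}(F_v)\) are readily evaluated against the hyperspecial measures: the two descents become, up to an explicit unramified normalization, the characteristic functions of the images of the integral points in \(S(F_v)\) and \(S'(F_v)\) under the quotient maps \(g \mapsto g\theta(g)^{-1}\) and \(g \mapsto g\overline{g}^{-1}\). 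The required identity reduces to equating the resulting \(H(F_v)\)-orbital integral on \(S\) with the twisted \(H'(F_v)\)-orbital integral on \(S'\), each weighted by its transfer factor.

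Next, since \(v\) is odd, I would apply the Cayley transform \(s \mapsto (1-s)(1+s)^{-1}\) on both sides to linearize the problem. An open neighbourhood of \(1\) in each symmetric space is identified with an open neighbourhood of \(0\) in its tangent space, with integral structures preserved. Matching regular semisimple orbits on both sides are parametrized by the common invariant --- the characteristic polynomial of \(2\alpha_1 \overline{\alpha_1} - 1 = \beta_1\) --- and the orbital integral of the unit element becomes, in the familiar Kottwitz style, a weighted count of \(\fo_v\)-lattices stabilized by a prescribed regular semisimple endomorphism.

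The main step, and I expect the main obstacle, is to match these two lattice counts. The strategy is to exhibit for each matching invariant an explicit bijection between the integral \(\fo_{E_v}\)-lattice structures in \(E_v^{2n}\) and the \(\fo_{F_v}\)-lattice structures in \(F_v^{2n}\), using the splitting \(D_v \cong M_2(F_v)\) and a choice of embedding \(\fo_{E_v} \hookrightarrow M_2(\fo_{F_v})\) induced by it. The delicate point is sign-sensitive: the quadratic factor \(\widetilde{\eta_v}(\tau \alpha_2)\) appearing in \(\kappa_v^{G'}\) has no direct analogue in \(\kappa_v^G\), so one must verify that this sign is precisely the discrepancy between the two counts. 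I would attempt to reduce it to the rank-one case \(n=1\), essentially the fundamental lemma behind Jacquet's proof of Waldspurger's theorem, by an inductive block-triangular decomposition of the lattices; as a back-up, one can appeal to a simple form of the global trace formula comparison, available once transfer of orbital integrals (Theorem~\ref{thm:transfer_group}) is in hand, to bypass the local sign verification at a single place.
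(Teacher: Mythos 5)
Your overall framing (descend to the symmetric spaces, observe that the unramified character $\chi_v$ can be stripped out, reduce non-elliptic orbits to smaller rank) is consistent with the paper, which indeed first shows the identity is independent of $\chi$ and then handles the non-elliptic case by parabolic descent and induction on $n$. But the heart of your argument --- an ``explicit bijection between the integral $\fo_{E_v}$-lattice structures and the $\fo_{F_v}$-lattice structures'' for each matching invariant, with a reduction to $n=1$ by ``inductive block-triangular decomposition of the lattices'' --- has a genuine gap. For an \emph{elliptic} regular semisimple class the stabilizer is an anisotropic torus (the characteristic polynomial of $\beta\overline{\beta}$ is irreducible), so no block-triangular decomposition of the relevant lattices exists and no induction to rank one is available; this elliptic case is exactly where the content of the theorem lies. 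Moreover, the lattice-count identity you are after is not a combinatorial bijection: after Kottwitz-type reductions it is precisely the base change fundamental lemma for $\GL_n$, i.e.\ the identity between twisted orbital integrals of a Hecke function $\Phi_\beta$ on $\GL_n(E_v)$ and orbital integrals of $\mathrm{bc}(\Phi_\beta)$ on $\GL_n(F_v)$. The paper's proof proceeds by showing both sides of~\eqref{eq:FL} equal such orbital integrals (using Kottwitz's lemma on norms and Guo's computation of the Hecke functions $\Psi_r=\mathrm{bc}(\Phi_\beta)$) and then quotes the Arthur--Clozel base change fundamental lemma; no direct bijection is known, and your proposal supplies no substitute for this input.

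Two further points. First, the Cayley-transform linearization only identifies a neighbourhood of $1$ with a neighbourhood of $0$; the support of $\id_{S'(\fo_{F_v})}$ contains classes with $|\det\alpha|<1$ (the paper's case $x_r=y_r>1$) where $\det(1+s)$ is not a unit and the transform does not respect the integral structure, so these must be treated separately, as the paper does by a direct computation. Second, the proposed global back-up cannot ``bypass the local verification at a single place'' here: what is missing is not a sign but the entire elliptic identity, and a trace-formula deduction of a fundamental lemma requires independently established identities at all other places together with a density argument, none of which is set up in your sketch.
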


\subsection{Matching at split places}
The goal of this subsection is to explain the matching of test functions at
the split places, which can be made explicit.

If $v$ is a split place of $F$. Then $G(F_v) = \GL_{2n}(F_v)$ and $G'(F_v) =
\GL_{2n}(F_v) \times \GL_{2n}(F_v)$. We fix a measure on $\GL_{2n}(F_v)$ and
thus we have measures on $G'(F_v)$ and on $G(F_v)$ under this identification.
The character $\eta_v$ is trivial so $\widetilde{\eta}_v$ takes the form
$(\eta_0, \eta_0^{-1})$ where $\eta_0$ is a character of $F_v^\times$. The
character $\chi_v$ is of the form $(\chi_1, \chi_2)$ where $\chi_1, \chi_2$
are characters of $F_v^\times$. Regular semisimple elements $y \in G(F_v)$
and $(x_1, x_2) \in G'(F_v)$ matches if $y = x_1 x_2^{-1}$. Let $f' = (f_1',
f_2') \in \cS(G'(F_v)) \simeq \cS(G(F_v)) \otimes \cS(G(F_v))$ and put
    \begin{equation}    \label{eq:chi_convolution_split}
    f(g) = \int_{\GL_{2n}(F_v)}
    f_1'(g h) f_2'(h) \chi_1^{-1}(h) \chi_2^{-1}(h) \rd g,
    \quad g \in \GL_{2n}(F_v).
    \end{equation}

\begin{lemma}   \label{lemma:matching_split}
The functions $f'$ and $f$ match.
\end{lemma}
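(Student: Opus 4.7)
The plan is a direct computation exploiting the product decomposition at the split place. By linearity I reduce to $f' = f_1' \boxtimes f_2'$. At the split place $E_v \simeq F_v \times F_v$, one has $G'(F_v) = \GL_{2n}(F_v)^2$, $H'(F_v) \simeq \GL_n(F_v)^4$ embedded as two independent block-diagonal copies of $\GL_n(F_v)^2 \hookrightarrow \GL_{2n}(F_v)$ in the two components of $G'$, and $H''(F_v) = \GL_{2n}(F_v)$ embedded diagonally. First I would unfold $O^{G'}(x, f')$ at $x = (x_1, x_2)$ as an iterated integral over $h' = (h_1^+, h_2^+, h_1^-, h_2^-) \in \GL_n(F_v)^4$ and $h'' \in \GL_{2n}(F_v)$; the integrand $f'(h'^{-1} x h'')$ then splits as $f_1'\bigl(\diag(h_1^+, h_2^+)^{-1} x_1 h''\bigr) \cdot f_2'\bigl(\diag(h_1^-, h_2^-)^{-1} x_2 h''\bigr)$.

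Next I would substitute $h'' = x_2^{-1} \diag(h_1^-, h_2^-) t$ with $t \in \GL_{2n}(F_v)$. This unimodular change of variables makes the argument of $f_2'$ equal to $t$ and the argument of $f_1'$ equal to $g \cdot t$, where $g = \diag(h_1^+, h_2^+)^{-1} y \diag(h_1^-, h_2^-)$ and $y = x_1 x_2^{-1}$. Collecting into the $t$-integral the portions of the $\chi$- and $\widetilde\eta$-characters that depend only on $\det t$, the inner integration becomes $\int_{\GL_{2n}(F_v)} f_1'(g t) f_2'(t) \chi_1^{-1}(\det t) \chi_2^{-1}(\det t) \, dt = f(g)$ by \eqref{eq:chi_convolution_split}. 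The outer four-fold integral over the $h_j^\pm$'s is then precisely the $H \times H$-orbital integral of $f$ at $y$, with $(h_1^+, h_1^-)$ and $(h_2^+, h_2^-)$ identified with the two $H(F_v) = \GL_n(F_v)^2$-factors.

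The remaining work is character bookkeeping. At the split place, $\chi_{H'}(h')$ factorizes as $\chi_1(\det h_1^+) \chi_2(\det h_2^+) \chi_1(\det h_2^-) \chi_2(\det h_1^-)$; the factor $(\chi\widetilde\eta)^{-1}(h^{-1} x h'')$ splits as a product of $(\chi_1 \eta_0)^{-1}$- and $(\chi_2 \eta_0^{-1})^{-1}$-values on the two $\GL_{2n}(F_v)$-components; and $\kappa_v^{G'}(x) = \chi_v(\alpha_4) \widetilde\eta_v(\tau \alpha_2)$ involves the $(2,2)$ and $(1,2)$ block entries of $x \bar x^{-1} = (x_1 x_2^{-1}, x_2 x_1^{-1})$ against $\widetilde\eta_v = (\eta_0, \eta_0^{-1})$. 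I expect the two $\eta_0^{\pm 1}$-contributions to cancel after the substitution above, leaving the character $\chi(h_1^{-1} h_2)^{-1}$ required by $O^G(y, f)$ and the ratio $\kappa_v^G(y) / \kappa_v^{G'}(x)$ on the outside.

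The main obstacle will be this final verification: one must express the block entries of $(x_1 x_2^{-1}, x_2 x_1^{-1})$ in terms of $y$, use the explicit form $\widetilde\eta_v = (\eta_0, \eta_0^{-1})$, and check that the various $\chi_1, \chi_2, \eta_0$-evaluations on the $h_j^\pm$ and on the transformed $h''$-translate combine precisely to produce the desired transfer-factor identity $\kappa_v^{G'}(x) O^{G'}(x, f') = \kappa_v^G(y) O^G(y, f)$. This is routine linear algebra together with the definitions, but the bookkeeping of six or seven interacting characters requires care.
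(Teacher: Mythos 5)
Your proposal is correct and follows essentially the same route as the paper: unfold the orbital integral at the split place, integrate out $h''$ against $f_2'$ (your substitution $h'' = x_2^{-1}\diag(h_1^-,h_2^-)t$) to produce the twisted convolution $f$, and then verify the transfer-factor/character identity using the block entries of $x_1x_2^{-1}$ and $x_2x_1^{-1}$. The paper's proof is exactly this, with the final bookkeeping carried out via the block decomposition $\chi_1(D_1)\chi_2(D_2)\chi_1(\det x_1x_2^{-1})^{-1} = \chi_1(A_2)\chi_2(D_2)$ and the cancellation $\eta_0(\det x_1x_2^{-1}) = \eta_0(\det(-B_1B_2^{-1}))$, confirming the $\eta_0$-cancellation you anticipated.
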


\begin{proof}
We write
    \[
    x_1 x_2^{-1} = \begin{pmatrix} A_1 & B_1\\ C_1 & D_1 \end{pmatrix}, \quad
    x_2 x_1^{-1} = \begin{pmatrix} A_2 & B_2\\ C_2 & D_2 \end{pmatrix}.
    \]
A little computation gives that the orbital integral $\kappa^{G'}((x_1, x_2))
O^{G'(F_v)}((x_1, x_2), f')$ equals
    \begin{equation}    \label{eq:orbital_integral_explicit}
    \begin{aligned}
    &\eta_0(x_1 x_2^{-1})^{-1} \eta_0(-B_1 B_2^{-1})
    \chi_1(D_1)\chi_2(D_2)\chi_1(x_1 x_2^{-1})^{-1}\\
    &\int f\left(\begin{pmatrix} a_1 \\ & b_1 \end{pmatrix}^{-1}
    x_1 x_2^{-1} \begin{pmatrix} a_2 \\ & b_2 \end{pmatrix} \right)
    \chi_1(a_1 a_2^{-1} )
    \chi_2(b_1 b_2^{-1})
    \rd a_1 \rd a_2 \rd b_1 \rd b_2.
    \end{aligned}
    \end{equation}
When $(x_1, x_2)$ is regular semisimple, $A_2$ is invertible, and thus
    \[
    \begin{pmatrix} A_2 & B_2\\ C_2 & D_2 \end{pmatrix} =
    \begin{pmatrix} 1 \\ C_2A_2^{-1} & 1 \end{pmatrix}
    \begin{pmatrix} A_2 &B_2 \\ & D_2 - C_2 A_2^{-1} B_2 \end{pmatrix}.
    \]
Thus $D_1 = (D_2 - C_2 A_2^{-1} B_2)^{-1}$ and $(\det x_1 x_2^{-1})^{-1} =
\det A_2 \det (D_2 - C_2 A_2^{-1} B_2)$. It follows that
$\chi_1(D_1)\chi_2(D_2)\chi_1(x_1 x_2^{-1})^{-1} = \chi_1(A_2) \chi_2(D_2)$.
Similarly we have $\eta_0(x_1 x_2^{-1})= \eta_0(-B_1 B_2^{-1})$. In
particular we note that $\kappa^{G'}((x_1, x_2)) O^{G'(F_v)}((x_1, x_2), f')$
is independent of the choice of $\widetilde{\eta}$. A straightforward
computation then gives that~\eqref{eq:orbital_integral_explicit} equals
    \[
    \kappa^G(y) O^G(y, f),
    \]
when $y = x_1 x_2^{-1}$.
\end{proof}

When we speak of the matching of test functions $f'$ and $f$ at split places,
we always mean that $f'$ and $f$ are related in this explicit way.

We observe that there is an involution on $\cS(G(F_v))$ given by $f \mapsto
f^\vee \chi_1\chi_2$. Define
    \[
    \cS(G(F_v))^+ = \{ f \in \cS(G(F_v)) \mid f^\vee \chi_1 \chi_2 = f\}, \quad
    \cS(G'(F_v))^+ = \cS(G(F_v))^+ \otimes \cS(G(F_v))^+.
    \]
There is a base change homomorphism
    \[
    \mathrm{bc}_v: \cS(G'(F_v)) \simeq  \cS(G(F_v)) \otimes \cS(G(F_v))
    \to
    \cS(G(F_v)),
    \]
which is given by the usual multiplication (i.e. convolution) on
$\cS(G(F_v))$. It maps $\cS(G'(F_v))^+$ to $\cS(G(F_v))^+$,
cf.~\cite{AC}*{Chapter~1, Section~5}.

\begin{lemma}   \label{lemma:+space}
If $f' \in \cS(G'(F_v))^+$ then $f'$ and $\mathrm{bc}_v(f')$ match.
\end{lemma}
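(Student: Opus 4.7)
The plan is to deduce the statement directly from Lemma~\ref{lemma:matching_split}. By linearity I may assume that $f'$ is a pure tensor $f' = f_1' \otimes f_2'$ with both $f_i' \in \cS(G(F_v))^+$. Lemma~\ref{lemma:matching_split} then asserts that $f'$ matches the function
\[
f(g) = \int_{\GL_{2n}(F_v)} f_1'(gh) f_2'(h) (\chi_1\chi_2)^{-1}(h) \, \rd h,
\]
so the proof reduces to the pointwise identity $f = \mathrm{bc}_v(f') = f_1' * f_2'$ on $\GL_{2n}(F_v)$.

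To establish this identity I would change the variable of integration $h \mapsto h^{-1}$, using unimodularity of $\GL_{2n}(F_v)$, to rewrite
\[
f(g) = \int_{\GL_{2n}(F_v)} f_1'(gh^{-1}) f_2'(h^{-1}) (\chi_1\chi_2)(h) \, \rd h.
\]
The defining symmetry of $\cS(G(F_v))^+$, applied to $f_2'$, reads $f_2'(h^{-1}) = (\chi_1\chi_2)^{-1}(h)\, f_2'(h)$, and substituting this cancels the character weight and leaves
\[
f(g) = \int_{\GL_{2n}(F_v)} f_1'(gh^{-1}) f_2'(h) \, \rd h = (f_1' * f_2')(g),
\]
which is the desired equality.

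There is no real obstacle here; the content of the lemma is that the symmetry condition built into $\cS(G(F_v))^+$ is precisely the one needed to convert the weighted integral appearing in Lemma~\ref{lemma:matching_split} into an honest convolution, hence into $\mathrm{bc}_v(f')$. Note that only the symmetry of the second factor $f_2'$ is used in the computation; the symmetry of $f_1'$ guarantees that $\mathrm{bc}_v(f')$ itself again lies in $\cS(G(F_v))^+$, consistent with the compatibility $\mathrm{bc}_v(\cS(G'(F_v))^+) \subset \cS(G(F_v))^+$ recalled just before the lemma.
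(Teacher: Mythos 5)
Your proposal is correct and is exactly the argument the paper intends: the paper's proof simply declares the lemma a direct consequence of Lemma~\ref{lemma:matching_split} and the definition of $\cS(G'(F_v))^+$, and your computation (reduce to pure tensors, substitute $h\mapsto h^{-1}$, use the symmetry $f_2'(h^{-1})=(\chi_1\chi_2)^{-1}(h)f_2'(h)$ to turn the weighted integral of~\eqref{eq:chi_convolution_split} into the convolution $f_1'*f_2'=\mathrm{bc}_v(f')$) is precisely the omitted verification. Nothing to add.
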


\begin{proof}
This is a direct consequence of Lemma~\ref{lemma:matching_split} and the
definition of $\cS(G'(F_v))^+$.
\end{proof}

Let us assume further that $v$ is an odd unramified (split) finite place, and
$\chi_1, \chi_2$ are unramified characters of $F_v^\times$. Let $K_v =
\GL_{2n}(\fo_{F_v})$ and $\cH_v = \C[K_v \bs G(F_v)/K_v]$ be the spherical
Hecke algebra of $G(F_v)$. Then the involution $f \mapsto f^\vee\chi_1\chi_2$
reduces to an involution on $\cH_v$. Let $\cH_v^{\pm}$ be the $\pm
1$-eigenspaces of this involution. Similarly let $\cH_{E, v}$ be the
spherical Hecke algebra of $G'(F_v)$, which is identified with  $\cH_v
\otimes \cH_v$.

\begin{lemma}   \label{lem:van}
If
    \[
    f'_v = (f'_1, f'_2) \in
    (\cH_v^- \otimes \cH_v^+) \oplus (\cH_v^+ \otimes \cH_v^-) \oplus
    (\cH_v^- \otimes \cH_v^-),
    \]
then we have
    \[
    O^{G'(F_v)}((x_1, x_2), f') =0, \quad
    O^{G(F_v)}(y, \mathrm{bc}_v(f')) = 0.
    \]
\end{lemma}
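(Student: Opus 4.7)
The plan has three steps: reduce to a $G$-side vanishing via matching, compute the image of $\mathrm{bc}_v$ on each summand using the ring structure on $\cH_v$, and appeal to the spectral characterization of linear-period orbital integrals.

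First I would apply Lemma~\ref{lemma:matching_split}, which ensures that $f'$ matches $\mathrm{bc}_v(f')$ at $v$. This yields $\kappa^{G'}((x_1,x_2))\,O^{G'(F_v)}((x_1,x_2),f') = \kappa^G(y)\,O^{G(F_v)}(y,\mathrm{bc}_v(f'))$ at matching $(x_1,x_2)\leftrightarrow y$; since both transfer factors are nonvanishing characters, the two orbital integrals vanish together. At the split place $v$ every regular semisimple $y$ is matched, so it suffices to establish $O^{G(F_v)}(y,\mathrm{bc}_v(f'))=0$ for every regular semisimple $y$.

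Second, I would exploit the $\Z/2$-grading $\cH_v=\cH_v^+\oplus\cH_v^-$ induced by the involution $\iota(f)=f^\vee\chi_1\chi_2$. A direct change of variables in~\eqref{eq:chi_convolution_split} identifies $\mathrm{bc}_v(f_1,f_2)$ with the twisted convolution $f_1*\iota(f_2)$ in $\cH_v$. Since $\cH_v$ is commutative and $\iota$ corresponds under Satake to the substitution $\pi\mapsto\pi^\vee\otimes(\chi_1\chi_2)^{-1}$ on unramified parameters, $\iota$ is a ring automorphism; hence $\cH_v^+$ is a subring, $\cH_v^+*\cH_v^-\subseteq\cH_v^-$, and $\cH_v^-*\cH_v^-\subseteq\cH_v^+$. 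Combining, $\mathrm{bc}_v$ sends $\cH_v^-\otimes\cH_v^+$ and $\cH_v^+\otimes\cH_v^-$ into $\cH_v^-$, while sending $\cH_v^-\otimes\cH_v^-$ into $\cH_v^-*\cH_v^-$. All three images therefore lie in the ideal $(\cH_v^-)=\cH_v^-+\cH_v^-*\cH_v^-$ generated by $\cH_v^-$ in $\cH_v$.

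The final step is to show that $f\mapsto O^{G(F_v)}(y,f)$ on $\cH_v$ annihilates the whole ideal $(\cH_v^-)$. Spectrally, via Satake, this ideal is precisely the kernel of restriction to the locus of $\chi_1\chi_2$-self-dual unramified $\pi$ (those with $\pi^\vee\simeq\pi\otimes\chi_1\chi_2$); the Friedberg--Jacquet unramified computation of linear-period orbital integrals expresses $O^{G(F_v)}(y,\cdot)$ as a spectral expansion supported exactly on this locus, so the orbital integral kills the ideal and the vanishing on all three summands follows. This is the main obstacle: for elements of $\cH_v^-*\cH_v^-\subseteq\cH_v^+$ no elementary sign argument applies, and one must genuinely invoke the Friedberg--Jacquet spectral input. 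As a more direct alternative I would also try to prove that the $G'$-orbital integral is $\iota$-invariant separately in each of its two tensor slots, which would uniformly kill all three summands by a single $O^{G'(F_v)}=-O^{G'(F_v)}$ cancellation whenever either slot of $f'$ lies in $\cH_v^-$.
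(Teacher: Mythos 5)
There is a genuine gap, and it sits exactly where you flag "the main obstacle." Your Step 3 asserts that $f\mapsto O^{G(F_v)}(y,f)$ annihilates the ideal of $\cH_v$ generated by $\cH_v^-$, on the grounds that this ideal cuts out the locus of $\chi_1\chi_2$-self-dual unramified parameters and that the orbital integral admits "a spectral expansion supported exactly on this locus." No such expansion is available: Friedberg--Jacquet compute local zeta integrals on unramified vectors, not orbital integrals, and the claim that $O^{G(F_v)}(y,\cdot)$ factors through restriction of the Satake transform to the self-dual locus is essentially a local relative-trace-formula/density statement far deeper than the lemma itself. Since $\cH_v^-*\cH_v^-\subseteq\cH_v^+$ (as you correctly observe), the third summand cannot be handled by any sign argument on the convolution, so this unproven spectral input is load-bearing and the proof does not close. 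There is also a smaller slip in Step 1: Lemma~\ref{lemma:matching_split} matches $f'=(f_1',f_2')$ with the \emph{twisted} convolution \eqref{eq:chi_convolution_split}, which equals $f_1'*\iota(f_2')$, not with $\mathrm{bc}_v(f')=f_1'*f_2'$; only Lemma~\ref{lemma:+space} identifies the two, and only on $\cS(G'(F_v))^+$. On each graded summand the two functions differ by a sign, so the reduction is salvageable, but as written it conflates two different functions.

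The "more direct alternative" you mention in your last sentence is in fact the paper's proof, and it is the step that needs to be carried out rather than deferred. Because the $G'(F_v)$-orbital integral factors through the two tensor slots separately, everything reduces to showing that the one-slot functional
\[
f\longmapsto \int_{\GL_n(F_v)\times\GL_n(F_v)} f\Bigl(\begin{pmatrix} a \\ & b\end{pmatrix} g\Bigr)\chi_1(a)^{-1}\chi_2(b)^{-1}\,\rd a\,\rd b
\]
vanishes identically in $g$ whenever $f\in\cH_v^-$, which simultaneously kills all three summands on both the $G'$- and the $G$-side (the latter because the $H\times H$-orbital integral of a convolution likewise splits through this functional). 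This vanishing is not formal: the paper establishes it by choosing double-coset representatives $g$ for $H(F_v)\bs G(F_v)/K_v$ via \cite{Offen}, verifying the transpose symmetry \eqref{eq:g} for those representatives, and then combining $f(\tp{g})=f(g)$ with $f=-f^\vee\chi_1\chi_2$ and a change of variables $a\mapsto\tp{b}^{-1}$, $b\mapsto\tp{a}^{-1}$ to show the integral equals its own negative. Without this concrete symmetry computation (or the unproven spectral characterization), your argument does not establish the lemma.
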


\begin{proof}
From the explicit expressions~\eqref{eq:orbital_integral_explicit}, it
suffices to check that if $f \in \cH_v^-$, then
    \begin{equation}    \label{eq:vanishing_orbital_integral}
    \int_{\GL_n(F_v) \times \GL_n(F_v)}
    f\left( \begin{pmatrix} a \\ & b \end{pmatrix}g \right)
    \chi_1(a)^{-1}\chi_2(b)^{-1} \rd a \rd b = 0
    \end{equation}
for all $g \in G(F_v)$. By~\cite{Offen}*{Proposition~3.1}, every double coset
$H(F_v) \bs G(F_v)/K_v$ is represented by an element $g$ such that
    \[
    g^{-1} \theta(g) = \begin{pmatrix}
    & \Lambda \\ - \Lambda^{-1} \end{pmatrix}, \quad
    \Lambda =
    \begin{pmatrix} && \varpi^{\lambda_1} \\ & \Ddots \\
    \varpi^{\lambda_n} \end{pmatrix}, \quad
    \lambda_1 \geq \cdots \geq \lambda_n \geq 0.
    \]
where $\varpi$ is a uniformizer in $F_v$. Therefore to
see~\eqref{eq:vanishing_orbital_integral} it is enough to assume that $g$ is
of this form. We may further assume that $g$ take the shape $g =
\begin{pmatrix} A & B \\ C & D \end{pmatrix}$,
and $A, D$ are diagonal while $B, C$ consist of only anti-diagonal entries,
and $\det \begin{pmatrix} a & b \\ c & d \end{pmatrix} = 1$, where $a, b, c,
d$ are entries of $g$ at $(i, i)$, $(i, n-i+1)$, $(n-i+1, i)$ and $(n-i+1,
n-i+1)$ respectively.
With this $g$ it is straightforward to see that
    \begin{equation}    \label{eq:g}
    \begin{pmatrix} & 1 \\ -1
    \end{pmatrix}\tp{g}^{-1} = g \begin{pmatrix} & 1 \\ -1
    \end{pmatrix}.
    \end{equation}
Note that since $f \in \cH_v$ we have $f(\tp{g}) = f(g)$ for all $g$. Now
replacing $f$ by $-f^\vee \chi_1\chi_2$
in~\eqref{eq:vanishing_orbital_integral}, we see that
    \[
    \begin{aligned}
    \eqref{eq:vanishing_orbital_integral} &= - \int
    f\left( \tp{\begin{pmatrix} a \\ & b \end{pmatrix}^{-1}}
    \tp{g}^{-1} \right) \chi_2(a) \chi_1(b) \rd a \rd b\\
    &=
    - \int
    f\left( \tp{\begin{pmatrix} a \\ & b \end{pmatrix}^{-1}}
    \begin{pmatrix} & -1 \\ 1 \end{pmatrix}
    g \begin{pmatrix} & 1 \\ -1 \end{pmatrix} \right)
    \chi_2(a) \chi_1(b) \rd a \rd b.
    \end{aligned}
    \]
Make change of variables $a \mapsto \tp{b}^{-1}$ and $b \mapsto \tp{a}^{-1}$,
and use the fact that $f \in \cH_v$ we see that the last integral equals
$-\eqref{eq:vanishing_orbital_integral}$, which
implies~\eqref{eq:vanishing_orbital_integral} equals zero.
\end{proof}

\begin{prop}    \label{prop:FL_split}
The function $f' \in \cH_{E, v}$ and $f = \mathrm{bc}_v(f') \in \cH_v$ match.
\end{prop}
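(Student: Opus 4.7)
The plan is to exploit the decomposition $\cH_v = \cH_v^+ \oplus \cH_v^-$ into eigenspaces of the involution $f \mapsto f^\vee \chi_1\chi_2$, which induces a decomposition
\[
\cH_{E,v} \simeq \cH_v \otimes \cH_v = (\cH_v^+ \otimes \cH_v^+) \oplus (\cH_v^+ \otimes \cH_v^-) \oplus (\cH_v^- \otimes \cH_v^+) \oplus (\cH_v^- \otimes \cH_v^-).
\]
Accordingly, write $f' = f'_{++} + f'_{+-} + f'_{-+} + f'_{--}$. Since the base change map $\mathrm{bc}_v$ is linear and orbital integrals are linear in the test function, the matching of $f'$ with $\mathrm{bc}_v(f')$ can be verified summand by summand. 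Note that at a split place, every regular semisimple $x=(x_1,x_2) \in G'(F_v)$ matches $y = x_1 x_2^{-1} \in G(F_v)$, so the conditions $f' \in \cS(G'(F_v))_0$ and $f \in \cS(G(F_v))_0$ are automatic.

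For the three summands containing at least one factor from $\cH_v^-$, Lemma~\ref{lem:van} asserts that both $O^{G'(F_v)}((x_1, x_2), f'_\bullet) = 0$ and $O^{G(F_v)}(y, \mathrm{bc}_v(f'_\bullet)) = 0$ for all regular semisimple orbits. Thus the matching identity
\[
\kappa_v^{G'}(x)\, O^{G'}(x, f'_\bullet) = \kappa_v^G(y)\, O^{G}(y, \mathrm{bc}_v(f'_\bullet))
\]
holds trivially (both sides vanish) for these three components.

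For the remaining component $f'_{++} \in \cH_v^+ \otimes \cH_v^+$, we observe that $f'_{++}$ lies in $\cS(G'(F_v))^+$ by definition. Hence Lemma~\ref{lemma:+space} applies directly and gives that $f'_{++}$ matches $\mathrm{bc}_v(f'_{++})$. Summing the four contributions yields the matching of $f'$ with $\mathrm{bc}_v(f')=f$, completing the proof.

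There is essentially no obstacle: the proposition is a clean corollary of the two immediately preceding lemmas, once one recognizes that the involution-eigenspace decomposition splits the problem into a part already covered by the $\cS(G(F_v))^+$ matching (Lemma~\ref{lemma:+space}) and a part where both orbital integrals vanish (Lemma~\ref{lem:van}).
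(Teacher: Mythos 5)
Your proof is correct and follows essentially the same route as the paper: decompose $f'$ along the eigenspaces of the involution, kill the three components with a $\cH_v^-$ factor via Lemma~\ref{lem:van}, and handle the $\cH_v^+\otimes\cH_v^+$ component by noting that $\mathrm{bc}_v$ agrees there with the matching map of Lemma~\ref{lemma:matching_split} (which is exactly the content of Lemma~\ref{lemma:+space} that you invoke).
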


\begin{proof}
By Lemma~\ref{lem:van} above, the assertion holds if $f'_v = (f'_1, f'_2) \in
(\cH_v^- \otimes \cH_v^+) \oplus (\cH_v^+ \otimes \cH_v^-) \oplus (\cH_v^-
\otimes \cH_v^-)$. If $f'_v \in \cH_v^+ \otimes \cH_v^+$ then the map
$\mathrm{bc}_v$ and the map~\eqref{eq:chi_convolution_split} coincide.
\end{proof}

\section{Relative trace formulae: the spectral side}
\subsection{Linear periods and Shalika models}  \label{subsec:FJ}
We recall some results on (split) linear periods and Shalika models on
$\GL_{2n}(\bA_E)$ from~\cites{FJ,JS}. In this subsection we temporarily use
the following notation. Let $Q$ be the Shalika subgroup of $\GL_{2n}(E)$
consisting of matrices of the form
    \[
    \begin{pmatrix} g \\ & g \end{pmatrix}
    \begin{pmatrix} 1 & x \\ & 1 \end{pmatrix}, \quad
    g \in \GL_n(E), \quad x \in M_n(E).
    \]
Let $\psi: E \bs \bA_E \to \C^\times$ be a nontrivial additive character and
$\chi, \xi: E^\times \bs \bA_E^\times \to \C^\times$ be two characters.
Define a character $\theta: Q(E) \bs Q(\bA_E) \to \C^\times$ by
    \[
    \theta \left( \begin{pmatrix} g \\ & g \end{pmatrix}
    \begin{pmatrix} 1 & x \\ & 1 \end{pmatrix} \right) =
    \psi(\Tr x) \xi(\det g).
    \]
Let $\Pi$ be an irreducible cuspidal automorphic representation of
$\GL_{2n}(\bA_E)$ with central character $\omega$, and assume that $\xi^n
\omega = 1$.

We define global Shalika functional on $\Pi$
    \[
    \lambda(\varphi) = \int_{\bA_E^\times Q(E) \bs Q(\bA_E)}
    \varphi(s) \theta(s) \rd s, \quad \varphi \in \Pi.
    \]
By~\cite{JS}*{Section~8, Theorem~1}, the Shalika functional on $\Pi$ is not
identically zero if and only if $L(s, \Pi, \wedge^2 \otimes \xi)$ has a
simple pole at $s = 1$. If this is the case, for $\varphi \in \Pi$ we put
    \[
    V_{\varphi}(g) = \lambda(\Pi(g) \varphi), \quad g \in \GL_{2n}(\bA_E).
    \]

Following~\cite{FJ} we consider the integral
    \[
    Z^{\mathrm{FJ}}(s, \phi, \chi, \xi) =
    \int
    \varphi \left( \begin{pmatrix} h_1 \\ & h_2 \end{pmatrix} \right)
    \abs{\det h_1 h_2^{-1}}^{s-\frac{1}{2}}
    \chi(\det h_1 h_2^{-1}) \xi(\det h_2) \rd h_1 \rd h_2,
    \]
Here the integration is over
    \[
    (h_1, h_2) \in \bA_E^\times (\GL_n(E) \times \GL_n(E)) \bs
    (\GL_n(\bA_E) \times \GL_n(\bA_E))
    \]
and $\bA_E^\times$ stands for the center of $\GL_{2n}(\bA_E)$. The integral
is convergent for all $s \in \C$. By~\cite{FJ}*{Proposition~2.2} this
integral is not identically zero only if the Shalika function on $\Pi$ is not
identically zero. Assume that this is the case. Then
by~\cite{FJ}*{Proposition~2.3} when $\re s$ is sufficiently large this
integral unfolds to
    \[
    \int_{\GL_n(\bA_E)} V_{\varphi}
    \left( \begin{pmatrix} g \\ & 1 \end{pmatrix} \right)
    \chi(\det g) \abs{\det g}^{s-\frac{1}{2}} \rd g.
    \]
By~\cite{FJ}*{Theorem~4.1} this integral equals a holomorphic multiple of
$L(s, \Pi \otimes \chi)$, and there is a choice of $\varphi$ such that it
equals this $L$-function. Specializing to the point $s = \frac{1}{2}$, we
obtain the following proposition.

\begin{prop}    \label{prop:FJ}
We have $Z^{\mathrm{FJ}}(\frac{1}{2}, \varphi, \chi, \xi) \not=0$ for some
$\varphi \in \Pi$ if and only if $L(s, \Pi, \wedge^2 \otimes \xi)$ has a
simple pole at $s = 1$ and $L(\frac{1}{2}, \Pi \otimes \chi) \not=0$.
\end{prop}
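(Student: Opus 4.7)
The plan is to deduce both implications directly from the three results already cited in the exposition: \cite{FJ}*{Proposition~2.2}, \cite{FJ}*{Proposition~2.3}, \cite{FJ}*{Theorem~4.1}, and \cite{JS}*{Section~8, Theorem~1}. The only subtlety is that the Friedberg--Jacquet unfolding identity linking $Z^{\mathrm{FJ}}(s, \varphi, \chi, \xi)$ to $L(s, \Pi \otimes \chi)$ holds a priori only for $\re s$ sufficiently large, whereas we wish to specialize at the central point $s = \frac{1}{2}$. The remedy, used in both directions, is to continue the identity analytically.

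For the forward implication, suppose $Z^{\mathrm{FJ}}(\frac{1}{2}, \varphi, \chi, \xi) \neq 0$ for some $\varphi \in \Pi$. Then $s \mapsto Z^{\mathrm{FJ}}(s, \varphi, \chi, \xi)$ is not identically zero, so by \cite{FJ}*{Proposition~2.2} the Shalika functional $\lambda$ on $\Pi$ is nonzero; by \cite{JS}*{Section~8, Theorem~1}, this is equivalent to $L(s, \Pi, \wedge^2 \otimes \xi)$ having a simple pole at $s = 1$. Next I would use \cite{FJ}*{Proposition~2.3} to rewrite $Z^{\mathrm{FJ}}(s, \varphi, \chi, \xi)$ for $\re s \gg 0$ as the Shalika-model integral, and then invoke \cite{FJ}*{Theorem~4.1} to express it as $f(s) L(s, \Pi \otimes \chi)$ with $f$ holomorphic. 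Since $Z^{\mathrm{FJ}}$ is entire (by the convergence assertion in the setup) and $L(s, \Pi \otimes \chi)$ is entire ($\Pi$ being cuspidal), this identity propagates to $s = \frac{1}{2}$ by analytic continuation, forcing $L(\frac{1}{2}, \Pi \otimes \chi) \neq 0$.

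For the converse, the same machinery runs backwards. Given the two analytic hypotheses, \cite{JS}*{Section~8, Theorem~1} again yields a nonzero Shalika functional, and \cite{FJ}*{Theorem~4.1} provides a specific $\varphi \in \Pi$ for which the Shalika-model integral coincides with $L(s, \Pi \otimes \chi)$ identically in $s$. Combining this with \cite{FJ}*{Proposition~2.3} gives $Z^{\mathrm{FJ}}(s, \varphi, \chi, \xi) = L(s, \Pi \otimes \chi)$ on a right half-plane, and analytic continuation extends the equality to all $s$. Specializing at $s = \frac{1}{2}$ then yields $Z^{\mathrm{FJ}}(\frac{1}{2}, \varphi, \chi, \xi) = L(\frac{1}{2}, \Pi \otimes \chi) \neq 0$ by hypothesis.

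The only step requiring attention is the analytic continuation, and this is routine given that both $Z^{\mathrm{FJ}}(\cdot, \varphi, \chi, \xi)$ and $L(\cdot, \Pi \otimes \chi)$ are entire and agree on an open set. No other obstacle is expected, since the proposition is essentially a packaging of the Friedberg--Jacquet integral representation at the central critical point.
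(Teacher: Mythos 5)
Your argument is correct and is essentially the paper's own proof: the proposition is obtained by chaining \cite{FJ}*{Proposition~2.2}, \cite{FJ}*{Proposition~2.3}, \cite{FJ}*{Theorem~4.1} and \cite{JS}*{Section~8, Theorem~1} and then specializing the resulting identity $Z^{\mathrm{FJ}}(s,\varphi,\chi,\xi)=f(s)L(s,\Pi\otimes\chi)$ (valid for all $s$ by analytic continuation, since both sides are entire) at $s=\tfrac12$. The extra care you take over the continuation from the half-plane of convergence is exactly what the paper's phrase ``specializing to the point $s=\tfrac12$'' tacitly assumes.
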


Let us now switch to the local situation. Let $v$ be a place of $E$. Define
the character $\theta_v: Q(E_v) \to \C^\times$ the same way as the global
case. By~\cite{CS}, the space
    \[
    \Hom_{Q(E_v)} (\Pi_v \otimes \theta_v, \C)
    \]
is at most one dimensional. Assume that it is one dimensional and $\lambda_v$
is a nonzero element. For any $\varphi_v \in \Pi_v$ put
    \[
    V_{\varphi_v}(g) = \lambda(\Pi_v(g) \varphi_v), \quad g \in \GL_{2n}(E_v),
    \]
and
    \[
    \cV_v = \{ V_{\varphi_v} \mid \varphi_v \in \Pi_v\}.
    \]
The space $\cV_v$ is called the local Shalika model of $\Pi_v$. As
in~\cite{FJ}*{Section~3} we define
    \[
    Z_v^{\mathrm{FJ}}(s, V_v, \chi_v) =
    \int_{\GL_n(F_v)}
    V_v \left( \begin{pmatrix} g \\ & 1 \end{pmatrix} \right)
    \chi_v(g) \abs{\det g}^{s-\frac{1}{2}} \rd g.
    \]
Here we note that this implicitly depends on $\psi_v$ and $\xi_v$ through the
Shalika model. Then by~\cite{FJ}*{Proposition~3.1}, it is a holomorphic
multiple of $L(s, \Pi_v \otimes \chi_v)$ and there is a $V_v \in \cV_v$ such
that it equals this local $L$-function. Moreover
by~\cite{FJ}*{Proposition~3.3} there is a functional equation
    \[
    \gamma(s, \Pi_v \otimes \chi_v, \psi_v)
    Z_v^{\mathrm{FJ}}(s, V_v, \chi_v) =
    Z_v^{\mathrm{FJ}}(1-s, \widetilde{V_v}, \chi_v^{-1}),
    \]
where
    \[
    \widetilde{V_v}(g) = V_v \left( \begin{pmatrix} & 1 \\ -1 \end{pmatrix}
    \tp{g}^{-1} \right),
    \]
and $\widetilde{V_v}$ belong to the Shalika model of the contragredient of
$\pi$ (defined using the characters $\chi_v^{-1}$ and $\psi_v^{-1}$).
Specializing to $s = \frac{1}{2}$ we conclude that
    \begin{equation}    \label{eq:FE_local_FJ}
    \epsilon(\Pi_v \otimes \chi_v, \psi_v)
    Z_v^{\mathrm{FJ}}(\frac{1}{2}, V_v, \chi_v) =
    Z_v^{\mathrm{FJ}}(\frac{1}{2}, \widetilde{V_v}, \chi_v^{-1}),
    \end{equation}
where $\epsilon(\Pi_v \otimes \chi_v, \psi_v)$ is the local root number of
$\Pi_v \otimes \chi_v$.

\subsection{Functoriality}  \label{subsec:functoriality}
We return to the setup and notation of Section~\ref{sec:geometric_side}. Let
$\Pi$ be an irreducible cuspidal automorphic representation of $G'(\bA_F)$.
We define $\Pi^\vee$ to be the dual of $\Pi$, and $\Pi^c$ to be the Galois
conjugate (relative to $E/F$) of $\Pi$, i.e. the automorphic representation
whose space is given by $\{ \varphi(\overline{g}) \mid \varphi \in \Pi\}$.
Then it is well-known that $\Pi^c \simeq \Pi$ if and only if $\Pi = \pi_E$
for some cuspidal automorphic representation $\pi$ of $\GL_{2n}(\bA_F)$,
cf.~\cite{AC}*{Chapter~3, Theorem~4.2 and~5.1}.

We apply the results recalled the previous subsection to the current
situation. Put
    \begin{equation}    \label{eq:split_linear}
    P'_{\chi}(\varphi) = \int_{Z'(\bA_F) H'(F) \bs H'(\bA_F)}
    \varphi\left( \begin{pmatrix} h_1 \\ & h_2 \end{pmatrix} \right)
    \chi(h_1 \overline{h_2}) \rd h_1 \rd h_2
    \end{equation}
Then by Proposition~\ref{prop:FJ}, applied to the case $\xi = \chi \chi^c$,
the linear form $P'_{\chi}$ is not identically zero if and only if
$L(\frac{1}{2}, \Pi \otimes \chi) \not=0$ and $L(s, \Pi, \wedge^2 \otimes
\chi\chi^c)$ has a pole at $s = 1$. The later implies that $\Pi^\vee \simeq
\Pi \otimes \chi \chi^c$.

We also consider the Flicker--Rallis period of $\Pi$ given by
    \[
    P''_{\chi\eta}(\varphi) =
    \int_{Z_{2n}(\bA_F) H''(F) \bs H''(\bA_F)} \varphi(h) (\chi\eta)(h) \rd h.
    \]
It is identically zero if and only if the Asai $L$-function $L(s, \Pi \otimes
\chi, \As^-)$ has a pole at $s = 1$, cf.~\cite{Kable}. Note that this implies
that $\Pi^\vee \otimes \chi^{-1} \simeq \Pi^c \otimes \chi^{c}$.

\begin{lemma}   \label{lemma:functoriality}
If neither of $P'_{\chi}$ and $P''_{\chi\eta}$ is identically zero, then
there is an irreducible cuspidal automorphic representation $\pi$ of
$\GL_{2n}(\bA_F)$ such that $\Pi = \pi_E$. Moreover, $L(\frac{1}{2}, \Pi
\otimes \chi) \not=0$, and $L(s, \pi, \wedge^2 \otimes \chi|_{\bA_F^\times})$
has a simple pole at $s = 1$.
\end{lemma}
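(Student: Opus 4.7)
The plan is to combine the two L-function dichotomies coming from the two period hypotheses to deduce, first, that $\Pi$ is Galois self-conjugate, and then to invoke Arthur--Clozel cyclic base change.

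First I would unpack the hypothesis that $P'_\chi$ is not identically zero. Setting $\xi=\chi\chi^c$ in Proposition~\ref{prop:FJ} (noting that the requirement $\xi^n\omega=1$ is forced by the vanishing of central characters on the linear subgroup, combined with the assumption $\chi^n|_{\bA_F^\times}\omega_0=1$ on the central simple algebra side), the non-vanishing of $P'_\chi$ on $\Pi$ gives simultaneously that $L(\tfrac{1}{2},\Pi\otimes\chi)\neq 0$ and that $L(s,\Pi,\wedge^2\otimes\chi\chi^c)$ has a simple pole at $s=1$. The pole condition, together with the standard factorization $L(s,\Pi\times\Pi^\vee)=L(s,\Pi,\Sym^2\otimes\omega^{-1})L(s,\Pi,\wedge^2\otimes\omega^{-1})$, forces the isomorphism $\Pi^\vee\simeq \Pi\otimes\chi\chi^c$, as already noted in the excerpt.

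Second, I would unpack the hypothesis that $P''_{\chi\eta}$ is not identically zero. By Kable's theorem, this forces the Asai L-function $L(s,\Pi\otimes\chi,\As^-)$ to have a (simple) pole at $s=1$, which is equivalent to the existence of a $\GL_{2n}(\bA_F)$-invariant bilinear pairing between $\Pi\otimes\chi$ and $\Pi^c\otimes\chi^c$, i.e.\ $\Pi^\vee\otimes\chi^{-1}\simeq \Pi^c\otimes\chi^c$, equivalently $\Pi^\vee\simeq \Pi^c\otimes\chi\chi^c$. Comparing with the isomorphism from the previous step yields
\[
\Pi\otimes\chi\chi^c\simeq \Pi^c\otimes\chi\chi^c,
\]
hence $\Pi\simeq\Pi^c$. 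By the cyclic base change theorem of Arthur--Clozel (\cite{AC}*{Chapter~3, Theorem~4.2 and~5.1}), this is exactly the condition for $\Pi$ to be the base change of some cuspidal automorphic representation $\pi$ of $\GL_{2n}(\bA_F)$; so $\Pi=\pi_E$ for such a $\pi$, giving the first assertion, and also the non-vanishing $L(\tfrac{1}{2},\Pi\otimes\chi)\neq 0$ is already recorded.

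It remains to transport the pole of $L(s,\Pi,\wedge^2\otimes\chi\chi^c)$ from $\bA_E$ down to $\bA_F$. Writing $\mu=\chi|_{\bA_F^\times}$, the Galois-invariant character $\chi\chi^c$ of $\bA_E^\times$ equals $\mu\circ N_{E/F}$, so under base change the exterior-square L-function factorizes as
\[
L(s,\pi_E,\wedge^2\otimes\mu_E)=L(s,\pi,\wedge^2\otimes\mu)\,L(s,\pi,\wedge^2\otimes\mu\eta).
\]
Since the left side has a simple pole at $s=1$, exactly one of the two factors on the right does. The main obstacle, and the most delicate step, is ruling out the $\mu\eta$-twisted factor: this is where one must use the compatibility between the central character $\omega$ of $\Pi$, the assumption $\chi^n|_{\bA_F^\times}\omega=1$, and the fact that $L(s,\pi,\wedge^2\otimes\nu)$ can have a pole at $s=1$ only when $\nu^n$ matches the central character of $\pi$ in the right way (the parameter of $\pi$ must land in $\GSp_{2n}(\C)$ with similitude $\nu$). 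A careful bookkeeping of these central-character constraints, propagated through the base change from $\pi$ to $\pi_E$, singles out $\mu=\chi|_{\bA_F^\times}$ as the correct twist and excludes $\mu\eta$. This gives the desired simple pole of $L(s,\pi,\wedge^2\otimes\chi|_{\bA_F^\times})$ at $s=1$ and completes the proof.
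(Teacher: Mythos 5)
The first half of your argument (combining $\Pi^\vee\simeq\Pi\otimes\chi\chi^c$ from the Friedberg--Jacquet period with $\Pi^\vee\otimes\chi^{-1}\simeq\Pi^c\otimes\chi^c$ from the Flicker--Rallis period to get $\Pi\simeq\Pi^c$, then invoking Arthur--Clozel) is exactly the paper's argument and is fine.

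The last step, however, has a genuine gap. Writing $\mu=\chi|_{\bA_F^\times}$, you correctly reduce to deciding which of $L(s,\pi,\wedge^2\otimes\mu)$ and $L(s,\pi,\wedge^2\otimes\mu\eta)$ carries the pole of $L(s,\Pi,\wedge^2\otimes\chi\chi^c)$, but your proposed mechanism for excluding the $\mu\eta$-factor --- ``central-character bookkeeping'' --- cannot work. A pole of $L(s,\pi,\wedge^2\otimes\nu)$ at $s=1$ constrains $\nu$ only through $\nu^n$ (via the Shalika central character condition $\nu^n\omega_\pi=1$), and $\mu^n=(\mu\eta)^n$ whenever $n$ is even since $\eta^2=1$; so no central-character identity can separate the two twists in general. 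The paper instead brings in a second factorization, namely that of the Asai $L$-function:
\[
L(s,\Pi\otimes\chi,\As^-)=L(s,\pi,\Sym^2\otimes\mu\eta)\,L(s,\pi,\wedge^2\otimes\mu),
\]
verified on unramified Satake parameters. If $L(s,\pi,\wedge^2\otimes\mu)$ were holomorphic at $s=1$, then the pole of the Asai $L$-function (from $P''_{\chi\eta}\not\equiv 0$) would force a pole of $L(s,\pi,\Sym^2\otimes\mu\eta)$, while the pole of $L(s,\Pi,\wedge^2\otimes\chi\chi^c)$ would force a pole of $L(s,\pi,\wedge^2\otimes\mu\eta)$; their product is the Rankin--Selberg $L$-function $L(s,\pi\times\pi\mu\eta)$, which would then have a double pole at $s=1$ --- impossible. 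This is the missing idea: you need the Asai factorization as a second constraint pinning the pole to the untwisted (by $\eta$) exterior-square factor, not a computation with central characters.
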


\begin{proof}
The existence of $\pi$ follows from the above discussion. To prove the second
assertion, let us observe that
    \[
    L(s, \Pi \otimes \chi, \As^-) =
    L(s, \pi, \Sym^2 \otimes \chi|_{\bA_F^\times}\eta)
    L(s, \pi, \wedge^2 \otimes \chi|_{\bA_F^\times}).
    \]
Indeed this can be checked place by place for almost all places of $F$. If
$v$ is split in $E$, then the equality clearly holds. Assume $v$ is inert and
$\Pi_v$ (and hence $\pi_v$) is unramified. Let $\beta_1, \cdots, \beta_{2n}$
be the Satake parameters of $\pi_v$, and $\gamma = \chi_v(\varpi_v)$ where
$\varpi_v$ is the uniformizer at $v$. Then the Satake parameters of $\Pi_v$
are $\beta_1^2, \cdots, \beta_{2n}^2$. Thus the left hand side equals
    \[
    \prod_{1 \leq i \leq 2n} (1+ \beta_i^2 \gamma q_v^{-s})^{-1}
    \prod_{1 \leq i < j \leq 2n} (1- \beta_i^2 \beta_j^2 \gamma^2 q^{-2s})^{-1}.
    \]
The right hand side equals
    \[
    \prod_{1 \leq i \leq j \leq 2n} (1+ \beta_i \beta_j \gamma q_v^{-s})^{-1}
    \prod_{1 \leq i < j \leq 2n} (1- \beta_i \beta_j \gamma^2 q^{-s})^{-1}.
    \]
Thus the desired equality holds at the place $v$.

Similar we also have
    \[
    L(s, \Pi, \wedge^2 \otimes \chi \chi^c) =
    L(s, \pi, \wedge^2 \otimes \chi|_{\bA_F^\times})
    L(s, \pi, \wedge^2 \otimes \chi|_{\bA_F^\times} \eta).
    \]

If $L(s, \pi, \wedge^2 \otimes \chi|_{\bA_F^\times})$ is holomorphic at $s =
1$, then both
    \[
    L(s, \pi, \Sym^2 \otimes \chi|_{\bA_F^\times}\eta), \quad
    L(s, \pi, \wedge^2 \otimes \chi|_{\bA_F^\times} \eta)
    \]
have a pole at $s = 1$, which implies that
    \[
    L(s, \pi \times \pi \chi|_{\bA_F^\times}\eta)
    \]
has at least a double pole at $s = 1$, which is not possible. This proves the
final assertion.
\end{proof}

\subsection{Local spherical charaters}  \label{ss:sph char}
Let $v$ be a nonarchimedean nonsplit place. Let $\pi_v$ be an irreducible
unitary representation of $G(F_v)$. According to~\cite{Lu}, the space
$\Hom_{H(F_v)}(\pi_v \otimes \chi_v, \C)$ is at most one dimensional. We
assume that it is one dimensional and we fix a nonzero element $\ell$ in it.
Let $f \in \cS(G(F_v))$ and we consider the spherical character
    \[
    J_{\pi_v}(f) = \sum_{\phi} \ell(\pi_v(f) \phi) \overline{\ell(\phi)}
    \]
where the sum runs over an orthonormal basis of $\pi$. A test functions of
the form $f = f_1 * \overline{f_1^\vee}$ is called of positive type.

\begin{lemma}   \label{lemma:transferrable_test_function}
There is a positive type $f \in \cS(G(F_v))_0$ such that $J_{\pi_v}(f) > 0$.
\end{lemma}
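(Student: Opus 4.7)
The plan is to combine the standard positivity of the spherical character $J_{\pi_v}$ on test functions of positive type with the observation that the matching locus contains a neighborhood of the identity. For any $f_1 \in \cS(G(F_v))$, the identity $\pi_v(\overline{f_1^\vee}) = \pi_v(f_1)^*$ (the Hilbert space adjoint with respect to the unitary structure on $\pi_v$) gives $\pi_v(f_1 * \overline{f_1^\vee}) = \pi_v(f_1)\pi_v(f_1)^*$. Representing the continuous linear functional $\phi \mapsto \ell(\pi_v(f_1)\phi)$ by a vector $w_{f_1} \in V_{\pi_v}$ via $\ell(\pi_v(f_1)\phi) = \langle \phi, w_{f_1}\rangle$ and applying Parseval to the sum over an orthonormal basis yields
\[
J_{\pi_v}(f_1 * \overline{f_1^\vee}) = \|w_{f_1}\|^2 \ge 0,
\]
with strict positivity exactly when $\ell$ does not vanish on the image of $\pi_v(f_1)$.

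It remains to choose $f_1$ so that $f = f_1 * \overline{f_1^\vee}$ lies in $\cS(G(F_v))_0$ and $\ell \circ \pi_v(f_1) \ne 0$. By the remark following the definition of matching in Subsection~\ref{subsec:matching}, there is an open neighborhood $U$ of $1 \in G(F_v)$ such that every regular semisimple element of $U$ matches some $x \in G'(F_v)$. Because matching is $H(F_v) \times H(F_v)$-invariant, any non-matching regular semisimple $y$ has its whole $H \times H$-orbit disjoint from $U$, so any $f$ supported in $U$ automatically has vanishing orbital integrals off the matching locus and hence lies in $\cS(G(F_v))_0$.

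To finish, fix $\phi_0 \in V_{\pi_v}$ with $\ell(\phi_0) \ne 0$ and choose an open compact subgroup $K \subset G(F_v)$ that stabilizes $\phi_0$ and is contained in $U$; both conditions can be arranged because open compact subgroups form a neighborhood basis at $1$. Set $f_1 = \vol(K)^{-1} \id_K$. Then $\supp(f_1 * \overline{f_1^\vee}) \subset K \cdot K = K \subset U$, so $f = f_1 * \overline{f_1^\vee} \in \cS(G(F_v))_0$, while $\pi_v(f_1)\phi_0 = \phi_0$ forces $\ell(\pi_v(f_1)\phi_0) = \ell(\phi_0) \ne 0$, whence $w_{f_1} \ne 0$ and $J_{\pi_v}(f) > 0$. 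The only nontrivial point is the positivity identity of the first paragraph, which is a standard Parseval computation for admissible unitary representations; once it is in hand, the construction of $f_1$ is purely formal.
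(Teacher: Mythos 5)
Your proposal is correct and follows essentially the same route as the paper: take $f_1$ the (normalized) characteristic function of a small open compact subgroup fixing a vector $\phi_0$ with $\ell(\phi_0)\neq 0$, so that $f=f_1*\overline{f_1^\vee}$ is supported near $1$ (hence in $\cS(G(F_v))_0$ by the neighbourhood-of-the-identity matching remark) and $J_{\pi_v}(f)=\sum_\phi|\ell(\pi_v(f_1)\phi)|^2>0$. Your Parseval/adjoint formulation of the positivity is just a repackaging of the paper's computation.
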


\begin{proof}
Choose $\phi \in \pi_v$ such that $\ell(\phi) \not=0$. Let $f_1 \in
\cS(G(F_v))$ be the characteristic function of a small open compact subgroup,
then $\ell(\pi(f_1) \phi) \not=0$. Put $f = f_1 * \overline{f_1^\vee}$. Then
$f$ is supported in a sufficiently small neighbourhood of $1 \in G(F_v)$ and
thus $f \in \cS(G(F_v))_0$, because every regular semisimple element in the
support of $f$ matches some element in $G'(F_v)$. Moreover we have
    \[
    J_{\pi_v}(f) = \sum_\phi \ell(\pi_v(f_1) \phi) \overline{\ell(\pi_v(f_1) \phi)}.
    \]
Each term in the sum is nonnegative and there is at least one nonzero term.
Therefore $J_{\pi_v}(f) >0$.
\end{proof}

By~\cite{Guo3}, $J_{\pi_v}$ is represented by a locally integrable function
$\Theta_{\pi_v}$ on $G(F_v)$, which is locally constant on the regular
semisimple locus, and which satisfies the invariance property
$\Theta_{\pi_v}(h_1 g h_2) = \chi_v(h_1 h_2) \Theta_{\pi}(g)$. We say that
$\pi_v$ is $H(F_v)$-elliptic $\Theta_{\pi_v}(y) \not=0$ for some elliptic
regular semisimple $y \in G(F_v)$ and $y$ matches some $x \in G'(F_v)$.

\begin{prop}    \label{prop:elliptic_rep}
If $\pi_v$ is supercuspidal and $\Hom_{H(F_v)}(\pi_v \otimes \chi_v, \C)
\not=0$, then $\pi_v$ is $H(F_v)$-elliptic.
\end{prop}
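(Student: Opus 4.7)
My plan is to combine Lemma~\ref{lemma:transferrable_test_function}, a Weyl-type integration formula adapted to the $H\times H$-action on $G$, and the cuspidal support property of supercuspidal characters.

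First, I invoke Lemma~\ref{lemma:transferrable_test_function} to obtain a positive-type test function $f\in\cS(G(F_v))_0$, supported in an arbitrarily small neighbourhood of $1\in G(F_v)$, with $J_{\pi_v}(f)>0$. Since $J_{\pi_v}$ is represented by the locally integrable function $\Theta_{\pi_v}$, which is locally constant on the regular semisimple locus and satisfies $\Theta_{\pi_v}(h_1 g h_2)=\chi_v(h_1h_2)\Theta_{\pi_v}(g)$, I would unfold the integral $\int_{G(F_v)} \Theta_{\pi_v}(g)f(g)\rd g$ along $(H\times H)$-orbits to obtain an expansion of the shape
\[
J_{\pi_v}(f)=\int \Theta_{\pi_v}(y)\,O^G(y,f)\,\rd\mu(y),
\]
the integral ranging over regular semisimple orbit representatives; here the twist $\chi_v$ in the definition of $O^G(y,f)$ exactly cancels the quasi-invariance factor of $\Theta_{\pi_v}$. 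Because $f\in\cS(G(F_v))_0$, only orbits matching some $x\in G'(F_v)$ contribute, so positivity forces $\Theta_{\pi_v}(y)\neq 0$ for at least one matching regular semisimple $y$.

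The essential remaining step, which I expect to be the main obstacle, is to upgrade such a $y$ to an elliptic one. For this I would invoke the relative analogue of Harish-Chandra's theorem on supercuspidal characters: if $\pi_v$ is supercuspidal, then $\Theta_{\pi_v}$ vanishes on every non-elliptic regular semisimple orbit. The standard route is via parabolic descent on the symmetric space $S$: a non-elliptic regular semisimple $y$ is conjugate under $H\times H$ to an element lying in a proper ``$\theta$-stable Levi'', so that, by a Casselman-type formula adapted to the spherical character (cf.~\cite{Guo3}), $\Theta_{\pi_v}(y)$ is computed from the value of the analogous relative character on the Jacquet module of $\pi_v$ along a proper parabolic; this Jacquet module is zero by supercuspidality. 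Combining this with the previous paragraph, the $y$ produced there is forced to be elliptic, and the proposition follows.

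The delicate point is to make the parabolic-descent step precise in this relative setting: one needs to identify explicitly which proper parabolics appear for each non-elliptic $(H\times H)$-orbit on $G$, to pass through the symmetric space description $S=\{g\theta(g)^{-1}\}$, and to justify that the descent formula for $\Theta_{\pi_v}$ indeed expresses its restriction to non-elliptic orbits in terms of Jacquet modules of $\pi_v$. Once this geometric/representation-theoretic bookkeeping is in place, the rest of the argument is a short positivity argument and the conclusion is immediate.
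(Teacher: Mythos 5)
There is a genuine gap at the crucial step. Your reduction via Lemma~\ref{lemma:transferrable_test_function} and a Weyl-type integration formula only produces a regular semisimple $y$ \emph{in the support of $f$}, i.e.\ arbitrarily close to $1$, with $\Theta_{\pi_v}(y)\neq 0$; to conclude you would then need $\Theta_{\pi_v}$ to vanish at the \emph{non-elliptic} regular semisimple elements near the identity. The Casselman-type descent you invoke cannot deliver this: such formulas compute the (spherical) character in terms of Jacquet modules only for elements that are sufficiently contracting towards a proper parabolic, i.e.\ deep in the non-elliptic directions, and say nothing about a fixed non-elliptic orbit close to $1$. Worse, the vanishing statement you need is not true: already in the classical conjugation setting the character of a supercuspidal representation of $\GL_2$ is not identically zero on split regular elements near the identity, because the constant term of the local character expansion contributes there. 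The same phenomenon occurs for the spherical character expansion \eqref{eq:character_expansion}: its degree-zero term $c_0\widehat{\mu_0}$ is a nonzero constant on the whole regular set as soon as $c_0\neq 0$ --- and $c_0\neq 0$ is essentially what the proposition asserts. So your step 4 assumes something at least as strong as (and in fact incompatible with) the conclusion.

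The paper's argument (Appendix~\ref{s:A2}) stays entirely near the identity and places the ``vanishing off the elliptic set'' where it actually holds: not on the character, but on the Shalika germ $\Gamma_0$ attached to the zero nilpotent orbit (Lemma~\ref{lemma:OI}), which is proved by parabolic descent of \emph{orbital integrals}. One compares the germ expansion \eqref{eq:Shalika_germ_CSA} of the orbital integral of a matrix coefficient with the character expansion \eqref{eq:character_expansion}, linked by the identity of Lemma~\ref{lemma:integral_matrix_coefficients} (which is only established on the elliptic set, where the stabilizers are compact modulo the center), isolates the homogeneity-degree-zero terms on both sides by scaling, and uses $\mu_0(f_{\natural})=\int_{Z\bs H}f(h)\chi(h)^{-1}\,\rd h\neq 0$ together with the nonvanishing and support properties of $\Gamma_0$ to force $c_0\neq 0$, hence nonvanishing of $\Theta_{\pi_v}$ at some elliptic element near $1$ (which automatically matches an element of $G'(F_v)$). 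To repair your approach you would have to replace your step 4 by this homogeneity argument; the positivity input from Lemma~\ref{lemma:transferrable_test_function} is then no longer needed.
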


The argument is standard and very close to the classical theory of
Harish-Chandra. The main part of the proof has also been worked out in a
slightly different setting in~\cite{Xue3}. A detailed proof is quite long,
and it produces a lot of repetitions from the literature and deviates
significantly from the goal of this paper. So we will merely sketch the
argument in Appendix~\ref{s:A2}.

\subsection{Involution} \label{subsec:involution}

We need to consider the local spherical characters arising from the
distribution $I_{\Pi}$. Let us fix a nonsplit nonarchimedean place $v$. Then
we have
    \[
    \Hom_{H'(F_v)}(\Pi_v \otimes \chi_{H',v}, \C) \not=0, \quad
    \Hom_{H''(F_v)}(\Pi_v \otimes \chi_v,
    \C) \not=0.
    \]
We let $\ell'$ and $\ell''$ be nonzero elements in these $\Hom$-spaces
respectively. Define the local spherical character
    \[
    I_{\Pi_v}(f') = \sum_{W} \ell'(\Pi_v(f')W) \overline{\ell''(W)}, \quad
    f' \in \cS(G'(F_v)),
    \]
where $W$ ranges over an orthonormal basis of $\Pi_v$.

We say that $I_{\Pi_v}$ is elliptic if there is an $f'$ supported in the
elliptic locus of $G'(F_v)$ such that $I_{\Pi_v}(f') \not=0$. We expect that
all supercuspidal representations are elliptic. We will address it in a
subsequent paper.

Let $f' \in \cS(G'(F_v))$. Put
    \begin{equation}    \label{eq:involution}
    f'^{\dag}(g) = f'(\tp{\overline{g}}^{-1}) (\chi_v\chi_v^c)(g), \quad
    g \in G'(F_v).
    \end{equation}

Let $\psi_v$ be a nontrivial additive character of $E_v$ trivial on $F_v$.
Since $\Pi_v^\vee \otimes \chi_v^{-1} \simeq \Pi_v^c \otimes \chi_v^{c}$, the
local root number $\epsilon(\Pi_v \otimes \chi_v, \psi_v) = \pm 1$ and is
independent of $\psi_v$. We denote it by $\epsilon(\Pi_v \otimes \chi_v)$.

\begin{lemma}   \label{lemma:involution}
For any $f' \in \cS(G'(F_v))$ we have
    \[
    I_{\Pi_v}(f'^\dag) = \epsilon(\Pi_v \otimes \chi_v) I_{\Pi_v}(f').
    \]
\end{lemma}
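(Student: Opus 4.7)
The plan is to express the spherical character as the pairing of $f'$ against a Bessel-type matrix coefficient, and then reduce the lemma to a functional equation of that matrix coefficient under the involution $g \mapsto \tp{\overline{g}}^{-1}$ on $G'(F_v)$. Setting
$$B_{\Pi_v}(g) = \sum_{W} \ell'(\Pi_v(g) W)\, \overline{\ell''(W)},$$
I have $I_{\Pi_v}(f') = \int_{G'(F_v)} f'(g)\, B_{\Pi_v}(g)\, dg$, with $B_{\Pi_v}$ transforming on the left under $(H'(F_v), \chi_{H', v})$ and on the right under $(H''(F_v), \overline{\chi_v \eta_v})$. Changing variables $g \mapsto \tp{\overline{g}}^{-1}$ in the integral for $I_{\Pi_v}(f'^\dag)$, and using that $(\chi_v \chi_v^c)(\tp{\overline{g}}^{-1}) = (\chi_v \chi_v^c)^{-1}(g)$, reduces the lemma to showing the functional equation
$$B_{\Pi_v}\bigl(\tp{\overline{g}}^{-1}\bigr) = \epsilon(\Pi_v \otimes \chi_v)\, (\chi_v \chi_v^c)(g)\, B_{\Pi_v}(g). \qquad (\star)$$

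The next step is to observe that the left-hand side of $(\star)$ is itself a Bessel-type function attached to $\Pi_v$, via the isomorphism $\Pi_v^{c,\vee} \otimes (\chi_v \chi_v^c)^{-1} \simeq \Pi_v$. This follows from two facts already secured in Subsection \ref{subsec:functoriality}: $\Pi_v^c \simeq \Pi_v$ (because $\Pi = \pi_E$, Lemma \ref{lemma:functoriality}) and $\Pi_v^\vee \simeq \Pi_v \otimes \chi_v \chi_v^c$ (because $L(s, \Pi, \wedge^2 \otimes \chi\chi^c)$ has a pole at $s = 1$). Uniqueness of the Shalika functional (\cite{CS}) and of the Flicker--Rallis functional then force both sides of $(\star)$ to be proportional, leaving only the constant $\epsilon(\Pi_v \otimes \chi_v)$ to be pinned down.

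To extract that constant I would exploit the explicit realization of $\ell'$ through the Shalika model: by uniqueness of $\Hom_{H'(F_v)}(\Pi_v \otimes \chi_{H', v}, \C)$ and the unfolding recalled in Subsection \ref{subsec:FJ}, we may normalize so that
$$\ell'(W) = Z_v^{\mathrm{FJ}}(\tfrac{1}{2}, \cW_W, \chi_v),$$
where $W \mapsto \cW_W$ is the Shalika model embedding. Under the involution $g \mapsto \tp{\overline{g}}^{-1}$, the Shalika function $\cW_W$ is transported into the object $\widetilde{\cW_W}$ appearing on the right-hand side of the local Friedberg--Jacquet functional equation \eqref{eq:FE_local_FJ} with parameter $\chi_v^{-1}$. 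Specializing \eqref{eq:FE_local_FJ} at $s = 1/2$ produces the factor $\epsilon(\Pi_v \otimes \chi_v, \psi_v)$, which is a $\psi_v$-independent sign by virtue of $\Pi_v^\vee \simeq \Pi_v \otimes \chi_v \chi_v^c$; this is the desired scalar.

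\textbf{Main obstacle.} The delicate point is normalization bookkeeping: one must verify that the intertwiner implementing $\Pi_v^{c,\vee} \otimes (\chi_v \chi_v^c)^{-1} \simeq \Pi_v$ is compatible with the Shalika-model duality $\cW_W \mapsto \widetilde{\cW_W}$ of \eqref{eq:FE_local_FJ}, and that the Flicker--Rallis functional $\ell''$ transports correctly under this intertwiner without contributing an additional sign. Both should follow from the uniqueness of the relevant linear forms, combined with testing $(\star)$ on a well-chosen element of $G'(F_v)$ where the Shalika unfolding and the Flicker--Rallis integral can be evaluated transparently side by side.
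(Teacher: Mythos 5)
Your overall strategy coincides with the paper's: both reduce the lemma to (i) a functional equation for $\ell'$ coming from the local Friedberg--Jacquet functional equation \eqref{eq:FE_local_FJ} specialized at $s=\tfrac12$, which produces the sign $\epsilon(\Pi_v\otimes\chi_v)$, and (ii) the invariance of the Flicker--Rallis functional $\ell''$ under the map $W\mapsto \widetilde{W}^c(\chi_v\chi_v^c)^{-1}$. The change of variables $g\mapsto \tp{\overline{g}}^{-1}$ in $I_{\Pi_v}(f'^\dag)$ and the identification of the transformed Bessel distribution via $\Pi_v^{c,\vee}\otimes(\chi_v\chi_v^c)^{-1}\simeq\Pi_v$ are exactly what the paper does, phrased in terms of $\ell'(\Pi_v(f'^\dag)W)$ rather than a kernel function $B_{\Pi_v}$.

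The gap is in how you propose to dispose of your ``main obstacle.'' For $\ell'$ the normalization issue is harmless: the two isomorphisms $W\mapsto\phi_W$ and $W\mapsto\widetilde{\phi_{\widetilde W}}$ between the Whittaker and Shalika models differ by a scalar that can simply be absorbed by rescaling, as the paper does, and any rescaling of $\ell'$ or $\ell''$ multiplies both sides of the lemma by the same constant. But the analogous statement for $\ell''$, namely that
\[
\ell''\bigl(\widetilde{W}^c(\chi_v\chi_v^c)^{-1}\bigr)=\ell''(W)
\]
with constant exactly $1$, is \emph{not} a formal consequence of multiplicity one: uniqueness of the $(H''(F_v),\chi_v\eta_v)$-functional only gives proportionality, and the ratio is an intrinsic invariant (independent of all normalizations) that could a priori be $-1$ and would then flip the sign of the lemma. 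This constant is the content of \cite{LM4}*{Lemma~1.1}, whose proof rests on \cite{Offen}*{Corollary~7.2} relating the sign to the local Asai root number for distinguished representations; it is a theorem, not bookkeeping. Your suggestion of ``testing on a well-chosen element'' does not obviously produce it, since evaluating the local Flicker--Rallis integral and the Shalika unfolding simultaneously at a single group element is precisely the kind of computation that Offen's result replaces. To complete the proof you should invoke that result (or reprove it), rather than deferring it to uniqueness.
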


\begin{proof}
To prove this lemma, we need to choose $\ell'$ and $\ell''$ more carefully.
Since different choices differ only by a constant, the validity of the lemma
is independent of such choices.

Let $\cW$ be the Whittaker model of $\Pi_v$ (defined by the character
$\psi_v$) and $\cV$ be the Shalika model of $\Pi_v$ defined by the characters
$\psi_v$ and $\chi_v \chi_v^c$. We fix an isomorphism
    \[
    \cW \to \cV, \quad W \mapsto \phi_W.
    \]
We also denote by $\widetilde{\cW}$ and $\widetilde{\cV}$ the Whittaker model
and Shalika model of $\pi^\vee$, defined using the characters $\psi_v^{-1}$
and $\psi_v^{-1}$, $(\chi_v\chi_v^c)^{-1}$ respectively. We also fix an
isomorphism
    \[
    \widetilde{\cW} \to \widetilde{\cV}, \quad W \mapsto \phi_{W}.
    \]
For any function $\alpha$ on $G'(F_v)$ we temporarily put
    \[
    \widetilde{\alpha}(g) = \alpha \left( \begin{pmatrix} & 1 \\ -1 \end{pmatrix}
    \tp{g}^{-1} \right).
    \]
Then both
    \[
    W \mapsto \phi_W, \quad W \mapsto \widetilde{\phi_{\widetilde{W}}}
    \]
are isomorphisms between $\cW$ and $\cV$, and hence they differ by a
constant. By rescaling the isomorphisms that we have fixed, we may assume
that this constant equals one. It follows that for any $W \in \cW$,  we have
    \[
    \phi_{\widetilde{W}} = \widetilde{\phi_W}.
    \]
For any $W \in \cW$ we put $W^c(g) = W(\overline{g})$.

Recall from Subsection~\ref{subsec:FJ} that we have the integral
    \[
    Z^{\mathrm{FJ}}(s, \phi, \chi_v) =
    \int_{\GL_n(E)} \phi
    \left( \begin{pmatrix} a \\ & 1 \end{pmatrix} \right) \chi_v(a)
    \abs{\det a}^{s-\frac{1}{2}} \rd a, \quad \phi \in \cV.
    \]
We choose $\ell' \in \Hom_{H'(F_v)} (\Pi_v \otimes \chi_v, \C)$ to be the
    \[
    W \mapsto \ell'(W) =
    Z^{\mathrm{FJ}}(\frac{1}{2}, \phi_W, \chi_v).
    \]
For the linear form $\ell''$, we take it to be
    \[
    \ell''(W) = \int_{N' \cap H''(F_v) \bs P' \cap H''(F_v)}
    W(h) (\chi_v\eta_v)(h) \rd h.
    \]
where $P'$ is the mirabolic subgroup of $G'$ and $N'$ the standard upper
triangular unipotent subgroup of $G'$.

If $W \in \cW$, since $\Pi_v^\vee \simeq \Pi_v \otimes (\chi_v
\chi_v^c)^{-1}$, we have $\widetilde{W^c}(\chi_v \chi_v^c)^{-1} \in \cW$. We
claim that for any $g \in G'(F_v)$ we have
    \begin{equation}    \label{eq:FE_l'}
    \epsilon(\Pi_v \otimes \chi_v)
    \ell'(\Pi_v(\tp{\overline{g}}^{-1}) W) (\chi_v\chi_v^c)^{-1}(g) =
    \ell'(\Pi_v(g) (\widetilde{W}^c(\chi_v\chi_v^c)^{-1})).
    \end{equation}
The right hand side equals
    \[
    \int \phi_{\Pi_v(g) (\widetilde{W}^c(\chi_v\chi_v^c)^{-1})}
    \left( \begin{pmatrix} a \\ & 1_n \end{pmatrix} \right) \chi_v(a) \rd a
    \]
which simplifies to
    \[
    (\chi_v\chi_v^c)^{-1}(g) \int
    \phi_{\widetilde{\Pi_v(\tp{\overline{g}}^{-1})W}}
    \left( \begin{pmatrix} a \\ & 1_n \end{pmatrix}  \right)
    \chi_v^{-1}(a) \rd a
    \]
Using the functional equation~\eqref{eq:FE_local_FJ} of $Z^{\mathrm{FJ}}$ we
conclude that this equals
    \[
    \epsilon(\Pi_v \otimes \chi_v)
    (\chi_v\chi_v^c)^{-1}(g) \int
    \phi_{\Pi_v(\tp{\overline{g}}^{-1})W}
    \left( \begin{pmatrix} a \\ & 1_n \end{pmatrix}  \right)
    \chi_v(a) \rd a,
    \]
which is precisely the left hand side of~\eqref{eq:FE_l'}.

We now compute $I_{\Pi_v}(f'^\dag)$. By definition we have
    \[
    \begin{aligned}
    \ell'(\Pi_v(f'^\dag) W) &=
    \int_{G'(F)} f'(\tp{\overline{g}}^{-1}) \ell'(\Pi_v(g)W)
    (\chi_v\chi_v^c)(g) \rd g\\
    &= \int_{G'(F)} f'(g) \ell'(\Pi_v(\tp{\overline{g}}^{-1})W)
    (\chi_v\chi_v^c)(g)^{-1} \rd g\\
    & = \epsilon(\Pi_v \otimes \chi_v)
    \int_{G'(F)} f'(g)
    \ell'(\Pi_v(g) (\widetilde{W}^c(\chi_v\chi_v^c)^{-1})).
    \end{aligned}
    \]
Thus we have
    \[
    \ell'(\Pi_v(f'^\dag)W) =
    \ell'(\Pi_v(f') (\widetilde{W}^c (\chi_v\chi_v^c)^{-1})).
    \]

By~\cite{LM4}*{Lemma~1.1} (the main part of the proof is
from~\cite{Offen}*{Corollary~7.2}) we have
    \[
    \ell''(\widetilde{W}^c (\chi_v\chi_v^c)^{-1}) =
    \ell''(W).
    \]
Thus we conclude
    \[
    I_{\Pi_v}(f'^\dag) = \epsilon(\Pi_v \otimes \chi_v) \sum_W
    \ell'(\Pi_v(f') (\widetilde{W}^c (\chi_v\chi_v^c)^{-1}))
    \overline{\ell''(\widetilde{W}^c (\chi_v\chi_v^c)^{-1})}.
    \]
When $W$ ranges over an orthonormal basis in $\cW$, $\widetilde{W}^c
(\chi_v\chi_v^c)^{-1}$ ranges over an orthonormal basis in $\cW$ as well.
Thus
    \[
    I_{\Pi_v}(f'^\dag) = \epsilon(\Pi_v \otimes \chi_v)
    I_{\Pi_v}(f').
    \]
This proves the lemma.
\end{proof}

We now study the relation between matching of test functions and this
involution. Let $\epsilon_{D_v} = \eta_v(\epsilon) = \pm 1$.

\begin{lemma}   \label{lemma:matching_involution}
Let $f' \in \cS(G'(F_v))_0$ and $f \in \cS(G(F_v))_0$ be matching test
functions. Then $f'^\dag$ and $\eta_v(-1)^n \epsilon_{D_v}^n f$ match.
\end{lemma}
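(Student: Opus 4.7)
The plan is to pass to the symmetric space $S' = \{s\in G'\mid s\bar s = 1\}$ via \eqref{eq:simplify_orbital_integral}, where the involution $\dag$ admits a transparent description. The map $g\mapsto \tp{\bar g}^{-1}$ on $G'(F_v)$ descends to $s\mapsto \tp s$ on $S'(F_v)$, since
\[
\tp{\bar g}^{-1}\cdot\overline{\tp{\bar g}^{-1}}^{-1} = \tp{\bar g}^{-1}\tp g = \tp{(g\bar g^{-1})}.
\]
Starting from \eqref{eq:f'tilde} and the definition of $f'^{\dag}$, the change of variable $h \mapsto \tp h^{-1}$ on $H''(F_v) = \GL_{2n}(F_v)$ (which preserves Haar measure) yields the pointwise identity
\[
\widetilde{f'^{\dag}}(\tp s) = \widetilde{f'}(s),
\]
where the character factors cancel using $\widetilde{\eta_v}^c = \widetilde{\eta_v}^{-1}$ (equivalent to $\eta_v$ being trivial on norms) and $\eta_v^2 = 1$.

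Next I would convert this to an orbital-integral identity. Consider the involutive automorphism $h\mapsto h^\sharp := \tp{\bar h}$ of $H'(F_v)$; it satisfies $\overline{h^\sharp} = \tp h$, and a direct computation gives $\tp{(hs\bar h^{-1})} = (h^\sharp)^{-1}\cdot\tp s$, where $\cdot$ denotes twisted conjugation. A routine character calculation shows that $\chi_{H'}\chi_v^{-1}\widetilde{\eta_v}^{-1}$ is invariant under the measure-preserving involution $\iota: h\mapsto (h^\sharp)^{-1}$. Combining these with the pointwise identity yields
\[
O^{S'}(s, \widetilde{f'^{\dag}}) = O^{S'}(\tp s, \widetilde{f'}).
\]
Writing $s = \begin{pmatrix}\alpha_1 & \alpha_2 \\ \alpha_3 & \alpha_4\end{pmatrix}$, for $s$ corresponding to a matching $x$, both the classifying invariant (the characteristic polynomial of $2\alpha_1\bar\alpha_1 - 1$) and the matching condition $1 - (\alpha_1\bar\alpha_1)^{-1}\in \epsilon N\GL_n(E_v)$ are preserved under $s\mapsto\tp s$, so $\tp s$ lies in the $H'(F_v)$-orbit of $s$. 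Hence there is $h_0\in H'(F_v)$ with $\tp s = h_0 s\bar{h_0}^{-1}$, and a straightforward substitution gives
\[
O^{S'}(\tp s, \widetilde{f'}) = (\chi_{H'}\chi_v^{-1}\widetilde{\eta_v}^{-1})(h_0)\cdot O^{S'}(s,\widetilde{f'}).
\]
Chaining the three identities and invoking the matching of $f$ and $f'$ reduces the lemma to the pointwise identity
\[
(\chi_{H'}\chi_v^{-1}\widetilde{\eta_v}^{-1})(h_0) = \eta_v(-1)^n\epsilon_{D_v}^n
\]
for all matching regular semisimple $x$.

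The main obstacle is this last sign computation. Writing $h_0 = (a,b) \in \GL_n(E_v)\times\GL_n(E_v)$, the block equations $a\alpha_1\bar a^{-1} = \tp{\alpha_1}$, $b\alpha_4\bar b^{-1} = \tp{\alpha_4}$, $a\alpha_2\bar b^{-1} = \tp{\alpha_3}$, $b\alpha_3\bar a^{-1} = \tp{\alpha_2}$ must be used together with the identities forced by $s\bar s = 1$ (in particular $\alpha_1\bar\alpha_2 = -\alpha_2\bar\alpha_4$, relating $\det\alpha_2$ and $\det\alpha_3$ to $\det\alpha_1$ and $\det\alpha_4$) to reduce $(\chi_{H'}\chi_v^{-1}\widetilde{\eta_v}^{-1})(h_0)$ to a clean determinant expression whose value is the stated sign uniformly in the orbit. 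The factor $\epsilon_{D_v}=\eta_v(\epsilon)$ enters through the matching constraint tied to the quaternion algebra $D$, while $\eta_v(-1)^n$ arises from a $(-1)^n$ appearing in solving the block equations.
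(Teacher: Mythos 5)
Your overall strategy coincides with the paper's: pass to $S'$, identify $f'\mapsto f'^{\dag}$ with $s\mapsto \tp{s}$ on the symmetric space, observe that $\tp{s}$ lies in the $H'(F_v)$-orbit of $s$, and extract the constant as the value of $\chi_{H'}\chi_v^{-1}\widetilde{\eta}_v^{-1}$ on a conjugating element $h_0$. But the decisive step — actually producing $h_0$ and evaluating the character on it — is exactly what you defer as "the main obstacle," and that is where the entire content of the constant $\eta_v(-1)^n\epsilon_{D_v}^n$ lives; as written, the proof is not complete. The block equations you propose to solve for a general $s$ are unnecessary and would be painful: the efficient route is to first put $s$ in the normal form $s'(\alpha)=\begin{pmatrix}\alpha & 1\\ 1-\alpha\overline{\alpha} & -\overline{\alpha}\end{pmatrix}$ (Section~\ref{sec:orbit_S'}), for which one sees by inspection that
\[
\tp{s'(\alpha)}=\begin{pmatrix}1-\alpha\overline{\alpha} & \\ & 1\end{pmatrix} s'(\alpha)\begin{pmatrix}(1-\alpha\overline{\alpha})^{-1} & \\ & 1\end{pmatrix},
\]
and since $1-\alpha\overline{\alpha}\in\GL_n(F_v)$ this conjugation is a twisted conjugation by $h_0=(1-\alpha\overline{\alpha},1)\in H'(F_v)$. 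The $\chi$-factors then cancel ($\chi_{H'}(h_0)=\chi_v(\det(1-\alpha\overline{\alpha}))$ against $\chi_v^{-1}(\det h_0)$), leaving $\eta_v(\det(1-\alpha\overline{\alpha}))$; finally $\det(1-\alpha\overline{\alpha})=(-1)^n\det(\alpha\overline{\alpha})\det(1-(\alpha\overline{\alpha})^{-1})$ together with the matching condition $1-(\alpha\overline{\alpha})^{-1}\in\epsilon N\GL_n(E_v)$ gives $\eta_v(-1)^n\epsilon_{D_v}^n$. This also yields for free the (needed, and omitted in your write-up) fact that $f'^{\dag}\in\cS(G'(F_v))_0$.

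Two smaller cautions. First, your "pointwise identity" $\widetilde{f'^{\dag}}(\tp{s})=\widetilde{f'}(s)$ is suspect as stated: $g\mapsto\tp{\overline{g}}^{-1}$ turns right $H''$-cosets into left $H''$-cosets, so the transformed integral is over $H''g$ rather than $gH''$; the identity only holds after also performing the $H'$-integration (i.e.\ at the level of $O^{S'}$, not of $\widetilde{f'}$), which is the form in which the paper states it. Second, when you replace $O^{S'}(\tp{s},\widetilde{f'})$ by a character value times $O^{S'}(s,\widetilde{f'})$ using an abstract $h_0$ with $\tp{s}=h_0 s\overline{h_0}^{-1}$, you should note that $h_0$ is only determined up to $H'_s(F_v)$, so the value is well defined only because $\chi_{H'}\chi_v^{-1}\widetilde{\eta}_v^{-1}$ is trivial on the stabilizer; the explicit $h_0$ above sidesteps this.
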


\begin{proof}
Let $x \in G'(F)$ and $s' = x \overline{x}^{-1}$. Then $O^{G'}(x, f') =
O^{S'}(s', \widetilde{f'})$ where $\widetilde{f'}$ is defined
by~\eqref{eq:f'tilde}. Moreover a little computation shows that
    \[
    O^{G'}(x, f'^\dag) = O^{S'}(\tp{s'}, \widetilde{f'}).
    \]
We will see below in Section~\ref{sec:orbit_S'} that $s'$ is in the
$H'$-orbit of the form
    \[
    \begin{pmatrix} \alpha & 1 \\
    1- \alpha \overline{\alpha} & - \overline{\alpha} \end{pmatrix}.
    \]
So we may assume that $s'$ equals it. Then
    \[
    \tp{s'} =
    \begin{pmatrix} \alpha & 1 - \alpha \overline{\alpha} \\
    1 & - \overline{\alpha} \end{pmatrix} =
    \begin{pmatrix} 1 - \alpha \overline{\alpha} \\ & 1 \end{pmatrix}
    \begin{pmatrix} \alpha & 1 \\ 1 - \alpha \overline{\alpha}
    & - \overline{\alpha} \end{pmatrix}
    \begin{pmatrix} (1- \alpha \overline{\alpha})^{-1}\\ & 1 \end{pmatrix}.
    \]
It follows that
    \[
    O^{S'}(\tp{s'}, \widetilde{f'}) =
    \eta_v( 1- \alpha \overline{\alpha})
    O^{S'}(s', \widetilde{f'}) =
    \eta_v( 1- \alpha \overline{\alpha}) O^{G'}(x, f').
    \]
This in particular implies that $f'^\dag \in \cS(G'(F_v))_0$.

Suppose $x$ matches some $y \in G(F_v)$. This is equivalent to $1 - (\alpha
\overline{\alpha})^{-1} \in \epsilon N \GL_n(E_v)$, which implies $\eta_v(1 -
\alpha \overline{\alpha}) = \eta_v(-1)^n \epsilon_{D_v}^n$. Then
    \[
    O^{G'}(x, f'^\dag) = O^{G'}(x, \eta_v(-1)^n \epsilon_{D_v}^n f')
    \]
and the lemma follows.
\end{proof}

\subsection{Multipliers}    \label{subsec:multiplier}
To analyze the spectral side without truncation, we need to make use of the
techniques from~\cite{BPLZZ}. Recall that for any (complex) algebra $\cA$, a
multiplier is linear map $\mu\star: \cA \to \cA$ that commutes with both the
left and right multiplication in $\cA$. The space of multipliers of $\cA$ is
denoted by $Mul(\cA)$.

Let $v$ be an archimedean place of $F$, $\ft_v$ the (complexified) Cartan
subalgebra of $G(F_{v})$, $\ft_v^*$ the dual space, and $\cZ_{G(F_v)} \simeq
\C[\ft_v]^{W_v}$ the center of the universal enveloping algebra of $G(F_v)$.
Write $\chi_v = (\chi_1, \chi_2)$ then $\chi_1\chi_2$ as a character of
$F_v^\times$ defines an element $\underline{a}_v = (a_v, \cdots, a_v) \in
\ft_v^*$.

In~\cite{BPLZZ}*{Definition~2.8(3)}, an algebra $\cM_v$ of holomorphic
functions on $\ft_v^*$ is introduced (the notation in~\cite{BPLZZ} is
$\cM_{\theta}^{\sharp}(\fh_{\C}^*)^{\mathsf{W}}$). We do not need the precise
definition of this space, but only the following property,
cf.~\cite{BPLZZ}*{Theorem~2.14}. There is an algebra homomorphism
    \[
    \cM_v \to Mul(\cS(G(F_v))), \quad \mu \mapsto \mu\star
    \]
such that if $\sigma$ is an irreducible representation of $G(F_v)$, and $f
\in \cS(G(F_v))$ then
    \[
    \sigma(\mu \star f) = \mu(\lambda_{\sigma}) \sigma(f)
    \]
where $\lambda_{\sigma}$ is the infinitesimal character of $\sigma$. Let
$\iota_v$ be the involution on $\cM_v$ such that $\iota_v(\mu)(z) = \mu(-
\underline{a}_v - z)$ for all $z \in \ft_v^*$ and $\cM_v^+$ be the subspace
consisting of elements invariant under this involution. Recall that we have
an involution $f \mapsto f^\vee \chi_1 \chi_2$ on $\cS(G(F_v))$ and the space
$\cS(G(F_v))^+$ consisting of functions invariant under this involution. If
$f \in \cS(G(F_v))^+$ and $\iota_v(\mu) = \mu$, then $\mu\star f \in
\cS(G(F_v))^+$, i.e.
    \begin{equation}    \label{eq:+space}
    (\mu\star f)^\vee \chi_1\chi_2 = \mu\star f.
    \end{equation}
To see this, we only need to check that the action of both sides on any
irreducible representation $\sigma$ of $G(F_v)$ coincide. The left hand side
equals
    \[
    (\sigma \otimes \chi_1 \chi_2) (\mu\star f)^\vee =
    \mu( - \lambda_{\sigma \otimes \chi_1 \chi_2})
    (\sigma \otimes \chi_1 \chi_2)^\vee (f)
    = \mu(-\underline{a}_v - \lambda_{\sigma})
    (\sigma \otimes \chi_1 \chi_2)^\vee (f).
    \]
Since $\iota_v(\mu) = \mu$ we have $\mu(-\underline{a}_v - \lambda_{\sigma})
= \mu(\lambda_{\sigma})$. Moreover $f^\vee \chi_1\chi_2 = f$ implies $(\sigma
\otimes \chi_1 \chi_2)^\vee (f)  = \sigma(f)$. This proves~\eqref{eq:+space}.

Put $\cZ_G = \prod_{v \mid \infty} \cZ_{G(F_v)}$, $\lambda = \otimes_{v \mid
\infty} \lambda_v$ the character of $\cZ_G$, $\cM = \prod_{v \mid \infty}
\cM_v$ and $\cM^+ = \prod_{v \mid \infty} \cM_v^+$.

Let $\mathtt{S}$ be a finite set of finite places of $F$ such that if $v
\not\in \mathtt{S}$ then $E_v/F_v$, $\pi_v$ and $\chi_v$ are all unramified.
Let $\mathtt{T}_0$ be the set of nonsplit finite places of $F$ and
$\mathtt{T} = \mathtt{T}_0 \cup \mathtt{S}$. Let
    \[
    \cH_G^{\mathtt{T}} =  \bigotimes_{v \not\in \mathtt{T}}
    \cH_{v} =
     \bigotimes_{v \not\in \mathtt{T}}
    C_c^\infty (G(\fo_{F_v}) \bs G(F_v) /G(\fo_{F_v}))
    \]
be the spherical Hecke algebra away from $\mathtt{T}$. We fix an open compact
subgroup $K = \prod_{v \nmid \infty} K_v$ such that if $v \not\in \mathtt{S}$
then $K_v = G(\fo_{F_v})$.

All the above objects for $G$ have their counterparts for $G'$. For each
archimedean place $v$ we have an algebra of holomorphic function $\cM'_v$
which is identified with $\cM_v \otimes \cM_v$ and we put $\cM'^+_v = \cM_v^+
\otimes \cM_v^+$. Moreover put $\cM' = \prod_{v \mid \infty} \cM_v$ and
$\cM'^+ = \prod_{v \mid \infty} \cM_v^+$. We have $\cS(G'(F_v))^+ =
\cS(G(F_v))^+ \otimes \cS(G(F_v))^+$, and we put $\cS(G'(F_\infty))^+ =
\prod_{v \mid \infty} \cS(G'(F_v))^+$. We also have the center of the
universal enveloping algebra $\cZ_{G'}$ which is identified with $\cZ_{G}
\otimes \cZ_G$, and the spherical Hecke algbra away from $\mathtt{T}$
    \[
    \cH_{G'}^{\mathtt{T}} = \bigotimes_{v \not\in \mathtt{T}}
    \cH_{G', v} \simeq
    \cH^{\mathtt{T}} \otimes \cH^{\mathtt{T}}.
    \]
There is a base change homomorphism
    \[
    \mathrm{bc}: \cZ_{G'} \otimes \cH_{G'}^{\mathtt{T}}
    \to \cZ_G \otimes \cH^{\mathtt{T}}
    \]
which is given by the usual multiplication in $\cZ_G$ and $\cH^{\mathtt{T}}$.
We fix an open compact subgroup $K' = \prod_{v \nmid \infty} K'_v$ such that
if $v \not\in \mathtt{S}$ then $K_v' = G'(\fo_{F_v})$.

Let $\lambda = (\lambda_\infty, \lambda^{\infty, \mathtt{T}})$ be the
character of $\cZ_G \otimes \cH_G^{\mathtt{T}}$ attached to $\pi$, and
$L^2_0(G(F) \bs G(\bA_F)/K, \omega)[\lambda]$ be the maximal quotient of
$L^2_0(G(F) \bs G(\bA_F)/K, \omega)$ on which $\cZ_G \otimes
\cH_G^{\mathtt{T}}$ acts by $\lambda$. Then $\lambda' = \lambda \circ
\mathrm{bc} =  (\lambda, \lambda)$ is the character of $\cZ_{G'} \otimes
\cH_{G'}^{\mathtt{T}}$ attached $\pi_E$, cf.~\cite{AC}*{Chapter~1,
Section~5}. We let $L^2_0(G'(F) \bs G'(\bA_F)/K', \omega)[\lambda']$ be the
maximal quotient of $L^2_0(G'(F) \bs G'(\bA_F)/K', \omega)$ on which
$\cZ_{G'} \otimes \cH_{G'}^{\mathtt{T}}$ acts by $\lambda'$. We have
    \[
    L^2_0(G(F) \bs G(\bA_F), \omega)[\lambda] =
    \pi \oplus (\pi \otimes \eta), \quad
    L^2_0(G'(F) \bs G'(\bA_F), \omega')[\lambda'] = \pi_E.
    \]
The second equality is a direct consequence of~\cite{Ram}. The first needs a
little explanation. Indeed let $\sigma$ be an irreducible component of
$L^2_0(G(F) \bs G(\bA_F), \omega)[\lambda]$. Then the base change $\pi_E$ and
$\sigma_E$ to $\GL_{2n}(\bA_E)$ are isobaric automorphic representations and
they agree on almost all places of $E$ of degree one over $F$. Therefore
$\pi_E = \sigma_E$ by~\cite{Ram} and in particular $\sigma_E$ is cuspidal.
Let $\sigma_0$ be the Jacquet--Langlands transfer $\sigma$ to
$\GL_{2n}(\bA_F)$. Then by~\cite{AC}*{Chapter~3, Theorem~4.2(d)}, either
$\pi_0 = \sigma_0$ or $\pi_0 = \sigma_0 \otimes \eta$. Since
Jacquet--Langlands transfer is an injective map, we conclude that either $\pi
= \sigma$ or $\pi = \sigma \otimes \eta$. Our claim then follows from the
fact that $L^2_0(G(F) \bs G(\bA_F), \omega)$ is of multiplicity one.

Let
    \[
    \mathrm{bc}: \cM' \otimes \cH_{G'}^{\mathtt{T}} = (\cM \otimes
    \cH_{G}^{\mathtt{T}}) \otimes (\cM \otimes \cH_{G}^{\mathtt{T}})
    \to \cM \otimes \cH_{G}^{\mathtt{T}}
    \]
be the usual multiplication map.

\begin{prop}    \label{prop:multiplier}
There are elements $\mu' \in \cM'^+ \otimes \cH_{G'}^{\mathtt{T}}$ and $\mu =
\mathrm{bc}(\mu') \in \cM^+ \otimes \cH_G^{\mathtt{T}}$ such that for all $f'
\in \cS(G'(\bA_F))$ and $f \in \cS(G(\bA_F))$ we have
\begin{itemize}
\item $R(\mu' \star f')$ maps $L^2(G'(F) \bs G'(\bA_F)/K', \omega')$ into
    $\pi_E$,

\item $\mu'(\lambda') = \mu'(\lambda, \lambda) = 1$, which is equivalent to
    $\pi_E(\mu' \star f') = \pi_E(f')$ for all $f' \in
    \cS(G'(\bA_F))_{K'}$,
\end{itemize}
and
\begin{itemize}
\item $R(\mu \star f)$ maps $L^2(G(F) \bs G(\bA_F)/K, \omega)$ into $\pi
    \oplus (\pi \otimes \eta)$,

\item $\pi(\mu \star f) = \pi(f)$ and $(\pi \otimes \eta)(\mu \star f) =
    (\pi \otimes \eta)(f)$ for all $f \in \cS(G(\bA_F))_{K}$.
\end{itemize}
\end{prop}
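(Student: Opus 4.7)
The plan is to build $\mu'$ first on the $G'$-side using the spectral isolation technique of Beuzart-Plessis--Liu--Zhang--Zhu, then define $\mu=\mathrm{bc}(\mu')$ and transport the required properties to the $G$-side via the compatibility $\lambda'=\lambda\circ\mathrm{bc}$.

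First I would invoke the results of \cite{BPLZZ} at the archimedean places and combine them with spherical Hecke projectors at the finite places outside $\mathtt{T}$ to produce an element $\mu'_0\in\cM'\otimes\cH_{G'}^{\mathtt{T}}$ whose spectral action on $L^2_0(G'(F)\bs G'(\bA_F)/K',\omega')$ projects onto the $\pi_E$-isotypic component. Here one uses the discussion preceding the proposition to see that the $\lambda'$-eigenspace of this cuspidal space equals $\pi_E$, via \cite{Ram} and strong multiplicity one for $\GL_{2n}$. After rescaling one can arrange $\mu'_0(\lambda')=1$.

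Next, to obtain $\mu'\in\cM'^{+}\otimes\cH_{G'}^{\mathtt{T}}$, I would symmetrize $\mu'_0$ under the archimedean involution $\iota'$. The crucial point is that the averaged multiplier still satisfies $\mu'(\lambda')=1$; this reduces to checking that the infinitesimal character of $\pi_E$ at each archimedean place is invariant, modulo the Weyl group, under $z\mapsto-\underline{a}_v-z$. This invariance reflects the self-duality $\Pi^{\vee}\simeq\Pi\otimes\chi\chi^{c}$, extracted from the Shalika/Friedberg--Jacquet analysis under the hypothesis that the linear period is nonzero. Setting $\mu=\mathrm{bc}(\mu')$, the $+$-subspace condition is preserved because the multiplication map $\cM_v\otimes\cM_v\to\cM_v$ intertwines $\iota_v\otimes\iota_v$ with $\iota_v$, and the identity $\mu(\lambda)=\mu'(\lambda\circ\mathrm{bc})=\mu'(\lambda')=1$ is then immediate. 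Since $L^2_0(G(F)\bs G(\bA_F)/K,\omega)[\lambda]=\pi\oplus(\pi\otimes\eta)$ and both summands share the same archimedean infinitesimal character and the same unramified Hecke eigencharacter outside $\mathtt{T}$, the multiplier $\mu$ acts as the identity on both summands, yielding the desired properties for $R(\mu\star f)$.

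The main obstacle is the archimedean step: one must produce a multiplier inside the distinguished subalgebra $\cM'$ that isolates $\lambda_\infty$ among all admissible infinitesimal characters of $K$-level, and verify that averaging under $\iota'$ does not spoil the normalization at $\lambda'$. The first is precisely the content of the BPLZZ multiplier construction, while the second follows from the archimedean incarnation of the self-duality of $\Pi$. Once both are in hand, the transport to the $G$-side via $\mathrm{bc}$ is purely formal.
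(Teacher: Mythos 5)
Your overall strategy is the same as the paper's: apply the BPLZZ multiplier theorem to $G'=\Res_{E/F}\GL_{2n}$ (where the relevant eigenspace is exactly $\pi_E$ by \cite{Ram} and strong multiplicity one), force the result into the $\iota$-invariant subspace using the self-duality $\pi_{E_w}^\vee\simeq\pi_{E_w}\otimes\chi_1\chi_2$, and then set $\mu=\mathrm{bc}(\mu')$. However, one step as you describe it would fail. You propose to pass from $\mu'_0$ to $\cM'^{+}$ by \emph{averaging} under $\iota'$. The additive average $\tfrac12(\mu'_0+\iota_w(\mu'_0))$ does lie in the $+$-subspace and does satisfy the normalization $\mu'(\lambda')=1$, but it destroys the isolation property, which is the first (and essential) bullet of the proposition: for an automorphic $\sigma\neq\pi_E$ contributing to $L^2(G'(F)\bs G'(\bA_F)/K',\omega')$ one gets $\iota_w(\mu'_0)(\lambda_\sigma)=\mu'_0\bigl(\iota_w(\lambda_{\sigma,\infty}),\lambda_\sigma^{\infty,\mathtt T}\bigr)$, and the argument $(\iota_w(\lambda_{\sigma,\infty}),\lambda_\sigma^{\infty,\mathtt T})$ need not be the character of any automorphic representation, so there is no reason for $\mu'_0$ to vanish there. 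The correct symmetrization is multiplicative: take $\mu'=\prod_{w\mid\infty}\mu'_0\,\iota_w(\mu'_0)$, which still annihilates everything $\mu'_0$ annihilates, lies in $\cM'^{+}\otimes\cH_{G'}^{\mathtt T}$, and satisfies $\mu'(\lambda')=\prod_w\mu'_0(\lambda')\iota_w(\mu'_0)(\lambda')=1$ by the same invariance of $\lambda'$ you invoke. This is what the paper does.

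Two smaller points. First, to apply the BPLZZ theorem at all you should record the hypothesis that $G'=\Res_{E/F}\GL_{2n}$ has no CAP representations; this is why the construction of $\mu'_0$ is available. Second, on the $G$-side the statement "the transport via $\mathrm{bc}$ is purely formal" elides the main content there: you must check not only that $\mu$ acts as the identity on $\pi\oplus(\pi\otimes\eta)$, but that $\mu$ annihilates every other component $\sigma$ of $L^2(G(F)\bs G(\bA_F)/K,\omega)$. This follows from $\mu(\nu)=\mu'(\nu\circ\mathrm{bc})$ together with the observation that $\nu\circ\mathrm{bc}$ is the character attached to the base change $\sigma_E$, so $\mu'(\nu\circ\mathrm{bc})\neq0$ forces $\sigma_E$ cuspidal and $\sigma_E=\pi_E$, whence $\sigma\in\{\pi,\pi\otimes\eta\}$ by the multiplicity-one discussion preceding the proposition. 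You should spell this out rather than leave it implicit.
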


\begin{proof}
Since $G' = \Res_{E/F}\GL_{2n}$, there is no CAP automorphic representation
of $G'(\bA_F)$ in the sense of~\cite{BPLZZ}*{Definition~3.4}.
By~\cite{BPLZZ}*{Theorem~3.6} there is an element $\mu'' \in \cM' \otimes
\cH_{G'}^{\mathtt{T}}$ satisfying the condition required for $\mu'$ in the
proposition.

As $\cM' = \prod_{w \mid \infty} \cM_w$, for $\mu \in \cM' \otimes
\cH_{G'}^{\mathtt{T}}$ we have the element $\iota_w(\mu)$ for each
archimedean place $w$ of $E$, by applying the involution $\iota_w$ only to
the place $w$. Put
    \[
    \mu' = \prod_{w \mid \infty} \mu'' \iota_w (\mu'') \in
    \cM'^+ \otimes \cH_{G'}^{\mathtt{T}}.
    \]
We claim that $\mu'$ again satisfies conditions in the proposition. Indeed,
the first condition is clear since $\mu''$ already maps $L^2(G'(F) \bs
G'(\bA_F)/K', \omega')$ into $\pi_E$, and moreover we have
    \[
    \mu'(\lambda) = \prod_{w \mid \infty} \mu''(\lambda')
    \iota_w(\mu'')(\lambda')
    = 1,
    \]
since $\pi_{E_w}$ satisfies $\pi_{E_w}^\vee = \pi_{E_w} \otimes \chi_{1}
\chi_{2}$ if $w$ is above the place $v$ of $F$ and we write $\chi_v =
(\chi_1, \chi_2)$, which implies $\iota_w(\mu'')(\lambda') = \mu''(\lambda')
= 1$.

Put $\mu = \mathrm{bc}(\mu')$. We claim that $\mu$ satisfies the conditions
in the proposition. This is equivalent to $\mu(\lambda) = 1$ and if $\nu =
(\nu_\infty, \nu^{\infty, \mathtt{T}})$ arises from an irreducible component
$\sigma$ of $L^2(G(F) \bs G(\bA_F)/K, \omega)$ and $\mu(\nu) \not=0$, then
$\sigma$ is cuspidal and $\nu = \lambda$. By the definition of $\mu$ we have
$\mu(\nu) = \mu'(\nu , \nu)$ where $(\nu, \nu) = \nu \circ \mathrm{bc}$ is
the character of the algebra $\cZ_{G'} \otimes \cH_{G'}^{\mathtt{T}}$
obtained by pulling back the character $\nu$, which is a character arising
from the automorphic representation $\sigma_E$. Then $\mu'(\nu \circ
\mathrm{bc}) \not=0$ means that $\sigma_E$ is cuspidal and $\nu \circ
\mathrm{bc} = \lambda'$. Moreover $\mu(\lambda) = \mu'(\lambda') = 1$.
\end{proof}

\section{Proofs of the main theorems}

We are now ready to prove Theorems~\ref{thm:linear_periods}
and~\ref{thm:linear_periods_converse}, assuming
Theorem~\ref{thm:transfer_group} and Theorem~\ref{thm:FL_group}.

\subsection{Proof of Theorem~\ref{thm:linear_periods}}
We keep the notation from the theorem and from
Subsection~\ref{subsec:multiplier}. First we define the global spherical
characters as follows. Let $\pi$ be an irreducible cuspidal automorphic
representation of $G(\bA_F)$ and $f \in \cS(G(\bA_F))$. Put
    \[
    J_{\pi}(f) = \sum_{\varphi} P_{\chi}(\pi(f) \varphi)
    \overline{P_{\chi}(\varphi)},
    \]
where $\varphi$ runs through an orthonormal basis of $\pi$. Let $\Pi$ be an
irreducible cuspidal automorphic representation of $G'(\bA_F)$ and $f' \in
\cS(G'(\bA_F))$. Put
    \[
    I_{\Pi}(f') = \sum_{\varphi} P'_{\chi}(\Pi(f') \varphi)
    \overline{P''_{\chi\eta}(\varphi)},
    \]
where again $\varphi$ runs through an orthonormal basis of $\Pi$.

Recall that $\mathtt{S}$ is a finite set of finite places of $F$ such that if
$v \not\in \mathtt{S}$ then $E_v/F_v$, $\pi_v$ and $\chi_v$ are all
unramified, $\mathtt{T}_0$ is the set of nonsplit finite places of $F$ and
$\mathtt{T} = \mathtt{T}_0 \cup \mathtt{S}$. We require the finite set
$\mathtt{S}$ to contain the place $v_1$. Let us now take test function $f =
\otimes f_v \in \cS(G(\bA_F))$ and $f' = \otimes f_v' \in \cS(G'(\bA_F))$ as
follows. If $v \not\in \mathtt{S}$ and is finite, we take $f_v =
\id_{G(\fo_{F_v})}$ and $f_v' = \id_{G'(\fo_{F_v})}$. If $v \in \mathtt{S}$
and $v \not= v_1$, or $v$ is an archimedean place, we take a positive type
test function $f_v \in \cS(G(F_v))_0$ supported sufficiently close to $1$,
such that $J_{\pi_v}(f_v) \not=0$. The existence of such a test function is
given by Lemma~\ref{lemma:transferrable_test_function} if $v$ is not split,
and is obvious if $v$ is split. We choose $f'_v \in \cS(G'(F_v))$ such that
$f'_v$ and $f_v$ match in the sense of Lemma~\ref{lemma:matching_split}. If
$v$ is archimedean, since $\pi_v^\vee \otimes \chi_1 \chi_2 \simeq \pi_v$ if
$\chi_v = (\chi_1, \chi_2)$, we may further assume that $f'_v \in
\cS(G'(F_v))^+$ and hence $f_v \in \cS(G(F_v))^+$. With this additional
condition, $f_v = \mathrm{bc}_v(f'_v)$. If $v = v_1$, we may assume that
$f_{v_1}$ is supported in the elliptic locus, and this is possible by
Proposition~\ref{prop:elliptic_rep}.

We have constructed multipliers $\mu' \in \cM'^{+} \otimes
\cH_{G'}^{\mathtt{T}}$ and $\mu \in \cM^+ \otimes \cH_G^{\mathtt{T}}$ in
Proposition~\ref{prop:multiplier}. The key point is that these multipliers
are in the ``plus'' subspaces. We now use the test functions $\mu' \star f'$
and $\mu \star f$. We claim that they still match. To see this we write $f' =
f'^{\mathtt{T}} \otimes f'_{\mathtt{T}}$ where
    \[
    f'^{\mathtt{T}} \in \cS(G'(F_\infty))^+
    \otimes \cH_{G'}^{\mathtt{T}}, \quad
    f_{\mathtt{T}} \in
    \cS(G(F_\infty))^+ \otimes \cH_{G}^{\mathtt{T}},
    \]
and $f = f^{\mathtt{T}} \otimes f_{\mathtt{T}}$ similarly. The function $\mu'
\star f'$ (resp. $\mu \star f$) is obtained from $f'$ (resp. $f$) by
modifying only finite places not in $\mathtt{T}$ and the archimedean places.
Thus $f'_{\mathtt{T}}$ and $f_{\mathtt{T}}$ match (i.e. each local components
match). Moreover we have
    \[
    \mathrm{bc}(\mu' \star f'^{\mathtt{T}}) =
    \mathrm{bc}(\mu') \star \mathrm{bc}(f'^{\mathtt{T}}) =
    \mu \star f^{\mathtt{T}}.
    \]
The last equality follows from the definition of $\mu$ and $f^{\mathtt{T}}$.
It follows from Lemma~\ref{lemma:+space} and Proposition~\ref{prop:FL_split}
that $\mu' \star f'^{\mathtt{T}}$ and $\mu \star f^{\mathtt{T}}$ match. This
proves the claim.

With this choice of the test functions, we conclude that
    \begin{equation}    \label{eq:rtf}
    I_{\pi_E}(f') =
    J_{\pi}(f) + J_{\pi \otimes \eta}(f),
    \end{equation}
Since the functions are of positive type we conclude that $J_{\pi}(f) >0$ and
$J_{\pi \otimes \eta}(f) \geq 0$. Therefore $I_{\pi_E}(f') \not=0$ and the
assertion on the $L$-functions follows from Lemma~\ref{lemma:functoriality}.

We now move to the assertions on the local components of $\pi$. Let us fix a
nonsplit place $v$ of $F$. Since $L(s, \pi_0, \wedge^2 \otimes
\chi_v|_{F_v^\times})$ has simple pole at $s = 1$, we conclude that the
Langlands parameter of $\pi_{0, v}$ takes values in $\GSp_{2n}(\C)$ with
similitude character $\chi_v|_{F_v^\times}$.

It remains to calculate the local root number $\epsilon(\pi_{0, E, v} \otimes
\chi_v)$. By Lemma~\ref{lemma:matching_involution}, $\epsilon_{D_v}^n
\eta_v(-1)^n f_v$ and $f'^\dag_v$ also match. We define
    \[
    f'^\dag = \otimes_{w \not= v} f_w' \otimes f_v'^\dag.
    \]
Then by Lemma~\ref{lemma:involution} we have
    \[
    \epsilon(\pi_{0, E, v} \otimes \chi_v) I_{\pi_E}(f') =
    \epsilon_{D_v}^n \eta_v(-1)^n
    (J_{\pi}(f) + J_{\pi \otimes \eta}(f)) \not=0.
    \]
It follows that
    \[
    \epsilon(\pi_{0, E, v} \otimes \chi_v) = \epsilon_{D_v}^n \eta_v(-1)^n.
    \]
This finishes the proof of Theorem~\ref{thm:linear_periods}.

\subsection{Proof of  Theorem~\ref{thm:linear_periods_converse} }
We now move to Theorem~\ref{thm:linear_periods_converse}. The technical part
is, unlike the situation in Theorem~\ref{thm:linear_periods} where all test
functions supported in a small neighbourhood of $1 \in G(F_v)$ where $v$ is a
nonsplit place of $F$ can be transferred to $G'(F_v)$, not all test functions
$f'$ can be transferred to $G(F_v)$. In order for the test function on
$G'(F_v)$ to have a matching function $f$, there is a nontrivial vanishing
condition on the orbital integrals. Lacking a good understanding of
representation theory and harmonic analysis on $S'(F_v)$, the result we
obtain is unfortunately limited to the elliptic case.

We keep the assumptions of the theorem. Let us put $\Pi = \pi_{0, E}$. Then
$n$ is odd and $P_{\chi}'$ and $P_{\chi\eta}''$ are not identically zero on
$\Pi$. The later two conditions in particular implies that $\Pi^\vee \otimes
(\chi \chi^c)^{-1} = \Pi$ and $\Pi^c = \Pi$.

\begin{lemma}   \label{lemma:elliptic_support_split}
Assume that $\Pi_v$ is elliptic. Let $D$ be the quaternion algebra over
$F_v$, split (resp. nonsplit) if $\epsilon(\Pi_v \otimes \chi_v) =
\eta_v(-1)^n$ (resp. $-\eta_v(-1)^n$). Then there is an $f' \in
\cS(G'(F_v))_0$ (for the group $G(F_v)$ given by this quaternion algebra $D$)
such that $I_{\Pi_v}(f') \not=0$.
\end{lemma}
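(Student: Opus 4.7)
The plan is a symmetrization under the involution $f' \mapsto f'^\dag$ of \eqref{eq:involution}, combining the ellipticity hypothesis with Lemmas~\ref{lemma:involution} and~\ref{lemma:matching_involution}. By the definition of ellipticity recalled in Subsection~\ref{subsec:involution}, I pick $f'_0 \in \cS(G'(F_v))$ supported in the elliptic locus of $G'(F_v)$ with $I_{\Pi_v}(f'_0) \neq 0$. Set $c = \eta_v(-1)^n \epsilon_{D_v}^n \in \{\pm 1\}$ and define the candidate test function
\[
f' \;=\; f'_0 + c\, f'^\dag_0.
\]

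First I verify that $f' \in \cS(G'(F_v))_0$ for the $G(F_v)$ attached to $D$. From the computation in the proof of Lemma~\ref{lemma:matching_involution}, for a regular semisimple $x \in G'(F_v)$ with $s' = x\overline{x}^{-1}$ brought to the standard representative shape having upper-left block $\alpha$, one has $O^{G'}(x, f'^\dag_0) = \eta_v(1 - \alpha\overline{\alpha})\, O^{G'}(x, f'_0)$, while $x$ matching some $y \in G(F_v)$ for the given quaternion algebra is equivalent to $\eta_v(1 - \alpha\overline{\alpha}) = c$. Hence
\[
O^{G'}(x, f') \;=\; \bigl(1 + c\, \eta_v(1 - \alpha\overline{\alpha})\bigr)\, O^{G'}(x, f'_0),
\]
which vanishes when $x$ does not match any $y \in G(F_v)$ and equals $2\, O^{G'}(x, f'_0)$ otherwise. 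This gives $f' \in \cS(G'(F_v))_0$ as required.

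Next, Lemma~\ref{lemma:involution} yields
\[
I_{\Pi_v}(f') \;=\; I_{\Pi_v}(f'_0) + c\, I_{\Pi_v}(f'^\dag_0) \;=\; \bigl(1 + c\, \epsilon(\Pi_v \otimes \chi_v)\bigr)\, I_{\Pi_v}(f'_0).
\]
The hypothesis on $D$ is exactly that $\epsilon(\Pi_v \otimes \chi_v) = \eta_v(-1)^n \epsilon_{D_v}^n = c$ (noting $\epsilon_{D_v} = 1$ when $D$ is split and $\epsilon_{D_v} = -1$ when $D$ is ramified), so the prefactor equals $1 + c^2 = 2$ and $I_{\Pi_v}(f') \neq 0$.

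The main obstacle here is conceptual rather than computational: the real content is that the spectral eigenvalue $\epsilon(\Pi_v \otimes \chi_v)$ of the involution $\dagger$ on $\Pi_v$, given by Lemma~\ref{lemma:involution}, coincides with the geometric sign $\eta_v(-1)^n \epsilon_{D_v}^n$ that selects which quaternion algebra receives the transferred orbital integrals in Lemma~\ref{lemma:matching_involution}. This alignment is precisely the dichotomy predicted by the local conjecture of Prasad and Takloo-Bighash. Once that coincidence is in hand and an elliptic-supported $f'_0$ has been produced from the ellipticity of $\Pi_v$, the symmetrization above is essentially forced.
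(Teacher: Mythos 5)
Your construction coincides with the paper's: take an elliptic-supported $f'_0$ with $I_{\Pi_v}(f'_0)\neq 0$, symmetrize under $\dagger$, use Lemma~\ref{lemma:involution} on the spectral side and the identity $O^{G'}(x,f_0'^{\dag})=\eta_v(1-\alpha\overline{\alpha})\,O^{G'}(x,f'_0)$ on the geometric side. The one step you have not actually justified is the assertion that ``$x$ matches some $y\in G(F_v)$'' is \emph{equivalent} to $\eta_v(1-\alpha\overline{\alpha})=c$. Lemma~\ref{lemma:matching_involution} only proves the forward implication (matching $\Rightarrow$ the sign condition), whereas your vanishing argument needs the converse: if $x$ does not match, then $\eta_v(1-\alpha\overline{\alpha})=-c$. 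That converse is false for general regular semisimple $x$; it holds here for two reasons you never invoke. First, one may restrict to elliptic $x$, because $f'_0$ and $f_0'^{\dag}$ are supported in the elliptic locus, so orbital integrals at non-elliptic $x$ vanish for free. Second, $n$ must be odd: for elliptic $x$ the element $1-(\alpha\overline{\alpha})^{-1}$ lies in the degree-$n$ field generated by $\alpha\overline{\alpha}$, and membership in $\epsilon N\GL_n(E_v)$ is detected by $\eta_v$ of the determinant, which equals $\eta_v(\epsilon)^n$; only for odd $n$ does this distinguish the two quaternion algebras.

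The parity also enters your spectral computation: the identity $\epsilon(\Pi_v\otimes\chi_v)=\eta_v(-1)^n\epsilon_{D_v}^n$ fails for $n$ even when $D$ is ramified (then $\epsilon_{D_v}^n=1$ while the hypothesis gives $\epsilon(\Pi_v\otimes\chi_v)=-\eta_v(-1)^n$). The lemma sits inside the proof of Theorem~\ref{thm:linear_periods_converse}, where $n$ odd is a standing assumption, so the argument is salvageable verbatim, but you should state where that hypothesis is used, since it is the only genuinely delicate point. Relatedly, your closing paragraph overstates the content of the sign coincidence: $D$ is \emph{defined} so that $\epsilon(\Pi_v\otimes\chi_v)$ equals the geometric sign $\eta_v(-1)^n\epsilon_{D_v}^n$, so nothing about the Prasad--Takloo-Bighash dichotomy is being proved at this step.
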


\begin{proof}
Let $f'$ be a test function supported in the elliptic locus such that
$I_{\Pi_v}(f') \not=0$. Let $f'' = f' + \epsilon(\Pi_v \otimes \chi_v)
f'^\dag$ where $-^\dag$ is the involution defined by~\eqref{eq:involution}
in Subsection~\ref{subsec:involution}. Then $I_{\Pi_v}(f'') = 2 I_{\Pi_v}(f')
\not=0$ by Lemma~\ref{lemma:involution}.

It remains to explain that $f'' \in \cS(G'(F_v))_0$. We need to prove that if
$x \in G'(F_v)$ is an elliptic element that does not match any $y \in
G(F_v)$, then $O^{G'}(x, f'') =0$.

The element $x$ matches $y\in G(F_v)$ if and only if $1 - (\alpha
\overline{\alpha})^{-1} \in \epsilon N \GL_n(E_v)$ where $\epsilon =
\epsilon(\Pi_v \otimes \chi_v) \eta_v(-1)^n$ in $F_v^\times/NE_v^\times$.
Since $x \in G'(F_v)$ is elliptic and $n$ is odd, we have $1 - (\alpha
\overline{\alpha})^{-1} \in \epsilon N \GL_n(E_v)$ if and only if $\eta_v(1 -
(\alpha \overline{\alpha})^{-1}) = \epsilon$, and this simplifies to $\eta_v(
1- \alpha \overline{\alpha}) = \epsilon(\Pi_v \otimes \chi_v)$.

Therefore if $x$ does not match and $y \in G(F_v)$, then $\eta_v( 1- \alpha
\overline{\alpha}) = - \epsilon(\Pi_v \otimes \chi_v)$. We have
    \[
    O^{G'}(x, f'^\dag) =
    O^{S'}(\tp{s'}, \widetilde{f'})
    = \eta_v( 1- \alpha \overline{\alpha})
    O^{S'}(s', \widetilde{f'})
    = - \epsilon(\Pi_v \otimes \chi_v) O^{G'}(x, f')
    \]
which implies $O^{G'}(x, f'') = 0$.
\end{proof}

A similar argument also proves the following lemma.

\begin{lemma}   \label{lemma:supercuspidal_matrix_coefficients}
Let $D$ be the quaternion algebra over $F_v$, split (resp. nonsplit) if
$\epsilon(\Pi_v \otimes \chi_v) = \eta_v(-1)^n$ (resp. $-\eta_v(-1)^n$).
Assume that $\Pi_v$ is supercuspidal. Let $f' \in \cS(G'(F_v))$ and
    \[
    f'_{\Pi_v}(g) =
    \sum_W \langle \Pi_v(f_v) W, \Pi_v(g)W \rangle
    \]
where $W$ runs through an orthonormal basis of $\Pi_v$. If $f''\in
\cS(G'(F_v))$ such that
    \[
    \int_{Z'(F_v)} f''(zg) \omega_{\Pi_v}(z) \rd z = f'_{\Pi_v}(g),
    \]
then $f'' \in \cS(G'(F_v))_0$ (for the group $G(F_v)$ given by this
quaternion algebra $D$).
\end{lemma}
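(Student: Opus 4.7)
The plan is to parallel the proof of Lemma~\ref{lemma:elliptic_support_split}, using the involution $f'\mapsto f'^\dag$ defined in~\eqref{eq:involution} to force orbital-integral cancellation at non-matching elements. Recall the mechanism there: by Lemma~\ref{lemma:matching_involution}, $O^{G'}(x,f'^\dag)=\eta_v(1-\alpha\overline\alpha)\,O^{G'}(x,f')$, and at any elliptic regular $x\in G'(F_v)$ not matching any $y\in G(F_v)$ one has $\eta_v(1-\alpha\overline\alpha)=-\epsilon(\Pi_v\otimes\chi_v)$. Consequently, if $f''^\dag=\epsilon(\Pi_v\otimes\chi_v)\,f''$ holds, then $O^{G'}(x,f'')=-O^{G'}(x,f'')$ at such $x$ and the orbital integral vanishes.

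The crux is therefore to establish the pointwise identity
\[
(f'_{\Pi_v})^\dag=\epsilon(\Pi_v\otimes\chi_v)\,f'_{\Pi_v}.
\]
Granting this, I will set $\tilde f''=\tfrac{1}{2}\bigl(f''+\epsilon(\Pi_v\otimes\chi_v)\,f''^\dag\bigr)$ and check that integrating $\tilde f''(zg)$ against $\omega_{\Pi_v}(z)$ over $Z'(F_v)$ still returns $f'_{\Pi_v}(g)$ (here one uses that $\dag$ preserves the integration variable $z\in Z'(F_v)=F_v^\times$, so $\overline z=z$, and the central-character computation is direct). Since replacing $f''$ by $\tilde f''$ is harmless for the conclusion, one reduces to the case $f''^\dag=\epsilon(\Pi_v\otimes\chi_v)\,f''$, and the cancellation argument above gives the desired vanishing of $O^{G'}(x,f'')$ at elliptic non-matching $x$. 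For the identity itself, observe that $(f'_{\Pi_v})^\dag$ is a sum of matrix coefficients of the twisted action $g\mapsto\Pi_v(\tp{\overline g}^{-1})$, which realizes the representation $(\Pi_v^c)^\vee\simeq\Pi_v\otimes\chi_v\chi_v^c$ (using $\Pi_v^\vee\simeq\Pi_v^c\otimes\chi_v\chi_v^c$ from Subsection~\ref{subsec:functoriality}). Fixing any intertwiner $T:\Pi_v\to(\Pi_v^c)^\vee$ rewrites $(f'_{\Pi_v})^\dag$ as a scalar multiple of $f'_{\Pi_v}$, and the scalar is pinned down by pairing both sides with the spherical character functional $I_{\Pi_v}$ and invoking Lemma~\ref{lemma:involution}.

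Non-elliptic regular semisimple non-matching $x$ are handled separately: since $\Pi_v$ is supercuspidal, its relative character (the locally integrable function representing $I_{\Pi_v}$ on the regular semisimple locus, in the style of~\cite{Guo3}) is supported on the elliptic set of $G'(F_v)$, which via a Weyl-type integration formula forces $O^{G'}(x,f'_{\Pi_v})$ and hence $O^{G'}(x,f'')$ to vanish on the non-elliptic regular locus. The main obstacle is the pointwise identity in the second step above: promoting the spectral statement of Lemma~\ref{lemma:involution} to an equality of functions. This requires careful matching of the normalizations of the intertwiner $T$, of the Shalika and Flicker--Rallis models used to define $\ell'$ and $\ell''$ in the proof of Lemma~\ref{lemma:involution}, and of the local functional equation~\eqref{eq:FE_local_FJ} for $Z^{\mathrm{FJ}}$; once these normalizations are reconciled, the scalar of proportionality is forced to be $\epsilon(\Pi_v\otimes\chi_v)$.
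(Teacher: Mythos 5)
Your overall mechanism is the right one --- play the spectral eigenvalue $\epsilon(\Pi_v\otimes\chi_v)$ of the involution $\dag$ from Lemma~\ref{lemma:involution} against its geometric eigenvalue $\eta_v(1-\alpha\overline{\alpha})$ on orbital integrals --- but the pivot of your argument, the pointwise identity $(f'_{\Pi_v})^\dag=\epsilon(\Pi_v\otimes\chi_v)\,f'_{\Pi_v}$, is false whenever $\epsilon(\Pi_v\otimes\chi_v)=-1$. Evaluate both sides at $g=1$: since $\tp{\overline{1}}^{-1}=1$ and $(\chi_v\chi_v^c)(1)=1$, the left-hand side equals $f'_{\Pi_v}(1)=\Tr\Pi_v(f')$, while the right-hand side equals $\epsilon(\Pi_v\otimes\chi_v)\Tr\Pi_v(f')$; choosing $f'$ with $\Tr\Pi_v(f')\neq 0$ gives a contradiction. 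No normalization of your intertwiner $T$ can rescue this, because $f'_{\Pi_v}(g)=\sum_W\langle\Pi_v(f')W,\Pi_v(g)W\rangle$ is a trace and hence independent of $T$: what one actually gets is $(f'_{\Pi_v})^\dag=(f'^\dag)_{\Pi_v}$, and this function agrees with $\epsilon(\Pi_v\otimes\chi_v)f'_{\Pi_v}$ only after taking orbital integrals over $(H'\times H'')$-orbits, not pointwise. Consequently your reduction to the case $f''^\dag=\epsilon(\Pi_v\otimes\chi_v) f''$ collapses as well: the symmetrized function $\tilde f''$ no longer descends to $f'_{\Pi_v}$.

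The ingredient you are missing is the multiplicity-one factorization of the orbital integral of a matrix coefficient, which is how the paper converts the spectral statement into a geometric one without any pointwise claim. Since $\Hom_{H'(F_v)}(\Pi_v\otimes\chi_{H',v},\C)$ and $\Hom_{H''(F_v)}(\Pi_v\otimes\chi_v,\C)$ are one-dimensional, for each fixed regular semisimple $x$ the bilinear form $(W,W')\mapsto O^{G'}(x,\langle W,\Pi_v(\cdot)W'\rangle)$ must equal $A(x)\,\ell'(W)\overline{\ell''(W')}$ for a scalar $A(x)$, whence $O^{G'}(x,f'_{\Pi_v})=A(x)\,I_{\Pi_v}(f')$. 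Applying this to $f'^\dag$ and invoking Lemma~\ref{lemma:involution} gives $O^{G'}(x,(f'^\dag)_{\Pi_v})=\epsilon(\Pi_v\otimes\chi_v)\,O^{G'}(x,f'_{\Pi_v})$, while the geometric computation from the proof of Lemma~\ref{lemma:elliptic_support_split} gives $O^{G'}(x,(f'_{\Pi_v})^\dag)=\eta_v(1-\alpha\overline{\alpha})\,O^{G'}(x,f'_{\Pi_v})$; comparing the two shows the orbital integral can be nonzero only when $\eta_v(1-\alpha\overline{\alpha})=\epsilon(\Pi_v\otimes\chi_v)$, i.e. only at matching $x$. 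Note that this treats all regular semisimple $x$ uniformly, so your separate appeal to the claim that the relative character of a supercuspidal is supported on the elliptic set --- a claim you do not justify and which does not follow from supercuspidality --- is neither needed nor available.
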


\begin{proof}
We keep the notation from the proof of Lemma~\ref{lemma:involution}
and~\ref{lemma:elliptic_support_split}. To simplify notation, if $f'$ is a
matrix coefficient of $\Pi_v$, by the orbital integral $O(x, f')$ we mean
$O(x, f'_1)$ where $f'_1 \in \cS(G'(F_v))$ is a function such that
    \[
    \int_{Z'(F_v)} f''(zg) \omega_{\Pi_v}(z) \rd z = f'(g)
    \]
This is independent from the choice of $f''$.

The $\Hom$-spaces $\Hom_{H'(F)}(\Pi_v \otimes \chi_{H',v}, \C)$ and
$\Hom_{H''(F_v)}(\Pi_v \otimes \chi_v, \C)$ are both of dimension one, and
for a fixed regular semisimple $x \in G'(F_v)$, the linear form
    \[
    (W, W') \mapsto O^{G'}(x, \langle W, \Pi_v(\cdot) W' \rangle)
    \]
defines an element in
    \[
    \Hom_{H'(F)}(\Pi_v \otimes \chi_{H',v}, \C) \otimes
    \overline{\Hom_{H''(F_v)}(\Pi_v \otimes \chi_v, \C)}.
    \]
It follows that there is a function $A(x)$ on $G'(F_v)$ independent of $W$
and $W'$ such that
    \[
    O^{G'}(x, \langle W, \Pi_v(\cdot) W' \rangle) = A(x)
    \ell'(W) \overline{\ell''(W')}.
    \]
Thus
    \[
    O^{G'}(x, f'_{\Pi_v})= A(x) I_{\Pi_v}(f').
    \]

Let us now consider $O^{G'}(x, (f'^\dag)_{\Pi_v})$. By the proof of
Lemma~\ref{lemma:involution}, on the one hand, we have
    \[
    O^{G'}(x, (f'^\dag)_{\Pi_v}) = A(x) I_{\Pi_v}(f'^\dag) =
    \epsilon(\Pi_v \otimes \chi_v) A(x) I_{\Pi_v}(f') =
    \epsilon(\Pi_v \otimes \chi_v) O^{G'}(x, f'_{\Pi_v}).
    \]
On the other hand by the proof Lemma~\ref{lemma:elliptic_support_split} we
have
    \[
    O^{G'}(x, (f'^\dag)_{\Pi_v}) = O^{G'}(x, (f'_{\Pi_v})^\dag) =
    \eta_v(1 - \alpha \overline{\alpha}) O^{G'}(x, f'_{\Pi_v}).
    \]
Thus if $O^{G'}(x, f'_{\Pi_v}) \not=0$ we have $\epsilon(\Pi_v \otimes
\chi_v) = \eta_v(1-\alpha \overline{\alpha})$. As in the proof of
Lemma~\ref{lemma:elliptic_support_split}, this implies that $f'' \in
\cS(G'(F_v))_0$ (for the group $G$ given by this quaternion algebra $D$).
\end{proof}

\begin{proof}[Proof of Theorem~\ref{thm:linear_periods_converse}]
We just need to reverse the argument in the proof of
Theorem~\ref{thm:linear_periods}. Let $D$ be the quaternion algebra over $F$
that splits at all $v \not\in \Sigma$ and split places, and at a nonsplit
place $v \in \Sigma$, it is split (resp. nonsplit) if $\epsilon(\Pi_v \otimes
\chi_v) = \eta_v(-1)^n$ (resp. $-\eta_v(-1)^n$). Let $G = \GL_n(D)$.

By the assumption of the theorem we know that $P'_{\chi}$ and
$P''_{\chi\eta}$ are not identically zero on $\Pi$. This implies that if $v$
is a split place of $F$ and $w$ a place of $E$ above it, then $\Pi_{w} \simeq
\pi_{0, v}$ and $\Pi_w^\vee \otimes \chi_1\chi_2 \simeq \Pi_w$ if we write
$\chi_v = (\chi_1, \chi_2)$.

Again we have the sets of places $\mathtt{S}$ and $\mathtt{T}$ as in the
proof of Theorem~\ref{thm:linear_periods}. We require that the set
$\mathtt{S}$ contains $\Sigma$. We choose the test function $f'$ as follows.
Assume first that $v \not=v_1$. If $v \not\in \Sigma$, then we choose $f'_v =
\id_{G'(\fo_{F_v})}$ and $f_v = \id_{G(\fo_{F_v})}$. If $v$ is infinite then
we choose $f_v' \in \cS(G'(F_v))^+$ such that $I_{\Pi_v}(f'_v) \not=0$ and
let $f_v = \mathrm{bc}_v(f'_v) \in \cS(G(F_v))^+$ be the test function that
matches it. This is possible because $\Pi_w^\vee \otimes \chi_1\chi_2 \simeq
\Pi_w$. If $v \in \Sigma$ and is split we choose any $f'_v$ such that
$I_{\Pi}(f'_v) \not=0$ and let $f_v$ be the function that matches $f'_v$ in
the sense of Lemma~\ref{lemma:matching_split}. If $v \in \Sigma$ is nonsplit
and $\Pi_v$ is supercuspidal, then we take an $f''_v \in \cS(G'(F_v))$ and
choose $f'_v \in \cS(G'(F_v))$ such that
    \[
    \int_{Z'(F_v)} f'_v(zg) \omega_{\Pi_v}(z) \rd z = (f''_v)_{\Pi_v}(g),
    \]
as in Lemma~\ref{lemma:supercuspidal_matrix_coefficients}. We may assume that
$I_{\Pi'_v}(f'_v) \not=0$. By
Lemma~\ref{lemma:supercuspidal_matrix_coefficients}, $f'_v \in
\cS(G'(F_v))_0$ and we let $f_v \in \cS(G(F_v))$ be a test functions that
matches $f'_v$. If $v = v_1$, then we choose $f'_v$ to be a test function
supported in the elliptic locus such that $I_{\Pi_v}(f'_v) \not=0$ and $f'_v
\in \cS(G'(F_v))_0$. By Lemma~\ref{lemma:elliptic_support_split} such a test
function exists. We let $f_v \in \cS(G(F_v))$ be a test functions that
matches $f'_v$.

For this test function we have $I_{\Pi}(f') \not=0$. Now argue as in the
proof of Theorem~\ref{thm:linear_periods}, we conclude that
    \begin{equation}    \label{eq:rtf_converse}
    I_{\Pi}(f') = J_{\pi}(f) + J_{\pi \otimes \eta}(f).
    \end{equation}
Here $\pi$ is an irreducible cuspidal representation of $G(\bA_F)$ such that
$\pi_E = \Pi$. So $P_{\chi}$ is not identically on either on $\pi$ or $\pi
\otimes \eta$. But as a character of $G(\bA_F)$, $\eta$ is trivial when
restricted to $H(\bA_F)$. It follows that $P_{\chi}$ is identically zero on
$\pi$ if and only if it is so on $\pi \otimes \eta$. Thus it is not
identically zero on both. Finally $\pi_0$, $\pi$ and $\pi \otimes \eta$ agree
at all the split places. So either $\pi$ or $\pi \otimes \eta$ is the
Jacquet--Langlands transfer of $\pi_0$ to $G(\bA_F)$.

The uniqueness of $D$ follows from Theorem~\ref{thm:linear_periods}. This
finishes the proof of Theorem~\ref{thm:linear_periods_converse}.
\end{proof}

\section{Analysis of the orbits}

\subsection{Semisimple elements in $S'$}    \label{sec:orbit_S'}
In this section, $E/F$ is a quadratic field extension of either number fields
or local fields of characteristic zero. Recall that we have the groups $G' =
\Res_{E/F} \GL_{2n}$, $H' = \Res_{E/F}(\GL_n \times \GL_n)$ embedded in $G'$
as diagonal blocks, $H'' = \GL_{2n, F}$, and $Z' \simeq \GL_{1, F}$ the split
center of $G'$. We also have a symmetric space
    \[
    S' = \{ g \overline{g}^{-1} \mid g\in G'\}
    \]
on which $H'$ acts by twisted conjugation. An element in $S'(F)$ is called
semisimple if its $H'$-orbit is (Zariski) closed. It is called regular
semisimple if its stabilizer in $H'$ is a torus of dimension $n$. It is
called elliptic if further more its stabilizer in $H'$ is an elliptic torus
modulo $Z'$ (over $F$). An element $g \in G'(F)$ is semisimple (resp. regular
semisimple, resp. elliptic) if $g \overline{g}^{-1}$ is so in $S'(F)$.

In what follows we are going to make repeated use of the following lemma
without mentioning it, cf.~\cite{AC}*{Chapter~1, Lemma~1.1}.

\begin{lemma}
Let $g \in \GL_n(E)$. Then $N g$ is conjugate to an element in $\GL_n(F)$.
Moreover if $g_1, g_2 \in \GL_n(E)$, then $g_1$ and $g_2$ are twisted
conjugate if and only if $Ng_1$ and $N g_2$ are conjugate in $\GL_n(E)$.
\end{lemma}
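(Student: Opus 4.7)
The plan is to prove both assertions by direct computation together with an explicit averaging argument patterned on the classical proof of Hilbert~90. For the first assertion, a direct calculation yields $g^{-1}(Ng)g = \overline{g}\,g = \overline{Ng}$, so $Ng$ is $\GL_n(E)$-conjugate to $\overline{Ng}$. The invariant factors of $Ng$ as an $E[X]$-module on $E^n$ therefore coincide with those of $\overline{Ng}$, which are their Galois conjugates; hence they lie in $F[X]$, and the rational canonical form built from these invariant factors is an element of $\GL_n(F)$ which is $\GL_n(E)$-conjugate to $Ng$.

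The forward direction of the second assertion is immediate from $N(hg_1\overline{h}^{-1}) = h(Ng_1)h^{-1}$. For the converse, suppose $Ng_1 = y(Ng_2)y^{-1}$ for some $y \in \GL_n(E)$. Replacing $g_2$ by the twisted conjugate $yg_2\overline{y}^{-1}$ reduces to $Ng_1 = Ng_2 =: x$. Applying the first assertion and then twisting both $g_1$ and $g_2$ simultaneously by the conjugating element produced there, I may further assume $x \in \GL_n(F)$. Then $\overline{x} = x$; taking Galois conjugates of $g_i\overline{g_i} = x$ gives $\overline{g_i}g_i = x$ as well, so $g_i x = g_i\overline{g_i}g_i = xg_i$, and both $g_1$ and $g_2$ lie in the centralizer of $x$ in $\GL_n(E)$.

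To produce the conjugator, I set
\[
h(v) := v + g_1\,\overline{v}\,g_2^{-1}
\]
for a parameter $v$ in the centralizer of $x$. A direct expansion, using $\overline{g_i} = g_i^{-1}x$ and $vx = xv$, verifies the identity $h(v)g_2 = g_1\,\overline{h(v)}$ for every such $v$; so any invertible choice of $h(v)$ yields the desired conjugator. Taking $v = cI$ for a scalar $c \in E^\times$ gives $h(cI) = cI + \overline{c}\,g_1g_2^{-1}$, whose determinant vanishes precisely when $-c/\overline{c}$ is an eigenvalue of $g_1g_2^{-1}$. Since $c \mapsto c/\overline{c}$ surjects onto the infinite norm-one subgroup of $E^\times$ (by Hilbert~90 for $\GL_1$), a suitable $c$ avoiding the finite set of forbidden values can always be chosen, completing the proof.

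The main point requiring care is locating the averaging formula for $h(v)$; once written down, the verifications are routine. A subtle pitfall lies in the first assertion, where one must work with invariant factors rather than only the characteristic polynomial, since otherwise the reduction to $x \in \GL_n(F)$ would fail for non-semisimple $x$.
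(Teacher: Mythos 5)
Your proof is correct and complete. Note that the paper does not prove this lemma at all: it is quoted verbatim from Arthur--Clozel (Chapter~1, Lemma~1.1), where the backward implication is obtained by identifying the set of twisted conjugators as a torsor under the twisted centralizer and invoking the vanishing of the relevant $H^1$ (a Hilbert~90 statement for unit groups of algebras). Your argument is the explicit, self-contained version of the same principle: the reduction to $N g_1=N g_2=x\in\GL_n(F)$ via invariant factors is exactly right (and you correctly flag that the characteristic polynomial alone would not suffice for non-semisimple $x$), the identity $h(v)g_2=g_1\overline{h(v)}$ for $v$ commuting with $x$ checks out using $\overline{g_i}=g_i^{-1}x$, and the choice $v=cI$ works because $\det\bigl(cI+\overline{c}\,g_1g_2^{-1}\bigr)=0$ only for $c/\overline{c}$ in a set of at most $n$ values, while $c\mapsto c/\overline{c}$ has infinite image (its kernel is $F^\times$ and $F$ has characteristic zero). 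What your route buys is elementarity and an explicit conjugator; what the cohomological route buys is uniformity over cyclic extensions of arbitrary degree and over the non-regular centralizers without having to exhibit an invertible element by hand. For the quadratic case needed here, your argument is entirely adequate.
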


We first classify all semisimple elements in $S'(F)$.

\begin{lemma}   \label{lemma:semisimple_S'}
Every semisimple element in $S'(F)$ is in the $H'(F)$-orbit of the form
$s'(\alpha, n_1, n_2, n_3)$, where $n_1+n_2+n_3 = n$ is a partition of $n$,
$\alpha \in \GL_{n_1}(E)$, $\alpha \overline{\alpha} \in \GL_{n_1}(F)$ is
semisimple in the usual sense, $\det (\alpha \overline{\alpha} - 1) \not=0$,
and
    \[
    s'(\alpha, n_1, n_2, n_3) = \begin{pmatrix}
    \alpha &&& 1_{n_1} \\& 0_{n_2} &&& 1_{n_2} \\
    && 1_{n_3} &&&0_{n_3} \\
    1_{n_1} - \alpha \overline{\alpha} &&& - \overline{\alpha}\\
    & 1_{n_2} &&& 0_{n_2} \\ &&0_{n_3}&&& 1_{n_3}
    \end{pmatrix}
    \]
\end{lemma}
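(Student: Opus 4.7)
Plan. The approach is to classify semisimple $H'(F)$-orbits on $S'(F)$ by normalizing $s' = \bigl(\begin{smallmatrix} A & B \\ C & D \end{smallmatrix}\bigr)$ (with $A, B, C, D \in M_n(E)$) via the $H'$-action
$(h_1,h_2)\cdot s' = \bigl(\begin{smallmatrix} h_1 A \overline{h_1}^{-1} & h_1 B \overline{h_2}^{-1} \\ h_2 C \overline{h_1}^{-1} & h_2 D \overline{h_2}^{-1}\end{smallmatrix}\bigr)$
together with the relation $s'\overline{s'} = 1$. The ranks of $A$ and $B$ are orbit invariants (they change only by left-right multiplication by invertible matrices), and in the target normal form one has $A = \diag(\alpha, 0_{n_2}, 1_{n_3})$ and $B = \diag(1_{n_1}, 1_{n_2}, 0_{n_3})$, so $n_2 = n - \rank A$ and $n_3 = n - \rank B$ are forced.

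I would first show that on the semisimple locus the $H'(F)$-orbit of $s'$ contains a block-diagonal representative of sizes $2n_1, 2n_2, 2n_3$: the $n_3$-block equal to $1_{2n_3}$, the $n_2$-block equal to the swap $\bigl(\begin{smallmatrix} 0 & 1_{n_2} \\ 1_{n_2} & 0 \end{smallmatrix}\bigr)$, and the $n_1$-block with both $A$ and $B$ invertible. Via the geometric identification $S' \simeq G'/H''$ (using Hilbert 90 to write $s' = g\overline{g}^{-1}$ and the associated $F$-form $W = gF^{2n} \subset E^{2n}$), these three pieces should correspond respectively to the generalized eigenspaces of $A\overline{A}$ for the eigenvalues $1$, $0$, and their complement; semisimplicity of $s'$ is exactly what ensures these split off as genuine $H'$-direct summands rather than coupling via Jordan-type extensions. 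Next, on the generic $n_1$-block I would choose $h_2 \in \GL_{n_1}(E)$ with $\overline{h_2} = B$ to normalize $B$ to $1_{n_1}$. The four equations from $s'\overline{s'} = 1$ then force $\overline{D} = -A$, $\overline{C} = 1 - A\overline{A}$, and the compatibility $A\overline{A} = \overline{A}A$; the latter is equivalent to $A\overline{A} \in \GL_{n_1}(F)$. Setting $\alpha := A$ gives the form $s'(\alpha, n_1, 0, 0)$; both $\alpha \in \GL_{n_1}(E)$ and $\det(\alpha\overline{\alpha} - 1) \neq 0$ are automatic since the eigenvalues $0$ and $1$ of $A\overline{A}$ were already separated off. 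Finally, semisimplicity of $s'$ translates to semisimplicity of $\alpha\overline{\alpha}$ as an element of $\GL_{n_1}(F)$ by identifying the $H'$-stabilizer of the normal form with the $\GL_{n_1}(F)$-centralizer of $\alpha\overline{\alpha}$, up to rigid factors from the $n_2$- and $n_3$-blocks.

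The main obstacle will be the decoupling step: verifying that semisimplicity of $s'$ forces the three pieces to split as direct summands rather than interacting through nontrivial Jordan structure. I expect to handle this via the $F$-form description: the intersections $W \cap V_i$ produce the $n_3$-block (with both intersections having $F$-dimension $n_3$), an ``off-diagonal'' subspace of $W$ projecting isomorphically onto subspaces of each $V_i$ yet meeting neither produces the $n_2$-block, and the residual piece produces the $n_1$-block. Zariski-closedness of the $H'$-orbit, which is equivalent to semisimplicity of $s'$, then forces $W$ to decompose as a direct sum compatible with this three-part splitting, yielding the stated canonical form.
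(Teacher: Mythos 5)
Your plan has the right skeleton and correctly identifies where the difficulty lies, but the proof as written has a genuine gap: the decoupling step is exactly the hard content of the lemma, and you defer it with ``I expect to handle this via\dots'' rather than proving it. Concretely, two inputs are missing. First, you need to know at the outset that $A\overline{A}$, $D\overline{D}$, $\overline{B}C$ and $C\overline{B}$ are semisimple in the usual sense (the paper gets this by passing to $\overline{F}$, where the problem becomes two copies of $\GL_n\times\GL_n$ acting on $\GL_{2n}$, and citing Jacquet--Rallis); without this, ``generalized eigenspaces of $A\overline{A}$ for the eigenvalues $1$ and $0$'' need not split off cleanly, and your three-part decomposition of $W$ is not yet defined. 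Second, and more seriously, the assertion that closedness of the orbit forces the eigenvalue-$1$ part of $A\overline{A}$ to be confined to the block where $B$ vanishes (equivalently, that the relevant block of $C$ is invertible) is precisely the step that requires a real argument: the relation $B\overline{C}=1-A\overline{A}$ only gives $B\overline{C}=0$ on that eigenspace, which is compatible with nontrivial coupling. The paper rules this out by exhibiting an explicit one-parameter degeneration $\lambda\to 0$ that would change $\operatorname{rank}B$ in the orbit closure, contradicting closedness. Your $F$-form picture ($W=gF^{2n}$ relative to $V_1\oplus V_2$) could plausibly be made to work, but you would still have to produce the analogue of that degeneration, and you have not.

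Two smaller inaccuracies: with $B$ normalized to $1_{n_1}$, the equations from $s'\overline{s'}=1$ give $D=-\overline{A}$ and $C=1-\overline{A}A$ but do \emph{not} force $A\overline{A}=\overline{A}A$; reaching the stated normal form with $C=1-\alpha\overline{\alpha}$ requires a further twisted conjugation of $A$ (preserving $B=1$ forces $h_2=\overline{h_1}$, so $A$ moves by twisted conjugation, and one invokes the norm lemma quoted from Arthur--Clozel to arrange $\alpha\overline{\alpha}\in\GL_{n_1}(F)$). Also, semisimplicity of $s'$ cannot be ``translated'' to semisimplicity of $\alpha\overline{\alpha}$ by comparing stabilizers --- closedness of an orbit is not detected by the stabilizer --- which is another reason the Jacquet--Rallis input at the start is needed.
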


\begin{proof}
Let $s' = \begin{pmatrix} A &B\\ C &D \end{pmatrix}$ be a semisimple element.
We claim that $A \overline{A}$, $D \overline{D}$, $\overline{B}C$ and $C
\overline{B}$ are all semisimple elements in $M_n(E)$ in the usual sense. To
see this we may assume that $F$ is algebraically closed as being semisimple
is a property that does not depend on the base field. Then $E = F \times F$,
$S'$ consists of elements of the form $(g, g^{-1})$, $g \in \GL_{2n}(F)$, and
it is identified with $\GL_n(F)$ by projection to the first factor. The group
$H'$ is identified with four copies of $\GL_n(F)$, and two copies of
$\GL_n(F) \times \GL_n(F)$ respectively acts on $\GL_{2n}(F)$ by left and
right translation. The claim then reduces to~\cite{JR}*{Lemma~4.2}.

We will use twisted conjugation by elements in $H'(F)$ to reduce $s'$ to an
element of the form $s'(\alpha, n_1, n_2, n_3)$. This takes several steps.
When we say that ``$s'$ or one of the blocks in $s'$ takes a particular
form'', or ``we may assume a block of $s'$ is of the form'', we mean that
after replacing $s'$ by its twisted conjugation by elements in $H'(F)$ which
do not change the particular shape of $s'$ we have achieved in the previous
steps, that block of $s'$ takes the shape that we want.

\emph{\underline{Step~1}: Simplifying $B$.} We may assume that $B$ is of the
form
    \[
    \begin{pmatrix} 1_{n - n_3} \\ & 0_{n_3} \end{pmatrix}.
    \]
Make a partition
    \[
    A = \begin{pmatrix} A_1 & A_2 \\ A_3 & A_4 \end{pmatrix}
    \]
of the matrix $A$ with $A_1 \in M_{n-n_3}(E)$, and similar partitions for $C$
and $D$. From the condition that $s' \overline{s'} = 1$, we conclude
    \[
    A \begin{pmatrix} 1_{n-n_3} \\ & 0_{n_3} \end{pmatrix} +
    \begin{pmatrix} 1_{n - n_3} \\ & 0_{n_3} \end{pmatrix} \overline{D} = 0, \quad
    C \overline{A} + D \overline{C} = 0,
    \]
and
    \[
    A \overline{A} + \begin{pmatrix} 1_{n-n_3} \\ & 0_{n_3} \end{pmatrix}
    \overline{C} = 1_n,
    \quad
    C \begin{pmatrix} 1_{n - n_3} \\ & 0_{n_3} \end{pmatrix} + D \overline{D} = 1_n.
    \]
It follows that $A_3 = 0$, $D_2 = 0$, $A_4 \overline{A_4} = D_4
\overline{D_4} = 1_{n_3}$. Thus we may assume $A_4 = D_4 = 1_{n_3}$. Thus
$s'$ takes the following form
    \[
    s' = \begin{pmatrix} A_1 & A_2 & 1 \\ & 1 && 0 \\
    C_1 & C_2 & -\overline{A_1} \\
    C_3 & C_4 & D_3 & 1 \end{pmatrix}.
    \]
Since $A \overline{A}$ is semisimple in the usual sense, $A_1 \overline{A_1}$
is so. We may further assume that $A_1 \overline{A_1}$ has entries in $F$. Then $C_1$ has entries in $F$.

\emph{\underline{Step~2}: $C_1$ is invertible.} To see this, first as
$\overline{B} C$ is semisimple in the usual sense, we conclude that $C_1 \in
M_{n-n_3}(F)$ is semisimple in the usual sense. If $C_1$ is not invertible we
may assume that $C_1$ is of the form $\begin{pmatrix} 0 \\ & C_1'
\end{pmatrix}$ where $C_1'$ is invertible. Using the fact that
$\overline{B}C$ and $C \overline{B}$ are semisimple in the usual sense, we
conclude that $C$ has to be of the form
    \[
    \begin{pmatrix} 0 \\ &C_1' & * \\ &* &* \end{pmatrix}.
    \]
Therefore
    \[
    \overline{A_1} A_1 = \begin{pmatrix} 1 \\ & 1 - C_1'\end{pmatrix}.
    \]
Since $1$ is not an eigenvalue of $1 - C_1'$, we may assume that $A_1$ takes the
form $\begin{pmatrix} 1 \\ & A_{12} \end{pmatrix}$. It follows that $A$ takes
the form
    \[
    A = \begin{pmatrix} 1 &&A_{21} \\& A_{12} &A_{22}\\ &&1 \end{pmatrix}, \quad
    A_2 = \begin{pmatrix} A_{21} \\ A_{22} \end{pmatrix}.
    \]
That the upper right corner of $C$ equals zero implies that the upper right
corner of $A$ is purely imaginary. Now twisted conjugate $s'$ by an element
in $H'(F)$ of the form
    \[
    \left( \begin{pmatrix} 1 & &* \\ &1\\ &&1 \end{pmatrix}, \quad
    \begin{pmatrix} 1  \\ &1 \\ * &&1 \end{pmatrix} \right)
    \]
we may assume that the upper right corner is zero.

 Then it follows that $s'$ is of the form
    \[
    \begin{pmatrix} 1 &&&1\\ & *&* &&1 \\
    &&1 &&&0 \\
    0 &&& 1 \\
    & C_1' &* &&* \\
    &* &* &&* &1 \end{pmatrix}.
    \]
For any $\lambda \in F^\times$, the element
    \[
    \begin{pmatrix} 1 &&&\lambda\\ & *&* &&1 \\
    &&1 &&&0 \\
    0 &&& 1 \\
    & C_1' &* &&* \\
    &* &* &&* &1 \end{pmatrix} = \begin{pmatrix} \lambda \\ &*\\&&*\\
    &&&1 \\ &&&&* \\&&&&&* \end{pmatrix}
    \begin{pmatrix} 1 &&&1\\ & *&* &&1 \\
    &&1 &&&0 \\
    0 &&& 1 \\
    & C_1' &* &&* \\
    &* &* &&* &1 \end{pmatrix}
    \begin{pmatrix} \lambda \\ &*\\&&*\\
    &&&1 \\ &&&&* \\&&&&&* \end{pmatrix}^{-1}
    \]
is in the same $H'(F)$-orbit of $s'$. Since $s'$ is semisimple, it follows that the limit as $\lambda\to 0$
    \[
    \begin{pmatrix} 1 &&&0\\ & *&* &&1 \\
    &&1 &&&0 \\
    0 &&& 1 \\
    & C_1' &* &&* \\
    &* &* &&* &1 \end{pmatrix},
    \]
is also in the orbit. It is a contradiction, since
the rank of $B$ is a constant in an $H'(F)$-orbit. This proves the claim that
$C_1$ is invertible.

\emph{\underline{Step~3}: final simplification.} Once we have that $C_1$ is
invertible, we may assume that $C_2$ and $C_3$ are both zero. Then by $C
\overline{A} + D \overline{C} = 0$, we conclude that $A_2$ and $D_3$ are
zero. Using semisimplicity of $s'$ again we conclude that $C_4$ should be
zero. So we arrive at the conclusion that $s'$ takes the form
    \[
    \begin{pmatrix} A_1 &  & 1 \\ & 1 && 0 \\
    1- A_1 \overline{A_1} &  & -\overline{A_1} \\
    & &  & 1 \end{pmatrix}.
    \]

Finally we may replace $A_1$ by its twisted conjugation so that $A_1 =
\begin{pmatrix} \alpha \\ & 0 \end{pmatrix}$ where $\alpha$ is invertible and
$\alpha \overline{\alpha} \in \GL_{n_1}(F)$. The lemma then follows.
\end{proof}

\begin{lemma}
Let $s' = s'(\alpha, n_1, n_2, n_3)$ be a semisimple element in $S'(F)$ as in
Lemma~\ref{lemma:semisimple_S'}. It is regular if and only if $n_1 = n$ and
$\alpha \overline{\alpha} \in \GL_n(F)$ is regular semisimple in the usual
sense. It is elliptic if $\alpha \overline{\alpha} \in \GL_n(F)$ in the usual
sense. Two regular semisimple $s'(\alpha_1, n, 0,0)$ and $s'(\alpha_2,
n,0,0)$ are in the same $H'(F)$-orbit if and only if $\alpha_1$ and
$\alpha_2$ are twisted conjugate in $\GL_n(E)$.
\end{lemma}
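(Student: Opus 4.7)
The plan is to compute the stabilizer of $s' = s'(\alpha, n_1, n_2, n_3)$ in $H'$ under the twisted conjugation action, and then read off the three assertions. Writing $(h_1, h_2) \in H'$ with $h_i \in \GL_n(E)$ in $3 \times 3$ block form according to the partition $n = n_1 + n_2 + n_3$, I would impose the four stabilizer equations $h_1 A \overline{h_1}^{-1} = A$, $h_1 B \overline{h_2}^{-1} = B$, $h_2 C \overline{h_1}^{-1} = C$, $h_2 D \overline{h_2}^{-1} = D$ on the four $n \times n$ blocks of $s'$. A routine block-by-block calculation (using crucially that $1 - \alpha\overline{\alpha}$ is invertible) forces both $h_1$ and $h_2$ to be block diagonal, and produces three decoupled conditions: on the $n_1$-block, $h_1^{11}\alpha = \alpha\overline{h_1}^{11}$ and $h_2^{11} = \overline{h_1}^{11}$; on the $n_2$-block, $h_2^{22} = \overline{h_1}^{22}$ with $h_1^{22} \in \GL_{n_2}(E)$ arbitrary; and on the $n_3$-block, $h_1^{33}$ and $h_2^{33}$ independent elements of $\GL_{n_3}(F)$. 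Hence the stabilizer is the $F$-group
\[
(\GL_{n_1, E})_{\alpha, \mathrm{twisted}} \times \Res_{E/F}\GL_{n_2} \times \GL_{n_3, F} \times \GL_{n_3, F},
\]
of $F$-dimension $\dim (\GL_{n_1, E})_{\alpha, \mathrm{twisted}} + 2 n_2^2 + 2 n_3^2$.

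For $s'$ to be regular, i.e.\ for this stabilizer to be a torus of dimension $n$, the factors $\Res_{E/F}\GL_{n_2}$ and $\GL_{n_3, F}^2$ must be trivial, forcing $n_2 = n_3 = 0$ and $n_1 = n$. It then remains to identify $(\GL_{n, E})_{\alpha, \mathrm{twisted}}$ as an $F$-group. Since $\alpha\overline{\alpha} \in \GL_n(F)$, Galois invariance gives $\overline{\alpha}\alpha = \alpha\overline{\alpha}$, so $\alpha$ commutes with $\alpha\overline{\alpha}$ and hence lies in $T(E)$, where $T := Z_{\GL_n}(\alpha\overline{\alpha})$. I would then verify both inclusions: if $h \in T(F)$ then $h = \overline{h}$ and $h$ commutes with $\alpha$ (since $T$ is abelian and $\alpha \in T(E)$), so $h\alpha = \alpha\overline{h}$; conversely, if $h\alpha = \alpha\overline{h}$ then multiplication on the right by $\overline{\alpha}$ shows $h$ commutes with $\alpha\overline{\alpha}$, so $h \in T(E)$, and then $\alpha^{-1} h \alpha = \overline{h}$ reduces via commutativity of $T$ to $\overline{h} = h$. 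This identifies the twisted centralizer with $T$ as an $F$-group. Therefore $s'$ is regular iff $\alpha\overline{\alpha}$ is regular semisimple in $\GL_n(F)$, and in that case it is elliptic iff $T/Z(\GL_n)$ is anisotropic, which is precisely the usual condition that the characteristic polynomial of $\alpha\overline{\alpha}$ is $F$-irreducible (noting that $Z'$ coincides with the center of $\GL_n$ under our identification).

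For the orbit classification, if $\alpha_2 = h\alpha_1 \overline{h}^{-1}$ for some $h \in \GL_n(E)$, then I would check that $(h, \overline{h}) \in H'(F)$ sends $s'(\alpha_1, n, 0, 0)$ to $s'(\alpha_2, n, 0, 0)$: the top-left, top-right, and bottom-right blocks are immediate, while the bottom-left reduces to the identity $\overline{h}(1 - \alpha_1\overline{\alpha_1})\overline{h}^{-1} = h(1 - \alpha_1\overline{\alpha_1}) h^{-1}$, which follows from the fact that $h^{-1}\overline{h}$ commutes with $\alpha_1\overline{\alpha_1}$—itself a consequence of $\alpha_2\overline{\alpha_2} \in \GL_n(F)$ combined with $\overline{\alpha_1}\alpha_1 = \alpha_1\overline{\alpha_1}$. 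Conversely, if $(h_1, h_2) \cdot s'(\alpha_1, n, 0, 0) = s'(\alpha_2, n, 0, 0)$, the top-right block gives $h_2 = \overline{h_1}$ and then the top-left block gives $h_1 \alpha_1 \overline{h_1}^{-1} = \alpha_2$.

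The main obstacle is the identification $(\GL_{n, E})_{\alpha, \mathrm{twisted}} \cong T$ as $F$-groups; once the commutativity trick (that $\alpha$ itself lies in the torus $T$ over $E$) is in place, regularity and ellipticity reduce to standard facts about centralizers of regular semisimple elements in $\GL_n$, and the orbit statement follows from the canonical choice $(h_1, h_2) = (h, \overline{h})$ dictated by the top-right block.
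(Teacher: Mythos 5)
Your proposal follows essentially the same route as the paper: compute the stabilizer $H'_{s'} \simeq (\GL_{n_1,E})_{\alpha,\mathrm{twisted}} \times \GL_{n_2,E} \times (\GL_{n_3,F} \times \GL_{n_3,F})$ and run a dimension count, with the block-by-block verification and the orbit/ellipticity statements (which the paper dismisses as ``obvious'') spelled out correctly. One caveat: your identification of $(\GL_{n,E})_{\alpha,\mathrm{twisted}}$ with $T = Z_{\GL_n}(\alpha\overline{\alpha})$ uses the commutativity of $T$, which presupposes that $\alpha\overline{\alpha}$ is regular; so as written it only gives the ``if'' direction of the regularity criterion, and likewise the lower bound $\dim(\GL_{n_1,E})_{\alpha,\mathrm{twisted}} \geq n_1$ needed to force $n_2 = n_3 = 0$ is not justified in the non-regular case (note also that $\Res_{E/F}\GL_{n_2}$ with $n_2=1$ \emph{is} a torus, so only the dimension count, not the torus condition, rules it out). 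The paper closes both points at once by invoking the standard fact (\cite{AC}*{Chapter~1, Lemma~1.1}, quoted at the start of the section) that the twisted centralizer is an inner form of $(\GL_{n_1,F})_{\alpha\overline{\alpha}}$, hence always has dimension $\geq n_1$ with equality exactly when $\alpha\overline{\alpha}$ is regular semisimple; inserting that one sentence makes your argument complete.
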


\begin{proof}
The stabilizer of $s'$ in $H'$ is isomorphic to
    \[
    (\GL_{n_1, E})_{\alpha, \text{twisted}}
    \times \GL_{n_2, E}
    \times (\GL_{n_3, F} \times \GL_{n_3, F}).
    \]
The twisted stabilizer $(\GL_{n_1, E})_{\alpha, \text{twisted}}$ is an inner
form of $(\GL_{n_1, F})_{\alpha \overline{\alpha}}$, whose dimension is at
least $n_1$. Thus the dimension of $H'_{s'}$ is at least
    \[
    n_1 + 2 n_2^2 + 2n_3^3 \geq n,
    \]
and the equality holds if and only if $n_2= n_3 = 0$ and $\dim (\GL_{n_1,
E})_{\alpha, \text{twisted}} = n$, which is equivalent to that $\alpha
\overline{\alpha}$ is regular semisimple in $\GL_n(F)$. The other assertions
of the lemma are obvious.
\end{proof}

To simplify notation, for any $\alpha \in M_n(E)$, we put $s'(\alpha) =
s'(\alpha, n, 0, 0)$.

Let $\mathbf{A}^n$ be the affine space of dimension $n$ over $F$ and
    \[
    q': S' \to \mathbf{A}^n
    \]
be the morphism
    \[
    \begin{pmatrix} A & B \\ C & D \end{pmatrix}
    \mapsto \Tr \wedge^i (2A \overline{A} - 1), \quad i = 1, \cdots, n.
    \]

\begin{lemma}   \label{lemma:categorical_quotient_split}
The map $q'$ is a categorical quotient.
\end{lemma}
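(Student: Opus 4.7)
The plan is to verify that $q'$ is $H'$-invariant with values in $\mathbf{A}^n$, and then to show that the induced morphism from the GIT quotient $S' /\!/ H'$ to $\mathbf{A}^n$ is an isomorphism by invoking the orbit classification of Lemma~\ref{lemma:semisimple_S'}.

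First I would check $H'$-invariance. Writing $h = \diag(h_1, h_2) \in H'$ and $s' = \begin{pmatrix} A & B \\ C & D \end{pmatrix}$, the upper-left block of $hs'\overline{h}^{-1}$ is $h_1 A \overline{h_1}^{-1}$, and one directly computes $A\overline{A} \mapsto h_1 A\overline{A} h_1^{-1}$, so every $\Tr \wedge^i (2A\overline{A} - 1)$ is preserved. These traces lie in $F$ because $\overline{A\overline{A}} = \overline{A} A$ has the same characteristic polynomial as $A\overline{A}$ (the standard $XY$ versus $YX$ identity).

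Since $H'$ is reductive, the affine GIT quotient $S' /\!/ H' := \Spec(F[S']^{H'})$ exists and is a categorical quotient; by invariance, $q'$ factors as $q' = \tilde q' \circ \pi$ with $\tilde q' : S' /\!/ H' \to \mathbf{A}^n$. It then suffices to show that $\tilde q'$ is an isomorphism. The two sides are irreducible of the same dimension $n$: on the source side, $\dim S' = 4n^2$, $\dim H' = 4n^2$, and a regular semisimple stabilizer is a torus of dimension $n$, so the principal orbit has codimension $n$. In characteristic zero a bijective morphism of irreducible varieties with normal (in fact smooth) target is an isomorphism, so it is enough to show that $\tilde q'$ is a bijection on $\overline{F}$-points, which by GIT amounts to a bijection on closed $H'(\overline{F})$-orbits in $S'(\overline{F})$.

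For this, Lemma~\ref{lemma:semisimple_S'} supplies the normal form: every closed orbit has a representative $s'(\alpha, n_1, n_2, n_3)$ with $\alpha$ invertible and $\det(\alpha\overline{\alpha} - 1) \neq 0$, and the image under $q'$ equals
\[
\det\bigl(X - (2\alpha\overline{\alpha} - 1)\bigr) \cdot (X+1)^{n_2} (X-1)^{n_3}.
\]
Because $2\alpha\overline{\alpha} - 1$ has no eigenvalue at $\pm 1$, the multiplicities $n_2, n_3$ and the characteristic polynomial of $\alpha\overline{\alpha}$ can all be read off canonically from this image. The lemma following Lemma~\ref{lemma:semisimple_S'} then shows that the $H'$-orbit of $s'(\alpha, n, 0, 0)$ is determined by the twisted conjugacy class of $\alpha$, which over $\overline{F}$ is detected by the conjugacy class of $N\alpha = \alpha\overline{\alpha}$ (by the lemma recalled just before Lemma~\ref{lemma:semisimple_S'}); the $0_{n_2}$ and $1_{n_3}$ blocks contribute no additional orbit data. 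Surjectivity on $\overline{F}$-points is immediate since every monic polynomial of degree $n$ has such a factorization and every invertible matrix is a norm over $\overline{F}$.

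The main obstacle is the injectivity check in the last paragraph: extracting from Lemma~\ref{lemma:semisimple_S'} that the triple $(n_1, n_2, n_3)$ together with the $H'$-orbit of the $\alpha$-piece are all canonically recoverable from the characteristic polynomial of $2A\overline{A} - 1$. Once this is in place, the remaining steps (invariance, the GIT reduction, the dimension count, and surjectivity) are formal.
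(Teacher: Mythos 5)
Your argument is correct, and it is essentially a fully written-out version of what the paper disposes of in two lines: the paper's proof observes that the statement is geometric, passes to $\overline F$ (where $E\otimes_F\overline F\simeq\overline F\times\overline F$ and the action becomes the two-sided $(\GL_n\times\GL_n)^2$-action on $\GL_{2n}$ of Guo--Jacquet), and cites \cite{Guo2} and \cite{CZhang}, where the coefficients of the characteristic polynomial of the relevant block are shown to generate the invariants. You instead run the standard GIT argument directly --- invariance, factorization through $\Spec F[S']^{H'}$, bijectivity on closed $\overline F$-orbits via the normal forms of Lemma~\ref{lemma:semisimple_S'}, and Zariski's main theorem --- which is self-contained and makes explicit exactly where the orbit classification is used. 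Two points to tighten. First, Lemma~\ref{lemma:semisimple_S'} is stated for $E/F$ a quadratic \emph{field} extension, whereas your closed-orbit count takes place over $\overline F$, where $E$ splits; you should either note that the proof of that lemma goes through verbatim in the split case (it does --- this is precisely the setting of \cite{JR}*{Section~4}) or invoke the split classification directly, which is what the paper's citation accomplishes. Second, for injectivity you only need the easy implication: equality of the characteristic polynomials of $2A\overline A-1$ forces equal $(n_1,n_2,n_3)$ (multiplicities of $\pm 1$) and conjugate semisimple norms $\alpha\overline\alpha$, hence twisted-conjugate $\alpha$'s over $\overline F$, hence equal orbits; so your appeal to the lemma following Lemma~\ref{lemma:semisimple_S'} should be read in that direction rather than as a characterization of orbit equivalence. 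With those clarifications the proof is complete.
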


\begin{proof}
This is a geometric statement, so we may assume that $F$ is algebraically
closed. Then as in the proof of Lemma~\ref{lemma:semisimple_S'} we are then
reduced to the case considered in~\cites{Guo2,CZhang}.
\end{proof}

\subsection{Explicit \'etale Luna slices} \label{subsec:etale_Luna}
Let us begin with some general discussion. Let $G$ be a reductive algebraic
group acting on an affine algebraic variety $X$. We denote by $X \to X//G$,
or simply $X//G$ the categorical quotient. Let $x \in X$ and $T_x$ be the
tangent space of $X$ at $x$. We fix a $G_x$-invariant inner product on $T_x$.
Let $TO_x$ be the tangent space of orbit $Gx$ at $x$, and $N_x  =
TO_x^{\perp}$ the orthogonal complement in $T_x$. The group $G_x$ then acts
on $N_x$, and this is called the sliced representation at $x$. An \'etale Luna
slice at $x$ is a locally closed subvariety $Z \subset X$, containing $x$ and
stable under the $G_x$ action, together with a strongly \'etale
$G_x$-equivariant morphism $\iota: Z \to N_x$ such that the morphism
    \[
    G \times_{G_x} Z \to X
    \]
is strongly \'etale. Here if $X$ and $Y$ are affine varieties with $G$ actions,
a morphism $X \to Y$ is called strongly \'etale if the induced morphism $X//G
\to Y//G$ is \'etale and the diagram
    \[
    \xymatrix{
    X \ar[d] \ar[r] & Y \ar[d] \\
    X//G \ar[r] & Y//G
    }
    \]
is Cartesian.

We now come back to the action of the group $H'$ on $S'$. Let
    \[
    \fs' = \left\{ \begin{pmatrix} &X \\ Y \end{pmatrix} \Big|\
    X, Y \in M_n(E)^- \right\},
    \]
where $M_n(E)^-$ stands for the matrices with purely imaginary entries. This
is viewed as an algebraic variety over $F$. It is isomorphic to the tangent
space of $S'$ at $1$. The stabilizer of $1$ in $H'$ is isomorphic to $\GL_{n,
F} \times \GL_{n, F}$, which acts on $\fs'$ by conjugation. Let $g \in G'(F)$
and $s' = g \overline{g}^{-1} \in S'(F)$. The tangent space of $S'$ at $s'$
is identified with
    \[
    T_{s'} = \{ gY \overline{g}^{-1} \mid Y \in \fs'\}.
    \]
The tanget space of the $H'$-orbit of $s'$ is identified with a subspace
    \[
    TO_{s'} = \{ Xs' - s' \overline{X} \mid X \in \fh'\}.
    \]
We fix an inner product on $T_s$ by
    \[
    \langle gY_1 \overline{g}^{-1}, gY_2 \overline{g}^{-1} \rangle =
    \Tr Y_1 Y_2.
    \]
This inner product is $H'_{s'}$-invariant. Put
    \[
    N_{s'} = \{ Ys' \mid \theta'(Y) = -Y, \ Ys' = -s' \overline{Y}\}.
    \]
Then we have an orthogonal decomposition
    \[
    T_{s'} = TO_{s'} \oplus^{\perp} N_{s'}.
    \]
Thus $N_{s'}$ is the sliced representation.

Put
        \[
        Z' = \left\{ x s' \mid x \theta(x) = 1,
        \ x s' = s' \overline{x}^{-1}, \ \det(1+x) \not=0,\
        \det\left((1 - \Ad(xs'))|_{\fg'^{\perp}_{s'}} \right) \not=0
        \right\},
        \]
where $\fg'_{s'}$ stands for the Lie algebra of the centralizer of $s'$ in
$G'$, and the orthogonal complement is taken with respect to an
$H_{s'}'$-invariant inner product. Then $Z'$ is a locally closed subscheme of
$S'$. Put also
        \[
        \iota': Z' \to N_{s'}, \quad xs' \mapsto (1-x)(1+x)^{-1}s'.
        \]

\begin{lemma}   \label{lemma:Luna}
The maps $\iota': Z' \to N_{s'}$ and $(H' \times Z')//H'_{s'} \to S'$ are
strongly \'etale. Therefore $Z'$ is an \'etale Luna slice at $s'$.
\end{lemma}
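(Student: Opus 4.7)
The plan is to follow the template of Luna's étale slice theorem, verifying the two strongly étale assertions by explicit differential computations combined with invariant-theoretic matching. The map $\iota'$ plays the role of a Cayley-type linearization of $S'$ near $s'$, and the open conditions defining $Z'$ are precisely what is needed to land in the étale locus of both morphisms.

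For $\iota'$, I would recognize $x \mapsto (1-x)(1+x)^{-1}$ as the Cayley transform, well-defined exactly on the locus $\det(1+x) \not= 0$ and birational with inverse $Y \mapsto (1-Y)(1+Y)^{-1}$. Under Cayley, the relations $x\theta(x) = 1$ and $xs' = s'\overline{x}^{-1}$ cutting out $Z'$ transform into the linear relations $\theta'(Y) = -Y$ and $Ys' = -s'\overline{Y}$ cutting out $N_{s'}$. A direct computation of the differential then shows that $d\iota'$ is an isomorphism at every point of $Z'$, so $\iota'$ is an open immersion (and in particular étale) onto an open $H'_{s'}$-stable subscheme of $N_{s'}$. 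The $H'_{s'}$-equivariance is immediate since the Cayley transform commutes with conjugation. Strong étaleness then reduces to matching the invariant rings on the two sides, which I would deduce from Lemma~\ref{lemma:categorical_quotient_split}: on both $Z'$ and $N_{s'}$, the $H'_{s'}$-invariants are controlled by the coefficients of the characteristic polynomial of the appropriate block (which linearize to $2A\overline{A}-1$ under Cayley).

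For the action map $\sigma: (H' \times Z')//H'_{s'} \to S'$, I would compute $d\sigma$ at a point $(1, xs')$. Its image decomposes as $TO_{xs'} + T_{xs'}Z'$: the $H'$-factor produces $TO_{xs'}$ via the infinitesimal action, while the $Z'$-factor maps via $d\iota'$ onto $N_{xs'}$. The nonvanishing condition $\det\bigl((1 - \Ad(xs'))|_{\fg'^{\perp}_{s'}}\bigr) \neq 0$ built into the definition of $Z'$ is exactly the transversality statement guaranteeing $TO_{xs'} \oplus N_{xs'} = T_{xs'}S'$; the subspace $\fh'_{s'} \subset \fh'$ is killed by quotienting by $H'_{s'}$, so $d\sigma$ is an isomorphism at every point, yielding étaleness. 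The Cartesian square on categorical quotients follows because $H'$-invariants on $(H' \times Z')//H'_{s'}$ reduce to $H'_{s'}$-invariants on $Z'$, which agree with the pullback of $H'$-invariants on $S'$ by the identification already made in the previous step.

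The main obstacle is confirming the Cartesian property on the categorical quotients, rather than mere pointwise étaleness. I would handle this by invoking Luna's étale slice theorem in its standard form: since $s'$ has closed $H'$-orbit (being semisimple) and $H'_{s'}$ is reductive, it suffices to verify the transversality and nondegeneracy hypotheses, which are exactly encoded by the conditions $\det(1+x) \neq 0$ and $\det\bigl((1 - \Ad(xs'))|_{\fg'^{\perp}_{s'}}\bigr) \neq 0$ in the definition of $Z'$. Once these are in place, Luna's theorem supplies the strong étaleness of the action map automatically, and the conclusion that $Z'$ is an étale Luna slice at $s'$ then follows.
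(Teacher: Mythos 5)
Your outline is essentially sound, but it takes a genuinely different route from the paper. The paper's proof is a two-line reduction: since ``strongly \'etale'' is a geometric property, one may base change to $\overline{F}$, where $E$ becomes $F\times F$ and the pair $(H'\curvearrowright S')$ degenerates to two copies of $\GL_n\times\GL_n$ acting on $\GL_{2n}$ --- exactly the Guo--Jacquet setting --- so the lemma is outsourced to the explicit slice construction in \cite{CZhang}*{Subsection~5.3} (itself based on \cite{JR}*{Section~5.2}), just as in the proof of Lemma~\ref{lemma:categorical_quotient_split}. You instead propose to verify everything directly over $F$: the Cayley-transform computation showing $\iota'$ is an equivariant open immersion, the differential computation for the action map, and the identification of invariants. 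This is legitimate and is, in substance, what the cited references carry out; what you lose is brevity, and what you gain is independence from the Galois-descent trick and from the external references. One point to tighten: Luna's \'etale slice theorem is an \emph{existence} statement and does not ``automatically'' certify that the given $Z'$ is a slice, nor does pointwise \'etaleness of the action map by itself give the Cartesian square. What you actually need is Luna's fundamental lemma (an \'etale, equivariant map that preserves closed orbits and stabilizers is strongly \'etale on a saturated neighbourhood), applied not only at $s'$ but at every point of $Z'$; the two open conditions $\det(1+x)\neq 0$ and $\det\bigl((1-\Ad(xs'))|_{\fg'^{\perp}_{s'}}\bigr)\neq 0$ in the definition of $Z'$ are there precisely to guarantee its hypotheses at each such point, and you should say explicitly that orbits of points of $Z'$ under $H'_{s'}$ (resp.\ of their images in $S'$ under $H'$) are closed with equal stabilizers before invoking it. With that supplied, your argument closes.
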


\begin{proof}
Since this is a geometric statement we may assume that the base field $F$ is
algebraically closed. As in the proof of
Lemma~\ref{lemma:categorical_quotient_split}, this lemma is then reduced to
the explicit construction of \'etale Luna slices in the case of relative trace
formula of Guo--Jacquet, and this is explained
in~\cite{CZhang}*{Subsection~5.3} (which in turn is based
on~\cite{JR}*{Section~5.2}).
\end{proof}

We will need more concrete descriptions of the sliced representations. Let
$s' = s'(\alpha, n_1, n_2, n_3)$ be a semisimple element in $S'(F)$. Its
stabilizer in $H'$ equals
    \[
    H_1' \times H_2' \times H_3' =
    (\GL_{n_1, E})_{\alpha, \text{twisted}}
    \times \GL_{n_2, E}
    \times (\GL_{n_3, F} \times \GL_{n_3, F}).
    \]
The sliced representation at $s'(\alpha, n_1, n_2, n_3)$ is isomorphic to
$V_1' \oplus V_2' \oplus V_3'$ where
    \[
    V_1' = \{A \in M_{n_1}(E) \mid \alpha \overline{A} = A \overline{\alpha} \},
    \quad V_2' = M_{n_2}(E), \quad V_3' = M_{n_3}(E)^- \oplus M_{n_3}(E)^-.
    \]
In the three extreme cases where $n_1 = n$, $n_2 = n$ and $n_3 = n$, we have
the following descriptions.

\begin{enumerate}
\item Assume $n_1 = n$, $n_2 = n_3 = 0$. The embedding of $(\GL_{n,
    E})_{\alpha, \text{twisted}}$ in $H'$ is given by
    \[
    h \mapsto \begin{pmatrix} h \\ & \overline{h}\end{pmatrix}.
    \]
    The embedding of $V_1'$ in $N_{s'}$ is given by
        \[
        A \mapsto \begin{pmatrix} & A\\
        -\overline{A}(1 - \alpha \overline{\alpha})\end{pmatrix} s'.
        \]

\item Assume $n_2 = 0$, $n_1 = n_3 = 0$. Then $s' = \begin{pmatrix} & 1 \\
    1
    \end{pmatrix}$. The embedding of $\GL_{n ,E}$ in $H'$ is given by
        \[
        h \mapsto \begin{pmatrix} h \\ & \overline{h}\end{pmatrix}.
        \]
    The embedding of $V_2' = M_n(E)$ into $N_{s'}$ is given by
        \[
        A \mapsto \begin{pmatrix} & A \\ - \overline{A} \end{pmatrix}s' =
        \begin{pmatrix} A \\ & - \overline{A} \end{pmatrix}
        \]

\item Assume $n_1 = n_2 = 0$. Then $s' = 1$. The embedding of $\GL_{n, F}
    \times \GL_{n, F}$ in $H'$ is given by
        \[
        (h_1, h_2) \mapsto \begin{pmatrix} h_1 \\ & h_2 \end{pmatrix}.
        \]
    The embedding of $V_3' = M_{n_3}(E)^- \oplus M_{n_3}(E)^-$ into
    $N_{s'}$ is given by
        \[
        (A, B) \mapsto \begin{pmatrix} &A \\ B \end{pmatrix} s' =
        \begin{pmatrix} &A \\ B \end{pmatrix}.
        \]
\end{enumerate}

In general, in obvious notation, we have $Z' = Z_1' \times Z_2' \times Z_3'$
according to the decomposition $N_{s'} = V_1' \times V_2' \times V_3'$. We
also have the $H'_{s'}$-equivariant morphism $\iota' = \iota_1' \times
\iota_2' \times \iota_3'$.

\subsection{Semisimple elements in $G$}
Recall that $D$ is a quaternion algebra over $F$ with a fixed embedding $E
\to D$, $G = \GL_n(D)$, $H = \Res_{E/F} \GL_{n}$ the centralizer of
$E^\times$ in $G$, $Z \simeq \GL_{1, F}$ the split center of $G$. Recall that
$\theta$ is the automorphism of $G$ given by conjugation by
$\begin{pmatrix} 1_n \\
&-1_n
\end{pmatrix}$. Then $H$ is the stabilizer of $\theta$.

Put $S = \{ g \theta(g)^{-1} \mid g \in G\}$ and $H$ acts on $S$ by
conjugation. Similarly to $S'$, an element in $S(F)$ is called semisimple if
its $H$-orbit is (Zariski) closed. It is called regular semisimple its
stabilizer in $H$ is a torus of dimension $n$. It is called elliptic if
further more its stabilizer in $H$ is an elliptic torus modulo $Z$ (over
$F$). An element $g \in G(F)$ is semisimple (resp. regular semisimple, resp.
elliptic) if $g \theta(g)^{-1}$ is so in $S(F)$.

The following lemma summarizes~\cite{Guo2}*{Proposition~1.2}.

\begin{lemma}   \label{lemma:semisimple_S}

Every semisimple $g \in G(F)$ is in the $(H \times H)(F)$-orbit of
    \[
    g(\beta, n_1, n_2, n_3) = \begin{pmatrix} 1_{n_1} &&& \epsilon \beta \\
    & 0_{n_2} &&& \epsilon 1_{n_2} \\ && 1_{n_3} &&& 0_{n_3}\\
    \overline{\beta} &&& 1_{n_1} \\
    &1_{n_2} &&& 0_{n_2} \\ && 0_{n_3} &&& 1_{n_3}
    \end{pmatrix},
    \]
where $\beta \in \GL_{n_1}(E)$, $\beta \overline{\beta} \in \GL_n(F)$ and
$\det(1- \epsilon \beta \overline{\beta}) \not=0$. It is regular semisimple
(resp. elliptic) if $n_2 = n_3 = 0$ and $\beta\overline{\beta}$ is regular
semisimple (resp. elliptic) in $\GL_n(F)$ in the usual sense.
\end{lemma}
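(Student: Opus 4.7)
My plan is to mimic the argument used for Lemma~\ref{lemma:semisimple_S'}, after first translating the $(H\times H)$-orbit problem on $G$ into an $H$-orbit problem on the symmetric space $S$. The map $\nu: G \to S$, $g \mapsto g\theta(g)^{-1}$, is surjective, and its fibers are single $H$-cosets (under right multiplication). Hence the $(H\times H)(F)$-orbits on $G(F)$ are in bijection with $H(F)$-orbits on $S(F)$, and semisimplicity is preserved. Thus it suffices to find a canonical form for semisimple elements $s \in S(F)$ under $H(F)$-conjugation and then take any preimage under $\nu$; the proposed $g(\beta,n_1,n_2,n_3)$ should be verified to be such a preimage for the canonical $s$, by a direct block computation using the realization
    \[
    g(\beta,n_1,n_2,n_3) = \begin{pmatrix} A & \epsilon B \\ \overline{B} & \overline{A}\end{pmatrix}, \qquad
    A,B \in M_n(E),
    \]
from the given matrix shape.

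For the classification step, I would write $s = \begin{pmatrix} A & \epsilon B \\ \overline{B} & \overline{A}\end{pmatrix} \in S(F)$ with the constraint $s\theta(s) = 1$, which yields $A\overline{A} + \epsilon B \overline{B} = 1$ and related identities mirroring the relations $s'\overline{s'}=1$ used in Lemma~\ref{lemma:semisimple_S'}. The $H$-action, conjugation by $\begin{pmatrix}\alpha \\ & \overline{\alpha}\end{pmatrix}$ with $\alpha \in \GL_n(E)$, acts on both $A$ and $B$ by twisted conjugation. The three-step reduction in Lemma~\ref{lemma:semisimple_S'} goes through essentially verbatim: (i) simplify $B$ to a block $\begin{pmatrix} 1_{n-n_3} \\ & 0_{n_3}\end{pmatrix}$ using $H$-conjugation and the relation $s\theta(s)=1$; (ii) use semisimplicity plus the one-parameter family argument (limiting as $\lambda \to 0$) to force the top-left block of $B$-complement in $C$ to be invertible; (iii) kill off the cross terms to arrive at a block form, yielding the diagonal blocks $-1_{n_2}$, $1_{n_3}$ and a $\beta$-block with $\beta\overline{\beta} \in \GL_{n_1}(F)$ and $\det(1 - \epsilon\beta\overline{\beta})\neq 0$. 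Pulling back through $\nu$ produces the claimed form $g(\beta,n_1,n_2,n_3)$.

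The regular semisimple and elliptic assertions follow from a stabilizer dimension count as in the argument following Lemma~\ref{lemma:semisimple_S'}. The stabilizer of $g(\beta,n_1,n_2,n_3)$ in $H\times H$ factors as the product of (a) the twisted-conjugation stabilizer of $\beta$, an inner form over $F$ of the centralizer of $\beta\overline{\beta}$ in $\GL_{n_1,F}$, hence of dimension at least $n_1$; (b) $\Res_{E/F}\GL_{n_2,E}$ of $F$-dimension $2n_2^2$; and (c) $\GL_{n_3,E}\times\GL_{n_3,E}$ of $F$-dimension $4n_3^2$. The total is at least $n_1 + 2n_2^2 + 4n_3^2 \geq n$, with equality forcing $n_2 = n_3 = 0$ and $\beta\overline{\beta}$ regular semisimple in $\GL_n(F)$, which gives the regular semisimple criterion; ellipticity follows by requiring in addition that the stabilizer is anisotropic modulo $Z$, i.e.\ $\beta\overline{\beta}$ is elliptic. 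The main technical point, and the only place requiring care, is the Galois descent in step (ii), ensuring that the $H'$-conjugation used in Lemma~\ref{lemma:semisimple_S'} can be carried out inside $H \subset H'$; this is handled by choosing the simplifying conjugators $\alpha$ to be defined over $E$, so that the corresponding $\begin{pmatrix}\alpha \\ & \overline{\alpha}\end{pmatrix}$ lies in $H(F)$.
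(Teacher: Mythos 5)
The paper offers no proof of this lemma at all: it is quoted directly from \cite{Guo2}*{Proposition~1.2}, so any argument you give is necessarily ``different from the paper's''. Your framing is partly sound: the map $g\mapsto g\theta(g)^{-1}$ does identify $(H\times H)(F)$-orbits on $G(F)$ with $H(F)$-conjugacy orbits on $S(F)$ (surjectivity on $F$-points follows from Hilbert~90 for $H=\Res_{E/F}\GL_{n,E}$), and the stabilizer dimension count at the end is the right way to get the regular/elliptic criteria --- modulo the slip that the stabilizer of the $1_{2n_3}$-block in $H\times H$ is the \emph{diagonal} copy of $\GL_{n_3,E}$, of $F$-dimension $2n_3^2$, not $\GL_{n_3,E}\times\GL_{n_3,E}$ of dimension $4n_3^2$; the inequality $n_1+2n_2^2+2n_3^2\ge n$ still forces $n_2=n_3=0$, so the conclusion survives.

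The genuine gap is the assertion that the three-step reduction of Lemma~\ref{lemma:semisimple_S'} ``goes through essentially verbatim''. Its first step fails outright in the setting of $S$. In $S'$ the block $B$ of $s'$ transforms as $B\mapsto h_1B\overline{h_2}^{-1}$ with $h_1,h_2\in\GL_n(E)$ \emph{independent}, which is exactly why $B$ can be normalized to $\begin{pmatrix}1_{n-n_3}&\\&0_{n_3}\end{pmatrix}$. An element of $S$ is constrained to the shape $\begin{pmatrix}A&\epsilon B\\ \overline{B}&\overline{A}\end{pmatrix}$, and the only conjugators available are $\begin{pmatrix}\alpha&\\&\overline{\alpha}\end{pmatrix}$ with a \emph{single} $\alpha\in\GL_n(E)$; under these, $B\mapsto\alpha B\overline{\alpha}^{-1}$ is twisted conjugation, whose orbits are much finer than rank, so $B$ cannot be brought to rank normal form. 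Likewise the auxiliary unipotent conjugators $(u_1,u_2)$ with $u_2\neq\overline{u_1}$ used in Steps~2--3 of Lemma~\ref{lemma:semisimple_S'} do not lie in $H$, so ``choosing the conjugators over $E$'' does not rescue the argument --- every element of $H(F)$ is already of the form $(\alpha,\overline{\alpha})$, and that is precisely the restriction that breaks the proof. (Your relation $A\overline{A}+\epsilon B\overline{B}=1$ is also not what $s\theta(s)=1$ gives; the correct relations are $A^2-\epsilon B\overline{B}=1$ and $AB=B\overline{A}$.) The workable route --- the one behind \cite{Guo2}*{Proposition~1.2} and visible in the commented-out draft inside the paper --- is dual to yours: decompose $s=s^++s^-$ into $\theta$-eigenparts, use that $A=s^+$ transforms by \emph{ordinary} $\GL_n(E)$-conjugation to block-diagonalize the commuting semisimple pair $(s^+,s^-)$ first, and only afterwards analyze the twisted-conjugation normal form of $\beta$ on each block.
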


If $n_2 = n_3 = 0$, we write $s(\beta, n, 0, 0) = s(\beta)$ and $g(\beta) =
g(\beta, n, 0, 0)$.

Let $g \in G$ and $s = g\theta(g)^{-1} = \begin{pmatrix} A & B \\ C &D
\end{pmatrix} \in S$. Define
    \[
    q: G \to \mathbf{A}^n, \quad g \mapsto \Tr \wedge^i A, \quad
    i = 1,\cdots n.
    \]

\begin{lemma}
The map $q$ is a categorical quotient.
\end{lemma}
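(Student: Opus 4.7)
The plan is to mimic the proof of Lemma~\ref{lemma:categorical_quotient_split} almost verbatim: reduce to the algebraically closed case and identify the situation with the Guo--Jacquet setup, where the categorical quotient is already known.

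First I would observe that being a categorical quotient is a geometric property, so we may assume $F$ is algebraically closed. Then $E \simeq F \times F$ and the quaternion algebra $D$ splits, whence $G \simeq \GL_{2n, F}$, $H \simeq \GL_{n,F} \times \GL_{n,F}$ embedded block-diagonally, and under this identification the involution $\theta$ becomes conjugation by $\diag(1_n, -1_n)$. In particular $H$ is exactly the fixed-point subgroup of $\theta$.

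Next I would reduce from $G$ with the $H \times H$-action (by $(h_1,h_2)\cdot g = h_1^{-1} g h_2$) to $S$ with the conjugation $H$-action. The morphism $g \mapsto g\theta(g)^{-1}$ identifies the right-quotient $G/H$ with $S$, so it is a categorical quotient for the right $H$-action on $G$; moreover it is equivariant for the residual left $H$-action on $G/H$ and the conjugation $H$-action on $S$. Factoring $q$ through $S$, the lemma reduces to showing that the induced map $S \to \mathbf{A}^n$, $s \mapsto (\Tr \wedge^i A)_{i=1}^{n}$, is a categorical quotient for the $H$-action on $S$ by conjugation.

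This is precisely the split (Guo--Jacquet) setup: one has the symmetric space of $\GL_{2n}$ associated with $\theta$, with $\GL_n \times \GL_n$ acting by conjugation, and the invariants are generated by the traces of exterior powers of the $(1,1)$-block. This is established in~\cite{Guo2} and~\cite{CZhang}, exactly as invoked in the proof of Lemma~\ref{lemma:categorical_quotient_split}. Invoking that result finishes the proof. The only step that requires any care is checking that under the isomorphism $G_{\overline{F}} \simeq \GL_{2n}$ the top-left block $A$ of $g\theta(g)^{-1}$ corresponds to the block used in the Guo--Jacquet invariants; since $\theta$ is realized identically in both pictures, this identification is automatic and I do not anticipate a real obstacle here.
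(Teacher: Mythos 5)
Your proposal is correct and follows essentially the same route as the paper: the paper's proof is simply "this is a geometric statement, so assume $F$ algebraically closed; then the lemma reduces to the case considered in \cites{Guo2,CZhang}." You have merely made explicit the intermediate reductions (splitting $D$ over $\overline{F}$, passing from the $H\times H$-action on $G$ to the conjugation $H$-action on $S$ via $g\mapsto g\theta(g)^{-1}$) that the paper leaves implicit.
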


\begin{proof}
This is again a geometric statement, so we may assume that $F$ is
algebraically closed. Then the lemma reduces to the case considered
in~\cites{Guo2,CZhang}.
\end{proof}

At each semisimple $g \in G(F)$, we construct an explicit \'etale Luna slice as
follows. Put $s= g \theta(g)^{-1} \in S(F)$. We note $(H \times H)_g \simeq
H_s$. The normal space of the $H\times H$-orbit of $g$ at the point $g$ is
identified with
    \[
    N_{g} = \{ Yg \mid Y \in \fs, Ys = sY\}.
    \]
The group $(H \times H)_g$ acts on $N_g$ and $N_{g}$ is the sliced
representation. Let
    \[
    Z = \{ y g \in G \mid y \theta(y) = 1, \ y s = s y,\ \det(1+y) \not=0,\
    \det\left((1 - \Ad(ys))|_{\fg^{\perp}_{s}} \right) \not=0\},
    \]
where $\fg_{s}$ stands for the Lie algebra of the centralizer of $s$ in $G$,
and the orthogonal complement is taken with respect to an $H_{s}$-invariant
inner product. There is a map $\iota: Z \to N_{s}$ given by
    \[
    yg \mapsto (1 - y)(1+y)^{-1}g.
    \]

\begin{lemma}   \label{lemma:Luna_S}
The maps $Z \to N_{s}$ and $((H \times H) \times Z)//(H \times H)_g \to G$
are strongly \'etale. Therefore $Z$ is an \'etale Luna slice at $g$.
\end{lemma}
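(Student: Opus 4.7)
The strategy is to imitate the proof of Lemma~\ref{lemma:Luna}: since being strongly \'etale is a geometric property, we may pass to the algebraic closure $\overline{F}$, where the situation reduces to one already treated in the literature. Over $\overline{F}$, the quaternion algebra $D$ splits, so $G_{\overline{F}} \simeq \GL_{2n}$, the subgroup $H$ becomes a Levi of the form $\GL_n \times \GL_n$ (realized as $\theta$-fixed points), and the pair $(G, H, \theta)$ degenerates to the symmetric pair underlying the Guo--Jacquet relative trace formula. In this setting, an explicit \'etale Luna slice construction has already been carried out in \cite{CZhang}*{Subsection~5.3}, itself based on \cite{JR}*{Section~5.2}.

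So the first step is to verify that, after base change to $\overline{F}$, the subscheme $Z$ and the map $\iota$ defined in our Lemma are exactly (or canonically identified with) the ones appearing in those references. For this, observe that both conditions in the definition of $Z$ are intrinsic: $y\theta(y)=1$ places $y$ into $S$, $ys=sy$ makes $ys$ lie in the normal space, and the two non-vanishing determinant conditions are precisely the open conditions ensuring that $\iota: Z \to N_s$, sending $yg \mapsto (1-y)(1+y)^{-1} g$ (the usual Cayley-type transform), is defined and \'etale at the chosen point, and that the action map $(H\times H) \times_{(H\times H)_g} Z \to G$ is \'etale along the orbit. These are all unchanged by base change, so they match the corresponding objects in the Guo--Jacquet case over $\overline{F}$.

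The second step is then simply to invoke the results of \cite{CZhang}*{Subsection~5.3} to conclude that $\iota$ is strongly \'etale and that the induced map $((H\times H) \times Z)//(H\times H)_g \to G$ is strongly \'etale. The fact that $q: G \to \mathbf{A}^n$ (Lemma just above) is a categorical quotient implies that the Cartesian square
\[
\xymatrix{
((H\times H)\times Z)/(H\times H)_g \ar[d] \ar[r] & G \ar[d] \\
Z // (H\times H)_g \ar[r] & G // (H\times H)
}
\]
makes sense and commutes, and the bottom arrow is shown to be \'etale as in the cited references. Together these statements yield that $Z$ is an \'etale Luna slice at $g$ in the sense recalled in Subsection~\ref{subsec:etale_Luna}.

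The main (and really only) obstacle is the matching in the first step, i.e. making precise the dictionary between $G = \GL_n(D)$ realized as a subgroup of $\GL_{2n}(E)$ via the embedding fixed in Subsection~\ref{subsec:matching}, and the split model of $\GL_{2n}$ with $\theta$-action used in \cite{CZhang}. Once the geometric identifications are set up carefully — paying attention to the fact that our $\theta$ is conjugation by $\mathrm{diag}(1_n, -1_n)$ rather than the matrix $\begin{pmatrix} & 1 \\ 1 \end{pmatrix}$ used in some references, which only differs by an inner automorphism — the rest is formal and follows the $S'$-case of Lemma~\ref{lemma:Luna} verbatim.
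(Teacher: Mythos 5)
Your proposal is correct and follows essentially the same route as the paper: since strong \'etaleness is a geometric property, one passes to $\overline{F}$, where $(G, H\times H)$ becomes two copies of $\GL_n\times\GL_n$ acting on $\GL_{2n}$ (the Guo--Jacquet setting), and the statement reduces to the explicit slice construction of \cite{CZhang}*{Section~5.3}. The paper's only additional remark is the preliminary reduction from the $(H\times H)$-action on $G$ to the $H$-action on $S$ via $(H\times H)_g=H_s$, which your argument subsumes.
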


\begin{proof}
We first recall that $(H \times H)_g = H_s$. Checking that $((H \times H)
\times Z)//(H \times H)_g \to G$ is strongly \'etale is equivalent to checking
that $(H \times Z)//H_s \to S$ is strongly \'etale. This is a geometric
statement so we may assume that $F$ is algebraically closed. Then the action
of $H$ on $S$ is reduced to two copies of $\GL_{n, F} \times \GL_{n, F}$
acting on $\GL_{2n, F}$. The lemma again reduces to the description
in~\cite{CZhang}*{Section~5.3}.
\end{proof}

We now give more concrete descriptions of the sliced representation at $g =
g(\beta, n_1, n_2, n_3)$. Let $\fs$ be the space consisting of matrices of
the form
    \[
    \begin{pmatrix} & \epsilon Y \\ \overline{Y} \end{pmatrix}, \quad
    Y \in M_n(E).
    \]
It is identified with the tangent space of $S$ at $1$, and the group $H$ acts
on $\fs$ by conjugation. We also write $\fs_n$ to indicate the size of the
space $\fs$. The stabilizer $(H\times H)_g$ is isomorphic to $H_1 \times H_2
\times H_3$ where
    \[
    H_1 = (\GL_{n_1, E})_{\beta, \mathrm{twisted}}, \quad H_2 = \GL_{n_2, E},
    \quad H_3 = \GL_{n_3, E}.
    \]
The norm space $N_g = V_1 \oplus V_2 \oplus V_3$ with
    \[
    V_1 = \left\{ \begin{pmatrix} &\epsilon Y \\ \overline{Y} \end{pmatrix}
    \in \fs_{n_1} \ \Big|\ \beta \overline{Y} = Y \overline{\beta} \right\},
    \quad V_2 = \fs_{n_2},
    \quad V_3 = \fs_{n_3}.
    \]
We also have $Z = Z_1 \times Z_2 \times Z_3$ according to the decomposition
$N_{g} = V_1 \times V_2 \times V_3$, and the $H_{s}$-equivariant morphism
$\iota = \iota_1 \times \iota_2 \times \iota_3$.

\section{Transfer of test functions}    \label{sec:transfer}

\subsection{Matching}   \label{subsec:matching_def}
The goal of this section is to prove Theorem~\ref{thm:transfer_group}.
Throughout this section, we suppress the subscript $v$ and assume that $E/F$
is a quadratic extension of local fields of characteristic zero.

Recall that we have the categorical quotients
    \[
    q': S' \to \mathbf{A}^n, \quad q: G \to \mathbf{A}^n.
    \]
Two semisimple elements $s' \in S'(F)$ and $g \in G(F)$ match if they are
mapped to the same point in $\mathbf{A}^n$. More concretely we have

\begin{lemma}   \label{lemma:matching_concrete_condition}
Two regular semisimple elements $s'(\alpha) \in S'(F)$ and $g(\beta) \in
G(F)$ match if and only if $-(1-\alpha \overline{\alpha})(\alpha
\overline{\alpha})^{-1}$ and $\epsilon \beta\overline{\beta}$ have the same
characteristic polynomial.
\end{lemma}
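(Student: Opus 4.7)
Plan:

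The plan is to reduce matching to an equality of two characteristic polynomials of $n\times n$ matrices, and then intertwine the two resulting conditions via the Cayley transform. By definition, $s'(\alpha)$ and $g(\beta)$ match when $q'(s'(\alpha))=q(g(\beta))\in\mathbf{A}^n$. Since the coordinates $\Tr\wedge^i$, $i=1,\dots,n$, are (up to signs) the elementary symmetric polynomials in the eigenvalues, this is equivalent to the equality of the characteristic polynomials of the two $n\times n$ matrices appearing in the definitions of $q'$ and $q$.

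For the $S'$-side, the top-left block of $s'(\alpha)$ is $\alpha$, so $q'(s'(\alpha))$ records the characteristic polynomial of $2\alpha\overline{\alpha}-1$. For the $G$-side, I compute $s=g(\beta)\theta(g(\beta))^{-1}$ explicitly. Writing $g(\beta)=\begin{pmatrix}1 & \epsilon\beta\\ \overline{\beta} & 1\end{pmatrix}$ and $\theta(g(\beta))=\begin{pmatrix}1 & -\epsilon\beta\\ -\overline{\beta} & 1\end{pmatrix}$, a block inversion is legitimate because $1-\epsilon\beta\overline{\beta}$ is invertible by Lemma~\ref{lemma:semisimple_S}. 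Combined with the commutativity $\beta\cdot(\beta\overline{\beta})=(\beta\overline{\beta})\cdot\beta$, which follows from $\beta\overline{\beta}\in\GL_n(F)$ (hence $\overline{\beta}\beta=\beta\overline{\beta}$), this shows that the top-left block of $s$ equals $A:=(1+\epsilon\beta\overline{\beta})(1-\epsilon\beta\overline{\beta})^{-1}$. Hence matching is equivalent to $2\alpha\overline{\alpha}-1$ and $A$ having the same characteristic polynomial.

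To conclude, I apply the Cayley transform $c(w)=(w-1)(w+1)^{-1}$. A direct computation, valid under the invertibility conditions $\det\alpha\overline{\alpha}\neq 0$, $\det(1-\alpha\overline{\alpha})\neq 0$, and $\det(1-\epsilon\beta\overline{\beta})\neq 0$ guaranteed by the structure lemmas, gives
\[
c(2\alpha\overline{\alpha}-1)=1-(\alpha\overline{\alpha})^{-1}=-(1-\alpha\overline{\alpha})(\alpha\overline{\alpha})^{-1},\qquad c(A)=\epsilon\beta\overline{\beta}.
\]
The scalar Cayley map $\lambda\mapsto(\lambda-1)/(\lambda+1)$ is a bijection on its domain, which the spectra above are contained in by the same invertibility conditions (eigenvalues of $2\alpha\overline{\alpha}-1$ avoid $\pm 1$, and eigenvalues of $A$ also avoid $\pm 1$). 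Therefore the two matrices have the same characteristic polynomial if and only if their Cayley transforms do, which yields the equivalence stated in the lemma. The only mild obstacle is organizing the block inversion and ensuring the commutativity is correctly used to pass $(1-\epsilon\beta\overline{\beta})^{\pm 1}$ past $\beta$; once that is verified, the remainder is formal algebraic manipulation.
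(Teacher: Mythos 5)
Your proposal is correct and follows essentially the same route as the paper: identify the relevant $n\times n$ matrices from the definitions of $q'$ and $q$, compute the upper-left block of $g(\beta)\theta(g(\beta))^{-1}$ as $(1-\epsilon\beta\overline{\beta})^{-1}(1+\epsilon\beta\overline{\beta})$, and then check that equality of characteristic polynomials of $2\alpha\overline{\alpha}-1$ and this block is equivalent to the stated condition. Packaging that last step as a Cayley transform is a clean (purely organizational) way of doing the algebra the paper carries out directly.
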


\begin{proof}
Let $s'(\alpha) \in G'(F)$ and $g(\beta) \in G(F)$ be regular semisimple
elements. The upper left $n \times n$ block of its image in $S(F)$ is
    \[
    (1 - \epsilon \beta \overline{\beta})^{-1}
    (1+\epsilon \beta \overline{\beta}).
    \]
By definition, that $s'(\alpha)$ and $g(\beta)$ match is equivalent to $2
\alpha \overline{\alpha}-1$ and $(1 - \epsilon \beta \overline{\beta})^{-1}
(1+\epsilon \beta \overline{\beta})$ have the same characteristic polynomial.
This is further equivalent to that $-(1-\alpha \overline{\alpha})(\alpha
\overline{\alpha})^{-1}$ and $\epsilon \beta\overline{\beta}$ have the same
characteristic polynomial.
\end{proof}

We put
    \[
    G(F)_{\reg, 0} = \left\{g \in G(F)_{\reg} \ \Big|\
    g \theta(g)^{-1} = \begin{pmatrix} A & \epsilon B \\
    \overline{B}& \overline{A} \end{pmatrix}
    \in S(F), \
    \frac{1}{2}(A +1) \in N \GL_n(E)\right\},
    \]
and
    \[
    S'(F)_{\reg, 0} = \left\{ s' =
    \begin{pmatrix} A & B \\ C &D \end{pmatrix} \in S'(F)_{\reg}\
    \Big|\ 1 - (A \overline{A})^{-1}
    \in \epsilon N\GL_n(E) \right\}.
    \]

\begin{lemma}   \label{lemma:matching_orbits}
The matching defines a bijection between the $H$-orbits in $G_{\reg, 0}$ to
$S'_{\reg, 0}$.
\end{lemma}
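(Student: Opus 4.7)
The plan is to reduce both sides to a classification by $\GL_n(F)$-conjugacy classes and identify the matching with an explicit involution on such classes. By Lemma~\ref{lemma:semisimple_S}, every regular semisimple $(H\times H)$-orbit in $G(F)$ is represented by some $g(\beta)$ with $\beta\overline{\beta}$ regular semisimple; by Lemma~\ref{lemma:semisimple_S'}, every regular semisimple $H'$-orbit in $S'(F)$ is represented by some $s'(\alpha)$. Using the twisted-conjugacy lemma stated at the beginning of Subsection~\ref{sec:orbit_S'}, these orbits are parameterized respectively by the $\GL_n(F)$-conjugacy classes $[Q]$ of $Q := \beta\overline{\beta}$ and $[P]$ of $P := \alpha\overline{\alpha}$, both of which automatically lie in $N\GL_n(E)$.

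A direct block-matrix computation gives that the upper-left block of $g(\beta)\theta(g(\beta))^{-1}$ equals $A = (1+\epsilon Q)(1-\epsilon Q)^{-1}$, so that $\frac{1}{2}(A+1) = (1-\epsilon Q)^{-1}$, while the upper-left block of $s'(\alpha)$ is simply $\alpha$, so $A\overline{A} = P$. By Lemma~\ref{lemma:matching_concrete_condition}, $g(\beta)$ matches $s'(\alpha)$ iff $1 - P^{-1}$ and $\epsilon Q$ have the same characteristic polynomial, which for regular semisimple elements is the equality of $\GL_n(F)$-conjugacy classes $[P] = [(1-\epsilon Q)^{-1}]$. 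The map $[Q] \mapsto [(1-\epsilon Q)^{-1}]$, with inverse $[P] \mapsto [\epsilon^{-1}(1-P^{-1})]$, is therefore an involution on the set of regular semisimple $\GL_n(F)$-conjugacy classes (subject to the evident invertibility conditions).

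Finally, I would verify that the defining conditions of $G(F)_{\reg,0}$ and $S'(F)_{\reg,0}$ correspond under this involution precisely to the existence of a representative on the matched side. The condition $\frac{1}{2}(A+1) \in N\GL_n(E)$ for $G(F)_{\reg,0}$ translates to $[P] \in N\GL_n(E)$, which is exactly the requirement that some $\alpha \in \GL_n(E)$ with $\alpha\overline{\alpha}$ in the class $[P]$ exist; dually, the condition $1-(A\overline{A})^{-1} \in \epsilon N\GL_n(E)$ for $S'(F)_{\reg,0}$ translates, via $1 - P^{-1} = \epsilon Q$, into $[Q] \in N\GL_n(E)$, which is exactly the existence condition for $\beta$. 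Since $N\GL_n(E)$ is closed under inversion and under $\GL_n(F)$-conjugation, these are well-defined conditions on conjugacy classes, and the involution $[Q]\leftrightarrow[P]$ restricts to the claimed bijection. The only bookkeeping of substance is this translation of two seemingly asymmetric norm conditions into the symmetric requirement that both $[P]$ and $[Q]$ lie in $N\GL_n(E)$, which is precisely what is needed for the matched orbit to admit a representative.
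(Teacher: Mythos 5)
Your argument is correct and is essentially the paper's: the paper simply takes $s'(\alpha)\in S'(F)_{\reg,0}$, uses the defining condition to write $1-(\alpha\overline{\alpha})^{-1}=\epsilon\beta\overline{\beta}$, and observes that $g(\beta)$ is regular semisimple and matches (and conversely), with the parameterization of orbits by conjugacy classes of $\alpha\overline{\alpha}$ and $\beta\overline{\beta}$ left implicit via the norm-map lemma; your version just makes that bookkeeping explicit. One terminological nit: the map $[Q]\mapsto[(1-\epsilon Q)^{-1}]$ is a bijection with the inverse you state, not an involution, but nothing in the argument depends on that.
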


\begin{proof}
Take $s'(\alpha) \in S'(F)_{\reg, 0}$. By definition we can find a $\beta \in
\GL_n(E)$ such that
    \[
    1- (\alpha \overline{\alpha})^{-1} = \epsilon \beta \overline{\beta}.
    \]
Since $\alpha \overline{\alpha} \in \GL_n(F)$ is regular semisimple in the
usual sense, so is $\beta \overline{\beta}$. Then $g(\beta)$ is regular
semisimple in $G(F)$ and matches $s'(\alpha)$ by definition. The other
direction is proved similarly.
\end{proof}

Recall from \eqref{eq:tran fact} that we have defined transfer factors on $G$ and $G'$.  Fixed a character
$\widetilde{\eta}: E^\times \to \C^\times$ whose restriction to $F^\times$
equals $\eta$. Fix a purely imaginary element $\tau \in E^\times$. If $s' =
\begin{pmatrix} A' &B' \\C' &D' \end{pmatrix} \in S'(F)$
be a regular semisimple element, then by Lemma~\ref{lemma:semisimple_S'},
$A', B', C', D'$ are all invertible and we define  (cf.  \eqref{eq:tran fact})
\begin{align} \label{eq:tran fact S'}
\kappa^{S'}(s') = \chi(\tau
D') \widetilde{\eta}(B').
\end{align}
If $g \in G(F)$ and $g^{-1} =
\begin{pmatrix} A & \epsilon B \\ \overline{B} & \overline{A}
\end{pmatrix} \in G(F)$ then by Lemma~\ref{lemma:semisimple_S}, $A, B$
are both invertible and in  \eqref{eq:tran fact} we put $\kappa^G(s) = \chi(A)$.

We also put
    \[
    \cS(G(F))_0 = \{f \in \cS(G(F)) \mid O^G(g, f) = 0
    \text{ for all $g \not\in G(F)_{\reg, 0}$}\},
    \]
and
    \[
    \cS(S'(F))_0 = \{f' \in \cS(S'(F)) \mid O^{G'}(s', f') = 0
    \text{ for all $s'\not\in S'(F)_{\reg, 0}$}\}.
    \]
Then we say that $f \in \cS(G(F))_0$ and $f' \in \cS(G'(F))_0$ match if
    \[
    \kappa^G(g) O^G(g, f) = \kappa^{G'}(s') O^{G'}(s', f'_{s'}),
    \]
for all matching regular semisimple $g \in G(F)$ and $s' \in S'(F)$.

\begin{theorem} \label{thm:transfer}
For all $f \in \cS(G(F))_0$ there is an $f' \in \cS(S'(F))_0$ that matches
it, and vice versa.
\end{theorem}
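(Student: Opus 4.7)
The overall strategy is a combination of partition-of-unity descent in the orbit space $\mathbf{A}^n$ with the explicit étale Luna slices of Lemmas~\ref{lemma:Luna} and~\ref{lemma:Luna_S}, structured as an induction on $n$. By Lemma~\ref{lemma:matching_orbits} the matching sets up a bijection between the regular semisimple orbits in $G(F)_{\reg,0}$ and those in $S'(F)_{\reg,0}$, and the orbital integrals of functions in $\cS(G(F))_0$ and $\cS(S'(F))_0$ are supported on these loci. Pulling everything back to $\mathbf{A}^n$ via $q$ and $q'$, the theorem reduces to the following local statement: given any pair of matching semisimple orbits, represented by $g(\beta,n_1,n_2,n_3)$ and $s'(\alpha,n_1,n_2,n_3)$, every test function supported in an arbitrarily small invariant neighborhood of one side admits a matching partner on the other side. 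A smooth partition of unity on $\mathbf{A}^n$ subordinate to a cover by such neighborhoods then assembles the transfer globally.

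At each such matching pair the Luna slices $Z$ and $Z'$ identify equivariant neighborhoods of $g$ and $s'$ with open neighborhoods of the origin in the respective sliced representations $N_g = V_1 \oplus V_2 \oplus V_3$ and $N_{s'} = V_1' \oplus V_2' \oplus V_3'$, carrying compatible actions of a common stabilizer $H_1\times H_2\times H_3 = (\GL_{n_1,E})_{\beta,\mathrm{twisted}} \times \GL_{n_2,E} \times \GL_{n_3,E}$; the identification of stabilizers on the two sides under the matching is the key compatibility. The descent formula then expresses each orbital integral on $G$ (resp. $S'$) supported near the chosen orbit, once twisted by the restriction of the transfer factor $\kappa^G$ (resp. $\kappa^{S'}$) to the slice, as a product of three independent orbital integrals on $V_1\oplus V_2 \oplus V_3$ (resp. $V_1'\oplus V_2'\oplus V_3'$). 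The transfer therefore factorizes into three separate transfers on $V_i \leftrightarrow V_i'$ for $i=1,2,3$. The $V_1 \leftrightarrow V_1'$ transfer is either the $n=n_1$ regular case, where a matching test function can be produced directly by smoothness of orbital integrals and the local isomorphism of regular orbits supplied by Lemma~\ref{lemma:matching_concrete_condition}, or, when $n_1<n$, it is an instance of the original theorem of strictly smaller size, handled by the inductive hypothesis.

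The main obstacle lies in the two infinitesimal transfers on $V_2$ and $V_3$: one compares twisted orbital integrals on $M_{n_2}(E)$ under $\GL_{n_2,E}$-twisted conjugation against orbital integrals on the antidiagonal subspace $\fs_{n_3}\subset\fg$ under $H$-conjugation, and the other is a parallel Lie-algebra-type transfer between $M_{n_3}(E)^-\oplus M_{n_3}(E)^-$ under $(\GL_{n_3,F}\times\GL_{n_3,F})$ and $\fs_{n_3}$ under $\GL_{n_3,E}$, both twisted by restrictions of the characters $\chi$ and $\widetilde{\eta}$. These Lie-algebra transfers will require independent harmonic-analytic input, most naturally a Fourier-theoretic argument in the spirit of Waldspurger's density results, or a reduction to the infinitesimal Guo--Jacquet transfer already in the literature, with the character $\chi$ contributing an additional twist to be tracked carefully. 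Finally, the compatibility of the transfer factors $\kappa^G$ and $\kappa^{S'}$ with the descent through $\iota$ and $\iota'$ must be verified pointwise, using the explicit Cayley-type formulas $yg\mapsto (1-y)(1+y)^{-1}g$ and $xs'\mapsto (1-x)(1+x)^{-1}s'$; this is a direct but bookkeeping-intensive calculation that fixes normalizations between the two sides.
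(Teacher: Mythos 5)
Your overall architecture --- localize near semisimple orbits, descend through the explicit \'etale Luna slices, and reduce to transfers on the sliced representations $V_1\oplus V_2\oplus V_3$ versus $V_1'\oplus V_2'\oplus V_3'$ --- is the same as the paper's, which implements the localization via Lemma~\ref{lemma:local_transfer} and the descent via Proposition~\ref{prop:ss_descent}. But a central structural claim is wrong: the stabilizers at matching \emph{semisimple} (non-regular) points are not ``a common group.'' At $g(\beta,n_1,n_2,n_3)$ the stabilizer is $(\GL_{n_1,E})_{\beta,\mathrm{twisted}}\times\GL_{n_2,E}\times\GL_{n_3,E}$, while at $s'(\alpha,n_1,n_2,n_3)$ it is $(\GL_{n_1,E})_{\alpha,\mathrm{twisted}}\times\GL_{n_2,E}\times(\GL_{n_3,F}\times\GL_{n_3,F})$; the third factors are different groups and the first factors are in general distinct inner forms of the centralizer $L$ of $\alpha\overline{\alpha}$. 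Only for \emph{regular} semisimple elements are the stabilizers identified. This is not bookkeeping: the mismatch of groups is precisely why the $V_3\leftrightarrow V_3'$ comparison is the nontrivial infinitesimal Guo--Jacquet transfer (a $(\GL_{n_3,F}\times\GL_{n_3,F})$-orbital integral on $M_{n_3}(E)^-\oplus M_{n_3}(E)^-$ against a twisted $\GL_{n_3,E}$-orbital integral on $\fs_{n_3}$), handled in the paper by Proposition~\ref{prop:GJ_transfer}.

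Your induction on $n$ also does not close. When $n_1<n$, the $V_1\leftrightarrow V_1'$ comparison is not an instance of the theorem in smaller rank: under $X\mapsto X\overline{\alpha}$ and $Y\mapsto Y\overline{\beta}$ these spaces become the adjoint representations of the two inner forms $H_1'$ and $H_1$ of $L$, so the required input is the transfer of (Lie-algebra) orbital integrals between inner forms of products of general linear groups --- a separate known result, not the inductive hypothesis. Your description of the $V_2$-transfer is likewise off: $V_2$ and $V_2'$ are both $M_{n_2}(E)$ with twisted conjugation by the \emph{same} group $\GL_{n_2,E}$, and the real difficulty is the unit $-\epsilon$ in the matching condition $X\overline{X}\sim-\epsilon Y\overline{Y}$, which the paper resolves by routing through the split space $\mathsf{V}$ and applying the Guo--Jacquet transfer twice. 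Finally, two steps you omit are genuinely needed: the argument that a semisimple point matching nothing on the other side has an invariant neighbourhood disjoint from the $\reg,0$ locus (proved in the paper by a limit argument on norms), and the fact that matching of nearby regular elements corresponds to matching in the slice only after the power-series reparametrization $\xi_1$ of Lemma~\ref{lemma:matching_sliced}, without which the transfer factors and matching conditions on the slice do not line up.
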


By~\eqref{eq:f'tilde} and~\eqref{eq:simplify_orbital_integral}, this theorem
and Theorem~\ref{thm:transfer_group} are equivalent. The rest of this section
is devoted to the proof of Theorem~\ref{thm:transfer}. The basic idea is that
matching of test functions is a local property, and it is enough to prove the
existence of ``local transfer'' near each semisimple point. The orbital
integrals near a semisimple point can be related to the orbital integrals on
the corresponding sliced representation via semisimple descent. In the
current setting the orbital integrals on the sliced representations turn out
to be familiar ones. For readers' convenience we record this in the following
lemma, whose proof can be found in~\cite{Zhang1}*{Proposition~3.8}.

\begin{lemma}   \label{lemma:local_transfer}
Let $f \in \cS(G(F))_0$. If for all matching semisimple $g \in G(F)$ and $s'
\in S'(F)$, there is an $(H \times H)(F)$-invariant neighbourhood $U$ of $g$
in $G(F)$ and an $H'(F)$-invariant neighbourhood $U'$ of $s'$ in $S'(F)$, and
a function $f'_{s'} \in \cS(S'(F))_0$, such that for all matching regular
semisimple $g \in U$ and $s' \in U'$ we have
    \[
    \kappa^G(g) O^G(g, f) = \kappa^{S'}(s') O^{S'}(s', f'_{s'}),
    \]
then there is an $f' \in \cS(S'(F))_0$ that matches $f$. Similar statement
holds in the other direction.
\end{lemma}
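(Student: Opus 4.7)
The plan is to reduce the global matching to the local hypothesis by a partition-of-unity argument on the categorical quotient. Since $v$ is nonarchimedean, $\cS(G(F)) = C_c^\infty(G(F))$, so $f$ is compactly supported and the image $q(\mathrm{supp}\, f)$ is a compact subset of the totally disconnected space $\mathbf{A}^n(F) = F^n$. Working on the quotient is natural because by definition the matching of two semisimple orbits is equality of their images in $\mathbf{A}^n$ under $q$ and $q'$.

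First I would catalog the semisimple orbits relevant to $f$. For each $a \in q(\mathrm{supp}\, f)$, Lemma~\ref{lemma:semisimple_S} yields finitely many semisimple $(H\times H)(F)$-orbits above $a$. For each representative $g$, if $g$ matches some $s' \in S'(F)$, the hypothesis supplies invariant neighborhoods $U_g$, $U'_{s'}$ and a local match $f'_{s'} \in \cS(S'(F))_0$. If $g$ matches no $s' \in S'(F)$, the combination of $f \in \cS(G(F))_0$ with the locally constant nature of the matching condition on the regular semisimple locus (cf.\ Lemma~\ref{lemma:matching_orbits}) permits shrinking $U_g$ so that $O^G(g', f) = 0$ for every regular semisimple $g' \in U_g$, and I set $f'_g := 0$. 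By compactness I then extract a finite family $\{U_i\}_{i=1}^{N}$ whose images $V_i := q(U_i)$ cover $q(\mathrm{supp}\, f)$, together with associated local matches $f'_i$.

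Next I would glue. Refine $\{V_i\}$ to a finite disjoint clopen partition $\{W_i\}$ with $W_i \subset V_i$ --- possible since $F^n$ is totally disconnected --- and set $\phi_i := \id_{W_i}$. Define
\[
    f' \;:=\; \sum_{i=1}^{N} (\phi_i \circ q') \cdot f'_i \;\in\; \cS(S'(F))_0.
\]
For a matching regular semisimple pair $(g', s'')$ one has $q(g') = q'(s'')$, hence $(\phi_i \circ q)(g') = (\phi_i \circ q')(s'')$; linearity of orbital integrals in the test function and the local matching on $U_i, U'_i$ then yield
\[
    \kappa^G(g')\, O^G(g', f)
    \;=\; \sum_i \phi_i(q(g'))\, \kappa^G(g')\, O^G(g', f)
    \;=\; \sum_i \phi_i(q'(s''))\, \kappa^{S'}(s'')\, O^{S'}(s'', f'_i)
    \;=\; \kappa^{S'}(s'')\, O^{S'}(s'', f').
\]
The reverse direction is proved by the same argument with the roles of $G$ and $S'$ exchanged.

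The main obstacle is the vanishing step at a semisimple $g$ that does not match any $s' \in S'(F)$: the hypothesis supplies nothing there, and one must show that $f \in \cS(G(F))_0$ propagates vanishing of orbital integrals from the regular matching locus to a full invariant neighborhood of $g$. This is where the \'etale Luna slice of Subsection~\ref{subsec:etale_Luna} (Lemma~\ref{lemma:Luna_S}) enters decisively: in a neighborhood of $g$, the data of $f$ is modeled on the sliced representation $N_g$, and density of the regular matching locus inside $N_g$ forces the vanishing to spread to a genuine invariant neighborhood. This local descent step is precisely the content of~\cite{Zhang1}*{Proposition~3.8} to which we appeal.
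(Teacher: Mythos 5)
Your overall strategy --- a partition of unity on the categorical quotient $\mathbf{A}^n$, exploiting compactness of $q(\mathrm{supp}\,f)$ and total disconnectedness of $F^n$ --- is the standard one and is exactly the route taken in the reference \cite{Zhang1}*{Proposition~3.8} that the paper cites in lieu of a proof. Your treatment of the semisimple $g$ that match nothing is also correct in substance: the paper proves separately (first lemma of Subsection~6.4) that non-matching spreads to an invariant neighbourhood meeting no element of $G(F)_{\reg,0}$, so $f \in \cS(G(F))_0$ forces the orbital integrals of $f$ to vanish there; note, however, that invoking \cite{Zhang1}*{Proposition~3.8} for this sub-step is circular, since that proposition \emph{is} the lemma you are proving.

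The genuine gap is in the gluing. Your chain of equalities needs, for each $i$ with $\phi_i(q(g'))=1$, the local identity $\kappa^G(g')O^G(g',f)=\kappa^{S'}(s'')O^{S'}(s'',f'_i)$, but the hypothesis only supplies this for $g' \in U_i$ and $s'' \in U'_i$, whereas you only know $q(g') = q'(s'') \in W_i \subset q(U_i)$. The implication ``invariant in $q(U_i)$ $\Rightarrow$ element in $U_i$'' fails for two reasons: the $U_i$ need not be saturated (i.e.\ $U_i \neq q^{-1}(q(U_i))$), and a single fibre of $q$ or $q'$ over a non-regular value can contain several distinct semisimple rational orbits (the elements $g(\beta,n_1,n_2,n_3)$ of Lemma~\ref{lemma:semisimple_S} with varying partitions, for instance). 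With your indexing by individual orbits, two semisimple orbits $g_1,g_2$ over the same point $a$ give $a \in q(U_1)\cap q(U_2)$ but $a$ lands in only one $W_i$, say $W_1$; a regular semisimple element degenerating to $g_2$ with invariant in $W_1$ is then compared against $f'_1$, whose matching property is unknown outside $U_1$, and likewise $O^{S'}(s'',f'_1)$ is uncontrolled for $s''$ over $W_1$ but outside $U'_1$. The repair is to index the partition by points of the quotient rather than by orbits: for each $a$, shrink the neighbourhoods of \emph{all} semisimple orbits over $a$ on \emph{both} sides to saturated ones (available from the analytic Luna slices of \cite{AG2} used in Subsection~\ref{subsec:ss_descent}), intersect their images in $\mathbf{A}^n$ to get $V_a$, and take $f'_a$ to be the sum of the corresponding local matches; saturation then guarantees that every regular semisimple element with invariant in $W_a \subset V_a$ lies in the appropriate neighbourhood, and the computation you wrote goes through.
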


The condition in this lemma will be referred to as the existence of local
transfer at $g$ and $s'$. The existence of local transfer is what we will
prove later in this section.

\subsection{Transfer on the sliced representations}
Near each semisimple point, the orbital integral can be connected to the
orbital integral on the sliced representation of that point. As the
preparation for the proof of Theorem~\ref{thm:transfer}, we explain the
transfer of test functions on the sliced representations in this subsection.

The transfer of test functions on the sliced representations are closely
related to the transfer of test functions in the relative trace formulae of
Guo and Jacquet, which has been established in~\cite{CZhang}. We recall it
first. Put $\mathsf{V} = M_n(F) \times M_n(F)$ and $\mathsf{W} = M_n(E)$. The
group $\mathsf{H}' = \GL_{n,F} \times \GL_{n,F}$ acts on $\mathsf{V}$ by
    \[
    (h_1, h_2) \cdot (X_1, X_2) = (h_1 X_1 h_2^{-1}, h_2 X_2 h_1^{-1}),
    \]
and the group $\mathsf{H} = \Res_{E/F} \GL_{n, E}$ acts on $\mathsf{W}$ by
twisted conjugation. The element $(X_1, X_2) \in \mathsf{V}$ is regular
semisimple if $X_1X_2 \in \GL_n(F)$ and is regular semisimple in the usual
sense. The element $Y \in \mathsf{W}$ is regular semisimple if $Y
\overline{Y} \in \GL_n(E)$ and is regular semisimple in the usual sense. Two
regular semisimple elements $(X_1, X_2) \in \mathsf{V}$ and $Y \in
\mathsf{W}$ match if $X_1X_2$ and $\epsilon Y \overline{Y}$ have the same
characteristic polynomial. For $f' \in \cS(\mathsf{V})$ and $f \in
\cS(\mathsf{W})$, define the orbital integrals
    \[
    O^{\mathsf{V}}((X_1, X_2), f') =
    \int_{\mathsf{H}_{(X_1, X_2)}(F) \bs \mathsf{H}(F)}
    f'(h_1 X_1 h_2^{-1}, h_2 X_2 h_1^{-1}) \eta(h_1h_2) \rd h_1 \rd h_2,
    \]
and
    \[
    O^{\mathsf{W}}(Y, f) = \int_{\mathsf{H}_Y(F) \bs \mathsf{H}(F)}
    f(h Y \overline{h}^{-1}) \rd h.
    \]
Put
    \[
    \cS(\mathsf{V})_0 = \{ O^{\mathsf{V}}((X_1, X_2), f') = 0 \text{ for all
    regular semisimple $(X_1, X_2) \in \mathsf{V}$
    with $X_1 X_2 \not\in \epsilon N \GL_n(E)$}\}.
    \]

Put also for $(X_1, X_2) \in \mathsf{V}$,
    \[
    \kappa^{\mathsf{V}}(X_1, X_2) = \eta(X_1).
    \]
Two functions $f' \in \cS(\mathsf{V})_0$ and $f \in \cS(\mathsf{W})$ match if
    \[
    \kappa^{\mathsf{V}}(X_1, X_2) O^{\mathsf{V}}((X_1, X_2), f')=
    O^{\mathsf{W}}(Y, f)
    \]
for all matching $(X_1, X_2)$ and $Y$. The following
is~\cite{CZhang}*{Theorem~5.14}.

\begin{prop}    \label{prop:GJ_transfer}
For any $f' \in \cS(\mathsf{V})_0$ there is an $f \in \cS(\mathsf{W})$ that
matches it, and vice versa.
\end{prop}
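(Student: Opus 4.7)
The plan is to establish Proposition~\ref{prop:GJ_transfer} by the method of semisimple descent combined with a Fourier-analytic argument in a neighbourhood of the origin. Since matching of test functions is a local property near each semisimple orbit (the infinitesimal analogue of Lemma~\ref{lemma:local_transfer}), it suffices to produce local transfer in a neighbourhood of each pair of matching semisimple orbits in $\mathsf{V}$ and $\mathsf{W}$.

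First, I would classify the semisimple elements of $\mathsf{V}$ and $\mathsf{W}$ in the spirit of Lemma~\ref{lemma:semisimple_S'}, compute their stabilizers, and write down explicit \'etale Luna slices along the lines of Subsection~\ref{subsec:etale_Luna}. The sliced representation at a semisimple element decomposes according to the eigenvalue decomposition of the invariants $X_1 X_2$ on $\mathsf{V}$ and $\epsilon Y \overline{Y}$ on $\mathsf{W}$: at each nonzero eigenvalue block one gets a smaller Guo--Jacquet space $\mathsf{V}_{n_i} \leftrightarrow \mathsf{W}_{n_i}$, and at the eigenvalue zero one gets a distinguished block which is itself a Guo--Jacquet space of the same form but of strictly smaller dimension (once the base point is nonzero).

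Second, by induction on $n$, the existence of local transfer at every noncentral semisimple orbit is thereby reduced to transfer for the same Guo--Jacquet spaces in strictly smaller dimensions. Consequently the only genuinely new content is local transfer in a neighbourhood of $0 \in \mathsf{V}$ and $0 \in \mathsf{W}$, where the sliced representation coincides with the ambient linear space and the base case of the induction must be supplied.

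Third, to treat the origin I would exploit the linear structure of $\mathsf{V}$ and $\mathsf{W}$ via Fourier analysis. Equip both spaces with self-dual Haar measures and Fourier transforms $\widehat{\cdot}$ with respect to a nontrivial additive character $\psi$ of $F$. The crucial claim is that the Fourier transform intertwines the transfer up to an explicit scalar given by a product of Weil indices attached to the quadratic form cutting out the invariant. Granted this, any $f' \in \cS(\mathsf{V})_0$ can be written as $f' = f'_1 + \widehat{f'_2}$ with $f'_1, f'_2 \in \cS(\mathsf{V})_0$ both vanishing to arbitrarily high order at $0$; $f'_1$ is matched by the inductive hypothesis and $\widehat{f'_2}$ is matched by applying the Fourier transform on the $\mathsf{W}$-side. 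The hard part will be establishing this Fourier compatibility on the nose: it demands an honest computation of the orbital integrals of Gaussians and a matching of the resulting Weil constants on both sides, which in the end reduces to the classical identity for the local $\epsilon$-factor of a quadratic character already appearing in the $n = 1$ case. This compatibility, together with the descent above, is exactly the content of~\cite{CZhang}*{Section~5}, from whose Theorem~5.14 the proposition is quoted.
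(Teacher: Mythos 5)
The paper gives no proof of this proposition at all: it is quoted directly as \cite{CZhang}*{Theorem~5.14}, so your closing citation is in fact the paper's entire argument, and in that sense your proposal agrees with the paper. Your sketch of the descent-plus-Fourier strategy is a reasonable outline of what happens inside that reference (modulo the imprecise phrasing of the decomposition $f' = f'_1 + \widehat{f'_2}$ --- in \cite{CZhang} the relevant condition is that the orbital integrals vanish near the nilpotent cone, not that the functions vanish to high order at the origin, and producing such a decomposition is itself a substantial step), but none of this is reproduced or needed in the present paper.
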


Let us get back to the sliced representations. Let
    \[
    s' = s'(\alpha, n_1, n_2, n_3) \in S'(F), \quad
    g = g(\beta, n_1, n_2, n_3) \in G(F)
    \]
be matching semisimple elements. Recall that the sliced representation at
$s'$ is isomorphic to
    \[
    (H_1', V_1') \times (H_2', V_2') \times (H_3', V_3'),
    \]
where
    \[
    H_1' = (\GL_{n_1, E})_{\alpha, \text{twisted}},
    \quad H_2' = \GL_{n_2, E}, \quad
    H_3' = \GL_{n_3, F} \times \GL_{n_3, F},
    \]
and
    \[
    V_1' = \{X \in M_{n_1}(E) \mid \alpha \overline{X} =  X \overline{\alpha} \},
    \quad V_2' = M_{n_2}(E), \quad V_3' = M_{n_3}(E)^- \oplus M_{n_3}(E)^-,
    \]
The sliced representation at $g \in G(F)$ is isomorphic to
    \[
    (H_1, V_1) \times (H_2, V_2) \times (H_3, V_3),
    \]
where
    \[
    H_1 = (\GL_{n, E})_{\beta, \text{twisted}}, \quad
    H_2 = \GL_{n_2, E},\quad
    H_3 = \GL_{n_3, E},
    \]
and
    \[
    V_1 = \left\{ Y \in M_{n_1}(E) \mid \beta \overline{Y} =
    Y \overline{\beta} \right\},
    \quad V_2 = M_{n_2}(E),
    \quad V_3 = M_{n_3}(E).
    \]

We can speak of the transfer of functions on each component.

\begin{enumerate}
\item The transfer on $V_1'$ and $V_1$. The group $H_1'$ and $H_1$ act on
    $V_1'$ and $V_1$ respectively by twisted conjugation. Let $L$ be the
    centralizer of $\alpha \overline{\alpha}$ in $\GL_{n_1}(F)$. Since
    $-(1- \alpha \overline{\alpha})( \alpha \overline{\alpha})^{-1}$ and
    $\epsilon \beta \overline{\beta}$ have the same characteristic
    polynomial, it is also the centralize of $\beta \overline{\beta}$. Then
    $H_1'$ and $H_1$ are both inner forms of $L$. The map
        \[
        V_1' \to \fh_1', \quad X \to X \overline{\alpha}
        \]
    is an isomorphism of representations of $H_1'$ where $H_1'$ acts on
    $\fh_1'$ by conjugation. The map
        \[
        V_1 \to \fh_1, \quad Y \mapsto  Y \overline{\beta}
        \]
    is an isomorphism of representations of $H_1$ where $H_1$ acts on
    $\fh_1$ by conjugation. Thus we may speak of semisimple and regular
    semisimple elements in $V_1'$ and $V_1$, and matching between these
    elements. Two semisimple elements $X \in V_1'$ and $Y \in V_1$ match if
    $X \overline{\alpha}$ and $Y \overline{\beta}$ have the same (reduced)
    characteristic polynomial. We define
        \[
        V'_{1, \reg, 0} = \{ X \in V'_{1, \reg} \mid \text{ $X$
        matches some $Y \in V_{1, \reg}$}\},
        \]
    and
        \[
        V_{1, \reg, 0} = \{ Y \in V_{1, \reg} \mid \text{ $Y$
        matches some $X \in V_{1, \reg}'$}\}.
        \]
    For regular semisimple $X \in V'_1$, $Y \in V_1$, and test functions
    $f' \in \cS(V_1')$, $f \in \cS(V_1)$, we may define orbital integral
    $O^{V_1'}(X, f')$ and $O^{V_1}(Y, f)$ as usual. Define
        \[
        \cS(V_{1}')_0 = \{ f' \in \cS(V_1') \mid O^{V_1'}(X, f') =0
        \text{ for all $X \not \in V'_{1, \reg, 0}$}\}
        \]
    and $\cS(V_1)_0$ similarly. Two test functions $f' \in \cS(V_{1}')_0$
    and $f \in \cS(V_1)_0$ match if
        \[
        O^{V_1'}(X, f') = O^{V_1}(Y, f)
        \]
    for all matching regular semisimple $X$ and $Y$. The transfer of test
    functions between inner forms of $\GL_n$ implies that for any $f' \in
    \cS(V_{1}')_0$ there is an $f \in \cS(V_1)_0$ that matches it and vice
    versa.

\item The transfer on $V_2'$ and $V_2$. Both $V_2'$ and $V_2$ are
    isomorphic to $M_{n_2}(E)$. Regular semisimple elements $X \in V_2'$
    and $Y \in V_2$ mean $X \overline{X}$ and $Y \overline{Y}$ are in
    $\GL_{n_2}(E)$ and are regular semisimple in the usual sense. We define
    that they match if $X \overline{X}$ and $-\epsilon Y \overline{Y}$ have
    the same characteristic polynomial. We define
        \[
        V'_{2, \reg, 0} = \{ X \in V'_{2, \reg} \mid \text{ $X$
        matches some $Y \in V_{2, \reg}$}\},
        \]
    and
        \[
        V_{2, \reg, 0} = \{ Y \in V_{2, \reg} \mid \text{ $Y$
        matches some $X \in V_{2, \reg}'$}\}.
        \]
    For regular semisimple $X \in V'_2$, $Y \in V_2$, and test functions
    $f' \in \cS(V_2')$, $f \in \cS(V_2)$, we  define orbital integral
    $O^{V_2'}(X, f')$ and $O^{V_2}(Y, f)$ by
        \[
        O^{V_2'}(X, f') = \int_{H'_{2, X} \bs H_2} f' (h^{-1} X \overline{h})
        \chi(h^{-1} \overline{h}) \rd h,
        \]
    and
        \[
        O^{V_2}(Y, f) = \int_{H_{2, Y} \bs H_2} f (h^{-1} Y
        \overline{h}) \chi(h^{-1} \overline{h}) \rd h.
        \]
    Define
        \[
        \cS(V_{2}')_0 = \{ f' \in \cS(V_2') \mid O^{V_2'}(X, f') =0
        \text{ for all $X \not \in V'_{2, \reg, 0}$}\}
        \]
    and $\cS(V_2)_0$ similarly. Define for regular semisimple $X$ and $Y$
    the transfer factor
        \[
        \kappa^{V_2'}(X) = \chi(X), \quad \kappa^{V_2}(Y) = \chi(Y).
        \]
    Two test functions $f' \in \cS(V_{2}')_0$ and $f \in \cS(V_2)_0$ match
    if
        \[
        \kappa^{V_2'}(X) O^{V_2'}(X, f') = \kappa^{V_2}(Y) O^{V_2}(Y, f)
        \]
    for all matching regular semisimple $X$ and $Y$.

    Let us explain that for any $f' \in \cS(V_{2}')_0$ there is an $f \in
    \cS(V_2)_0$ that matches it and vice versa. First by replacing $f'$ and
    $f$ by $f' \chi$ and $f\chi$, we may assume that $\chi$ is trivial. Let
    $f' \in \cS(V_2')_0$. Then $O^{V_2'}(X, f')$ is the orbital integral
    appearing on the $\mathsf{W}$-side of the transfer problem of Guo and
    Jacquet. Then by Proposition~\ref{prop:GJ_transfer}, applied to
    $\mathsf{W} = V_2'$, there is an $\widetilde{f} \in \cS(\mathsf{V})$
    such that
        \[
        O^{V_2}(X, f') = \kappa^{\mathsf{V}}(Z_1, Z_2)
        O^{\mathsf{V}}((Z_1, Z_2), \widetilde{f}),
        \]
    for all regular semisimple $X \in V_2'$ and $(Z_1, Z_2)$ with $Z_1Z_2 =
    X \overline{X}$. Since $f' \in \cS(V_2')_0$, the function
    $\widetilde{f}$ satisfies the property that $O^{\mathsf{V}}((Z_1, Z_2),
    \widetilde{f}) = 0$ if $Z_1Z_2 \not\in N \GL_n(E)$ or $Z_1Z_2 \not\in
    -\epsilon N \GL_n(E)$.  Now apply Proposition~\ref{prop:GJ_transfer}
    again to $\mathsf{W} = V_2$, we conclude that there is a function $f
    \in \cS(V_2)$ such that
        \[
        \kappa^{\mathsf{V}}(Z_1, Z_2) O^{\mathsf{V}}((Z_1, Z_2),
        \widetilde{f}) = O^{V_2}(Y, f)
        \]
    for all regular semisimple $X \in V_2'$ and $(Z_1, Z_2)$ with $Z_1Z_2 =
    -\epsilon Y \overline{Y}$. The function $f$ satisfies the property that
    $O^{V_2}(Y, f) = 0$ if $-\epsilon Y \overline{Y} \not\in N \GL_n(E)$,
    and hence $f \in \cS(V_2)_0$. It follows that if $X \in V_2'$ matches
    $Y \in V_2$, i.e. $X \overline{X}$ and $-\epsilon Y \overline{Y}$ have
    the same characteristic polynomial, then
        \[
        O^{V_2}(X, f') = O^{V_2}(Y, f).
        \]
    The proof of the converse direction from $f$ to $f'$ is the same.

\item The transfer on $V_3'$ and $V_3$ is exactly the transfer problem of
    Guo and Jacquet. Let
        \[
        V'_{3, \reg, 0} = \{(X_1, X_2) \mid
        X_1 X_2 \in \epsilon N\GL_{n_3}(E)\}.
        \]
    For any $f' \in \cS(V_3')$ have the orbital integral
        \[
        O^{V_3'}((X_1, X_2), f') = \int_{H'_{3, (X_1, X_2)} \bs H'_3}
        f'(h_1^{-1} X_1 h_2, h_2^{-1} X_2 h_1) \eta(h_1 h_2) \rd h_1 \rd h_2.
        \]
    For $f \in \cS(V_3)$ we have the orbital integral
        \[
        O^{V_3}(Y, f) = \int_{H_{3, Y}
        \bs H_3}
        f(h^{-1} Y \overline{h}) \rd h.
        \]
    We also have
        \[
        \cS(V_3')_0 = \{ f' \in \cS(V_3') \mid O((X_1, X_2), f') = 0\
        \text{for all $(X_1, X_2) \not\in V'_{3, \reg, 0}$}\}.
        \]

    The transfer factor on $V_3'$ is given by
        \[
        \kappa^{V_3'}((X_1, X_2)) = \widetilde{\eta}(X_1).
        \]
    By Proposition~\ref{prop:GJ_transfer}, for any $f' \in \cS(V_3')_0$
    there is an $f \in \cS(V_3)$ such that
        \[
        \kappa^{V_3'}((X_1, X_2)) O^{V_3'}((X_1, X_2), f') = O^{V_3}(Y, f)
        \]
    for all matching $(X_1, X_2)$ and $Y$, and vice versa.
\end{enumerate}

We put
    \[
    \cS(N_{s'})_0 = \cS(V_1')_0 \otimes \cS(V_2')_0 \otimes \cS(V_3')_0,
    \quad
    \cS(N_{g})_0 = \cS(V_1)_0 \otimes \cS(V_2)_0 \otimes \cS(V_3).
    \]
Let us define transfer factors on $N_{s'}$ and on $N_g$ by setting
    \[
    \kappa^{N_{s'}}(X_1, X_2, (X_3, X_4)) =
    \chi(X_2) \widetilde{\eta}(X_3),\quad
    \kappa^{N_g}(Y_1, Y_2, Y_3) = \chi(Y_2),
    \]
and define orbital integrals and matching of test functions in $N_{s'}$ and
$N_g$ in the obvious way.

The above discussion can then be summarized as the following proposition.

\begin{prop}    \label{prop:transfer_slice}
For all $f' \in \cS(N_{s'})_0$ there is an $f \in \cS(N_g)_0$ such that for
all matching $X \in N_{s', \reg, 0}$ and $Y \in N_{g, \reg, 0}$, we have
    \[
    \kappa^{N_{s'}}(X) O^{N_{s'}}(X, f') = \kappa^{N_g}(Y) O^{N_g}(Y, f).
    \]
\end{prop}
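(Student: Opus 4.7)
The plan is a product-type argument assembling the three componentwise transfer results that were just recorded. Both sliced representations decompose as $N_{s'} = V_1' \oplus V_2' \oplus V_3'$ and $N_g = V_1 \oplus V_2 \oplus V_3$, and the stabilizer actions factor as $H'_{s'} = H_1' \times H_2' \times H_3'$ and $(H\times H)_g = H_1 \times H_2 \times H_3$, with each $H_i^{(\prime)}$ acting only on the corresponding $V_i^{(\prime)}$. Consequently, for a pure tensor $f' = f_1' \otimes f_2' \otimes f_3'$, the orbital integral factors:
    \[
    O^{N_{s'}}(X, f') = \prod_{i=1}^{3} O^{V_i'}(X_i, f_i'),
    \]
with $X = (X_1,X_2,X_3)$, and analogously on $N_g$. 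From the concrete descriptions in Subsection~\ref{subsec:etale_Luna} together with the definitions just given, a regular semisimple $X \in N_{s'}$ matches $Y \in N_g$ if and only if $X_i$ matches $Y_i$ in each factor. Moreover the transfer factors factor accordingly, as $\kappa^{N_{s'}}(X) = \kappa^{V_2'}(X_2)\,\kappa^{V_3'}(X_3)$ (the $V_1'$ piece being trivial) and $\kappa^{N_g}(Y) = \kappa^{V_2}(Y_2)$ (the $V_1$ and $V_3$ pieces being trivial).

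Since $\cS(N_{s'})_0$ is defined as the algebraic tensor product $\cS(V_1')_0 \otimes \cS(V_2')_0 \otimes \cS(V_3')_0$, it suffices by linearity to match pure tensors. Given $f_i' \in \cS(V_i')_0$, I would invoke, componentwise: (i) the transfer between the Lie algebras of the inner forms $H_1'$ and $H_1$ of the centralizer of $\alpha\overline{\alpha}$ in $\GL_{n_1, F}$ to obtain $f_1 \in \cS(V_1)_0$ matching $f_1'$; (ii) the two-step use of Proposition~\ref{prop:GJ_transfer}, bridging through the Guo--Jacquet space $\mathsf{V}$, to obtain $f_2 \in \cS(V_2)_0$ matching $f_2'$; (iii) a single direct application of Proposition~\ref{prop:GJ_transfer} to obtain $f_3 \in \cS(V_3)$ matching $f_3'$. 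Setting $f = f_1 \otimes f_2 \otimes f_3 \in \cS(N_g)_0$, the desired identity follows by multiplying the three componentwise identities, using the product decompositions both of the orbital integrals and of the transfer factors. The converse direction from $\cS(N_g)_0$ to $\cS(N_{s'})_0$ is symmetric.

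There is no substantive obstacle to this proof; the work has all been done in the component-by-component preparation. The only bookkeeping item worth verifying carefully is that the global transfer factors $\kappa^{N_{s'}}$ and $\kappa^{N_g}$ are indeed the products of the componentwise factors used in (i)--(iii), with no residual character twist---in particular that transfer between inner forms on the $V_1$ side and the standard Guo--Jacquet transfer on the $V_3$ side require no character data on the $G$-side, so that the product $\kappa^{V_2}(Y_2)$ exhausts $\kappa^{N_g}$. This is immediate from the explicit formulas for the $\kappa$'s introduced above.
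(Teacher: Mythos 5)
Your proposal is correct and coincides with the paper's own argument: the paper proves this proposition precisely by summarizing the preceding componentwise discussion, namely the inner-form transfer on $V_1'$/$V_1$, the two-step Guo--Jacquet transfer through $\mathsf{V}$ on $V_2'$/$V_2$, and the direct Guo--Jacquet transfer on $V_3'$/$V_3$, assembled on pure tensors with the transfer factors factoring exactly as you describe. Nothing further is needed.
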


\subsection{Semisimple descent} \label{subsec:ss_descent}
We recall the semisimple descent of orbital integrals, which is our main tool
in proving Theorem~\ref{thm:transfer}. This is a very general procedure, so
we temporarily consider in this subsection a reductive group $G$ acting on an
affine variety $X$ and $x \in X(F)$ is a semisimple point (i.e. the $G$-orbit
of $x$ is Zariski closed, or equivalently $G(F)x$ is closed in $X(F)$ in the
analytic topology). We will need the notion of an analytic Luna slice,
cf.~\cite{AG2}. The analytic Luna slice at $x$ is denoted by $(U, p, \psi, M,
N_x)$, where
\begin{itemize}
\item $U$ is an $G(F)$-invariant analytic neighbourhood of $x$ in $X(F)$.

\item $p$ is an $G(F)$-equivariant analytic retraction $p: U \to G(F)x$ and
    $M = p^{-1}(x)$.

\item $\psi$ is an $G(F)_x$-equivariant analytic embedding $M \to N_x$ with
    saturated image and $\psi(x) = 0$.
\end{itemize}
Here saturated means that $M = \psi^{-1} (\psi(M))$. Note that since $p$ is
$G(F)$-equivariant, if $y \in M$ and $gy =y$ where $g \in G(F)$, then $gx =
x$, i.e. $G(F)_y$ is a subgroup of $G(F)_x$.

For semisimple $x \in X(F)$, we have an \'etale Lune slice $Z$ and strongly
etale morphisms $\iota: Z \to N_x$ and $\phi: G\times_{G_x} Z \to X$. We can
construct an analytic slice $(U, p, \psi, M, N_{x})$ from it,
cf.~\cite{AG2}*{Corollary~A.2.4}. Let $\pi_Z: Z \to Z//G_x$ be the
categorical quotient. By definition, the morphisms $Z//G_x \to X//G$ and
$Z//G_x \to N_x//G_x$ are both \'{e}tale. Therefore we may choose a
sufficiently small analytic neighbourhood $Z'$ of $\pi_Z(x)$ in
$(Z//G_x)(F)$, so that the above two morphisms are (analytic) isomorphisms
from $Z'$ to its image. Let $M$ be the inverse image of $Z'$ under the
natural map $Z(F) \to (Z//G_x)(F)$. Let $\psi = \iota|_M$. Let $U'$ be the
inverse image of $Z'$ in $(G \times_{G_x} Z)(F)$. Let $p': U' \to
(G//G_x)(F)$ be the natural $G(F)$-invariant map. Let $U'' = U'\cap
p'^{-1}(G(F)//G(F)_x)$ and $U = \phi(U'') \subset X(F)$. Note that
$\phi|_{U''}$ is an analytic isomorphism from $U''$ to $U$. Let $p = p' \circ
(\phi|_{U''})^{-1}$. Then $(U, p, \psi, M, N_x)$ is the desired analytic Luna
slice at $x$.

The following is~\cite{Zhang1}*{Proposition~3.11}. It describes the relation
between the orbital integrals on $X$ near the semisimple point $x$ and the
orbital integrals on the sliced representation at $x$.

\begin{prop}    \label{prop:ss_descent}
Let $\chi$ be a character of $G(F)$. There is an $H_x$-invariant
neighbourhood $\cV \subset M$ of $x$ such that the following holds.
\begin{enumerate}
\item For any $f \in \cS(X(F))$ there is an $f_x \in \cS(N_x)$ such that if
    $y \in \cV$, $z = \psi(y)$ and $\chi$ is trivial on $H(F)_y$, we have
        \begin{equation}    \label{eq:ss_descent}
        \int_{G(F)/G_y(F)} f(gy) \chi(g) \rd g  =
        \int_{G_x(F)/G_y(F)} f_x(gz) \chi(g) \rd g.
        \end{equation}

\item Conversely for any $f_x \in \cS(N_x)$ there is an $f \in \cS(X(F))$
    such that the equality~\eqref{eq:ss_descent} holds if $y \in \cV$, $z =
    \psi(y)$ and $\chi$ is trivial on $H(F)_y$.
\end{enumerate}
\end{prop}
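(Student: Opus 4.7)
The plan is to prove the proposition by analytic descent to the Luna slice. The starting point is that the étale Luna slice of Lemmas~\ref{lemma:Luna} and~\ref{lemma:Luna_S}, together with the construction recalled before the proposition, produces an analytic Luna slice $(U, p, \psi, M, N_x)$ in which $\phi: G(F)\times_{G(F)_x} M \xrightarrow{\sim} U$ is an analytic isomorphism and $\psi: M \hookrightarrow N_x$ is an open embedding with saturated image. Shrink $M$ if necessary, and take the neighborhood $\cV \subset M$ small enough that $\psi(\cV)$ is an open neighborhood of $0$ in $N_x$ on which $\psi^{-1}$ is defined, and that for every $y \in \cV$ the stabilizer $G(F)_y$ is a subgroup of $G(F)_x$ (this inclusion holds because $p$ is a $G(F)$-equivariant retraction to $G(F)x$).

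The heart of the proof is the following unfolding, which handles both directions simultaneously. For $y \in \cV$ with $z = \psi(y)$, the tower $G(F)_y \subset G(F)_x \subset G(F)$ (equipped with compatible Haar measures) gives
\begin{equation*}
\int_{G(F)/G(F)_y} f(gy)\chi(g)\,\rd g
= \int_{G(F)/G(F)_x} \int_{G(F)_x/G(F)_y} f(ghy)\,\chi(gh)\,\rd h\,\rd g.
\end{equation*}
The hypothesis that $\chi|_{G(F)_y}$ is trivial is what makes the outer integral well-defined on $G(F)/G(F)_y$. By $G(F)_x$-equivariance of $\psi$, for $h \in G(F)_x$ one has $hy = \psi^{-1}(hz)$. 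Thus if a function $f_x$ on $N_x$ satisfies
\begin{equation*}
f_x(w) = \int_{G(F)/G(F)_x} f(g\psi^{-1}(w))\,\chi(g)\,\rd g \qquad (w \in \psi(M)),
\end{equation*}
then Fubini produces the desired identity~\eqref{eq:ss_descent}. So the whole proposition reduces to producing matching pairs $(f, f_x)$ satisfying this ``transfer'' relation on $\psi(M)$.

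For the direct direction, given $f \in \cS(X(F))$, the integrand $g \mapsto f(g\psi^{-1}(w))\chi(g)$ need not be $G(F)_x$-invariant, so one cannot literally quotient. Instead, choose a nonnegative $\rho \in C_c^\infty(G(F))$ whose $G(F)_x$-average $g \mapsto \int_{G(F)_x}\rho(gh)\,\rd h$ equals $1$ on a relatively compact set exhausting the image of the support of $f$ under $p$, and define
\begin{equation*}
f_x(w) := \int_{G(F)} \rho(g) f(g\psi^{-1}(w))\chi(g)\,\rd g, \qquad w \in \psi(M),
\end{equation*}
extended by zero to $\cS(N_x)$. A direct check using the partition property of $\rho$ recovers the transfer relation, and a standard smoothness argument (the map $\phi$ is an analytic isomorphism) shows $f_x$ is Schwartz. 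The converse direction is symmetric: given $f_x \in \cS(N_x)$, restrict to a neighborhood of $0$ contained in $\psi(\cV)$, pull back to $M$ via $\psi$, and then spread out along $G(F)$-orbits via $\phi$ using a dual cutoff; extending by zero off $U$ produces the required $f \in \cS(X(F))$.

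The main obstacle is the analytic bookkeeping: verifying that the cutoff construction produces a genuinely Schwartz (rather than merely smooth) function, checking compatibility of Haar measures on $G(F)/G(F)_y$, $G(F)/G(F)_x$, and $G(F)_x/G(F)_y$ throughout the unfolding, and controlling the behavior near the boundary of the saturated subset $\psi(M) \subset N_x$. These are standard once the analytic Luna slice is in hand, and the argument closely parallels~\cite{Zhang1}*{Proposition~3.11} and the general machinery of~\cite{AG2}.
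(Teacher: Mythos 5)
The paper gives no proof of this proposition---it is quoted from \cite{Zhang1}*{Proposition~3.11}---and your argument is the standard one underlying that reference: unfold the orbital integral along the tower $G(F)_y\subset G(F)_x\subset G(F)$ and build the matching pair $(f,f_x)$ via a cutoff $\rho$ whose $G(F)_x$-average equals $1$ on the relevant compact subset of $G(F)/G(F)_x$ (which is exactly $p(\supp f\cap U)$, as you note). This is correct; the one point deserving explicit care is that in the converse direction the truncation of $f_x$ near $0$ must be chosen $G(F)_x$-invariant (possible because $\psi(M)$ is saturated), since otherwise the orbital integral of the truncated function over the generally non-compact orbit $G(F)_x z$ would not agree with that of $f_x$ itself.
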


\subsection{Proof of Theorem~\ref{thm:transfer}}
Let us retain the setup of Theorem~\ref{thm:transfer}. Let $s' = s'(\alpha,
n_1, n_2, n_3) \in S'(F)$ and $g = g(\beta, n_1, n_2, n_3) \in G(F)$ be
semisimple elements.

\begin{lemma}
If $g$ does not match any semisimple element in $S'(F)$, then there is an $(H
\times H)(F)$-invariant neighbourhood $U$ of $g$ such that $U \cap
S(F)_{\reg, 0} = \emptyset$. If $s'$ does not match any semisimple element in
$G(F)$, then there is an $H'(F)$-invariant neighbourhood $U'$ of $s'$ such
that $U' \cap S'(F)_{\reg, 0} = \emptyset$.
\end{lemma}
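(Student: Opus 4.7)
The plan is to argue both statements by contradiction, using the \'etale Luna slices of Lemmas~\ref{lemma:Luna} and~\ref{lemma:Luna_S} to descend the question to the sliced representations at the semisimple points, where matching reduces to Guo--Jacquet-type norm conditions. I focus on the first statement; the second is symmetric, with $g$ and the slice at $g$ replaced by $s'$ and the slice at $s'$. (I read ``$S(F)_{\reg,0}$'' in the statement as $G(F)_{\reg,0}$, or equivalently its image in $S(F)$ under $g \mapsto g\theta(g)^{-1}$, since either interpretation is what the later applications of the lemma require.)

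Suppose every $(H\times H)(F)$-invariant neighbourhood of $g = g(\beta, n_1, n_2, n_3)$ meets $G(F)_{\reg,0}$. The goal is to extract a semisimple $s' \in S'(F)$ with $q(g) = q'(s')$, contradicting the non-matching hypothesis. Choose $g_k \to g$ with $g_k \in G(F)_{\reg,0}$; by Lemma~\ref{lemma:matching_orbits} each $g_k$ matches a regular semisimple $s'_k \in S'(F)_{\reg,0}$. Passing to the analytic Luna slice at $g$ derived from Lemma~\ref{lemma:Luna_S} via~\cite{AG2}*{Corollary~A.2.4}, and acting by $(H\times H)(F)$, we may assume that for $k$ large $g_k$ corresponds to $y_k = (y_{1,k}, y_{2,k}, y_{3,k}) \in N_g = V_1 \oplus V_2 \oplus V_3$ with $y_k \to 0$. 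Under the explicit identifications of Subsection~\ref{subsec:etale_Luna}, the condition $g_k \in G(F)_{\reg,0}$ translates into a Guo--Jacquet-type norm condition on each slice component $y_{i,k}$.

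By Lemma~\ref{lemma:matching_concrete_condition}, extended to non-regular semisimple elements via the block decompositions of Lemmas~\ref{lemma:semisimple_S'} and~\ref{lemma:semisimple_S}, any semisimple $s' \in S'(F)$ matching $g(\beta, n_1, n_2, n_3)$ must have the form $s'(\alpha, n_1, n_2, n_3)$ with $\alpha\overline\alpha$ in the $\GL_{n_1}(F)$-orbit determined by $\beta\overline\beta$; the existence of such an $\alpha \in \GL_{n_1}(E)$ is itself a norm condition. The slice norm conditions furnished by the $y_{i,k}$ give exactly this norm condition for the perturbed data around $\beta$. In both the $p$-adic and archimedean settings, being a norm is an open-and-closed condition in $F^\times$, and the Hasse-type conditions controlling membership in $N\GL_{n_1}(E)$ are correspondingly locally constant on the regular semisimple locus; passing to the limit $y_k \to 0$ therefore supplies the required $\alpha$ and hence $s'$. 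The main obstacle is justifying this last step rigorously --- that is, verifying that the three norm/Hasse conditions on $V_1, V_2, V_3$ stabilize under the degeneration and assemble consistently to the global matching invariants. This is where the explicit form of the sliced representations in Subsection~\ref{subsec:etale_Luna}, combined with Proposition~\ref{prop:GJ_transfer}, will carry the argument.
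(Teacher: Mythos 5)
Your contrapositive skeleton is the right one---if every $(H\times H)(F)$-invariant neighbourhood of $g$ met $G(F)_{\reg,0}$, you would take $g_k\to g$ with each $g_k$ matching some regular semisimple $s'_k$ and try to conclude that $g$ itself matches a semisimple element of $S'(F)$---but you have left unproved precisely the step that constitutes the paper's entire proof, and you flag it yourself as ``the main obstacle.'' Concretely, non-matching of $g=g(\beta,n_1,n_2,n_3)$ amounts to $1-\epsilon\beta\overline{\beta}\notin N\GL_{n_1}(E)$, and what must be shown is that a semisimple $\gamma\in\GL_{n_1}(F)$ with $\gamma\notin N\GL_{n_1}(E)$ admits a neighbourhood containing no regular semisimple norms. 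This does not follow from ``being a norm is an open-and-closed condition in $F^\times$'': for a regular semisimple $\delta$ lying in a maximal torus $T$, membership in $N\GL_{n_1}(E)$ is membership in $NT(E)$, and the torus $T$ varies with $\delta$; near a non-regular $\gamma$ the tori occurring are not all conjugate, so the condition is not visibly locally constant, and no Hasse-type sign criterion is available in general.

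The paper closes exactly this gap by a compactness argument: given regular semisimple $\delta_k=Ng_k\to\gamma$, one uses that the conjugation map $\GL_{n_1}(F)\times\GL_{n_1}(F)_\gamma\to\GL_{n_1}(F)$ is a fibration near $\gamma$ to place the $\delta_k$ inside the centralizer $\GL_{n_1}(F)_\gamma$, then uses that this group has only finitely many maximal tori up to conjugacy to place a subsequence inside a single $T(F)$; regularity of $\delta_k$ forces $g_k\in T(E)$, and since $N\colon T(E)\to T(F)$ is locally a fibration one extracts a convergent subsequence $g_k\to g$ with $Ng=\gamma$, contradicting $\gamma\notin N\GL_{n_1}(E)$. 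Without this (or an equivalent) argument your proof is incomplete. Separately, the detour through the \'etale Luna slices and Proposition~\ref{prop:GJ_transfer} is unnecessary for this statement: the matching invariant is read directly off the upper-left block $A$ of $y_1\theta(y_1)^{-1}$, whose associated quantity $\frac{1}{2}(A+1)$ converges to $(1-\epsilon\beta\overline{\beta})^{-1}$, so the question is a purely topological one about the norm locus and no descent to the sliced representations (let alone the transfer of orbital integrals) is needed.
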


\begin{proof}
That $g$ does not match any semisimple element in $S'(F)$ is equivalent to
    \[
    1 - \epsilon \beta \overline{\beta} \not\in N \GL_{n_1}(E).
    \]
Any regular semisimple element in a neighbourhood of $g$ is in the $(H \times
H)(F)$ orbit of an element $y = (y_1, y_2, y_3) \in U$. We can choose the
neighbourhood to be small enough such that if $y_1 \theta(y_1)^{-1} =
\begin{pmatrix} A & \epsilon B \\ \overline{B} & \overline{A} \end{pmatrix}$
then $\frac{1}{2}(A+1)$ is in a small neighbourhood of $(1-\epsilon \beta
\overline{\beta})^{-1}$ in $\GL_{n_1}(E)$. When this neighbourhood is small
enough, no regular semisimple elements (in the usual sense) in it is a norm
(see below for an explanation). This shows that $y = (y_1, y_2, y_3)$ does
not match any element in $S'(F)$.

The case of $s'$ can be proved by the same argument.

It remains to explain that if $\gamma \in \GL_{n_1}(F)$ is semisimple in the
usual sense and $\gamma \not\in N\GL_{n_1}(E)$, then there is a neighbourhood
of $\gamma$ such that any regular semisimple $\delta$ in the neighbourhood is
not a norm. This can be seen as follows. Suppose that there is a sequence of
regular semisimple elements $\delta_k$, such that $\delta_k$ converges to
$\gamma$ and $\delta_k = N g_k$ for some $g_k \in\GL_{n_1}(E)$. As
$\delta_k$'s are all conjugate to an element in $\GL_{n_1}(F)$, we may assume
that $\delta_k \in \GL_{n_1}(F)$. The conjugation action map $\GL_{n_1}(F)
\times \GL_{n_1}(F)_\gamma \to \GL_{n_1}(F)$, $(x, y) \mapsto xyx^{-1}$, is a
fibration in a neighbourhood of $\gamma$. Therefore we may further assume
that $\delta_k \in \GL_{n_1}(F)_{\gamma}$. Moreover as there are only
finitely maximal tori in $\GL_{n_1}(F)_{\gamma}$ up to conjugation, we may
assume, by taking a subsequence, that there is a maximal torus $T$ of
$\GL_{n_1}(F)_{\gamma}$, such that $\delta_k \in T(F)$. As $\delta_k$ are all
regular semisimple, we conclude that $g_k \in T(E)$. Since the norm map $T(E)
\to T(F)$ is locally a fibration, we may assume, again by taking a
subsequence, that $g_k$ is convergent to an element $g$. Then $\gamma = N g$.
\end{proof}

From now on let us assume that $g$ and $s'$ match. This in particular implies
that $-(1-\alpha \overline{\alpha})(\alpha \overline{\alpha})^{-1}$ and
$\epsilon \beta \overline{\beta}$ have the same characteristic polynomial. We
may and will assume that they in fact equal. Put
    \[
    s_1' = \begin{pmatrix} \alpha & 1_{n_1}\\ 1_{n_1}- \alpha \overline{\alpha}
    & -\overline{\alpha} \end{pmatrix}, \quad
    s_2' = \begin{pmatrix} & 1_{n_2} \\ 1_{n_2} \end{pmatrix},\quad
    s_3' = 1_{2n_3},
    \]
and
    \[
    g_1 = \begin{pmatrix} 1_{n_1} & \epsilon \beta \\
    \overline{\beta} & 1_{n_1} \end{pmatrix}, \quad
    g_2 = \begin{pmatrix} & 1_{n_2} \\ 1_{n_2} \end{pmatrix}, \quad
    g_3 = 1_{2n_3}.
    \]

The stabilizer of $s$ (resp. $s'$) has the form $H_1 \times H_2 \times H_3$
(resp. $H_1' \times H_2' \times H_3'$), and the sliced representation $N_s$
(resp. $N_{s'}$) has the form $V_1 \times V_2 \times V_3$ (resp. $V_1' \times
V_2' \times V_3'$). We have $V_i//H_i \simeq V_i'//H_i'$. By the explicit
construction in Subsection~\ref{subsec:etale_Luna}, the \'etale Luna slice at
$g$ (resp. $s'$) takes the form $Z = Z_1 \times Z_2 \times Z_3$ (resp. $Z' =
Z_1' \times Z_2' \times Z_3'$). The analytic slice at $g$ (resp. $s'$) is
denoted by $(U, p, \psi, M, N_g)$ (resp. $(U', p', \psi', M', N_{s'})$).
According to the construction of the analytic slice recalled in
Subsection~\ref{subsec:ss_descent}, $M$ (resp. $M'$) takes the form $M_1
\times M_2 \times M_3$ (resp. $M_1' \times M_2' \times M_3'$) where $M_i
\subset Z_i$ (resp. $M_i' \subset Z_i'$), where $\psi'(M_i)$ (resp.
$\psi'(M_i')$) is a saturated open subset of $V_i$ (resp. $V_i'$). We may
assume that the image of $M_i$ (resp. $M_i'$) in $V_i
//H_i = V_i'//H_i'$ are identified.

Let us assume that $xs' = (x_1 s_1', x_2 s_2', x_3 s_3') \in M'$ be a regular
semisimple element, $x_i s_i' \in M_i$, and
    \[
    \psi'(xs') = \left( \begin{pmatrix} & X_1 \\ -\overline{X_1}
    (1- \alpha \overline{\alpha})\end{pmatrix}, \
    \begin{pmatrix} & X_2 \\ -\overline{X_2} \end{pmatrix},\
    \begin{pmatrix} &X_3 \\ X_4 \end{pmatrix} \right),
    \]
where $X_1 \in V_1'$, $X_2 \in V_2'$ and $(X_3, X_4) \in V_3'$. Write $X =
(X_1, X_2, (X_3, X_4)) \in N_{s'}$. Let $yg = (y_1 g_1, y_2 g_2, y_3 g_3) \in
M$ be a regular semisimple elements, $y_i g_i \in M_i$, and
    \[
    \psi(yg) = \left(
    \begin{pmatrix} & \epsilon Y_1 \\ \overline{Y_1} \end{pmatrix},\
    \begin{pmatrix} & \epsilon Y_2 \\ \overline{Y_2} \end{pmatrix},\
    \begin{pmatrix} & Y_3 \\ \overline{Y_3} \end{pmatrix} \right),
    \]
where $Y_i \in V_i$. Write $Y = (Y_1, Y_2, Y_3) \in N_g$.

The next lemma connects the notion of matching of $xs'$ and $yg$ with the
matching of $X$ and $Y$ in the sliced representation. We use $a \sim b$ to
indicate that $a$ and $b$ have the same characteristic polynomial. We also
note that $X_1 \overline{\alpha}$ and $\alpha \overline{\alpha}$ commute, and
hence it makes sense to evaluate a power series with coefficients being
rational functions of $\alpha \overline{\alpha}$ at $X_1 \overline{\alpha}$.
Similarly it makes sense to evaluate a power series with coefficients being
rational functions of $\epsilon \beta \overline{\beta}$ at $Y_1
\overline{\beta}$.

\begin{lemma}   \label{lemma:matching_sliced}
We can find an $H'_i$-invariant neighbourhood $\cV_i'$ of $s_i'$ in $M_i'$
and a neighbourhood $\cV_i$ of $g_i$ in $M_i$, $i = 1, 2,3$, and a power
series $\xi_1$ with coefficients being rational functions of $\alpha
\overline{\alpha}$, and the leading term
    \[
    \xi_1(t) =t (1-\alpha \overline{\alpha})(\alpha \overline{\alpha})^{-1}
    + \cdots,
    \]
such that if $xs'$ and $yg$ match, and $x_is_i' \in\cV_i'$, $y_i g_i \in
\cV_i$, $i = 1, 2, 3$, then
    \[
    \xi_1(X_1 \overline{\alpha}) \sim Y_1 \overline{\beta},
    \quad X_2 \overline{X_2} \sim -\epsilon Y_2 \overline{Y_2},
    \quad X_3 \overline{X_3} \sim \epsilon Y_3 \overline{Y_3}.
    \]
\end{lemma}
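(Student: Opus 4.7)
My plan is to prove the lemma by an explicit analytic computation on the sliced representations, exploiting the block decomposition. First, since the stabilizers $H'_{s'}$ and $(H \times H)_g$, together with the \'etale Luna slices $Z'$ and $Z$, all factor as products over the three components indexed by $(n_1, n_2, n_3)$, and since the categorical quotients $q'$ and $q$ split into products of the characteristic polynomials of the three block upper-left submatrices, the matching condition decomposes into three independent block matchings. For the $i=3$ block, where $s'_3 = 1 = g_3$, the explicit embeddings $V'_3 \hookrightarrow N_{s'}$ and $V_3 \hookrightarrow N_g$ combined with a direct expansion of characteristic polynomials yield $X_3 \overline{X_3} \sim \epsilon Y_3 \overline{Y_3}$. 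For the $i=2$ block, with $s'_2 = g_2 = \begin{pmatrix} & 1 \\ 1 & \end{pmatrix}$, I would expect an analogous computation to give $X_2 \overline{X_2} \sim -\epsilon Y_2 \overline{Y_2}$, the sign difference arising from the different embeddings of $V'_2$ and $V_2$ into their ambient symmetric spaces.

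The main work is in the $i=1$ block, and I would handle it as follows. I parameterize via the Cayley transform: on the $S'$-side, $x_1 = (1-Y)(1+Y)^{-1}$ with
\[
Y = \begin{pmatrix} 0 & X_1 \\ -\overline{X_1}(1-\alpha\overline{\alpha}) & 0 \end{pmatrix},
\]
and on the $G$-side, $y_1 = (1-Y')(1+Y')^{-1}$ with
\[
Y' = \begin{pmatrix} 0 & \epsilon Y_1 \\ \overline{Y_1} & 0 \end{pmatrix}.
\]
The defining relations $\alpha \overline{X_1} = X_1 \overline{\alpha}$ for $V'_1$ and $\beta \overline{Y_1} = Y_1 \overline{\beta}$ for $V_1$ imply that $T := X_1 \overline{\alpha}$ and $S := Y_1 \overline{\beta}$ both commute with $\alpha \overline{\alpha}$; for $S$ this uses the identity $\epsilon \beta \overline{\beta} = -(1-\alpha\overline{\alpha})(\alpha\overline{\alpha})^{-1}$ from the base-point matching of Lemma~\ref{lemma:matching_concrete_condition}. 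I then expand the upper-left $n_1 \times n_1$ blocks $A'$ of $x_1 s'_1$ and $A$ of $y_1 g_1 \theta(y_1 g_1)^{-1}$ as matrix power series in $T$ and $S$ respectively, whose coefficients lie in the commutative algebra generated by $\alpha \overline{\alpha}$, and obtain
\[
2 A' \overline{A'} - 1 = A_0 + \phi(T), \qquad A = A_0 + \psi(S),
\]
with $A_0 = 2\alpha \overline{\alpha} - 1$ and $\phi$, $\psi$ matrix power series having invertible linear coefficients. The matching for $(x_1 s'_1, y_1 g_1)$ then says that $\phi(T)$ and $\psi(S)$ coincide up to characteristic-polynomial equivalence inside the centralizer of $A_0$; applying the formal implicit function theorem to invert $\psi$ produces the required power series $\xi_1$. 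A direct calculation of the linear terms of $\phi$ and $\psi$, together with the identity for $\epsilon \beta \overline{\beta}$ above, pins down the leading term $\xi_1(t) = t(1-\alpha\overline{\alpha})(\alpha\overline{\alpha})^{-1} + \cdots$.

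The hard part will be the careful expansion of the $G$-side, namely the upper-left block of $y_1 g_1 \theta(y_1 g_1)^{-1}$, since this requires inverting the $2 \times 2$ block matrix $\theta(y_1 g_1)$ and tracking all the non-commutation carefully. For the statement of the lemma, however, only the existence of $\xi_1$ and its leading coefficient are needed, so only the zeroth and first-order expansions in $S$ need to be computed explicitly; the neighbourhoods $\cV_i$ and $\cV'_i$ can then be chosen small enough for these expansions to converge.
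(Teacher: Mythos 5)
Your proposal is correct and follows essentially the same route as the paper: parameterize the slices by the inverse Cayley transform, reduce to the blocks, observe that $X_1\overline{\alpha}$ and $Y_1\overline{\beta}$ commute with $\alpha\overline{\alpha}$, expand the relevant upper-left blocks as power series in these variables with coefficients rational in $\alpha\overline{\alpha}$, and invert the $G$-side series to produce $\xi_1$, whose leading coefficient is fixed by the identity $\epsilon\beta\overline{\beta}=-(1-\alpha\overline{\alpha})(\alpha\overline{\alpha})^{-1}$. The only cosmetic difference is that the paper carries out the block-matrix expansions explicitly (obtaining the linear terms $-8t(1-\alpha\overline{\alpha})$ and $-8t(1-\epsilon\beta\overline{\beta})^{-1}$) where you invoke the formal implicit function theorem, and for the third block the invariant is really $X_3X_4$ for the pair $(X_3,X_4)\in V_3'$ rather than $X_3\overline{X_3}$.
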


\begin{proof}
We always assume that $x$ and $y$ lie in a small neighbourhoods of $1$, or
equivalently $X_i$'s and $Y_i$'s are sufficiently close to $0$. Elementary
but tedious calculations gives
    \[
    x_1 s_1' = \begin{pmatrix} A &B \\ -\overline{B}(1-\alpha \overline{\alpha})
    & \overline{A} \end{pmatrix}
    \begin{pmatrix} \alpha & 1 \\ 1- \alpha \overline{\alpha}
    & - \overline{\alpha} \end{pmatrix}
    \]
where
    \[
    A = (1+X_1 \overline{X_1}(1-\alpha \overline{\alpha}))^{-1}
    (1 - X_1 \overline{X_1}(1-\alpha \overline{\alpha})),\quad
    B = -2(1+X_1 \overline{X_1}(1-\alpha \overline{\alpha}))^{-1} X_1
    \]
and
    \[
    x_2 s_2' = \begin{pmatrix}
    -2 (1+X_2\overline{X_2})^{-1} X_2&
    (1+X_2\overline{X_2})^{-1} (1-X_2\overline{X_2}) \\
    (1+X_2\overline{X_2})^{-1} (1-X_2\overline{X_2}) &
    2 (1+X_2\overline{X_2})^{-1} X_2 \end{pmatrix},
    \]
and
    \[
    x_3 s_3' = \begin{pmatrix} (1-X_3X_4)^{-1}(1+X_3X_4) &
    -2(1-X_3X_4)^{-1}X_3 \\
    -2 (1-X_3X_4)^{-1} X_4 &
    (1-X_3X_4)^{-1}(1+X_3X_4)
    \end{pmatrix}.
    \]

Similarly we have
    \[
    y_1 g_1 = \begin{pmatrix}
    (1-\epsilon Y_1 \overline{Y_1})^{-1}(1+\epsilon Y_1 \overline{Y_1}) &
    - 2\epsilon (1-\epsilon Y_1 \overline{Y_1})^{-1} Y_1\\
    -2 \overline{Y_1} (1 - \epsilon Y_1 \overline{Y_1})^{-1} &
    (1-\epsilon \overline{Y_1} Y_1)^{-1}(1+\epsilon \overline{Y_1} Y_1)
    \end{pmatrix}
    \begin{pmatrix} 1 & \epsilon \beta \\
    \overline{\beta} & 1 \end{pmatrix}
    \]
and
    \[
    y_2 g_2 = \begin{pmatrix}
    -2\epsilon (1-\epsilon Y_2 \overline{Y_2})^{-1} Y_2
    & \epsilon
    (1-\epsilon Y_2 \overline{Y_2})^{-1}(1+\epsilon Y_2 \overline{Y_2})
    \\
    (1-\epsilon \overline{Y_2} Y_2 )^{-1}(1+\epsilon \overline{Y_2}Y_2)
    & -2\epsilon (1- \epsilon \overline{Y_2}Y_2 )^{-1}\overline{Y_2}

    \end{pmatrix}
    \]
and
    \[
    y_3 g_3 = \begin{pmatrix}
    (1-\epsilon Y_3 \overline{Y_3})^{-1}(1+\epsilon Y_3 \overline{Y_3})&
    -2\epsilon (1-\epsilon Y_3 \overline{Y_3})^{-1} Y_3\\
    -2 \overline{Y_3}(1-\epsilon Y_3 \overline{Y_3})^{-1} &
    (1-\epsilon \overline{Y_3} Y_3 )^{-1}(1+\epsilon \overline{Y_3}Y_3 )
    \end{pmatrix}
    \]

From these expression it is straightforward to check that if $xs'$ and $yg$
match, and when $X_2, X_3, X_4$ and $Y_2$, $Y_3$ are close to $0$, then
    \[
    X_2 \overline{X_2} \sim -\epsilon Y_2 \overline{Y_2},
    \quad X_3 X_4 \sim \epsilon Y_3 \overline{Y_3}.
    \]

It remains to treat $X_1$ and $Y_1$. Write
    \[
    x_1 s_1' = \begin{pmatrix} A_1' & * \\ * & * \end{pmatrix}.
    \]
A little computation gives $2 A_1 \overline{A_1} - 1$ is a rational function
in $X_1 \overline{\alpha}$ and $\alpha \overline{\alpha}$. Its power series
expansion (in the variable $t = X_1 \overline{\alpha}$) is $\xi_1'(X_1
\overline{\alpha})$ where $\xi_1'$  with coefficients in rational functions
of $\alpha \overline{\alpha}$ and the first few terms are
    \[
    2\alpha \overline{\alpha} - 1
    - 8 t (1- \alpha \overline{\alpha}) + \cdots
    \]
The power series is convergent when $X_1 \overline{\alpha}$ is sufficiently
close to zero.

Write
    \[
    y_1 g_1 \theta(y_1 g_1)^{-1} = \begin{pmatrix}
    A_1 & * \\ * &* \end{pmatrix}.
    \]
A little computation gives $A_1$ is a rational function in $\epsilon \beta
\overline{\beta}$ and $Y_1 \overline{\beta}$. Its power series expansion (in
the variable $t = Y_1 \overline{\beta}$ is $\widetilde{\xi_1}(Y_1
\overline{\beta})$ where $\xi_1$ is a power series whose coefficients are
rational functions in $\epsilon \beta \overline{\beta}$ and the first few
terms are
    \[
    (1+ \epsilon \beta \overline{\beta})
    (1 - \epsilon \beta \overline{\beta})^{-1} -
    8 t (1 - \epsilon \beta \overline{\beta})^{-1}
     + \cdots
    \]
The power series is convergent when $Y_1 \overline{\beta}$ is sufficiently
close to zero.

That $xs'$ and $yg$ match implies that $\xi_1'(X_1 \overline{\alpha}) \sim
\widetilde{\xi_1}(Y_1 \overline{\beta})$. By assumption we have $-(1- \alpha
\overline{\alpha}) (\alpha \overline{\alpha})^{-1} = \epsilon \beta
\overline{\beta}$ and $\det (1 - \alpha \overline{\alpha}) \not=0$. This
implies that the constant terms of $\xi_1'$ and $\widetilde{\xi_1}$ equal. So
there is a power series $\xi_1$ with coefficients in $\alpha
\overline{\alpha}$ of the form
    \[
    t (1 - \alpha \overline{\alpha})^{-1}(\alpha \overline{\alpha}) + \cdots,
    \]
such that $\xi_1(X_1 \overline{\alpha}) \sim Y_1 \overline{\beta}$.
\end{proof}

\begin{proof}[Proof of Theorem~\ref{thm:transfer}]
Let $f' \in \cS(S'(F))_0$. We prove that there is an $f \in \cS(G(F))_0$ that
matches it. The other direction is similar. By
lemma~\ref{lemma:local_transfer}, it is enough to prove such an $f$ exists
locally near every semisimple point $s'$. Moreover precisely, we will prove
that for any semisimple point $s' \in S'(F)$, there is an $f \in \cS(G(F))_0$
such that
    \begin{equation}    \label{eq:local_transfer}
    \kappa^{S'}(xs') O^{S'}(xs', f') = \kappa^G(yg) O^G(yg, f)
    \end{equation}
for all matching regular semisimple $xs' \in S'(F)$ and $yg \in G(F)$ when
$x$ and $y$ are sufficiently close to $1$.

We use semisimple descent at the point $s'$. First according to the
computation in the proof of Lemma~\ref{lemma:matching_sliced} when $X =
\psi'(xs') \in N_{s'}$ is sufficiently close to $0$, we have that
$\kappa^{S'}(xs')$ is a constant multiple of $\kappa^{N_{s'}}(X)$.

Let us now apply Proposition~\ref{prop:ss_descent} to the orbital integrals
$O^{S'}(xs', f')$. There is a enough $H_i'$-invariant neighbourhood $\cV_i'
\subset M_i'$ of $0$, and a function $f'_{s'} \in \cS(\psi'(\cV'_1 \times
\cV'_2 \times \cV'_3))$ such that for all $xs' \in \cV'_1 \times \cV'_2
\times \cV'_3$, we have
    \[
    \kappa^{S'}(xs') O^{S'}(xs', f') =
    \kappa^{N_{s'}}(X) O^{N_{s'}}(X, f'_{s'}).
    \]
We put $\cU'_i = \psi_i'(\cV_i')$, $i = 1, 2, 3$, $\cV' = \cV'_1 \times
\cV'_2 \times \cV'_3$, $\cU'  = \psi'(\cV') = \cU'_1 \times \cU'_2 \times
\cU'_3$.

For $i = 1, 2, 3$, by shrinking $\cV_i$, we may assume that the neighbourhood
$\cV'_i$ such that Lemma~\ref{lemma:matching_sliced} holds. We can find a
power series $\xi_1$ with coefficients in rational functions of $\alpha
\overline{\alpha}$ such that if $xs'$ and $yg$ match and $x_i s_i' \in
\cV'_i$, $i = 1, 2, 3$, then
    \[
    \xi_1(X_1 \overline{\alpha}) \sim Y_1 \overline{\beta}, \quad
    X_2 \overline{X_2} \sim -\epsilon Y_2 \overline{Y_2}, \quad
    X_3 X_4 \sim \epsilon Y_3 \overline{Y_3}.
    \]

Shrinking $\cV_1'$ further if necessary, we may assume that $X_1 \mapsto
\xi_1(X_1 \overline{\alpha}) \overline{\alpha}^{-1}$ defines a bijection from
$\cU_1$ to its image in $V'_1$. Let $\xi: \cU' \to N_{s'}$ be the map
    \[
    \xi(X_1, X_2, (X_3, X_4)) = (\xi_1(X_1 \overline{\alpha}) \overline{\alpha}^{-1},
    X_2, (X_3, X_4)).
    \]
Note that this map is $H'_{s'}$-equivariant. Then $\xi(\cU')$ is a
neighbourhood of $0 \in N_{s'}$ and $\xi$ is a bijection from $\cU'$ to its
image. Moreover $\kappa^{N_{s'}}(X) = a \kappa^{N_{s'}}(\xi(X))$ if $X \in
\cU'$, where $a$ is a nonzero constant.

Define a function $\widetilde{f'_{s'}}$
    \[
    \widetilde{f'_{s'}}(X) = \left\{ \begin{aligned} &
    a f'_{s'}(\xi^{-1}(X)),
    && X \in \xi(\cU'), \\
    & 0, && X \not\in \xi(\cU'). \end{aligned} \right.
    \]
Then we have
    \[
    \kappa^{N_{s'}}(X) O^{N_{s'}}(X, f'_{s'}) =
    \kappa^{N_{s'}}(\xi'(X))
    O^{N_{s'}}(\xi'(X), \widetilde{f'_{s'}}),
    \]
for all $X \in \xi(\cU')$. We view $f_{s'}'$ as an function on $N_{s'}$ via
extension by zero. Since $f' \in \cS(S'(F))_{0}$, we conclude that
$\widetilde{f'_{s'}} \in \cS(N_{s'})_0$.

By Proposition~\ref{prop:transfer_slice} there is an $f_g \in \cS(N_{g})_0$
that matches $\widetilde{f'_{s'}}$, i.e.
    \[
    \kappa^{N_{s'}}(\xi(X))
    O^{N_{s'}}(\xi(X), \widetilde{f'_{s'}}) =
    \kappa^{N_g}(Y)
    O^{N_g}(Y, f_g),
    \]
for all regular semisimple matching $\xi(X) \in \xi(\cU')$ and $Y \in N_g$.

As in the case of $S'$, when $Y$ is sufficiently close to $0$ in $N_g$, we
have $\kappa^{G}(yg)$ equals a constant multiple of $\kappa^{N_g}(Y)$.
Applying Proposition~\ref{prop:ss_descent} in the converse direction, we
conclude that there is an $H_{s}$-invariant neighbourhood $\cV$ of $g$ in
$N_g$, and an $f \in \cS(G(F))$ such that
    \[
    \kappa^{N_g}(Y) O^{N_s}(Y, f_g) =
    \kappa^G(yg) O^S(yg, f)
    \]
for all $yg \in \cV$. Since $f_g \in \cS(N_g)_0$ we conclude that $f \in
\cS(G(F))_0$. Shrinking $\cV$ if necessary, we may assume that if $yg \in
\cV$ and $xs' \in S'(F)$ match then $xs' \in \cV'$. Then we conclude a chain
of equalities
    \[
    \begin{aligned}
    \kappa^G(yg) O^S(yg, f) &= \kappa^{N_g}(Y) O^{N_s}(Y, f_g)
    = \kappa^{N_{s'}}(\xi(X))
    O^{N_{s'}}(\xi(X), \widetilde{f'_{s'}})\\
    &= \kappa^{N_{s'}}(X) O^{N_{s'}}(X, f'_{s'})
    = \kappa^{S'}(xs') O^{S'}(xs', f')
    \end{aligned}
    \]
when $xs' \in \cV'$ and $yg \in \cV$ match. This
proves~\eqref{eq:local_transfer} and thus Theorem~\ref{thm:transfer}.
\end{proof}

\section{The fundamental lemma} \label{sec:FL}

\subsection{The fundamental lemma}
Assume that $E/F$ is unramified with odd residue characteristic. Let
$\widetilde{\eta}(x) = (-1)^{\val_E(x)}$ be the unique unramified character
that extends $\eta$ to $E^\times$. Let $\tau \in \fo_E^\times$ be a purely
imaginary element which is used in the definition of the transfer factor
$\kappa^{S'}$ (cf.  \eqref{eq:tran fact S'}).

\begin{theorem} \label{thm:FL}
Let $s' = s'(\alpha) \in S'(F)$ and $g = g(\beta) \in G(F)$ be matching
regular semisimple elements. Then
    \begin{equation}    \label{eq:FL}
    \kappa^{S'}(s') O^{S'}(s', \id_{S'(\fo_F)}) =
    \kappa^G(g) O^{G}(g, \id_{G(\fo_F)}).
    \end{equation}
\end{theorem}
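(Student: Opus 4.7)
The strategy is to reduce Theorem~\ref{thm:FL} to the unit-element Guo--Jacquet Lie-algebra fundamental lemma, which is known by work of C.~Zhang~\cite{CZhang} (itself based on Yun's proof of the Jacquet--Rallis fundamental lemma). The reduction is effected by the semisimple descent of Proposition~\ref{prop:ss_descent} combined with the explicit Cayley-type \'etale Luna slices constructed in Subsection~\ref{subsec:etale_Luna}, applied at every integral semisimple orbit.

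First, I would decompose the two orbital integrals according to the semisimple part of their support. An element in $S'(\fo_F)$ lies in a small neighbourhood of some integral semisimple representative $s'_0 = s'(\alpha_0, n_1, n_2, n_3)$ of Lemma~\ref{lemma:semisimple_S'}, and similarly every element of $G(\fo_F)$ lies near some $g_0 = g(\beta_0, n_1, n_2, n_3)$ of Lemma~\ref{lemma:semisimple_S}; we only need to consider pairs whose invariants match modulo the maximal ideal. Near each matching pair $(s'_0, g_0)$, the analytic slice construction of Subsection~\ref{subsec:ss_descent} identifies the weighted orbital integrals with analogous orbital integrals against the characteristic functions of natural $\fo_F$-lattices on the sliced representations $N_{s'_0} = V_1' \oplus V_2' \oplus V_3'$ and $N_{g_0} = V_1 \oplus V_2 \oplus V_3$. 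At odd residue characteristic with $E/F$ unramified, the Jacobians of the Cayley maps $\iota'$ and $\iota$ are units, so these lattices are precisely the ``obvious'' integral ones.

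Second, on each of the three factors $V_i' \leftrightarrow V_i$, the resulting identity reduces to a known fundamental lemma: the $V_3'$-piece is verbatim the Guo--Jacquet Lie-algebra FL of~\cite{CZhang}; the $V_1'$-piece is inner-form transfer on $\gl_{n_1}$, which is an instance of the Arthur--Clozel base-change FL for the unit element (and is tautological when the inner form is split, as is the case here by the hypothesis that $D$ splits at $v$); and the $V_2'$-piece, a Flicker--Rallis-type Lie-algebra FL, can be absorbed into Guo--Jacquet by the same device as in Subsection~6.2, with the unramified $\chi_v$ disappearing at the unit-element level. Combining the three identities yields~\eqref{eq:FL}.

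The main obstacle will be bookkeeping the transfer factors $\kappa^{S'}$ and $\kappa^G$ through the Cayley transform: one must verify that $\chi(\tau D')\widetilde{\eta}(B')$ and $\chi(A)$ transport, under $\iota'$ and $\iota$, to the sliced transfer factors $\chi(X_2)\widetilde{\eta}(X_3)$ and $\chi(Y_2)$ of Subsection~\ref{subsec:etale_Luna}, modulo unit Jacobians involving $\det(1-\alpha_0\overline{\alpha_0})$ and $\det(1-\epsilon\beta_0\overline{\beta_0})$. The hypotheses that $v$ is unramified of odd residue characteristic, $D$ splits at $v$, and $\chi_v$ is unramified together force these Jacobians to be units and the transfer characters to evaluate trivially on integral elements, so the constants of proportionality ultimately reduce to the $\eta_v$-signs already encountered in the proof of Lemma~\ref{lemma:matching_involution}.
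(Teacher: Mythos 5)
Your strategy does not match the paper's, and it has a genuine gap at its core. The analytic/Luna-slice descent of Proposition~\ref{prop:ss_descent} is an \emph{existence} statement: given $f$, there is \emph{some} Schwartz function $f_x$ on the sliced representation whose orbital integrals agree with those of $f$ on an \emph{unspecified, possibly very small} invariant neighbourhood $\cV$ of the semisimple point. It neither identifies $f_x$ nor guarantees that $\cV$ captures all the integral points whose orbits you need. Your key assertion --- that $\id_{S'(\fo_F)}$ and $\id_{G(\fo_F)}$ descend, via the Cayley maps $\iota'$ and $\iota$, to the characteristic functions of the ``obvious'' lattices in $N_{s'_0}$ and $N_{g_0}$ because the Jacobians are units --- is precisely the hard content of a fundamental-lemma descent and is not established by anything in the paper or in your sketch. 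Making it rigorous would require an integral-model, topological-Jordan-decomposition style argument (\`a la Kottwitz), which is a different and substantially harder computation than invoking Lemma~\ref{lemma:Luna}. There is also a conceptual mismatch: the element $s'(\alpha)$ in Theorem~\ref{thm:FL} is itself regular semisimple, so descent ``at $s'$'' only controls orbits in a shrinking neighbourhood of $s'$ and says nothing about the value $O^{S'}(s',\id_{S'(\fo_F)})$ itself; what one actually decomposes is $\alpha\overline{\alpha}$ modulo the maximal ideal, not the orbit geometry over $F$. Finally, even granting the descent, the slice identities are not all unit-element statements: the genuine reduction produces general spherical Hecke functions, so one needs the base change fundamental lemma for the full Hecke algebra, and the attribution of the Guo--Jacquet Lie-algebra FL to~\cite{CZhang}/Yun is off --- \cite{CZhang} proves smooth transfer, while the unit-element Guo--Jacquet FL is due to Guo~\cite{Guo2}, itself by reduction to base change.

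For comparison, the paper's proof first observes that unramifiedness of $\chi$ lets one assume $\chi$ trivial (whence the transfer factors are trivial on the relevant integral orbits), and then splits into two cases. When $\alpha\overline{\alpha}$ is elliptic, both sides are computed directly: Kottwitz's lemma (\cite{Kottwitz}*{Lemma~8.8}) converts the twisted integration domain into $\GL_n(F)_{\alpha\overline{\alpha}}\backslash\GL_n(F)$, the two sides are identified with integrals of Guo's Hecke functions $\Psi_r$ and $\Phi_\beta$, and the identity $\Psi_r=\mathrm{bc}(\Phi_\beta)$ reduces everything to the Arthur--Clozel base change fundamental lemma for the \emph{full} Hecke algebra (\cite{AC}*{Chapter~1, Theorem~4.5}); a separate elementary computation handles the case $x_r=y_r>1$. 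When $\alpha\overline{\alpha}$ is not elliptic, the paper uses \emph{parabolic} descent --- which, unlike Luna-slice descent, is exact and comes with an explicitly computed Jacobian $\Delta(x)=\lambda'$ matching Guo's constant $\lambda$ --- and concludes by induction on $n$. If you want to salvage your approach, you would need to replace Proposition~\ref{prop:ss_descent} by an effective integral descent and then still confront the full-Hecke-algebra base change FL on the slices; at that point you have essentially reconstructed the paper's argument.
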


It is straightforward to see that this theorem implies
Theorem~\ref{thm:FL_group}. The rest of this section is devoted to prove this
theorem. We distinguish two cases depending on $\overline{\alpha}\alpha$
being elliptic or not. The elliptic case will be deduced from the base change
fundamental lemma and the calculations in~\cite{Guo2}. The nonelliptic case
will be treated using the parabolic descent and induction on $n$.

Before we prove the theorem, let us first explain that the validity
of~\eqref{eq:FL} is independent of the character $\chi$. In fact, the left
hand side equals
    \[
    \chi(-\overline{\alpha}) \int \id_{S'(\fo_F)} \left(
    \begin{pmatrix} h_1^{-1} \\ & h_2^{-1} \end{pmatrix}
    \begin{pmatrix} \alpha & 1 \\ 1 - \alpha \overline{\alpha}
    & - \overline{\alpha} \end{pmatrix}
    \begin{pmatrix} \overline{h_1} \\ & \overline{h_2} \end{pmatrix} \right)
    \chi(\overline{h_2} h_2^{-1}) \widetilde{\eta}(h_1 h_2) \rd h_1 \rd h_2.
    \]
Since $\det \overline{h_2} h_2^{-1} \in \fo_E^\times$ and $\chi$ is
unramified, the above expression equals
    \[
    \chi(-\overline{\alpha}) \int \id_{S'(\fo_F)} \left(
    \begin{pmatrix} h_1^{-1} \\ & h_2^{-1} \end{pmatrix}
    \begin{pmatrix} \alpha & 1 \\ 1 - \alpha \overline{\alpha}
    & - \overline{\alpha} \end{pmatrix}
    \begin{pmatrix} \overline{h_1} \\ & \overline{h_2} \end{pmatrix} \right)
    \widetilde{\eta}(h_1 h_2) \rd h_1 \rd h_2.
    \]
The right hand side of~\eqref{eq:FL} equals
    \[
    \int \id_{G(\fo_F)} \left(
    \begin{pmatrix} g^{-1} \\ & \overline{g}^{-1} \end{pmatrix}
    \begin{pmatrix} 1 & \beta \\ \overline{\beta} & 1 \end{pmatrix}
    \begin{pmatrix} h \\ & \overline{h} \end{pmatrix}
    \right) \chi(g^{-1} h) \rd g \rd h.
    \]
Taking determinant we see that
    \[
    \det(g \overline{g})^{-1} \det (1- \beta\overline{\beta})
    \det (h \overline{h}) \in \fo_F^\times.
    \]
Recall that $s'(\alpha)$ and $g(\beta)$ match if $-(1-\alpha
\overline{\alpha})(\alpha \overline{\alpha})^{-1}$ and $\beta
\overline{\beta}$ have the same characteristic polynomial, which implies that
$(1 - \beta \overline{\beta})^{-1}$ and $\alpha \overline{\alpha}$ have the
same characteristic polynomial. As $\chi$ is unramified we obtain
$\chi(g^{-1} h) = \chi(\alpha) = \chi(-\overline{\alpha})$. Thus the right
hand side of~\eqref{eq:FL} equals
    \[
    \chi(- \overline{\alpha}) \int \id_{G(\fo_F)} \left(
    \begin{pmatrix} g^{-1} \\ & \overline{g}^{-1} \end{pmatrix}
    \begin{pmatrix} 1 & \beta \\ \overline{\beta} & 1 \end{pmatrix}
    \begin{pmatrix} h \\ & \overline{h} \end{pmatrix}
    \right) \rd g \rd h.
    \]

Thus from now on we assume that $\chi$ is trivial. Under this assumption the
transfer factors on both sides of~\eqref{eq:FL} are trivial.

\subsection{The elliptic case}
Put $r =-(1-  \alpha \overline{\alpha}) (\alpha \overline{\alpha})^{-1}$ and
    \[
    x_r = \abs{\det (1-r)} = \abs{\det \alpha \overline{\alpha}}^{-1}, \quad
    y_r = \abs{\det r} = \abs{ \det (1-\alpha \overline{\alpha})
    (\alpha \overline{\alpha})^{-1}}.
    \]
In this subsection we always assume that $\alpha \overline{\alpha}$ and hence
$r$ are elliptic regular semisimple in $\GL_n(F)$ in the usual sense. Note
that this in particular covers the base case of the induction $n = 1$.

Make a change of variable $h_1 \mapsto \overline{h_2} h_1$ on the left hand
side of~\eqref{eq:FL} we obtain
    \begin{equation}    \label{eq:LHS_FL}
    \int
    \id_{S'(\fo_F)} \left( \begin{pmatrix} h_1^{-1} \\ & 1  \end{pmatrix}
    \begin{pmatrix} \overline{h_2}^{-1} \alpha h_2 & 1 \\
    h_2^{-1} (1 - \overline{\alpha} \alpha)h_2
    & - h_2^{-1} \overline{\alpha} \overline{h_2} \end{pmatrix}
    \begin{pmatrix} \overline{h_1} \\ & 1 \end{pmatrix}
    \right) \widetilde{\eta}(\det h_1)  \rd h_1  \rd h_2.
    \end{equation}
Here the integration is over $h_2 \in \GL_n(E)_{\alpha, \mathrm{twisted}} \bs
\GL_n(E)$ and $h_1 \in \GL_n(E)$.

The first observation is that if $x_r < 1$, i.e. $\abs{\det \alpha}
>1$, then $h_2^{-1} \alpha \overline{h_2} \not\in M_n(\fo_E)$ for any $h_2$
and therefore the
integral vanishes. Moreover under this condition, the right hand side
of~\eqref{eq:FL} also vanishes by~\cite{Guo2}*{(3.9)}. Thus
Theorem~\ref{thm:FL} holds in this case.

Assume from now on that $x_r \geq 1$. By~\cite{Guo2}*{Lemma~3.4}, either
$y_r\leq x_r = 1$ or $x_r = y_r >1$. We distinguish two cases.

\begin{enumerate}
\item $y_r \leq x_r = 1$, i.e. $\det \alpha \in \fo_E^\times$. Then $y_r =
    \abs{\det (1- \overline{\alpha}\alpha)}$.

    We need to make use of a lemma of Kottwitz's,
    cf.~\cite{Kottwitz}*{Lemma~8.8}, which we recall in a special case for
    readers' convenience. The lemma says that if $\gamma \in \GL_n(E)$ is
    conjugate to an element in $\GL_n(\fo_E)$ and is regular semisimple,
    then we can find an element $\delta \in \GL_n(E)$ such that $\gamma =
    \delta \overline{\delta} = \overline{\delta} \delta$ and $x^{-1} \gamma
    x \in \GL_n(\fo_E)$ implies $x^{-1} \delta x \in \GL_n(\fo_E)$.
    Moreover if $\delta$ satisfies the above property and $h \in \GL_n(E)$,
    then $h^{-1} \overline{\delta} \overline{h} \in \GL_n(\fo_E)$ if and
    only if $h^{-1} \gamma h \in \GL_n(\fo_E)$ and $h \in \GL_n(F)
    \GL_n(\fo_E)$.

    Let us come back to our setup. If $\alpha \overline{\alpha}$ is not
    conjugate to an element in $\GL_{n}(\fo_F)$, by considering the lower
    right block of the matrix, we see that~\eqref{eq:LHS_FL} equals zero.
    Let us assume that $\alpha \overline{\alpha}$ is conjugate to an
    element in $\GL_{n}(\fo_F)$. We first apply this lemma to $\gamma =
    \alpha\overline{\alpha}$. We may further assume that $\alpha = \delta$
    has the property described in the lemma of Kottwitz's. We then conclude
    that $h_2 \in \GL_n(F) \GL(\fo_E)$. Thus we may replace the outer
    integral in~\eqref{eq:LHS_FL} by $h_2 \in \GL_n(F)_\alpha \bs
    \GL_n(F)$. Here though $\alpha$ might not be in $\GL_n(F)$ but in
    $\GL_n(E)$, the group $\GL_n(F)_{\alpha}$ stands for all $h \in
    \GL_n(F)$ that commutes with $\alpha$. We note that $\GL_n(F)_{\alpha}
    = \GL_{n}(F)_{\alpha \overline{\alpha}}$. Indeed as explained
    in~\cite{AC}*{Chapter~1, Proof of Lemma~1.1}, as algebraic groups over
    $F$, the group $\GL_n(E)_{\alpha, \mathrm{twisted}}$ is an inner form
    of $\GL_n(F)_{\alpha \overline{\alpha}}$. Since $\alpha
    \overline{\alpha}$ is regular semisimple, both are tori over $F$ and
    hence are canonically isomorphic, which implies that $\GL_n(E)_{\alpha,
    \mathrm{twisted}} = \GL_n(F)_{\alpha \overline{\alpha}}$. This in
    particular implies that if $h \in \GL_n(E)_{\alpha, \mathrm{twisted}}$
    then $h \in \GL_n(F)$ and thus $\GL_n(F)_{\alpha} = \GL_n(E)_{\alpha,
    \mathrm{twisted}} = \GL_n(F)_{\alpha \overline{\alpha}}$. Therefore we
    may replace the outer integral by $h_2 \in \GL_n(F)_{\alpha
    \overline{\alpha}} \bs \GL_n(F)$.

    Now $h_2 \in \GL_n(F)$ and we apply the lemma again to $\gamma =
    h_2^{-1} \alpha \overline{\alpha}  h_2$, and $\delta = h_2^{-1}
    \overline{\alpha} h_2$. Clearly this $\gamma$ and $\delta$ again
    satisfy the conditions in the lemma of Kottwitz's. By consider the
    upper left corner of the matrix in the integrand, we conclude that $h_1
    \in \GL_n(F) \GL(\fo_E)$ and we may replace the outer integral
    in~\eqref{eq:LHS_FL} by $h_1 \in \GL_n(F)$.

    The integrand in~\eqref{eq:LHS_FL} is thus equivalent to the four
    conditions
        \[
        h_1^{-1} h_2^{-1} \alpha h_2 h_1,\ h_2^{-1} \alpha h_2 \in \GL_n(\fo_E),
        \quad h_1^{-1},\
        h_2^{-1}(1-\alpha \overline{\alpha}) h_2 h_1 \in M_n(\fo_F).
        \]
    As $\alpha$ satisfies the conditions in Kottwitz's lemma, the first two
    conditions are implied by the other two. To see this, observe that
    $h_1^{-1}$ and $h_2^{-1}(1-\overline{\alpha} \alpha) h_2 h_1$ being in
    $M_n(\fo_F)$ implies both
        \[
        h_2^{-1}(1-\alpha\overline{\alpha} ) h_2,
        \quad h_1^{-1} h_2^{-1}(1- \alpha\overline{\alpha}) h_2 h_1
        \]
    are in $M_n(\fo_F)$. As we have assume that $\det \alpha \in
    \fo_E^\times$, we conclude that
        \[
        h_2^{-1}  \alpha \overline{\alpha} h_2,
        \quad h_1^{-1} h_2^{-1} \alpha \overline{\alpha} h_2 h_1
        \]
    are in $\GL_n(\fo_F)$. That $\alpha$ satisfies the conditions in the
    lemma of Kottwitz's implies
        \[
        h_1^{-1} h_2^{-1} \alpha h_2 h_1,\quad h_2^{-1} \alpha h_2
        \]
    are both in $\GL_n(\fo_E)$.

    The integral~\eqref{eq:LHS_FL} thus simplifies to
        \[
        \int
        \id_{M_n(\fo_F) \times M_n(\fo_F)}
        (h_2^{-1} (1-\alpha\overline{\alpha} )
        (\alpha\overline{\alpha} )^{-1} h_2 h_1, h_1^{-1})
        \eta (\det h_1)  \rd h_1 \rd h_2,
        \]
    where the domain of integration is $h_1 \in \GL_n(F)$ and $h_2 \in
    \GL_n(F)_{\alpha \overline{\alpha}}  \bs \GL_n(F)$.

    In~\cite{Guo2}*{Lemma~3.5}, a Hecke function $\Psi_r$ on $\GL_n(F)$ is
    defined. By the calculation in~\cite{Guo2}*{p.~137}, under the
    assumption that $y_r \leq x_r = 1$, this function equals
        \[
        g \mapsto \int_{\GL_n(F)}
        \id_{\{ X, Y \in M_n(\fo_F), \  \abs{\det XY} = y_r \}}
        (g h_1, h_1^{-1}) \eta(h_1) \rd h_1.
        \]
    Therefore~\eqref{eq:LHS_FL} equals
        \begin{equation}    \label{eq:FL_LHS}
        \int_{\GL_n(F)_{\alpha \overline{\alpha}} \bs \GL_n(F)}
        \Psi_r(h_2^{-1} r h_2) \rd h_2.
        \end{equation}

    The right hand side of~\eqref{eq:FL} is exactly the orbital integral
    appearing in~\cite{Guo2}. For any (twisted) elliptic regular semisimple
    $\beta \in \GL_n(E)$, a Heck function $\Phi_\beta$ on $\GL_n(E)$ is
    defined in~\cite{Guo2}*{Lemma~3.6}. Under the assumption $y_r \leq x_r
    = 1$, by the calculation in~\cite{Guo2}*{p.~139} the characteristic
    function of
        \[
        \{ X \in M_n(\fo_E), \abs{\det X} = y_r\}.
        \]
    By~\cite{Guo2}*{Lemma~3.6} the right hand side of~\eqref{eq:FL} equals
        \begin{equation}    \label{eq:FL_RHS}
        \int_{\GL_n(E)_{\beta, \mathrm{twisted}} \bs \GL_n(E)}
        \Phi_\beta (h^{-1}\beta \overline{h}) \rd h.
        \end{equation}

    If $\alpha\overline{\alpha}$ is not conjugate to an element in
    $\GL_n(\fo_F)$, then neither is $1- \beta\overline{\beta}$. In this
    case the above integral of $\Phi_\beta$ equals zero so both sides
    of~\eqref{eq:FL} equal zero. Now assume that $ \alpha\overline{\alpha}$
    is conjugate to an element in $\GL_n(\fo_F)$. Let $\cH(\GL_n(E))$ and
    $\cH(\GL_n(F))$ be the spherical Hecke algebra of $\GL_n(E)$ and
    $\GL_n(F)$ respectively, and
        \[
        \mathrm{bc}: \cH(\GL_n(E)) \to \cH(\GL_n(F))
        \]
    the usual base change map. By~\cite{Guo2}*{Corollary~3.8}, we have
    $\Psi_r = \mathrm{bc}(\Phi_\beta)$. Thus the desired
    equality~\eqref{eq:FL} is a consequence of the
    identities~\eqref{eq:FL_LHS}~\eqref{eq:FL_RHS}, and the base change
    fundamental lemma~\cite{AC}*{Chapter~1, Theorem~4.5}.

\item Let us now assume that $x_r = y_r > 1$, i.e. $\abs{\det \alpha}<1$
    and hence $\abs{\det (1- \overline{\alpha} \alpha)} = 1$. The integrand
    of~\eqref{eq:LHS_FL} is equivalent to the condition that
        \[
        h_2^{-1} \overline{\alpha} \overline{h_2}, \quad
        h_1^{-1} \overline{h_2}^{-1} \alpha h_2 \overline{h_1}, \quad
        h_2^{-1} (1 - \overline{\alpha} \alpha)h_2 \overline{h_1},
        \quad h_1^{-1}
        \]
    are all in $M_n(\fo_E)$. Note that
        \[
        \abs{\det h_1^{-1}} \leq 1, \quad
        \abs{\det h_2^{-1} (1 - \overline{\alpha} \alpha)h_2 \overline{h_1}}
        \leq 1,
        \]
    but
        \[
        \abs{\det (1 - \overline{\alpha} \alpha)} = 1.
        \]
    It follows that $\abs{\det h_1} = 1$ and hence $h_1 \in \GL_n(\fo_E)$.
    Moreover since $\abs{\det \alpha}<1$, we have $h_2^{-1}
    \overline{\alpha} \overline{h_2} \in M_n(\fo_E)$ implies $h_2^{-1} (1 -
    \overline{\alpha} \alpha)h_2 \in \GL_n(\fo_E)$. It follows that the
    integral~\eqref{eq:LHS_FL} reduces to
        \begin{equation}    \label{eq:easy_case_simplified}
        \int \id_{M_n(\fo_E)} ( h_2^{-1} \alpha \overline{h_2} ) \rd h_2.
        \end{equation}

    By~\cite{Guo2}*{Lemma~3.6}, and the calculation
    in~\cite{Guo2}*{p.~139-140}, under the assumption $x_r = y_r >1$, the
    right hand side of~\eqref{eq:FL} equals
        \begin{equation}    \label{eq:LHS_FL_case2}
        \int \id_{\{ X^{-1} \in M_n(\fo_E), \abs{\det X}_E = x_r\}}
        ( h_2^{-1} \beta \overline{h_2} ) \rd h_2.
        \end{equation}

    It remains to explain that the
    integrals~\eqref{eq:easy_case_simplified} and~\eqref{eq:LHS_FL_case2}
    equal. First the condition $\abs{\det X}_E = x_r$
    in~\eqref{eq:LHS_FL_case2} is redundant as twisted conjugation does not
    change the absolute value of the determinant. As $E/F$ is unramified,
    $-1 \in NE^\times$ and hence there is a $\delta \in \fo_E^\times$ such
    that $\delta \overline{\delta} = -1$. Since $\alpha \overline{\alpha}$
    is elliptic and $\abs{\det \alpha \overline{\alpha}}<1$, we conclude
    the absolute values of its eigenvalues are all strictly less than one
    (in a fixed splitting field of $F$). Thus
        \[
        (1 - \alpha \overline{\alpha})^{-\frac{1}{2}} = 1 -
        \binom{\frac{1}{2}}{1} \alpha \overline{\alpha} +
        \binom{\frac{1}{2}}{2} (\alpha \overline{\alpha})^2 + \cdots
        \]
    is convergent and gives a well-defined element in $\GL_n(F)$. Put
    $\gamma = \delta (1 - \alpha \overline{\alpha})^{-\frac{1}{2}}$. By
    assumption $\alpha \overline{\alpha} \in \GL_n(F)$ and therefore
    $\gamma$ commutes with $\alpha$ and thus $(\gamma \alpha
    \overline{\gamma \alpha})^{-1}$ and $\beta \overline{\beta}$ have the
    same characteristic polynomial. Replacing $\beta$ by its twisted
    conjugate, we may assume that $\beta^{-1} = \gamma \alpha$. Therefore
    we need to explain
        \begin{equation}    \label{eq:simple_case_final}
        \eqref{eq:easy_case_simplified} = \int \id_{M_n(\fo_E)}
        (h^{-1} \gamma \alpha \overline{h}) \rd h.
        \end{equation}
    Assume that $h^{-1} \alpha \overline{h} \in M_n(\fo_E)$. Then $h^{-1}
    \alpha \overline{\alpha} h \in M_n(\fo_E)$ and the absolute values of
    all eigenvalues of it are strictly less than one. This implies
        \[
        h^{-1} \gamma h = \delta \left( 1 -
        \binom{\frac{1}{2}}{1} h^{-1} \alpha \overline{\alpha} h +
        \binom{\frac{1}{2}}{2} (h^{-1} \alpha \overline{\alpha} h)^2 + \cdots
        \right)
        \]
    is convergent and is in $\GL_n(\fo_E)$ (note the only denominators in
    these binomial coefficients are powers of $2$). Thus $h^{-1} \gamma
    \alpha \overline{h} \in M_n(\fo_E)$. Conversely if $h^{-1} \gamma
    \alpha \overline{h} \in M_n(\fo_E)$, then $h^{-1} (1 - \alpha
    \overline{\alpha})^{-1} \alpha \overline{\alpha} h \in M_n(\fo_E)$.
    Since $1 + h^{-1} (1 - \alpha \overline{\alpha})^{-1} \alpha
    \overline{\alpha} h = h^{-1} (1 - \alpha \overline{\alpha})^{-1} h$ and
    the absolute values of $\alpha \overline{\alpha}$ are all strictly less
    than one, we have $h^{-1} (1 - \alpha \overline{\alpha}) h \in
    \GL_n(\fo_E)$ and hence $h^{-1} \alpha \overline{\alpha} h \in
    M_n(\fo_E)$. Then as before we conclude that $h^{-1} \gamma h \in
    \GL_n(\fo_E)$ and hence $h^{-1} \alpha \overline{h} \in M_n(\fo_E)$.
    This proves~\eqref{eq:simple_case_final}.
\end{enumerate}

This finishes the proof of Theorem~\ref{thm:FL} when $\alpha
\overline{\alpha}$ is elliptic.

\subsection{Parabolic descent}
To handle the nonelliptic case, we make use of the parabolic descent of the
orbital integrals. In this subsection, we deviate from the setup from
Theorem~\ref{thm:FL} and consider $O^{G'}(x, f')$ where $f' \in \cS(G'(F))$
and $x \in G'(F)$ is regular semisimple in general.

We fix integers $n_1, n_2$ with $n = n_1 + n_2$. Let $Q = LU$ be the
parabolic subgroup of $\GL_{2n}$ of the form
    \[
    L = \begin{pmatrix} m_1^{(1)} & & m_1^{(2)} \\ & m_2^{(1)} && m_2^{(2)} \\
    m_1^{(3)} & & m_1^{(4)} \\ & m_2^{(3)} && m_2^{(4)} \end{pmatrix},
    \quad
    U = \begin{pmatrix} 1  & u^{(1)} & & u^{(2)} \\
    & 1 \\ & u^{(3)} & 1 & u^{(4)} \\ &&& 1 \end{pmatrix},
    \]
where
    \[
    m_i = \begin{pmatrix}
    m_i^{(1)} &  m_i^{(2)} \\ m_i^{(3)} & m_i^{(4)} \end{pmatrix} \in \GL_{2n_i},
    \quad
    \begin{pmatrix} u^{(1)} & u^{(2)} \\
    u^{(3)} & u^{(4)} \end{pmatrix} \in M_{2n_1 \times 2n_2}.
    \]

By definition
    \[
    O^{G'}(x, f') = (\chi\widetilde{\eta})^{-1}(x)
    \int_{(H' \times H'')_x \bs (H' \times H'')}
    f'(h^{-1} x h'') \chi_{H'}(h)
    (\chi\eta)^{-1}(h'') \rd h \rd h''
    \]
Suppose $x = (x_1, x_2) \in L(E)$ and $x_i \in \GL_{2n_i}(E)$, $i = 1, 2$.
Assume that $x_i \overline{x_i}^{-1} = s'(\alpha_i)$ is regular semisimple in
$S'_{n_i}(F)$. Here we add the subscript to indicate the size of the various
groups and symmetric spaces. Let $\widetilde{r_1}, \cdots,
\widetilde{r_{n_1}}$ and $\widetilde{s_1}, \cdots, \widetilde{s_{n_2}}$ be
the eigenvalues of $ \alpha_1\overline{\alpha_1}$ and
$\alpha_2\overline{\alpha_2}$ respectively (in some fixed algebraic closure
of $F$). Put
    \[
    \lambda' = \prod_{1 \leq i \leq n_1, 1 \leq j \leq n_2}
    (\widetilde{r_i} - \widetilde{s_j})^{-1}.
    \]
Then one checks that $\lambda' \in F$.

Let $P = MN$ be the upper triangular parabolic subgroup of $\GL_n(E)$
corresponding to the partition $n = n_1 + n_2$. Here $N$ is the unipotent
radical, and $M$ is the standard diagonal block Levi subgroup. Write $h =
(h_1, h_2)$, $h_1, h_2 \in \GL_n(E)$. We make use of the Iwasawa
decomposition
    \[
    h_i = u_i m_i k_i, \ i = 1, 2, \quad h'' = u''m''k''
    \]
where $u_i \in N(E)$, $m_i \in M(E)$, $k_i \in \GL_n(\fo_E)$, $u' \in U(F)$,
and $k' \in \GL_{2n}(\fo_F)$. Then $O^{G'}(x, f')$ equals
    \[
    (\chi\widetilde{\eta})^{-1}(x) \int f'_K
    ( m^{-1} u^{-1} x u'' m'')  \delta_{P(E)}(m)^{-1}
    \delta_{Q(F)}(m'')^{-1}
    \chi_{H'}(m) (\chi\eta)^{-1}(m'')
    \rd u \rd u'' \rd m \rd m'',
    \]
where the domain of integration is $(m, m'') \in ((M\cap L)(E) \times L(F))_x
\bs ((M\cap L)(E) \times L(F))$, $u \in N(E) \cap U(E)$, $u'' \in U(F)$, and
    \[
    f'_K(g) = \int_{K_{H'}} \int_{K_{H''}} f(k_1^{-1} g k_2) \chi_{H'}(k_1)
    (\chi\eta)^{-1}(k_2) \rd k_1 \rd k_2,
    \]
$K_{H'} = \GL_n(\fo_E) \times \GL_n(\fo_E)$ is a maximal open compact
subgroup of $H'(F)$ and $K_{H''} = \GL_{2n}(\fo_F)$ is a maximal open compact
subgroup of $H''(F)$.

Let us now show that
    \[
    \delta_A: (N \cap U)(E) \times U(F) \to U(E),
    \quad (u, u'') \mapsto u^{-1}x u'' x^{-1}
    \]
is bijective and submersive everywhere. Direct computation gives that the
tangent map at $(u, u'')$ is given by
    \[
    (\fn \cap \fu)(E)  \times \fu(E) \to \fu(E),\quad
    (X, Y) \mapsto -u^{-1} X x u'' x^{-1} + u^{-1} x u'' Y x^{-1}.
    \]
Since $u$ and $u''$ are both unipotent, the determinant at any $(u, u'')$
equals the determinant at $(1, 1)$. At the point $(1, 1)$, the tangent can be
more explicitly written as
    \[
    M_{n_1 \times n_2}(E) \times M_{n_1 \times n_2}(E) \times M_{2n_1 \times 2n_2}(F)
    \to M_{2n_1 \times 2n_2}(E),
    \]
and
    \[
    (X_1, X_2, Y) \mapsto -  \begin{pmatrix} X_1 \\ & X_2  \end{pmatrix}
    +
    x_1 Y x_2^{-1}.
    \]

Let $\rho(x)$ be this map and we put
    \[
    \Delta(x) = \delta_{Q(E)}^{\frac{1}{2}}(x) \abs{\det \rho(x)}^{-1}.
    \]
This will be computed later. We make a change of variable $u^{-1} x u'
\mapsto w x$ where $w \in U(E)$. Then we conclude that the orbital integral
$O^{G'}(x, f')$ equals
    \[
    (\chi\widetilde{\eta})^{-1}(x)
    \int  \delta_{Q(E)}^{-\frac{1}{2}}(x) \Delta(x)
    f_{K}'( m^{-1} w x m'') \chi_{H'}(m)
    (\chi\eta)^{-1}(m'') \delta_{P(E)}(m)^{-1} \delta_{Q(F)}(m'')^{-1}
    \rd w \rd m \rd m''.
    \]
We note that $\delta_{P(E)}(m) = \delta_{Q(E)}(m)^{\frac{1}{2}}$. Therefore a
change of variable $w \mapsto mwm^{-1}$ yields that the above integral equals
    \[
    (\chi\widetilde{\eta})^{-1}(x)
    \int  \Delta(x)
    f'_K (w m^{-1} x m'')
    (\chi\eta)^{-1}(m') \delta_{Q(E)}(m^{-1} x m'')^{-\frac{1}{2}}
    \rd w \rd m \rd m''.
    \]
Put
    \[
    f'^{Q}(x) = \delta_{Q(E)}(x)^{-\frac{1}{2}}
    \int_{U''(E)} f'_K( w x) \rd w, \quad x \in L(E).
    \]
Then $f'^Q \in \cS(L(E))$. The map $f' \mapsto f'^{Q}$ is the well-known
parabolic descent map. Thus
    \[
    O^{G'}(x, f') = \Delta(x) O^{L}(x, f'^{Q}).
    \]

It remains to compute $\Delta(x)$. The determinant of the map $\rho(x)$ is
the same as the determinant of the map
    \[
    M_{2n_1 \times 2n_2}(F) \to
    M_{n_1\times n_2}(E) \times M_{n_1 \times n_2}(E),
    \quad
    Y \mapsto p (x_1 Y x_2^{-1})
    \]
where $p$ is the projection of a matrix in $M_{2n_1 \times 2 n_2}(E)$ to
$M_{n_1\times n_2}(E) \times M_{n_1 \times n_2}(E)$, the upper right and
lower left corner. As we are merely computing determinants, we may pass to
the algebraic closure and assume that $F$ is algebraically close. Then $E$ is
identified with $F \times F$ and the Galois conjugation exchanges two
components in $F \times F$.

Recall that $x \overline{x}^{-1} = s'(\alpha)$ and $\alpha = \begin{pmatrix}
\alpha_1 \\ & \alpha_2 \end{pmatrix}$. One checks readily that $\Delta(x)$
depends only on the conjugacy class of $\alpha_i \overline{\alpha_i}$.
Therefore we may assume that $\alpha_i$ is diagonal
    \[
    \alpha_i =
    \begin{pmatrix} a_1^{(i)}\\ & \ddots \\ && a_{n_i}^{(i)} \end{pmatrix}
    \in \GL_{n_i}(F \times F), \quad
    a_j^{(i)} = (b_j^{(i)}, c_j^{(i)}) \in F^\times \times F^\times.
    \]
and
    \[
    x_i = \left( \widetilde{x_i}, 1 \right) \in \GL_{2n_i}(F) \times \GL_{2n_i}(F),
    \quad \widetilde{x_i}  = \begin{pmatrix} B^{(i)} & 1 \\ 1 - B^{(i)}C^{(i)} &
    -C^{(i)} \end{pmatrix},
    \]
and
    \[
    B^{(i)} =
    \begin{pmatrix} b_1^{(i)}\\ & \ddots \\ && b_{n_i}^{(i)} \end{pmatrix}, \
    C^{(i)} =
    \begin{pmatrix} c_1^{(i)}\\ & \ddots \\ && c_{n_i}^{(i)} \end{pmatrix}
    \in \GL_{n_i}(F).
    \]
With these choices, the determinant we would like to compute is the product
of various determinants of the linear transforms of the form
    \[
    M_{2 \times 2}(F) \to F \times F \times F \times F, \quad
    Y \mapsto p_{st} \left( \begin{pmatrix} b_s^{(1)} & 1 \\
    1 - b_s^{(1)} c_s^{(1)} & -c_s^{(1)} \end{pmatrix}
    Y \begin{pmatrix} b_t^{(2)} & 1 \\
    1 - b_t^{(2)} c_t^{(2)} & -c_t^{(2)} \end{pmatrix}^{-1}, Y \right)
    \]
where $p_{st}$ is the projection to the upper right and lower left corner,
and the product ranges over all $1\leq s \leq n_1$ and $1 \leq t \leq n_2$.
Direct computation gives that the determinant is $b_s^{(1)} c_s^{(1)} -
b_t^{(2)} c_t^{(2)}$, which in term equals $a_s^{(1)} \overline{a_s^{(1)}} -
a_t^{(2)} \overline{a_t^{(2)}}$. According the special form of $\alpha_i$ we
took, we have
    \[
    a_i^{(1)} \overline{a_i^{(1)}} - a_j^{(2)} \overline{a_j^{(2)}} =
    \widetilde{r_i} - \widetilde{t_j}.
    \]
Moreover $\delta_{Q(E)}(x) = 1$. It follows that $\Delta(x) = \lambda'$.

We summarize the above computation in the following proposition.

\begin{prop}    \label{prop:parabolic_descent}
Let the notation be as above. Then
    \[
    O^{G'}(x, f') = \lambda' \cdot O^L(x, f'^{Q}).
    \]
\end{prop}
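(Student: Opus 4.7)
The plan is to carry out parabolic descent in stages: first along the parabolic $P = MN$ of $\GL_n$ with Levi $M = \GL_{n_1} \times \GL_{n_2}$ for the $H'$-integration, and simultaneously along $Q = LU$ of $\GL_{2n}$ for the $H''$-integration. Writing $h_i = u_i m_i k_i$ and $h'' = u'' m'' k''$ by Iwasawa decomposition, the compact pieces get absorbed into a Hecke-type function $f'_K$ obtained from $f'$ by averaging against $\chi_{H'}$ and $(\chi\eta)^{-1}$ over the maximal compact subgroups $K_{H'} = \GL_n(\fo_E) \times \GL_n(\fo_E)$ and $K_{H''} = \GL_{2n}(\fo_F)$. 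The modular characters $\delta_{P(E)}$ and $\delta_{Q(F)}$ enter through the changes of Haar measure, and the remaining integration is over $(M \cap L)(E) \times L(F)$ modulo the stabilizer, together with integrations over $(N\cap U)(E)$ and $U(F)$.

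The crux is then the change of variables in the unipotent integrations. I would introduce the map
\[
\delta_A: (N\cap U)(E) \times U(F) \to U(E), \quad (u, u'') \mapsto u^{-1} x u'' x^{-1},
\]
and verify that it is bijective and everywhere submersive: since both $u$ and $u''$ are unipotent, the Jacobian at an arbitrary point equals the Jacobian at $(1,1)$, which is the determinant of the linear map
\[
\rho(x):(X_1, X_2, Y) \mapsto -\begin{pmatrix} X_1 \\ & X_2 \end{pmatrix} + x_1 Y x_2^{-1}
\]
(with $x_i \in \GL_{2n_i}(E)$). Making the substitution $u^{-1} x u'' \mapsto w x$ for $w \in U(E)$ and then $w \mapsto mwm^{-1}$ to absorb the conjugation, and using $\delta_{P(E)}(m) = \delta_{Q(E)}(m)^{1/2}$, the orbital integral factors as $\Delta(x) \cdot O^L(x, f'^Q)$ where $\Delta(x) = \delta_{Q(E)}^{1/2}(x) |\det \rho(x)|^{-1}$ and $f'^Q$ is the standard parabolic descent
\[
f'^Q(x) = \delta_{Q(E)}(x)^{-1/2} \int_{U(E)} f'_K(wx)\, dw.
\]

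The remaining task, which I expect to be the main bookkeeping obstacle, is to identify $\Delta(x)$ with the scalar $\lambda'$. Here I would use the fact that $\Delta(x)$ depends only on the conjugacy classes of $\alpha_i \overline{\alpha_i}$, so it suffices to compute it after passing to the algebraic closure of $F$; there $E = F \times F$, and I can diagonalize $\alpha_i$ to reduce $\det \rho(x)$ to a product over pairs $(s,t)$ with $1 \le s \le n_1$, $1\le t\le n_2$ of elementary $4\times 4$ determinants. Each such factor evaluates to $b_s^{(1)} c_s^{(1)} - b_t^{(2)} c_t^{(2)} = \widetilde{r_s} - \widetilde{s_t}$, and since $\delta_{Q(E)}(x) = 1$ for $x \in L(E)$ in the prescribed block form, the product reassembles into $\lambda'$. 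Combined with the factorization above, this yields the claimed identity.
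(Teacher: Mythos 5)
Your proposal follows the paper's own proof essentially step for step: the same Iwasawa decomposition and averaging into $f'_K$, the same submersion $\delta_A$ with Jacobian computed via $\rho(x)$ at $(1,1)$, the same pair of changes of variables using $\delta_{P(E)}(m)=\delta_{Q(E)}(m)^{1/2}$, and the same identification of $\Delta(x)$ with $\lambda'$ by passing to the algebraic closure, diagonalizing the $\alpha_i$, and reducing to the product of elementary determinants $b_s^{(1)}c_s^{(1)}-b_t^{(2)}c_t^{(2)}=\widetilde{r_s}-\widetilde{s_t}$. The argument is correct and matches the paper's.
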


\subsection{Reduction to the elliptic case}

Let us come back to the setup of Theorem~\ref{thm:FL}. Assume that $\alpha
\overline{\alpha} \in \GL_n(E)$ is regular semisimple but not elliptic (in
the usual sense). Then we can find positive integers $n_1, n_2$ with $n_1+n_2
= n$, $P = MN$ be the standard blocked upper triangular parabolic subgroup of
$\GL_n$ corresponding to this partition, and $\alpha$ is twisted conjugate to
$\begin{pmatrix} \alpha_1 \\ & \alpha_2 \end{pmatrix} \in M(E)$. Since
$s'(\alpha)$ and $g(\beta)$ match, we may find $\beta$ is twisted conjugate
to $\begin{pmatrix} \beta_1 \\ & \beta_2 \end{pmatrix} \in M(E)$,
$s'(\alpha_i) \in S'_{2n_{i}}$ matches $g(\beta_i) \in \GL_{2n_i}(E)$, $i =
1, 2$.

Put $f' = \id_{\GL_{2n}(\fo_E)}$ in Proposition~\ref{prop:parabolic_descent}.
Since $f'^{Q} = \id_{\GL_{2n_1}(\fo_E)} \otimes \id_{\GL_{2n_2}(\fo_E)}$, and
    \[
    O^{G'}(x, \id_{\GL_{2n}(\fo_E)}) = O^{S'}(x \overline{x}^{-1},
    \id_{S'(\fo_F)}),
    \]
we conclude
    \[
    O^{S'}(s'(\alpha), \id_{S'(\fo_F)}) = \lambda'
    O^{S'_{2n_1}}(s'(\alpha_1), \id_{S'_{2n_1}(\fo_F)})
    O^{S'_{2n_2}}(s'(\alpha_2), \id_{S'_{2n_2}(\fo_F)}).
    \]

Let $r_1, \cdots, r_{n_1}$ be the eigenvalues of $\beta_1\overline{\beta_1}$
and $s_1, \cdots, s_{n_2}$ be the eigenvalues of $\beta_2 \overline{\beta_2}$
(in some fixed algebraic closure of $F$). Put
    \[
    \lambda = \frac{\abs{ \det(1 - \beta_1\overline{\beta_1})}^{n_2}
    \abs{\det (1 - \beta_2\overline{\beta_2})}^{n_1}}
    {\abs{\prod_{1 \leq i \leq n_1, 1 \leq j \leq n_2}(r_i - s_j)}}.
    \]
Then one checks that $\lambda \in F$. By~\cite{Guo2}*{Proposition~2.2} we
have
    \[
    O^{\GL_{2n}} (g(\beta), \id_{\GL_{2n}(\fo_F)}) =
    \lambda
    O^{\GL_{2n_1}} (g(\beta_1), \id_{\GL_{2n_1}(\fo_F)})
    O^{\GL_{2n_2}} (g(\beta_2), \id_{\GL_{2n_2}(\fo_F)}).
    \]

Since $s'(\alpha_i)$ and $g(\beta_i)$ match, $i= 1, 2$, the elements $\beta_i
\overline{\beta_i}$ and $-(1-\alpha_i \overline{\alpha_i})( \alpha_i
\overline{\alpha_i})^{-1}$ have the same characteristic polynomial. It
follows that
    \[
    \lambda' = \lambda.
    \]
Then Theorem~\ref{thm:FL} follows by induction on $n$. This finishes the
proof of Theorem~\ref{thm:FL}.

\appendix

\section{Convergence of the elliptic part}
\label{s:A1}

The goal of this appendix is to explain the absolute convergence of the
elliptic part of the relative trace formula. We will prove
    \begin{equation}    \label{eq:abs_convergent_elliptic}
    \int_{H'(F) \bs H'(\bA_F)^1}
    \sum_{x \in S'(F)_{\el}} \abs{f'(h^{-1} x \overline{h})} \rd h
    \end{equation}
is convergent for all $f \in \cS(S'(\bA_F))$, where
    \[
    H'(\bA_F)^1 = \{ (h_1, h_2) \in H'(\bA_F) \mid \abs{\det h_1 h_2} = 1\}.
    \]
This implies the absolute convergence of~\eqref{eq:geometric_split}. The
proof of the absolute convergence of~\eqref{eq:geometric_nonsplit} is
similar.

Let $P_0$ be the usual upper triangular Borel subgroup of $\GL_n$ and $P' =
\Res_{E/F} P_0 \times P_0$ be a minimal parabolic subgroup of $H'$. Let $c$
be a real number with $0<c<1$ and $T_c$ the subset of the diagonal torus in
$\GL_{2n}(\R)$ consisting of
    \[
    \{(a_1, \cdots, a_n, b_1, \cdots, b_n) \in (\R_{>0})^{2n}
    \mid a_i a_{i+1}^{-1} \geq c, \ b_i b_{i+1}^{-1} \geq c, \
    a_1\cdots a_n b_1 \cdots b_n = 1 \}.
    \]
Let $T_c$ be diagonally embedded in $H'(F_\infty)$ and identify it with its
image. Fix a maximal compact subgroup $\cK$ of $H'(\bA_F)$. Then reduction
theory gives that there is a compact subgroup $\omega \subset P'(\bA_F)$,
such that $H'(\bA_F)^1 = H'(F) \cG$ and
    \[
    \cG = \{ pak \mid p \in \omega, \ a \in T_c, \ k \in \cK\}.
    \]
Thus we only need to prove that
    \[
    \int_{\omega} \int_{T_c} \int_{\cK}
    \sum_{x \in S'(F)_{\el}}
    \abs{f'((pak)^{-1} x (\overline{pak}))}
    \delta_P(a)^{-1} \rd k \rd a \rd p
    \]
is absolutely convergent. By the definition of $T_c$, there is a compact
subset $\Omega$ of $H'(\bA_F)$ such that if $p \in \omega$, $a \in T_c$, $k
\in \cK$, then $a^{-1}pak \in \Omega$. It follows that we only need to prove
that
    \[
    \int_{T_c} \sum_{x \in S'(F)_{\el}} \abs{f'(a^{-1} x a)}
    \delta_P(a)^{-1} \rd a
    \]
is absolutely convergent for all Schwartz functions $f'$ on $S'(\bA_F)$. It
is enough to consider $f' = \otimes_v f'_v$, where $f'_{v}$ is a Schwartz
function on $S'(F_v)$. Since $f'_v$ is compactly supported if $v \nmid
\infty$ and $T_c \subset H'(F_v)$, we just need to prove that
    \begin{equation}    \label{eq:sum_over_lattice}
    \int_{T_c} \sum_{x \in S'(L)_{\el}} \abs{f'_\infty(a^{-1} x a)}
    \delta_P(a)^{-1} \rd a
    \end{equation}
is absolutely convergent for any Schwartz function $f'_\infty$ on
$S'(F_\infty)$ and any fractional ideal $L$ of $\fo_F$. Note that $L$ is
discrete in $F_\infty$.

We fix some notation. Let $v \mid \infty$ be an infinite place we write
$\abs{\cdot}$ for the usual absolute value. If $x = (x_v) \in F_\infty$ we
write $\abs{x}$ for $\max_{v \mid \infty} \abs{x_v}$. If $X = (x_{ij}) \in
M_n(F_\infty)$, then we write $\aabs{X} = \max_{ij} \abs{x_{ij}}$.

Let us divide the integral into two pieces depending on $a_1\cdots a_n >1$ or
not. We will treat the case $a_1 \cdots a_n>1$. The case $a_1 \cdots a_n<1$
can be handled in exactly the same way by noting that $b_1 \cdots b_n >1$
under this assumption.

From now on assume that $a_1 \cdots a_n >1$. Then $b_1 \cdots b_n = (a_1
\cdots a_n)^{-1}<1$.

Since $L$ is a fractional ideal, there is a constant $c_L>0$ such that if $x
\in S'(L)$ and $u$ is a nonzero entry of $x$ then $\abs{u} \geq c_L$. This is
where the discreteness of $L$ in $F_\infty$ is used.

We write $x \in S'(F_\infty)$ as $\begin{pmatrix} A & B \\ C & D
\end{pmatrix}$. Fix a positive polynomial $P_1$ such that
    \[
    P_1(x) \geq \max\{ \aabs{A}, \aabs{B}, \aabs{C}, \aabs{D}\}.
    \]
Here polynomial means that we view $S'(F_\infty)$ as a real manifold and a
$P_1$ is a real positive polynomial, in other worlds, if $a_{ij}$ is an entry
of $A$, then both $a_{ij}$ and $\overline{a_{ij}}$ might appear in the
polynomial $P_1$.

If $x = \begin{pmatrix} A & B \\ C & D \end{pmatrix} \in S'(F_\infty)_{\el}$,
we write $A  = (x_{ij})$. Since the characteristic polynomial of $A
\overline{A}$ is irreducible over $F$, for every $i_0 = 1, \cdots, n-1$,
there is a $j \geq i_0+1$ and $i \leq i_0$ such that $x_{ji} \not=0$ (for
otherwise $A$ is contained in a proper parabolic subgroup of $\GL_n(E)$).
Thus $\abs{x_{ji}} \geq c_L$. Something similar holds for the entries of $D$.
This is where the condition ``elliptic'' is used.

By the choice of $P_1$ we have
    \[
    P_1(a^{-1}xa) \geq \abs{a_j^{-1} x_{ji} a_i} \geq c_L a_i a_j^{-1}.
    \]
Since $a \in T_c$ we have
    \[
    a_i \geq c a_{i+1} \geq \cdots \geq c^{i_0-i} a_{i_0}, \quad
    a_j^{-1} \geq c a_{j-1}^{-1} \geq \cdots \geq c^{j-i_0-1} a_{i_0+1}^{-1}.
    \]
Therefore
    \begin{equation}    \label{eq:ratio_a}
    P_1(a^{-1} x a) \geq c_L c^{j-i-1} a_{i_0} a_{i_0+1}^{-1}
    \geq c_L c^{n-2} a_{i_0} a_{i_0+1}^{-1}.
    \end{equation}
Note that we used the fact that $0<c<1$. So we obtain
    \[
    a_n^{-1} = (a_1 \cdots a_n)^{-\frac{1}{n}}
    \prod_{i=1}^{n-1} (a_i a_{i+1}^{-1})^{\frac{i}{n}} \leq
    (a_1 \cdots a_n)^{-\frac{1}{n}} \left( c_L^{-1} c^{-(n-2)}
    P_1(a^{-1}xa) \right)^{\frac{n-2}{2}},
    \]
and
    \[
    a_1 = (a_1 \cdots a_n)^{\frac{1}{n}}
    \prod_{i=1}^{n-1} (a_i a_{i+1}^{-1})^{\frac{n-i}{n}} \leq
    (a_1 \cdots a_n)^{\frac{1}{n}} \left( c_L^{-1} c^{-(n-2)}
    P_1(a^{-1}xa) \right)^{\frac{n-2}{2}},
    \]
Similarly by considering $D$, we conclude
    \begin{equation}    \label{eq:ratio_b}
    P_1(a^{-1} x a)
    \geq c_L c^{n-2} b_{i_0} b_{i_0+1}^{-1},
    \end{equation}
and
    \[
    b_n^{-1} \leq
    (b_1 \cdots b_n)^{-\frac{1}{n}} \left( c_L^{-1} c^{-(n-2)}
    P_1(a^{-1}xa) \right)^{\frac{n-2}{2}},
    \quad
    b_1 \leq
    (b_1 \cdots b_n)^{\frac{1}{n}} \left( c_L^{-1} c^{-(n-2)}
    P_1(a^{-1}xa) \right)^{\frac{n-2}{2}}.
    \]

For any $i, j = 1, \cdots, n$ we also have
    \[
    P_1(a^{-1}xa) \geq \abs{a_i^{-1} x_{ij} a_j},
    \]
and thus
    \begin{equation}    \label{eq:x}
    \abs{x_{ij}} \leq  a_i a_j^{-1} P_1(a^{-1} xa)
    \leq c^{-(i-1) - (n-j)} a_1 a_n^{-1}P_1(a^{-1} x a)
    \leq c^{-(2n-2)} \left( c_L^{-1} c^{-(n-2)}
    P_1(a^{-1} x a) \right)^{n-2}.
    \end{equation}
Write $C = (z_{ij})$, $D = (w_{ij})$. Similar considerations also give
    \begin{equation}    \label{eq:z}
    \begin{aligned}
    \abs{z_{ij}} &\leq (a_1 \cdots a_n)^{-\frac{1}{n}}
    (b_1 \cdots b_n)^{\frac{1}{n}}
    c^{-(2n-2)} \left( c_L^{-1} c^{-(n-2)}
    P_1(a^{-1} x a) \right)^{n-2}\\
    &=
    (a_1 \cdots a_n)^{-\frac{2}{n}}
    c^{-(2n-2)} \left( c_L^{-1} c^{-(n-2)}
    P_1(a^{-1} x a) \right)^{n-2},
    \end{aligned}
    \end{equation}
and
    \begin{equation}    \label{eq:w}
    \abs{w_{ij}} \leq  a_i a_j^{-1} P_1(a^{-1} x a)
    \leq c^{-(2n-2)} \left( c_L^{-1} c^{-(n-2)}
    P_1(a^{-1} x a) \right)^{n-2}.
    \end{equation}

To summarize, multiplying the inequalities~\eqref{eq:ratio_a},
\eqref{eq:ratio_b}, \eqref{eq:x}, \eqref{eq:z} and~\eqref{eq:w}, we obtain a
positive polynomial function $P$ on $S'(F_\infty)$ such that
    \begin{equation}    \label{eq:separation}
    \prod_{i=1}^{n-1} \left( \frac{a_i}{a_{i+1}} \frac{b_i}{b_{i+1}} \right)
    \max\{\aabs{A}, \aabs{C}, \aabs{D}\}
    \leq (a_1\cdots a_n)^{-\frac{2}{n}} P(a^{-1} x a).
    \end{equation}

Let us now fix a positive homogeneous polynomial $Q$ in $M_n(F_\infty)$ of a
large degree $M$. Consider
    \[
    \phi(x) = f'_{\infty}(x) Q(B).
    \]
This is still a Schwartz function on $S'(F_{\infty})$. Then
    \[
    \eqref{eq:sum_over_lattice} = \int \sum_{x \in S'(L)_{\el}}
    \phi(a^{-1} x a) (a_1 \cdots a_n)^{2M} Q(B)^{-1}\delta_{P'}(a)^{-1} \rd a.
    \]
where as before we write each $x = \begin{pmatrix} A & B \\ C &D
\end{pmatrix}$.

Since $\phi$ is Schwartz, it is bounded by the reciprocal of any polynomial
and in particular by $P^{-N}$ when $N$ is large enough, thus
by~\eqref{eq:separation} we have
    \[
    \begin{aligned}
    \eqref{eq:sum_over_lattice} &\leq
    \int \sum_{x \in S'(L)_{\el}} \max\{\aabs{A}, \aabs{C}, \aabs{D}\}^{-N}
    (a_1 \cdots a_n)^{\frac{-2N}{n}}  (a_1 \cdots a_n)^{2M} Q(B)^{-1} \rd a\\
    & = \sum_{x \in S'(L)_{\el}}
    \max\{\aabs{A}, \aabs{C}, \aabs{D}\}^{-N} Q(B)^{-1} \times
    \int (a_1 \cdots a_n)^{\frac{-2N}{n}}  (a_1 \cdots a_n)^{2M} \rd a
    \end{aligned}
    \]
Here $N$ is a sufficiently large real number, and the integration is over $a
\in T_c$ and $a_1 \cdots a_n >1$. The point is that the variables in the
integral, i.e. $a_1, \cdots, a_n, b_1, \cdots, b_n$, and the variables in the
sum, i.e. $x = \begin{pmatrix} A & B \\ C & D \end{pmatrix}$, are separated.
Thus when $N>>M>>0$, both the sum and the integral are convergent. This
proves the convergence of~\eqref{eq:sum_over_lattice} and hence the absolute
convergence of~\eqref{eq:geometric_split}.

\section{Elliptic representations}
\label{s:A2}

The goal of this appendix is to sketch a proof of
Proposition~\ref{prop:elliptic_rep}. To simplify notation, we fix in this
subsection a nonarchimedean nonsplit place $v$ of $F$ and suppress it from
all notation. Thus $F$ stands for a nonarchimedean local field of
characteristic zero. To shorten notation we also write $H$ for its group of
$F$-points $H(F)$. The equalities in the proof usually depend on the choice
of the measures. But such choices are not essential to the final result. Thus
we should interpret the equalities in the proof as equalities up to a nonzero
constant depending only the choice of the measures.

\subsection{Results on orbital integrals}
First we need some results on the nilpotent orbital integrals and Shalika
germ. let $\fs$ be the tangent space of $S$ at $1$, with an action of $H$ by
conjugation. An element $x \in \fs$ is called regular semisimple if $H_x$ is
a torus of dimension $n$, and it is called elliptic if in addition $H_x$ is
an elliptic torus modulo the split center of $H$. Regular semisimple orbital
integrals has been defined and studied in~\cite{CZhang}. An $H$-orbit in
$\fs$ is called nilpotent if its closure contains $0$. Nilpotent orbital
integrals have been defined in~\cite{Guo3}. In particular if $\cO$ is an
nilpotent orbit in $\fs$, it is proved in~\cite{Guo3} that the integral
    \[
    \int_{\cO} f(x) \rd x, \quad f\in \cS(\fs),
    \]
is absolutely convergent, where $\rd x$ is an invariant Radon measure on
$\cO$. Moreover it is proved that the Fourier transform $\widehat{\mu_{\cO}}$
of the distribution $\mu_{\cO}$ is a locally integrable function on $\fs$. If
$\cO = \{0\}$ is the smallest nilpotent orbit, then obviously
$\widehat{\mu_{\cO}}(X)$ is a nonzero constant. More importantly $\mu_{\cO}$
and $\widehat{\mu_{\cO}}$ have the following homogeneity property. If $t \in
F^{\times}$, then
    \[
    \mu_{\cO}(f_{t}) = \abs{t}^{\dim \cO} \mu_{\cO}(f),
    \quad f_t(X) = f(t^{-1}X).
    \]
This follows from the explicit formula for $\mu_{\cO}$ given
in~\cite{Guo3}*{Proposition~5.1}. Taking Fourier transform we conclude that
    \[
    \widehat{\mu_{\cO}}(f_t) = \abs{t}^{2n^2 - \dim \cO}
    \widehat{\mu_{\cO}}(f).
    \]
The most important point is that $\dim \cO < n^2$ for all $\cO$, and thus
    \begin{equation}    \label{eq:homogeneity_inequality}
    \dim \cO < 2n^2 - \dim \cO'
    \end{equation}
for any two nilpotent orbits $\cO$ and $\cO'$.

As in the classical situation of Harish-Chandra, we have the Shalika germs.
Let $\exp: \fs \to S$ be the exponential map, defined in an $H$-invariant
neighbourhood $0 \in \fs$. For any $f \in \cS(G)$, we define in an
$H$-invariant neighbourhood of $0 \in \fs$ a function $f_{\natural}$ by
requiring that
    \[
    \int_H f(gh) \chi(gh)^{-1} \rd h = f_{\natural}(X)
    \]
if $g \theta(g)^{-1}  = \exp X$. There is a unique $H$-invariant real valued
function $\Gamma_\cO$ on the regular semisimple locus of $\fs$ for each
nilpotent orbit $\cO$ with the following properties.
\begin{enumerate}
\item For any $f \in \cS(\fs)$, there is an $H$-invariant neighbourhood
    $U_f$ of $0 \in \fs$ such that
        \begin{equation}    \label{eq:Shalika_germ_CSA}
        O^G(g, f) = \sum_{\cO} \Gamma_\cO(X) \mu_{\cO}(f_{\natural}).
        \end{equation}
    for all regular semisimple $g \in U_f$, such that $g\theta(g)^{-1} =
    \exp(X)$.

\item For all $t \in F^\times$ and all regular semisimple $X$, we have
    \[
    \Gamma_{\cO}(tX) =
    \abs{t}^{- \dim \cO}  \Gamma_{\cO}(X).
    \]
\end{enumerate}

\begin{lemma}   \label{lemma:OI}
The Shalika germs $\Gamma_{\cO}$ are linearly independent. They are not
identically zero in any neighbourhood of $0$. If $\cO = \{0\}$ the minimal
nilpotent orbit, then $\Gamma_0(X) = 0$ if $X$ is not elliptic in $\fs$.
\end{lemma}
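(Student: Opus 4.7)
The proof follows the classical Harish--Chandra strategy adapted to the symmetric space $S$, as carried out in a closely related setting in~\cite{Xue3}. I would handle the three assertions in order.

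\emph{(i) Linear independence.} The essential input is that the distributions $\mu_{\cO}$ on $\cS(\fs)$ are themselves linearly independent. Granting this, one constructs for each $\cO$ a Schwartz function $f_{\cO}$ with $\mu_{\cO'}(f_{\cO,\natural}) = \delta_{\cO,\cO'}$; plugging into~\eqref{eq:Shalika_germ_CSA} gives $O^G(g,f_{\cO}) = \Gamma_{\cO}(X)$ locally, and any linear relation $\sum c_{\cO}\Gamma_{\cO}\equiv 0$ forces $c_{\cO}=0$. For the linear independence of the $\mu_{\cO}$, one uses the dilation $f \mapsto f_t$ and the distinct homogeneity exponents of the $\widehat{\mu_{\cO}}$ (which by~\eqref{eq:homogeneity_inequality} cannot coincide across orbits of differing dimension). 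Orbits of equal dimension are separated by the local structure of $\widehat{\mu_{\cO}}$ on the regular semisimple locus, since distinct nilpotent orbit closures contain distinct regular semisimple germs in their Fourier support; localizing $\widehat{f_\natural}$ near a suitable regular element produces test functions that detect a single $\mu_{\cO}$.

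\emph{(ii) Non-vanishing near $0$.} This is a formal consequence of (i) together with the stated homogeneity $\Gamma_{\cO}(tX)=|t|^{-\dim\cO}\Gamma_{\cO}(X)$: if $\Gamma_{\cO}$ vanished on some neighborhood of $0$ in the regular semisimple locus, rescaling by arbitrary $t\in F^\times$ would force $\Gamma_{\cO}\equiv 0$ on all regular semisimple $X$, contradicting (i).

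\emph{(iii) Vanishing of $\Gamma_{\{0\}}$ at non-elliptic points.} Let $X\in\fs$ be regular semisimple but not elliptic. Then one can choose a proper $\theta$-stable parabolic $P=MN\subset H$ with $X\in\fs\cap\fm$. Parabolic descent (the analog on $S$ of the Harish--Chandra constant-term identity) expresses $O^G(\exp X,f)$ as an orbital integral $O^M(\exp X,f^{(P)})$ for a suitable constant term $f^{(P)}$ on the symmetric space attached to $M$. Expanding both sides in Shalika germs and comparing the coefficients of the zero orbits on $G$ and on $M$, one notes that the $G$-side contribution of $\cO=\{0\}$ involves $f_\natural(0)$ while the $M$-side involves $f^{(P)}_\natural(0)$. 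Choosing $f$ supported away from $P$ so that $f^{(P)}_\natural\equiv 0$ but $f_\natural(0)\neq 0$, and using the linear independence from (i) to isolate $\Gamma_{\{0\}}^G(X)$ from the remaining $G$-side terms (which match against the $M$-side germs by induction on the semisimple rank), forces $\Gamma_{\{0\}}^G(X)=0$.

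\textbf{Main obstacle.} The heart of the argument is setting up parabolic descent for orbital integrals on $S$ twisted by $\chi$, and verifying the precise induction relation between $G$-nilpotent orbits in $\fs$ and $M$-nilpotent orbits in $\fs\cap\fm$ (including the trivial orbit). This requires a $\theta$-equivariant Iwasawa decomposition, compatibility of the exponential map with $\theta$, control of how $\chi$ behaves on $M$, and a careful matching of measures; it is exactly the technical core of the Harish--Chandra theory in this symmetric-space context, which is why we defer the detailed write-up to the appendix and refer to the treatment in~\cite{Xue3}.
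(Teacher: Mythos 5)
Your overall strategy is the same one the paper itself only sketches (the classical Harish--Chandra arguments transported to $\fs$, plus parabolic descent), and your point (ii) is correct: homogeneity together with linear independence upgrades non-vanishing to non-vanishing on every neighbourhood of $0$. However, your argument for linear independence in (i) has a genuine gap at its final step. From the linear independence of the distributions $\mu_{\cO}$ (which, by the way, is elementary by support and closure-order considerations and needs neither Fourier transforms nor \eqref{eq:homogeneity_inequality}) you correctly obtain dual functions $f_{\cO}$ with $\mu_{\cO'}(f_{\cO,\natural})=\delta_{\cO,\cO'}$, hence $O^G(g,f_{\cO})=\Gamma_{\cO}(X)$ near $0$ by \eqref{eq:Shalika_germ_CSA}. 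But a relation $\sum_{\cO} c_{\cO}\Gamma_{\cO}\equiv 0$ near $0$ then only says that the single function $h=\sum_{\cO} c_{\cO}f_{\cO}$ has vanishing regular semisimple orbital integrals near the identity; to conclude $c_{\cO_0}=\mu_{\cO_0}(h_{\natural})=0$ you must know that the nilpotent orbital integrals are controlled by the regular semisimple ones near $0$, which is essentially the statement being proved and is not formal. This is exactly where the inequality \eqref{eq:homogeneity_inequality} enters in the paper's (cited) argument: it guarantees that every homogeneity degree $\dim\cO$ of a $\mu_{\cO}$ is strictly smaller than every degree $2n^2-\dim\cO'$ of a $\widehat{\mu_{\cO'}}$, so that $\mathrm{span}\{\mu_{\cO}\}\cap\mathrm{span}\{\widehat{\mu_{\cO}}\}=0$, and it is this clash that forces the coefficients in a putative relation to vanish. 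Your stated use of \eqref{eq:homogeneity_inequality} --- to separate $\widehat{\mu_{\cO}}$'s attached to orbits of different dimensions --- is not what the inequality says (different dimensions give different homogeneity degrees for free), and the appeal to ``distinct regular semisimple germs in the Fourier support'' is not a usable argument as stated.

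For (iii), parabolic descent is indeed the paper's route (via \cite{CZhang}*{Subsection~6.1} together with homogeneity), but your mechanism for isolating $\Gamma_{\{0\}}$ does not work as written: for a fixed non-elliptic $X$ you cannot invoke ``linear independence from (i)'' to extract a single coefficient from the identity $\sum_{\cO}\Gamma_{\cO}(X)\mu_{\cO}(f_{\natural})=O^M(X,f^{(P)})$ holding for one chosen $f$, and choosing $f$ with $f^{(P)}_{\natural}\equiv 0$ but $f_{\natural}(0)\neq 0$ does not by itself yield the conclusion. The correct mechanism is either to observe that each nilpotent orbital integral on the Levi, composed with the constant-term map $f\mapsto f^{(P)}$, is a combination of nilpotent orbital integrals on $G$ attached to induced orbits, none of which is the zero orbit when the Levi is proper, and then use linear independence of the distributions $\mu_{\cO}$ in the variable $f$; or, as the paper indicates, to isolate the $\cO=\{0\}$ terms on both sides as the unique scaling-invariant pieces via $\Gamma_{\cO}(tX)=\abs{t}^{-\dim\cO}\Gamma_{\cO}(X)$. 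Either way, the missing ingredient is the precise induction relation between nilpotent orbits of $G$ and of $M$ under descent, which you correctly flag as the technical core but whose absence leaves step (iii) incomplete.
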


\begin{proof}
The linear independence is proved by exactly the same argument as in the
classical case of Harish-Chandra. The key to this argument is the
inequality~\eqref{eq:homogeneity_inequality}, and the rest of the argument is
essentially formal, cf.~\cite{Kottwitz2}*{Section~27}
and~\cite{Xue3}*{Section~7}. The fact that $\Gamma_0(X) = 0$ if $X$ is not
elliptic is proved using parabolic descent~\cite{CZhang}*{Subsection~6.1} and
the homogeneity property of $\Gamma_{\cO}$'s.
\end{proof}

\subsection{Characters of supercuspidal representations}
Now we recall that by~\cite{BP2}*{Proposition~3.2.1}, in the case $\pi$ being
supercuspidal, up to some nonzero constant depending only on the choice of
the measures and the linear form $\ell$, we have
    \begin{equation}    \label{eq:integration_matrix_coeffcient}
    \ell (v) \overline{\ell(w)} = \int_{Z \bs H} \langle v, \pi(h) w \rangle
    \chi(h)^{-1} \rd h
    \end{equation}
for all $v, w \in \pi$. Thus if $\varphi \in \cS(G)$ then
    \[
    J_{\pi}(\varphi) = \sum_v
    \int_{Z \bs H} \langle \pi(\varphi) v, \pi(h) w \rangle
    \chi(h)^{-1} \rd h,
    \]
where the sum runs over an orthonormal basis of $\pi$.

Recall  from  \eqref{eq:tran fact} that we have defined a transfer factor
    \[
    \kappa^G(g) = \chi(A), \quad g^{-1} =
    \begin{pmatrix} A & * \\ * &* \end{pmatrix}
    \]
for all regular semisimple $g \in G$. We put $\widetilde{\Theta}_{\pi}(g) =
\kappa^G(g) \Theta_{\pi}(g)$. Then $\widetilde{\Theta}_{\pi}$ is left and
right $H$-invariant, and we can view it as a function on $S$ which is
$H$-conjugate invariant. By~\cite{RR}*{Theorem~7.11}, if $X$ is in a small
neighbourhood of $0 \in \fs$, $g\in G$, $g \theta(g)^{-1} = \exp X$, we have
    \begin{equation}    \label{eq:character_expansion}
    \widetilde{\Theta}_{\pi} (g) =
    \sum_{\cO} c_{\cO} \widehat{\mu_{\cO}}(X).
    \end{equation}
The case treated in~\cite{RR} does not involve the character, but the same
argument goes through without change in our setup.

\begin{lemma}   \label{lemma:integral_matrix_coefficients}
Let $v, w \in \pi$, and $f(g) = \langle v, \pi(g) w \rangle$ be the matrix
coefficient. Then we have
    \begin{equation}    \label{eq:explicit_character}
    \kappa^G(g) O^G(g, f) =
    \widetilde{\Theta}_\pi(g) \int_{Z \bs H} f(h) \chi(h)^{-1} \rd h.
    \end{equation}
\end{lemma}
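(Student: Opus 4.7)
The plan is to unfold the orbital integral of the supercuspidal matrix coefficient $f(x)=\langle v,\pi(x)w\rangle$ by applying the Schur-type identity~\eqref{eq:integration_matrix_coeffcient} twice. First I would substitute the matrix coefficient into the definition of $O^G(g,f)$ and use unitarity of $\pi$ to rewrite the integrand as $\langle \pi(g)^{-1}\pi(h_1)v,\pi(h_2)w\rangle$. Because $\pi$ is supercuspidal, $f$ is compactly supported modulo $Z$, so the resulting integral (over the quotient $(H\times H)_g\bs H\times H$) is absolutely convergent. Applying~\eqref{eq:integration_matrix_coeffcient} to the inner $h_2$-integral with $v'=\pi(g)^{-1}\pi(h_1)v$ collapses it to $\ell(\pi(g^{-1}h_1)v)\overline{\ell(w)}$, yielding
\[
O^G(g,f)=\overline{\ell(w)}\int_{Z\bs H}\ell(\pi(g^{-1}h)v)\,\chi(h)\,\rd h.
\]

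Next I would identify the remaining integral with $\Theta_\pi(g)\ell(v)$. The change of variable $h\mapsto h_0 h$ combined with the $(H,\chi^{-1})$-equivariance of $\ell$ shows that $v\mapsto\int\ell(\pi(g^{-1}h)v)\chi(h)\,\rd h$ is itself an $(H,\chi^{-1})$-equivariant linear functional on $\pi$. By the one-dimensionality of $\Hom_{H}(\pi\otimes\chi,\C)$ from~\cite{Lu} recalled in Subsection~\ref{ss:sph char}, it must be a scalar multiple $c(g)\ell(v)$ of $\ell$. A parallel change of variable shows that $c(g)$ is $(H,\chi)$-bi-equivariant on $G$, matching the transformation law $\Theta_\pi(h_1gh_2)=\chi(h_1h_2)\Theta_\pi(g)$ satisfied by the relative character.

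Finally, to pin down $c(g)=\Theta_\pi(g)$, I would expand $\ell(\pi(g^{-1}h)v)=\sum_u\langle\pi(g^{-1}h)v,u\rangle\ell(u)$ in an orthonormal basis $\{u\}$ of $\pi$ and apply~\eqref{eq:integration_matrix_coeffcient} termwise; this produces the expression $c(g)=\sum_u\ell(u)\overline{\ell(\pi(g)u)}$, which is precisely the spectral formula for $\Theta_\pi(g)$ flowing from the defining identity $J_\pi(\phi)=\sum_u\ell(\pi(\phi)u)\overline{\ell(u)}$ and its representation by a locally integrable function $\Theta_\pi$ proved in~\cite{Guo3}. The hard part will be justifying the interchange of the basis sum with the integral over the non-compact quotient $Z\bs H$: it requires an admissibility and uniform convergence argument in the spirit of Harish-Chandra's theory of characters, parallel to but more involved than the standard setting. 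This is precisely the ``long and repetitive'' step that the authors choose only to sketch, deferring the details to the analogous treatment in~\cite{Xue3}.
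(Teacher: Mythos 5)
Your first two steps are sound and in fact reproduce, pointwise in $g$, what the paper does after integrating against a test function: unfolding $O^G(g,f)$ over $Z\bs H\times Z\bs H$ (which for \emph{elliptic} $g$ differs from the integral over $(H\times H)_g\bs(H\times H)$ only by the finite volume of the compact-mod-center stabilizer --- a restriction you should state, since for non-elliptic $g$ this passage fails) and applying \eqref{eq:integration_matrix_coeffcient} to the inner integral gives $O^G(g,f)=\overline{\ell(w)}\int_{Z\bs H}\ell(\pi(g^{-1}h)v)\chi(h)\,\rd h$, and multiplicity one then yields $O^G(g,f)=c(g)\,\ell(v)\overline{\ell(w)}$ with $c(g)$ independent of $v,w$. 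Since $\int_{Z\bs H}f(h)\chi(h)^{-1}\rd h=\ell(v)\overline{\ell(w)}$, the lemma reduces to $c(g)=\Theta_\pi(g)$.

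It is in pinning down this constant that your argument has a genuine gap. The expansion $\ell(x)=\sum_u\langle x,u\rangle\ell(u)$ over an orthonormal basis is not legitimate here: $\ell$ is a functional on smooth vectors and is not $L^2$-continuous, so the expansion is only a finite sum when $x$ is fixed by a compact open subgroup containing all the relevant levels --- but as $h$ ranges over the non-compact $Z\bs H$, the level of $\pi(g^{-1}h)v$ is unbounded, so the number of contributing terms grows without bound and the resulting expression $\sum_u\ell(u)\overline{\ell(\pi(g)u)}$ has no convergent meaning. More fundamentally, $\Theta_\pi(g)$ is \emph{defined} only as the locally integrable representative of the distribution $J_\pi$, so there is no ``spectral formula'' for its pointwise values to match against; the interchange you defer is not a technicality postponed to \cite{Xue3} but the entire content of the step, and the lemma's proof in the paper is actually given in full. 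The paper circumvents the issue by proving the integrated identity $\int_G\varphi(g)\kappa^G(g)O^G(g,f)\,\rd g=J_\pi(\varphi\kappa^G)\int_{Z\bs H}f(h)\chi(h)^{-1}\rd h$ for every $\varphi$ supported in the elliptic locus: there admissibility makes $\pi(\varphi\kappa^G)$ a finite-rank operator, so \eqref{eq:integration_matrix_coeffcient} is applied only finitely many times and no infinite sum ever appears. Your route can be completed by the same device --- pair $c(g)$ against such $\varphi$ and invoke local constancy of both sides on the regular elliptic set --- but that is precisely the paper's argument, so the basis-expansion step should be replaced rather than merely flagged as hard.
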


\begin{proof}
It is enough to prove that for any $\varphi \in \cS(G)$ supported in the
elliptic locus, we have
    \begin{equation}    \label{eq:explict_character_2}
    \int_{G} \varphi(g) \kappa^G(g) O^G(g, f) \rd g
    = J_\pi(\varphi \kappa^G)
    \int_{Z \bs H} f(h) \chi(h)^{-1}\rd h.
    \end{equation}
Though $\kappa^G$ is not defined on all $G$, as $\varphi$ is locally constant
and compactly supported in the elliptic locus, $\varphi \kappa^G \in \cS(G)$
and $J_\pi(\varphi \kappa^G)$ makes sense.

Let us first note that because $(H \times H)_g$ is an elliptic torus modulo
the center of $G$, up to some nonzero constant depending only on the choice
of the measures, the orbital integral of $f$ equals
    \[
    \kappa^G(g)
    \int_{Z \bs H \times Z \bs H} f(h_1^{-1} g h_2) \chi(h_1^{-1}h_2)^{-1}
    \rd h_1 \rd h_2.
    \]
As $\varphi$ is supported on the elliptic locus, we have
    \[
    \int_{G} \varphi(g) O^G(g, f) \rd g = \int_G \int_{Z \bs H \times Z \bs H}
    \varphi(g) \kappa^G(g)
    f(h_1^{-1} g h_2) \chi(h_1^{-1}h_2)^{-1} \rd h_1 \rd h_2 \rd g.
    \]
The right hand of this integral is absolutely convergent. Thus we may change
the order of integration and conclude that this integral equals
    \begin{equation}    \label{eq:integral_matrix_coefficient}
    \int_{Z \bs H \times Z \bs H} \langle v,
    \pi(h_1^{-1})
    \pi(\varphi \kappa^G) \pi(h_2) w \rangle
    \chi(h_1^{-1} h_2)^{-1} \rd h_1 \rd h_2.
    \end{equation}
Since $\pi$ is admissible, we may find elements $v_1, \cdots, v_r$ and $w_1,
\cdots, w_r$ in $\pi$ such that
    \[
    \pi(\varphi \kappa^G) v = \sum_{i = 1}^r \langle v, v_i \rangle w_i.
    \]
It then follows by~\eqref{eq:integration_matrix_coeffcient} that
    \[
    \eqref{eq:integral_matrix_coefficient}=
    \sum_{i= 1}^r \ell(v) \overline{\ell(w_i)} \ell(v_i) \overline{\ell(w)}.
    \]
We also have
    \[
    J_{\pi}(\varphi \kappa^G) = \sum_u
    \ell(\pi(\varphi \kappa^G)u)
    \overline{\ell(u)} = \sum_u \sum_{i=1}^r
    \langle u, v_i \rangle \ell(w_i)
    \overline{\ell(u)} =\sum_{i=1}^r \ell(w_i) \overline{\ell(v_i)}.
    \]
Thus~\eqref{eq:explict_character_2} follows by another application
of~\eqref{eq:integration_matrix_coeffcient}.
\end{proof}

\begin{proof}[Proof of Proposition~\ref{prop:elliptic_rep}]
Let $X$ be in a small neighbourhood of $0 \in \fs$, $g\in G$, $g
\theta(g)^{-1} = \exp X$. The character
expansion~\eqref{eq:character_expansion} gives
    \[
    \widetilde{\Theta}_{\pi} (g) =
    \sum_{\cO} c_{\cO} \widehat{\mu_{\cO}}(X).
    \]
Note that $X$ is elliptic if and only if $g$ is elliptic. Since $\cO = \{0\}$
is the only nilpotent orbit with $\widehat{\mu_{\cO}}(tX) =
\widehat{\mu_{\cO}}(X)$ for all $X \in \fs$ and $t \in F^\times$, to show
that $\widetilde{\Theta}_{\pi}(g) \not=0$ for some elliptic $g \in G$ which
sufficiently close to $1$, we only need to show that $c_0 \not=0$.

We find a matrix coefficient $f$ such that
    \[
    \int_{Z \bs H} f(h) \chi(h)^{-1} \rd h \not=0.
    \]
For this $f$ we consider the expansion of both sides
of~\eqref{eq:explicit_character} when $g$ is close to $1$ and is elliptic. We
have
    \[
    \sum_{\cO} \Gamma_{\cO}(X) \mu_{\cO}(f_{\natural})
    = \sum_{\cO} c_{\cO} \widehat{\mu_{\cO}}(X) \times
    \int_{Z \bs H} f(h) \chi(h)^{-1} \rd h.
    \]
The only terms on both sides of the expansion that are invariant under the
scaling $X \to tX$ are those corresponding to $\cO = 0$. Thus by the
homogeneity property of $\Gamma_{\cO}$ and $\widehat{\mu_{\cO}}$, we conclude
that
    \[
    \Gamma_0(X) \mu_0(f_{\natural}) = c_{0}
    \widehat{\mu_{0}}(X) \times
    \int_{Z \bs H} f(h) \chi(h)^{-1} \rd h.
    \]
By our choice of $f$ we have
    \[
    \mu_0(f_{\natural}) = \int_{Z \bs H} f(h) \chi(h)^{-1} \rd h \not=0.
    \]
If $c_0 = 0$, then $\Gamma_0(X) = 0$ if $X$ is elliptic in a neighbourhood of
$0$. By Lemma~\ref{lemma:OI}, $\Gamma_0(Y) = 0$ if $Y$ is not elliptic and
hence is identically zero in a neighbourhood of $0$. This is impossible by
Lemma~\ref{lemma:OI}. Therefore $c_0 \not=0$.
\end{proof}

\begin{bibdiv}
\begin{biblist}

\bib{AC}{book}{
   author={Arthur, James},
   author={Clozel, Laurent},
   title={Simple algebras, base change, and the advanced theory of the trace
   formula},
   series={Annals of Mathematics Studies},
   volume={120},
   publisher={Princeton University Press, Princeton, NJ},
   date={1989},
   pages={xiv+230},
   isbn={0-691-08517-X},
   isbn={0-691-08518-8},
   review={\MR{1007299}},
}

\bib{AG2}{article}{
   author={Aizenbud, Avraham},
   author={Gourevitch, Dmitry},
   title={Generalized Harish-Chandra descent, Gelfand pairs, and an
   Archimedean analog of Jacquet-Rallis's theorem},
   note={With an appendix by the authors and Eitan Sayag},
   journal={Duke Math. J.},
   volume={149},
   date={2009},
   number={3},
   pages={509--567},
   issn={0012-7094},
   review={\MR{2553879 (2011c:22026)}},
   doi={10.1215/00127094-2009-044},
}

\bib{BP2}{article}{
   author={Beuzart-Plessis, Rapha\"{e}l},
   title={On distinguished square-integrable representations for Galois
   pairs and a conjecture of Prasad},
   journal={Invent. Math.},
   volume={214},
   date={2018},
   number={1},
   pages={437--521},
   issn={0020-9910},
   review={\MR{3858402}},
   doi={10.1007/s00222-018-0807-z},
}

\bib{BPLZZ}{article}{
   author={Beuzart-Plessis, Rapha\"{e}l},
   author={Liu, Yifeng},
   author={Zhang, Wei},
   author={Zhu, Xinwen},
   title={Isolation of cuspidal spectrum, with application to the
   Gan-Gross-Prasad conjecture},
   journal={Ann. of Math. (2)},
   volume={194},
   date={2021},
   number={2},
   pages={519--584},
   issn={0003-486X},
   review={\MR{4298750}},
   doi={10.4007/annals.2021.194.2.5},
}

\bib{CS}{article}{
   author={Chen, Fulin},
   author={Sun, Binyong},
   title={Uniqueness of twisted linear periods and twisted Shalika periods},
   journal={Sci. China Math.},
   volume={63},
   date={2020},
   number={1},
   pages={1--22},
   issn={1674-7283},
   review={\MR{4047168}},
   doi={10.1007/s11425-018-9502-y},
}

\bib{FMW}{article}{
   author={Feigon, Brooke},
   author={Martin, Kimball},
   author={Whitehouse, David},
   title={Periods and nonvanishing of central $L$-values for ${\rm GL}(2n)$},
   journal={Israel J. Math.},
   volume={225},
   date={2018},
   number={1},
   pages={223--266},
   issn={0021-2172},
   review={\MR{3805647}},
   doi={10.1007/s11856-018-1657-5},
}

\bib{FJ}{article}{
   author={Friedberg, Solomon},
   author={Jacquet, Herv\'{e}},
   title={Linear periods},
   journal={J. Reine Angew. Math.},
   volume={443},
   date={1993},
   pages={91--139},
   issn={0075-4102},
   review={\MR{1241129}},
   doi={10.1515/crll.1993.443.91},
}

\bib{Guo2}{article}{
   author={Guo, Jiandong},
   title={On a generalization of a result of Waldspurger},
   journal={Canad. J. Math.},
   volume={48},
   date={1996},
   number={1},
   pages={105--142},
   issn={0008-414X},
   review={\MR{1382478}},
   doi={10.4153/CJM-1996-005-3},
}

\bib{Guo3}{article}{
    author={Guo, Jiandong},
    title={Spherical characters on certain $p$-adic symmetric spaces},
    journal={MPIM preprint},
    date={1998},
    eprint={https://www.mpim-bonn.mpg.de/de/preprints?year=&number=&name=guo\%2C+J&title=}
}

\bib{JacquetI}{article}{
   author={Jacquet, Herv\'{e}},
   title={Sur un r\'{e}sultat de Waldspurger},
   language={French},
   journal={Ann. Sci. \'{E}cole Norm. Sup. (4)},
   volume={19},
   date={1986},
   number={2},
   pages={185--229},
   issn={0012-9593},
   review={\MR{868299}},
}

\bib{JacquetII}{article}{
   author={Jacquet, Herv\'{e}},
   title={Sur un r\'{e}sultat de Waldspurger. II},
   language={French},
   journal={Compositio Math.},
   volume={63},
   date={1987},
   number={3},
   pages={315--389},
   issn={0010-437X},
   review={\MR{909385}},
}

\bib{JR}{article}{
   author={Jacquet, Herv\'{e}},
   author={Rallis, Stephen},
   title={Uniqueness of linear periods},
   journal={Compositio Math.},
   volume={102},
   date={1996},
   number={1},
   pages={65--123},
   issn={0010-437X},
   review={\MR{1394521}},
}

\bib{JS}{article}{
   author={Jacquet, Herv\'{e}},
   author={Shalika, Joseph},
   title={Exterior square $L$-functions},
   conference={
      title={Automorphic forms, Shimura varieties, and $L$-functions, Vol.
      II},
      address={Ann Arbor, MI},
      date={1988},
   },
   book={
      series={Perspect. Math.},
      volume={11},
      publisher={Academic Press, Boston, MA},
   },
   date={1990},
   pages={143--226},
   review={\MR{1044830}},
}

\bib{Kable}{article}{
   author={Kable, Anthony C.},
   title={Asai $L$-functions and Jacquet's conjecture},
   journal={Amer. J. Math.},
   volume={126},
   date={2004},
   number={4},
   pages={789--820},
   issn={0002-9327},
   review={\MR{2075482}},
}

\bib{Kottwitz}{article}{
   author={Kottwitz, Robert Edward},
   title={Orbital integrals on ${\rm GL}_{3}$},
   journal={Amer. J. Math.},
   volume={102},
   date={1980},
   number={2},
   pages={327--384},
   issn={0002-9327},
   review={\MR{564478}},
   doi={10.2307/2374243},
}

\bib{Kottwitz2}{article}{
   author={Kottwitz, Robert E.},
   title={Harmonic analysis on reductive $p$-adic groups and Lie algebras},
   conference={
      title={Harmonic analysis, the trace formula, and Shimura varieties},
   },
   book={
      series={Clay Math. Proc.},
      volume={4},
      publisher={Amer. Math. Soc., Providence, RI},
   },
   date={2005},
   pages={393--522},
   review={\MR{2192014}},
}

\bib{LM4}{article}{
   author={Lapid, Erez},
   author={Mao, Zhengyu},
   title={On a new functional equation for local integrals},
   conference={
      title={Automorphic forms and related geometry: assessing the legacy of
      I. I. Piatetski-Shapiro},
   },
   book={
      series={Contemp. Math.},
      volume={614},
      publisher={Amer. Math. Soc., Providence, RI},
   },
   date={2014},
   pages={261--294},
   review={\MR{3220931}},
   doi={10.1090/conm/614/12271},
}

\bib{Li1}{article}{
   author={Li, Huajie},
   title={An infinitesimal variant of the Guo-Jacquet trace formula. I: The
   case of $(GL_{2n,D}, GL_{n,D}\times GL_{n,D})$},
   journal={Doc. Math.},
   volume={27},
   date={2022},
   pages={315--381},
   issn={1431-0635},
   review={\MR{4424024}},
}

\bib{Li2}{article}{
   author={Li, Huajie},
   title={An infinitesimal variant of the Guo-Jacquet trace formula, II},
   journal={Pacific J. Math.},
   volume={315},
   date={2021},
   number={1},
   pages={151--207},
   issn={0030-8730},
   review={\MR{4350885}},
   doi={10.2140/pjm.2021.315.151},
}

\bib{Lu}{article}{
    author={Lu, Hengfei},
    title={Multiplicity one for the pair $({\rm GL}(n,D),{\rm GL}(n,E))$},
    note={To appear in Transformation Groups},
}

\bib{Offen}{article}{
   author={Offen, Omer},
   title={On local root numbers and distinction},
   journal={J. Reine Angew. Math.},
   volume={652},
   date={2011},
   pages={165--205},
   issn={0075-4102},
   review={\MR{2787356}},
   doi={10.1515/CRELLE.2011.017},
}

\bib{PTB}{article}{
   author={Prasad, Dipendra},
   author={Takloo-Bighash, Ramin},
   title={Bessel models for GSp(4)},
   journal={J. Reine Angew. Math.},
   volume={655},
   date={2011},
   pages={189--243},
   issn={0075-4102},
   review={\MR{2806111}},
   doi={10.1515/CRELLE.2011.045},
}

\bib{Ram}{article}{
    author={Ramakrishnan, Dinakar},
    title={A Theorem on GL(n) a la Tchebotarev},
    eprint={https://arxiv.org/abs/1806.08429},
}

\bib{RR}{article}{
   author={Rader, Cary},
   author={Rallis, Steve},
   title={Spherical characters on $\mathfrak{p}$-adic symmetric spaces},
   journal={Amer. J. Math.},
   volume={118},
   date={1996},
   number={1},
   pages={91--178},
   issn={0002-9327},
   review={\MR{1375304}},
}

\bib{W}{article}{
   author={Waldspurger, J.-L.},
   title={Sur les valeurs de certaines fonctions $L$ automorphes en leur
   centre de sym\'{e}trie},
   language={French},
   journal={Compositio Math.},
   volume={54},
   date={1985},
   number={2},
   pages={173--242},
   issn={0010-437X},
   review={\MR{783511}},
}
	
\bib{Xue1}{article}{
   author={Xue, Hang},
   title={Epsilon dichotomy for linear models},
   journal={Algebra Number Theory},
   volume={15},
   date={2021},
   number={1},
   pages={173--215},
   issn={1937-0652},
   review={\MR{4226986}},
   doi={10.2140/ant.2021.15.173},
}

\bib{Xue3}{article}{
   author={Xue, Hang},
   title={Orbital integrals on ${\rm GL_n \times GL_n \backslash GL_{2n}}$},
   journal={Canad. J. Math.},
   volume={74},
   date={2022},
   number={3},
   pages={858--886},
   issn={0008-414X},
   review={\MR{4430932}},
   doi={10.4153/s0008414x21000122},
}

\bib{CZhang}{article}{
   author={Zhang, Chong},
   title={On the smooth transfer for Guo-Jacquet relative trace formulae},
   journal={Compos. Math.},
   volume={151},
   date={2015},
   number={10},
   pages={1821--1877},
   issn={0010-437X},
   review={\MR{3414387}},
}

\bib{CZhang2}{article}{
   author={Zhang, Chong},
   title={On linear periods},
   journal={Math. Z.},
   volume={279},
   date={2015},
   number={1-2},
   pages={61--84},
   issn={0025-5874},
   review={\MR{3299843}},
   doi={10.1007/s00209-014-1357-8},
}

\bib{Zhang1}{article}{
   author={Zhang, Wei},
   title={Fourier transform and the global Gan-Gross-Prasad conjecture for
   unitary groups},
   journal={Ann. of Math. (2)},
   volume={180},
   date={2014},
   number={3},
   pages={971--1049},
   issn={0003-486X},
   review={\MR{3245011}},
}

\end{biblist}
\end{bibdiv}

\end{document}